\DeclareSymbolFont{sfoperators}{OT1}{ptm}{m}{n}
\DeclareSymbolFontAlphabet{\mathsf}{sfoperators}
\def\operator@font{\mathgroup\symsfoperators}
\numberwithin{equation}{section}
\newtheorem{thm}{Theorem}[section]
\newtheorem{lem}[thm]{Lemma}
\newtheorem{prop}[thm]{Proposition}
\theoremstyle{remark}
\newtheorem{rmk}[thm]{Remark}
\def\th@newremark{\th@remark\thm@headfont{\bfseries}}
\def\bdiamond{\mathop{\mathpalette\bdi@mond\relax}}
\newcommand\bdi@mond[2]{%
	\vcenter{\hbox{\m@th
			\scalebox{\ifx#1\displaystyle 2.6\else1.8\fi}{$#1\diamond$}%
	}}%
}
\def\bDiamond{\mathop{\mathpalette\bDi@mond\relax}}
\newcommand\bDi@mond[2]{%
	\vcenter{\hbox{\m@th
			\scalebox{\ifx#1\displaystyle 2.6\else1.2\fi}{$#1\Diamond$}%
	}}%
}
\definecolor{darkgreen}{rgb}{0.1,0.7,0.1}
\definecolor{darkred}{rgb}{0.7,0.1,0.1}
\definecolor{darkblue}{rgb}{0,0,0.7}
\newcommand{\EE}{\mathbb{E}}     
\newcommand{\NN}{\mathbb{N}}
\newcommand{\PP}{\mathbb{P}}     
\newcommand{\RR}{\mathbb{R}}      
\newcommand{\ZZ}{\mathbb{Z}}      
\newcommand{\aA}{\mathcal{A}}
\newcommand{\cC}{\mathcal{C}}
\newcommand{\dD}{\mathcal{D}}
\newcommand{\lL}{\mathcal{L}}
\newcommand{\mM}{\mathcal{M}}
\newcommand{\oO}{\mathcal{O}}
\newcommand{\pP}{\mathcal{P}}
\newcommand{\qQ}{\mathcal{Q}}
\newcommand{\sS}{\mathcal{S}}
\newcommand{\vV}{\mathcal{V}}
\newcommand{\wW}{\mathcal{W}}
\newcommand{\xX}{\mathcal{X}}
\newcommand{\yY}{\mathcal{Y}}
\newcommand{\fC}{\mathfrak{C}}
\newcommand{\fD}{\mathfrak{D}}
\newcommand{\ff}{\mathfrak{f}}
\newcommand{\cov}{{\operator@font cov}}
\newcommand{\var}{{\operator@font var}}
\newcommand{\corr}{{\operator@font corr}}
\newcommand{\diam}{{\operator@font diam}}
\newcommand{\Av}{{\operator@font Av}}
\newcommand{\trig}{{\operator@font trig}}
\newcommand{\Enh}{{\operator@font Enh}}
\newcommand{\EEnh}{\overline {\operator@font Enh}}
\newcommand{\EC}{{\operator@font EC}}
\newcommand{\E}{\mathbf{E}}
\renewcommand{\r}{\mathbf{r}}
\renewcommand{\t}{\mathbf{t}}
\newcommand{\eps}{\varepsilon}
\newcommand{\md}{\mathrm{d}}
\renewcommand{\d}{\partial}
\newcommand{\dist}{\operatorname{dist}}
\newcommand{\bracket}[1]{\langle #1 \rangle}
\newcommand{\err}{\mathrm{err }}
\title{Interface fluctuations for $1$D stochastic Allen-Cahn equation -- singular regime}
\author{Weijun Xu$^1$ \quad Shuhan Zhou$^2$}
\institute{Beijing International Center for Mathematical Research, Peking University, China. \email{weijunxu@bicmr.pku.edu.cn}
\and Peking University, China. \email{zhoushuhan@stu.pku.edu.cn
}
}
\begin{document}
\maketitle

\begin{abstract}
    We study interface fluctuations for the $1$D stochastic Allen-Cahn equation perturbed by half a spatial derivative of the spacetime white noise. This half derivative makes the solution distribution-valued, so that proper renormalization is needed to make sense of the solution. 

    We show that if the noise is sufficiently small, then an analogue of the classical results by \cite{Fun95,BBDMP98} holds in this singular regime. More precisely, for initial data close to the traveling wave solution of the deterministic equation, under proper long time scaling, the solution still stays close to the family of traveling waves, and the interface location moves according to an approximate diffusion process. There is one interesting difference between our singular regime and the classical situation: even if the solution and its approximate phase separation point are both well defined, the intended diffusion describing the movement of the canonical candidate of the phase point is not (even for fixed $\eps$). Two infinite quantities arise from the derivation of such an SDE, one due to singularity of the noise, and the other from renormalization. Magically, it turns out that they cancel out each other, thus making the derivation of the interface SDE valid in the $\eps \rightarrow 0$ limit. 
\end{abstract}

\setcounter{tocdepth}{2}
\tableofcontents

\section{Introduction}

In this article, we study the sharp interface limit problem (as $\eps \rightarrow 0$) for the following formal 1D Wick ordered stochastic Allen-Cahn equation:
\begin{equation}\label{e:main_eqn}
    \partial_t u_\eps =  \Delta u_\eps + u_\eps -  u_\eps^{\diamond (2n+1)}  + \eps^{\gamma} a_\eps \sqrt{\fD}\dot{W}\;.
\end{equation}
Here, $\gamma > \frac{1}{2}$ is a fixed parameter, $a_\eps = a(\sqrt{\eps} \cdot)$ for some smooth function $a$ with compact support in $(-1,1)$, $\dot{W}$ is the one-dimensional spacetime white noise (time derivative of the $\lL^2$ cylindrical Wiener process $W$), $\fD = \frac{|\d_x|}{2\pi}$ is the differential operator normalized in Fourier space (specified in \eqref{e:half_derivative} below), and $(\cdot)^{\diamond (2n+1)}$ is the $(2n+1)$-th Wick product induced by the Gaussian structure of the linearized equation with an additional mass $\mu>0$. Hence, the solution $u_\eps$ itself depends on the choice of $\mu>0$. The necessity of the Wick ordering comes from the fact that the additional $\sqrt{\fD}$ operation on the spacetime white noise makes the solution distribution valued, and hence proper renormalization is needed to make sense of the solution. The rigorous meaning of \eqref{e:main_eqn} will be given in Section~\ref{sec:renormalization} below. 

The choice of the cutoff on the interval of length $\oO(1/\sqrt{\eps})$ is to be consistent with the set up in \cite{Fun95}. Indeed, by a change of parameter, one can make the cutoff to be $a \big( \eps^\theta \cdot \big)$ for every $\theta > 0$. 

The Allen-Cahn equation is a popular model to study dynamical phase separation. Let $m:\RR\rightarrow[-1,1]$ be the unique increasing solution of the steady-state problem
\begin{equation}\label{e:m}
    \Delta m + m - m^{2n+1} = 0\;, \qquad m(\pm \infty) = \pm 1\;, \quad m(0)=0\;.
\end{equation}
The solution space for \eqref{e:m} without the restriction $m(0) = 0$ is the one dimensional manifold
\begin{equation*} 
    \mM:=\{m(\cdot-\theta)\,: \; \; \theta \in \RR\}\;.
\end{equation*}
If there is no noise in the equation \eqref{e:main_eqn} (that is, $\gamma=+\infty$), solutions starting close enough to $\mM$ will converge to $\mM$ as $t \rightarrow +\infty$. In particular, solutions starting on $\mM$ do not evolve with time and stay there forever. 

The stochastic perturbation with a small additive spacetime white noise (that is, \eqref{e:main_eqn} with $\sqrt{\fD}$ removed) has also been well understood (\cite{Fun95, BDMP95, BBDMP98, XZZ24}). The general result is that under proper long time (polynomial in $\eps$), the solution is $u_\eps$ is close to $m_{\xi_t^\eps / \sqrt{\eps}}$, where $\xi_t^\eps$ moves approximately according to a diffusion process. The main result of this article is that such a phenomenon still holds in the singular regime \eqref{e:main_eqn} if the solution is properly renormalized. We state our main theorem below.

\begin{thm}
\label{thm:main}
    Fix $\gamma>\frac{1}{2}$ and $\kappa \in (0,\gamma)$ arbitrary. Let $\xi_0\in \RR$, and $u_{\eps}$ be the solution to \eqref{e:main_eqn} with initial data $u_\eps[0] = m_{\xi_0 / \sqrt{\eps}} \in \mM$. Then there exists a process $\xi^{\eps}_t\in \cC(\RR^+,\RR)$ satisfying both of the following:
    \begin{enumerate}
        \item For every $T>0$ and $N>0$, we have
        \begin{equation} \label{e:closeness_process}
        \PP\left(\sup_{t \in [0,T]} \|u_{\eps}[\eps^{-2\gamma-1}t] - m_{\xi^{\eps}_t/\sqrt{\eps}}\|_{\cC^{-\kappa}} > \eps^{\gamma -\kappa}\right) \lesssim \eps^N
        \end{equation}
        for all $\eps\in[0,1]$. 

        \item For every $T>0$, $\xi^{\eps}_t$ converges weakly on $\cC([0,T],\RR)$ to a limiting process $\xi$, which satisfies the It\^o SDE
        \begin{equation} \label{e:limit_sde}
            \md \xi_t = \alpha_1 a(\xi_t)\,\md B_t + \alpha_2 a(\xi_t)a'(\xi_t) \,\md t
        \end{equation}
        with initial data $\xi_0$. Here $B_t$ is the standard Brownian motion, and $\alpha_1$, $\alpha_2$ are explicit constants specified in \eqref{e:alpha1} and \eqref{e:alpha2} below. 
    \end{enumerate}
\end{thm}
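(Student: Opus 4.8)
The plan is to follow the classical Funaki/Brassesco--De Masi--Presutti strategy, adapted to the singular (distribution-valued) setting by first renormalizing. The starting point is the mild/renormalized formulation of \eqref{e:main_eqn}: write $u_\eps = v_\eps + Z_\eps$, where $Z_\eps$ is the stationary (massive) stochastic convolution with $\sqrt{\fD}\dot W$, living in $\cC^{-\kappa}$, and $v_\eps$ solves a random PDE with Wick powers expanded into products of $v_\eps$ and explicitly renormalized polynomials of $Z_\eps$. The first block of work is purely deterministic-dynamical: show that, for initial data on $\mM$ and on time scales up to $\eps^{-2\gamma-1}T$, the solution remains in a small $\cC^{-\kappa}$-tube around the manifold $\{m_\theta\}$. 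This is done by introducing coordinates $(\xi^\eps, w^\eps)$ near the manifold --- $\xi^\eps$ is the projection onto the one-dimensional manifold parameter, and $w^\eps$ is the transverse fluctuation --- and proving a spectral-gap/contraction estimate for the linearized Allen--Cahn operator $\Delta + 1 - (2n+1)m^{2n}$ acting on the orthogonal complement of $m'$ (its zero eigenfunction). Combined with Gaussian tail bounds on the relevant renormalized noise functionals (stretched-exponential/hypercontractivity estimates, so that \eqref{e:closeness_process} holds with error $\eps^N$ for every $N$), one obtains the a priori confinement statement (1). Here the smallness $\gamma > 1/2$ is used crucially to beat the growth of the nonlinearity against the noise size over the long time horizon.

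Granting confinement, the second block derives the effective SDE for $\xi^\eps$. Differentiating the defining relation $\langle u_\eps[\eps^{-2\gamma-1}t] - m_{\xi^\eps_t/\sqrt\eps},\, m'_{\xi^\eps_t/\sqrt\eps}\rangle = 0$ (or whatever orthogonality defines the projection) in $t$, using the equation and It\^o's formula, yields
\begin{equation*}
    \md \xi^\eps_t = (\text{martingale part, driven by the noise projected onto } m') + (\text{drift from curvature/It\^o correction}) + (\text{remainder}).
\end{equation*}
The quadratic variation of the martingale part is (after the time change) of the form $\eps^{2\gamma}a_\eps^2 \langle \sqrt{\fD}\,\text{(kernel)}, \cdots\rangle$ rescaled, and one shows it converges to $\alpha_1^2 a(\xi_t)^2\,\md t$; the drift, which comes from the $w^\eps$-dependence and the spatial inhomogeneity $a_\eps' $, converges to $\alpha_2 a(\xi_t)a'(\xi_t)\,\md t$. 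The transverse component $w^\eps$ is shown to be negligible (of lower order) on the relevant timescale via the spectral gap, so that it feeds back into $\xi^\eps$ only through the stated It\^o drift. Tightness of $\xi^\eps$ on $\cC([0,T],\RR)$ follows from Kolmogorov's criterion applied to these increments, and identification of the limit is via the martingale problem for \eqref{e:limit_sde}; uniqueness of that SDE (smooth compactly supported coefficients) gives the full weak convergence.

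The step I expect to be the genuine obstacle --- and the one flagged as novel in the abstract --- is the computation of the drift constant $\alpha_2$, equivalently showing that the SDE makes sense at all in the limit. When one writes down the It\^o drift for the \emph{canonical} phase point, two divergent contributions appear: one is a divergence coming from the singularity of $\sqrt{\fD}\dot W$ (roughly $\langle \sqrt{\fD}\delta, \sqrt{\fD}\delta\rangle$-type terms, formally infinite because $m'$ is not in the form domain of $\fD$ in the way needed), and the other is the renormalization constant $\mu$-dependent divergence coming from the Wick ordering of $u_\eps^{\diamond(2n+1)}$ when expanded around $m$. The crux is to track both constants through the time change and the projection with enough precision to see that they are equal with opposite sign, so their sum is a finite number independent of $\mu$; this requires carefully pairing the renormalization counterterm (which is $\langle (2n+1)(2n)\cdots m^{2n-1}, \text{Wick constant}\times m'\rangle$-like) against the noise-singularity term and a Fourier-side cancellation. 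Everything else --- the confinement bound, tightness, martingale-problem identification --- is an adaptation of known techniques, modulo bookkeeping of the renormalized noise terms in Besov/$\cC^{-\kappa}$ spaces; the cancellation of the two infinities, and extracting the finite remainder that defines $\alpha_2$ (with $\alpha_1$ the comparatively straightforward quadratic-variation constant), is where the real work lies.
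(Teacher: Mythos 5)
Your overall skeleton (confinement near $\mM$, then an effective SDE for the phase point, then tightness and a martingale-problem identification) matches the paper, but the proposal is missing the one device that makes the SDE derivation possible at all, and it gets the nature of the cancellation partly wrong. First, the coordinate: the paper does not differentiate an orthogonality relation $\bracket{u_\eps - m_\theta, m'_\theta}=0$. It uses the nonlinear functional $\zeta$ of the deterministic flow, precisely because of the exact identity \eqref{e:zeta_magical_cancellation}, $\bracket{D\zeta(v),\Delta v+f_n(v)}=0$, which annihilates the $\eps^{-2\gamma-1}$-sized deterministic drift identically. With the linear (Fermi) coordinate you propose, that huge term does not vanish and none of the estimates you invoke control it; this is not a cosmetic choice. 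Second, and more importantly, even with $\zeta$ the It\^o drift for the canonical phase point $\sqrt{\eps}\,\zeta(v_\eps)$ is infinite for every fixed $\eps$: the $b_1$ term is infinite because $v_\eps^{2n+1}-v_\eps^{\diamond(2n+1)}$ carries the renormalization, and the It\^o-correction term $b_2$ is infinite because the cross half-derivative $\sqrt{\fD}^{\otimes2}D^2\zeta$ does not exist on the diagonal (the divergence sits in $D^2\zeta$, not in $m'$ or in the martingale part --- the quadratic-variation constant $\alpha_1=\|\sqrt{\fD}D\zeta(m)\|_{\lL^2}$ is finite). So "writing down the It\^o drift and tracking the constants" is exactly what cannot be done. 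The paper's resolution is to define the interface through the mollified solution $v_\eps^{(\delta)}=\qQ_\delta v_\eps$ with the regularization scale slaved to $\eps$ as in \eqref{e:parameters_dependence}, apply It\^o's formula in the separable space $\wW^{\kappa',p}$ to get \eqref{e:Ito}, and then show that the two $|\log\delta|$ divergences --- one in $C^{(\delta)}$ from the Wick counterterm in $b_{1,\eps}^{(\delta)}$, the other in $\int\sqrt{\fD}^{\otimes2}\big(\qQ_\delta^{\otimes2}D^2\zeta(m)\cdot(y_1+y_2)\big)(y,y)\,\md y$ in $b_{2,\eps}^{(\delta)}$ --- carry the same constant $n(2n+1)C_0^*/(2\pi^2)$ with opposite signs. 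Your sketch contains no regularization scheme and no mechanism for identifying these two coefficients, which is the actual content of the theorem.

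Two further inaccuracies: the finite limit is \emph{not} independent of $\mu$ --- the solution itself depends on $\mu$ through the Wick ordering, and $\alpha_2$ in \eqref{e:alpha2} contains $-\log\mu/(8\pi^2)$ and blows up as $\mu\to0$; what cancels is the $|\log\delta|$ (regularization) divergence, not the $\mu$-dependence. And the restriction $\gamma>\frac12$ is not what makes the confinement statement work (Theorem~\ref{thm:Ckappacloseness} holds for all $\gamma>0$); it is used to kill lower-order error terms in the drift estimates of the SDE derivation.
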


An intermediate step towards proving Theorem~\ref{thm:main} is to show $u_\eps[t]$ being close to the stable manifold $\mM$ for arbitrary polynomially long times. We single it out as the following statement.

\begin{thm} \label{thm:Ckappacloseness}
    Fix $\gamma>0$, $\kappa'>0$ and $\nu\in (0,\gamma)$ arbitrary. There exists $p^*\geq 1$ such that for every $p\in [p^*,+\infty]$ and $N ,N'> 0$, the solution $u_\eps$ to \eqref{e:main_eqn} with initial data $u_\eps[0] \in \mM$ satisfies
    \begin{equation*}
        \PP \; \Big( \sup \limits_{t \in [0, \eps^{-N}]} \dist_{\wW^{-\kappa',p}}(u_\eps[t], \mM) > \eps^{\gamma- \nu} \Big) \lesssim \eps^{N'}\;
    \end{equation*}
    for all $\eps \in[0,1]$. Here, $\dist_{\wW^{-\kappa',p}}(v,\mM)$ denotes the $\wW^{-\kappa',p}$ distance of $v$ from $\mM$ as defined in \eqref{e:distance_Ckappa}. 
\end{thm}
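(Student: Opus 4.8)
The plan is to combine the Da Prato--Debussche decomposition with the classical slow-manifold stability argument for the deterministic Allen--Cahn flow. By the construction in Section~\ref{sec:renormalization} we may write $u_\eps = Z_\eps + v_\eps$, where $Z_\eps$ is the stochastic convolution for the linear part (carrying the mass $\mu$ that defines the Wick ordering), and $v_\eps$ solves the \emph{random} PDE
\begin{equation*}
  \partial_t v_\eps = \Delta v_\eps + v_\eps - v_\eps^{2n+1} + R_\eps\,,\qquad v_\eps[0] = u_\eps[0]\in\mM\,,
\end{equation*}
in which $R_\eps$ is a linear combination of $Z_\eps$ and of the products $Z_\eps^{\diamond k}\,v_\eps^{2n+1-k}$, $1\le k\le 2n+1$. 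Thus $v_\eps$ is function-valued (of positive spatial regularity, say $\cC^{2-\kappa}$) and solves a small perturbation of the deterministic equation; the only use of the hypothesis is that $\eps^\gamma\to 0$.

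\textbf{Step 1: probabilistic input.} Writing $Z_\eps=\eps^\gamma\widehat Z_\eps$, the renormalized powers $\widehat Z_\eps^{\diamond k}$ are the stochastic objects whose moments of every order are controlled in $\cC^{-\kappa}$, uniformly in time, by Section~\ref{sec:renormalization}. A union bound over the $\lesssim\eps^{-N}$ unit subintervals of $[0,\eps^{-N}]$, with a Kolmogorov/Besov estimate inside each, produces a good event $\Omega_\eps$ with $\PP(\Omega_\eps^c)\lesssim\eps^{N'}$ on which $\sup_{t\le\eps^{-N}}\|Z_\eps^{\diamond k}[t]\|_{\cC^{-\kappa}}\lesssim\eps^{k\gamma}(\log\tfrac1\eps)^{C}$ for all $k$ --- the moment order needed is independent of $\gamma$, and the growth of $\mathrm{supp}\,a_\eps$ on scale $\eps^{-1/2}$ costs only the logarithm. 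Consequently, on $\Omega_\eps$ and as long as $v_\eps$ stays bounded, $\|R_\eps\|_{\cC^{-\kappa}}\lesssim\eps^{\gamma}(\log\tfrac1\eps)^C$.

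\textbf{Step 2: deterministic stability of $\mM$.} On $\Omega_\eps$ one runs the classical slow-manifold argument. Parabolic smoothing against the $\cC^{-\kappa}$ forcing keeps $v_\eps[t]$ of regularity $>\kappa$ (so all products make sense and the a priori bound propagates), and near $\mM$ one writes $v_\eps=m_{\xi_t}+\rho_t$ with a constraint decoupling $\rho_t$ from $\ker L_{\xi_t}$, $L_{\xi_t}:=\Delta+1-(2n+1)m_{\xi_t}^{2n}$ (e.g.\ $\langle\rho_t,m_{\xi_t}'\rangle=0$). The spectral gap of $L_{\xi_t}$ on $\spann\{m_{\xi_t}'\}^\perp$ gives, in the working norm,
\begin{equation*}
  |\dot\xi_t|\lesssim\|\rho_t\|^2+\|R_\eps\|\,,\qquad \tfrac{d}{dt}\|\rho_t\|\le -c\|\rho_t\|+C\|\rho_t\|^2+C\|R_\eps\|\,,\qquad\rho_0=0\,.
\end{equation*}
A continuation argument stopping at $\tau_\eps:=\inf\{t:\|\rho_t\|\ge\eps^{\gamma-\nu}\}$ then forces $\|\rho_t\|\lesssim\eps^{\gamma}(\log\tfrac1\eps)^C<\eps^{\gamma-\nu}$ for $t<\tau_\eps$ and $\eps$ small, so $\tau_\eps>\eps^{-N}$ on $\Omega_\eps$; the drift of $\xi_t$ itself is irrelevant for closeness to $\mM$. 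Converting $\|Z_\eps[t]\|_{\cC^{-\kappa}}+\|\rho_t\|$ into the $\wW^{-\kappa',p}$-distance of \eqref{e:distance_Ckappa} via Besov/Sobolev embeddings --- legitimate precisely for $p\ge p^*$, for which the $L^p$-over-a-domain-of-size-$\eps^{-1/2}$ loss is absorbed into the $\nu$-slack --- finishes the proof.

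\textbf{Main obstacle.} The long-time aspect is benign: the spectral gap yields exponential (not diffusive) relaxation to $\mM$, so no error accumulates and every estimate above is uniform in the horizon $\eps^{-N}$. The genuine work is to keep the rough objects $Z_\eps^{\diamond k}\in\cC^{-\kappa}$ and the nonlinearity mutually consistent --- fixing parabolic spaces in which $v_\eps$ has regularity strictly above $\kappa$, propagating that bound, and performing the $(\xi,\rho)$-splitting in a norm simultaneously strong enough to see the spectral gap of $L_{\xi_t}$, weak enough to pair with $\cC^{-\kappa}$, and comparable to $\wW^{-\kappa',p}$. A secondary point is the passage from pointwise Gaussian moment bounds to spatial-supremum bounds over the growing support of $a_\eps$, which is where the (harmless) logarithms enter.
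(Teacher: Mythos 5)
Your overall architecture is the same as the paper's: decompose $u_\eps = X_\eps + w_\eps$ (Da Prato--Debussche), prove high-probability bounds on the Wick powers uniformly over $[0,\eps^{-N}]$ (the paper's Lemma~\ref{lem:smallnessOfLinearSolution}), and then run a deterministic slow-manifold stability argument with a stopping-time/continuation step for the remainder equation (the paper's Lemma~\ref{lem:PerturbationVersion}), iterated over the polynomial horizon. Your moving-phase $(\xi_t,\rho_t)$ splitting versus the paper's fixed-phase-per-window Duhamel iteration with $e^{-t\aA}-\pP$ is a cosmetic variation; also note the differential inequality $\frac{d}{dt}\|\rho_t\|\le -c\|\rho_t\|+\dots$ cannot be taken literally with a $\cC^{-\kappa}$ forcing and must be replaced by the mild formulation with smoothing weights $(t-s)^{-(\kappa'+\eta)/2}$, which is exactly what the paper does.

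The one step that, as written, does not go through is the final conversion of your $\lL^\infty$/H\"older-type bounds into the $\wW^{-\kappa',p}$ distance for finite $p$. Your justification ("the $\lL^p$-over-a-domain-of-size-$\eps^{-1/2}$ loss is absorbed into the $\nu$-slack") presupposes that the relevant objects are localized on an interval of length $\eps^{-1/2}$, and that is false: the stationary linear solution $X_\eps$ and its Wick powers are not supported there (the paper only proves pointwise decay like $|x|^{-1/2}$ outside, Lemma~\ref{lem:linear_Ex_decay}, and uses it in the remark following Lemma~\ref{lem:linear_final_approx} to do this conversion for the stochastic objects), and, more seriously, the deterministic deviation $\rho_t$ is controlled by you only in sup-type norms on all of $\RR$, with no decay information at all; an $\lL^\infty$ bound on $\RR$ gives no $\lL^p$ or $\wW^{-\kappa',p}$ smallness whatsoever, and over times of order $\eps^{-N}$ the forced region's influence spreads, so localization of $\rho_t$ would itself require an additional weighted estimate you have not provided. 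The paper sidesteps this entirely by running the whole stability argument (initial data, forcing, Duhamel bounds, Fermi coordinate of Lemma~\ref{lem:fermi}) in $\wW^{-\kappa',p}$ and $\wW^{\eta,p}$ from the start, so the global integrability is propagated rather than recovered at the end; your Step 2 should be reformulated in those spaces (your framework accommodates this with no new ideas) rather than patched by an embedding that does not exist on the whole line.
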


\begin{rmk}
The reason we use $\wW^{-\kappa',p}$ for $p<+\infty$ in Theorem~\ref{thm:Ckappacloseness} instead of $\cC^{-\kappa}$ as in Theorem~\ref{thm:main} is that it is a separable space that allows us to apply It\^o's formula. Once It\^o's formula has been used to write down the (approximate) SDE \eqref{e:Ito}, we switch back to $\lL^\infty$-based spaces which is convenient for most of the analysis regarding the deterministic Allen-Cahn flow. We refer to Section~\ref{sec:SDE} for more precise discussions. 
\end{rmk}

Assuming the validity of Theorem~\ref{thm:Ckappacloseness}, the proof for Theorem~\ref{thm:main} follows closely the functional analytic framework developed in \cite{Fun95}. One first identifies a natural candidate for the ``separation point process" $\xi_t^\eps$, derives an SDE for it, and then shows that the SDE converges to a limit (in law) as $\eps \rightarrow 0$. 

The description of the SDE (for fixed $\eps$) involves Fr\'echet derivatives of the functional $\zeta$ from the deterministic Allen-Cahn flow (defined in Proposition~\ref{pr:LinftyEC} below). In the classical situation where the stochastic perturbation in \eqref{e:main_eqn} is given by the spacetime white noise (that is, without the operator $\sqrt{\fD}$), the SDE is well defined for every fixed $\eps$. It was shown in \cite{Fun95} that if $\gamma$ is sufficiently large, then the drift and martingale terms in the SDE converge to well-defined limits and hence the interface process $\xi_t^\eps$ also converges to a limiting diffusion. 

However, in our situation (the stochastic perturbation is half a derivative worse than the spacetime white noise, making renormalization necessary), the intended SDE turns out to be a priori ill-defined for any fixed $\eps>0$! When one tries to formally write down the SDE for fixed $\eps$, there are two sources for the drift term -- one from the drift term in the PDE \eqref{e:main_eqn} for $u_\eps$, and the other from the quadratic variation process of the martingale part in \eqref{e:main_eqn}. One can show that both of these quantities are infinity. 

The key observation of the current work is that these two infinite quantities magically cancel out each other, and that their sum converges to a well-defined limit as $\eps \rightarrow 0$. To rigorously justify such cancellation and convergence, we regularize the solution at scale $\delta>0$ with a specific dependence on the original parameter $\eps$, and carefully analyze the asymptotic behavior of various terms as $(\eps, \delta) \rightarrow (0,0)$ in this specific way. This is carried out in Section~\ref{sec:convergence}.

\begin{rmk}
    Theorem~\ref{thm:main} is expected to hold for all $\gamma>0$. In the classical situation when the stochastic perturbation in \eqref{e:main_eqn} is given by the spacetime white noise (without $\sqrt{\fD}$ operation), \cite{Fun95} developed a functional framework and proved such a result under the restriction $\gamma > 5$. The main reason for this restriction is to provide a sufficiently large power of $\eps$ to kill some potentially divergent terms from the derivation of the SDE. Later, \cite{BBDMP98} proved the same result for all $\gamma > 0$ with a completely different method based on probabilistic couplings. Very recently, \cite{XZZ24} showed that the full $\gamma>0$ regime can also be covered with the functional analytic approach in \cite{Fun95} by introducing additional functional correctors to cancel out the potentially divergent terms for the SDE. 

    In our situation, the reason we assume $\gamma>\frac{1}{2}$ is the same for the restriction $\gamma>5$ as in \cite{Fun95} -- there are terms from the SDEs which vanishes for $\gamma>\frac{1}{2}$ (but can be potentially divergent if $\gamma<\frac{1}{2}$). It is possible to remove this restriction by introducing additional correctors as in \cite{XZZ24}, but such a procedure will be technically involved. Hence, we leave the statement of the main theorem with the restriction $\gamma>\frac{1}{2}$. 
\end{rmk}

\begin{rmk}
    One may also consider the more general situation where $\sqrt{\fD}$ is replaced by $\fD^{\sigma}$ for $\sigma > 0$. When $0<\sigma <\frac{1}{2}$, the equation is classically well-posed and one does not need to renormalize it. In this case, a similar result as Theorem~\ref{thm:main} can be proven by the methods in \cite{Fun95, XZZ24}. The coefficients $\alpha_1$ and $\alpha_2$ in the limiting SDE will depend on $\sigma$, and one expects $\alpha_2\to\infty$ as $\sigma\to\frac{1}{2}$. On the other hand, the arguments in the current article also apply to the case where $\sigma$ is slightly larger than $\frac{1}{2}$ (with logarithmic divergence worsen to a small power divergence). But we restrict ourselves to $\sigma=\frac{1}{2}$ to avoid notations on various exponents and keep technicalities clean. 
\end{rmk}

\subsection*{Organization of the article}

The rest of the article is organized as follows. In Section~\ref{sec:deterministic_preliminaries}, we provide some well-known facts on the deterministic Allen-Cahn flow and the solution theory for \eqref{e:main_eqn}. In Section~\ref{sec:closeness}, we prove Theorem~\ref{thm:Ckappacloseness} -- showing that the solution $u_\eps$ to \eqref{e:main_eqn} starting from the stable manifold $\mM$ remains close to $\mM$ in arbitrarily polynomially long time. Section~\ref{sec:convergence} is devoted to the proof of Theorem~\ref{thm:main}. As mentioned above, we carry out a regularization procedure and carefully study the asymptotic behavior of various quantities, including the precise cancellation effects and convergence properties. The analysis relies on various properties and bounds of the Fr\'echet derivatives of the functional $\zeta$ from the deterministic Allen-Cahn flow. These ingredients are listed and proved in Section~\ref{sec:deterministic}.

\subsection*{Notations}

We now introduce frequently used notations in this article.

\begin{flushleft}
\textbf{Deterministic Allen-Cahn flow}
\end{flushleft}

Throughout, we fix the integer $n \geq 1$, and write
\begin{equation*}
    f_n(v) := v - v^{2n+1}\;.
\end{equation*}
Let $F^t$ denote the deterministic Allen-Cahn flow in the sense that $F^t(v)$ satisfies the equation
\begin{equation} \label{e:deterministic_flow}
    \d_t F^t(v) = \Delta F^t(v) + f_n \big( F^t(v) \big)\;, \qquad F^0(v) = v\;.
\end{equation}
We use $m$ to denote the unique increasing solution to the stationary equation \eqref{e:m} subject to $\pm 1$ boundary conditions and centering condition $m(0)=0$. Let $\mM$ be the manifold of translations of $m$, which consists of all the solutions to \eqref{e:m} subject to $\pm 1$ boundary conditions. We also write $m_\theta = m(\cdot - \theta)$. Let
\begin{equation}\label{e:distance_Ckappa}
    \dist_{\wW^{\nu,p}}(v, \mM) := \inf_{\theta} \|v - m_\theta\|_{\wW^{\nu,p}}
\end{equation}
and 
\begin{equation*}
    \dist (v, \mM) =\dist_{\lL^\infty}(v,\mM):= \inf_{\theta} \|v - m_\theta\|_{\lL^\infty}
\end{equation*}
be the  $\wW^{\nu,p}$ and $\lL^\infty$-distance of $v$ from $\mM$. For $r > 0$, we write
\begin{equation*}
    \vV_{r} = \{ v: \dist(v, \mM)< r \}\;.
\end{equation*}
Let $\beta>0$ be such that Proposition~\ref{pr:LinftyEC} holds. It in particular implies the existence of a functional $\zeta: \vV_\beta \rightarrow \RR$ such that $F^t(v) \rightarrow m_{\zeta(v)}$ exponentially fast in $\lL^\infty$ uniformly over $v \in \vV_\beta$. This functional $\zeta$ will play an essential role throughout the article. We write
\begin{equation*}
    \vV_{\beta,0} = \big\{ v \in \vV_\beta: \zeta(v)=0 \big\}\;.
\end{equation*}
Another special role is played by the generator of the linearized operator of $F^t$ at $v=m$, which we denote by (with a flipped sign)
\begin{equation} \label{e:op_A}
    \aA = -\Delta - f_n'(m)=-\Delta -1 + (2n+1)m^{2n}\;.
\end{equation}
Let $\pP$ denote the $\lL^2$ projection onto the one dimensional space spanned by $m'$ in the sense that 
\begin{equation} \label{e:projection}
    \pP v := \frac{\bracket{v, m'}}{\|m'\|_{\lL^2}} \, m'\;.
\end{equation}
Since $m'\in\sS$, $\pP$ extends to all functions in $\sS'$.

\begin{flushleft}
\textbf{Hermite polynomial and Wick product}
\end{flushleft}

Let $H_k(\cdot)$ be the $k$-th standard Hermite polynomial with leading coefficient $1$, that is, $H_0 = 1$, $H_1(x) = x$, $H_2(x) = x^2 - 1$, etc. For $\sigma>0$, define the $k$-th Hermite polynomial with variance $\sigma^2$ by
\begin{equation} \label{e:Hermite}
    H_k (x; \sigma^2) := \sigma^k H_k (x / \sigma)\;.
\end{equation}
Throughout, we fix $\mu > 0$ and let $X_\eps$ be the stationary solution to the linearized equation with the additional mass $\mu>0$ (defined in \eqref{e:linear_solution}). We use $\diamond$ to denote the Wick product with respect to the Gaussian structure induced by $X_\eps$. More precisely, $X_\eps^{\diamond k}$ is the $k$-th Wick product of $X_\eps$ (defined in Lemma~\ref{le:convergence_Wick_power}), and
\begin{equation} \label{e:Wick}
    F^{\diamond N} := \sum_{k=0}^{N} \begin{pmatrix} N \\ k \end{pmatrix} X_\eps^{\diamond (N-k)} (F - X_\eps)^{k}
\end{equation}
for general process $F$.

\begin{flushleft}
\textbf{Others}
\end{flushleft}

Throughout, we fix $a \in \cC^\infty(\RR)$ with compact support in $(-1,1)$, and write $a_\eps := a(\sqrt{\eps} \cdot)$ for the cutoff in \eqref{e:main_eqn}. For $x \in \RR$, we write
\begin{equation*}
    \bracket{x} := \sqrt{1 +x^2}\;.
\end{equation*}
For a Schwartz function/distribution $\varphi$ on $\RR$, let $\widehat{\varphi}$ denote its Fourier transform in the sense that
\begin{equation*}
    \widehat{\varphi}(\theta) := \int_{\RR} \varphi(x) e^{-2\pi i \theta x} \,\md x\;.
\end{equation*}
The operation $\sqrt{\fD}$ is the Fourier multiplier
\begin{equation} \label{e:half_derivative}
    \widehat{\sqrt{\fD}\varphi}(\theta) := |\theta|^\frac{1}{2} \widehat{\varphi}(\theta)\;.
\end{equation}
Let
\begin{equation} \label{e:heat_kernel}
    q_t (x) := \frac{1}{\sqrt{4 \pi t}} e^{-\frac{|x|^2}{4t}}
\end{equation}
be the standard heat kernel on $\RR$. Let $\qQ_\delta$ be the convolution by $q_{\delta^2}$ in the sense that
\begin{equation*}
    \qQ_\delta \varphi := q_{\delta^2} * \varphi
\end{equation*}
for functions $\varphi$ on $\RR$. We use $\sqrt{\fD}^{\otimes 2}$ and $\qQ_\delta^{\otimes 2}$ be the operations on functions with two variables such that
\begin{equation} \label{e:cross_fractional_derivative}
    (\sqrt{\fD}^{\otimes 2} \varphi)(x_1, x_2) := \sqrt{\fD_{x_1}} \sqrt{\fD_{x_2}} \varphi\;,
\end{equation}
and
\begin{equation} \label{e:cross_heat_mollification}
    (\qQ_{\delta}^{\otimes 2} \varphi)(x_1, x_2) := \iint\limits_{\RR^2} q_{\delta^2}(x_1 - y_1) q_{\delta^2} (x_2 - y_2) \varphi(y_1, y_2) \,\md y_1 \,\md y_2\;.
\end{equation}
For $\alpha \in (0,1)$, $\cC^\alpha$ denotes the usual H\"older space with norm
\begin{equation*}
    \|f\|_{\cC^\alpha} := \|f\|_{\lL^\infty} + \sup_{x \neq y} \frac{|f(x) - f(y)|}{|x-y|^\alpha}\;.
\end{equation*}
The norm $\cC^{-\alpha}$ (for $\alpha \in (0,1)$) is defined as
\begin{equation*}
    \|\varphi\|_{\cC^{-\alpha}} := \sup_{x \in \RR} \sup_{\lambda \in (0,1)} \sup_{\psi: \|\psi\|_{\cC^\alpha} \leq 1} \lambda^\alpha|\bracket{\varphi, \psi_x^\lambda}|\;,
\end{equation*}
where $\psi_x^\lambda := \lambda^{-1} \psi \big( \frac{\cdot - x}{\lambda} \big)$. Also, for $\alpha \in [0,1)$ and $p \in [1, +\infty)$, $\wW^{\alpha,p}$ denotes the standard Sobolev space (with $\wW^{0,p} = \lL^p$). For $\alpha \in (0,1)$ and $p \in (1,+\infty)$, $\wW^{-\alpha,p}$ is the dual of $\wW^{\alpha, p'}$ with $p'$ being the conjugate of $p$. We also write $\wW^{\alpha,\infty} = \cC^\alpha$
for $\alpha \in (-1,1) \setminus \{0\}$ for simplicity. We have the standard embedding
\begin{equation*}
    \wW^{\alpha, p}(\RR) \hookrightarrow \cC^{\alpha'}(\RR)
\end{equation*}
for $\alpha, \alpha' \in \RR$ with $\alpha' < \alpha - \frac{1}{p}$. 

For $\Phi: \xX \rightarrow \yY$ defined on a Banach space $\xX$, we use $D^n \Phi$ to denote its $n$-th Fr\'echet derivative. We use $\bracket{F, G}$ to denote the usual pairing between $G \in \xX$ and $F \in \xX^*$. In most situations it is clear what the relevant spaces $\xX$ and $\xX^*$ are, so we omit them in the notation. 

Throughout, $\gamma>\frac{1}{2}$ is a fixed parameter, and $\beta>0$ is also fixed (such that Proposition~\ref{pr:LinftyEC} holds). We also fix small exponents $0 < \kappa' \ll \kappa \ll \eta \ll (\gamma \wedge 1)$ such that the following relations hold:
\begin{equation} \label{e:exponents_dependence}
    \gamma - \kappa > \frac{1}{2}\;, \quad 4 \kappa < \eta := \frac{1}{100n}\;, \quad \kappa' + 2000 n (\gamma+1) \kappa' < \kappa\;.
\end{equation}
Recall that $\eps>0$ is a parameter in the original equation that will be sent to $0$. Let $\delta >0$ be another parameter (used as regularization in Section~\ref{sec:convergence} below) such that
\begin{equation} \label{e:parameters_dependence}
    \delta = \delta (\eps) = \eps^{1000n(\gamma+1)}\;.
\end{equation}
Note that the deterministic bounds in Section~\ref{sec:deterministic} are uniform in all $(\eps, \delta) \in (0,\frac{1}{2})^2$, but the convergence in Section~\ref{sec:convergence} is based on the joint limit $(\eps,\delta) \rightarrow (0,0)$ along the above relation. 

Finally, we use $c$, $C$ to denote generic constants whose values may change from line to line. We also use the notation $A \lesssim B$ to denote $A \leq C \cdot B$ for some constant $C$ {independent of the parameter $\eps$ (and other parameters that are normally clear from the context).

\subsection*{Acknowledgements}

We thank Tadahisa Funaki and Wenhao Zhao for helpful discussions. W. Xu is supported by the Ministry of Science and Technology via the National Key R\&D Program of China (no.2023YFA1010102) and National Science Foundation China via the standard project grant (no.8200906145).

\section{Preliminaries}
\label{sec:deterministic_preliminaries}

\subsection{Deterministic Allen-Cahn flow}

We first recall a well-known fact about the function $m$ defined in \eqref{e:m}. 

\begin{lem} \label{lem:statationary_exponential_decay}
    The function $m$ is smooth, and its derivative is a Schwartz function with
    \begin{equation*}
        |m'(x)|\lesssim e^{-c |x|}
    \end{equation*}
    for some $c>0$. 
\end{lem}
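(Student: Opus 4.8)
The statement is the standard fact that the kink profile $m$ for a double-well potential with polynomial nonlinearity $f_n(v) = v - v^{2n+1}$ is smooth and that $m'$ decays exponentially. The plan is as follows.

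First I would establish smoothness. Since $m$ solves the ODE $m'' = -f_n(m) = m^{2n+1} - m$, which is an autonomous second-order ODE with a polynomial (hence smooth) right-hand side, a bootstrap argument immediately gives $m \in C^\infty(\RR)$: $m \in C^2$ from the equation, then differentiating the equation repeatedly shows $m \in C^k$ for all $k$. (Existence, uniqueness, monotonicity and the fact that $m(\pm\infty) = \pm 1$ are classical and are part of the setup around \eqref{e:m}; I would simply cite them.)

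Next, the exponential decay of $m'$. The cleanest route is the first-integral (energy) identity: multiplying $m'' = m^{2n+1} - m$ by $m'$ and integrating gives $\tfrac12 (m')^2 = W(m) + \text{const}$, where $W(v) = \tfrac{v^{2n+2}}{2n+2} - \tfrac{v^2}{2}$ is a primitive of $v^{2n+1}-v$; matching the boundary behavior $m \to \pm 1$, $m' \to 0$ fixes the constant, so $\tfrac12(m')^2 = W(m) - W(1)$. Since $W$ has nondegenerate minima at $v = \pm 1$ (indeed $W''(\pm 1) = 2n > 0$), a Taylor expansion gives $W(m) - W(1) \sim n\,(1 - |m|)^2$ as $|m| \to 1$, hence $|m'| \asymp \sqrt{n}\,(1 - |m|)$ near $x = \pm\infty$. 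Plugging this back into $m' = \pm\sqrt{2(W(m)-W(1))}$ yields, for $x \to +\infty$, the autonomous equation $\tfrac{d}{dx}(1 - m) \asymp -\sqrt{2n}\,(1-m)$ up to higher-order corrections, from which a Gronwall / comparison argument gives $1 - m(x) \lesssim e^{-cx}$ and therefore $|m'(x)| \lesssim e^{-c|x|}$ for some $c > 0$ (one may take any $c < \sqrt{2n}$); the case $x \to -\infty$ is symmetric. Differentiating the ODE once more, $m'' = (m^{2n+1}-m)$ is then also exponentially decaying, and inductively every derivative of $m'$ decays exponentially, so $m'$ is Schwartz.

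The only mild obstacle is making the asymptotic analysis near the rest points rigorous rather than heuristic: one must control the error terms in the expansion of $W(m) - W(1)$ uniformly, e.g.\ by working on a half-line $[x_0, \infty)$ on which $|m(x)| $ is close enough to $1$ that $W(m) - W(1) \geq n(1-|m|)^2/2$, say, and then running the comparison argument there, absorbing the bounded piece $[0, x_0]$ into the constant. This is entirely routine ODE phase-plane analysis (the point $v = 1$ is a hyperbolic fixed point of the associated first-order system $(m, m')$, and $m$ lies on its stable manifold, which approaches the fixed point at the exponential rate governed by the linearization $\sqrt{W''(1)} = \sqrt{2n}$), so I would present it compactly rather than in full detail.
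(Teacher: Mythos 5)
Your proposal is correct. Note, however, that the paper does not actually prove this lemma: its ``proof'' is a one-line citation to \cite[Section~2]{BCS20}, so there is no argument in the paper to compare against line by line. What you supply is the standard self-contained argument behind such citations: smoothness by bootstrapping the autonomous ODE $m'' = m^{2n+1}-m$, and exponential decay via the first integral $\tfrac12 (m')^2 = W(m) - W(1)$ with $W(v) = \tfrac{v^{2n+2}}{2n+2} - \tfrac{v^2}{2}$, using that $v=\pm1$ are nondegenerate minima of $W$ (equivalently, hyperbolic fixed points of the phase-plane system, with linearized rate $\sqrt{W''(1)}=\sqrt{2n}$), followed by a comparison argument on a half-line where $m$ is close to $\pm1$ and induction on derivatives of the ODE to get the Schwartz property. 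The steps you flag as needing care (that $m'\to 0$ at infinity, that $|m|<1$ and $m'>0$ everywhere so the first integral is positive, and the uniform control of the Taylor remainder on $[x_0,\infty)$) are exactly the routine points one must check, and your outline handles them correctly; the small inconsistency between $\sqrt{n}$ and $\sqrt{2n}$ in the heuristic constant is immaterial since the lemma only asserts some $c>0$.
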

\begin{proof}
See for example \cite[Section~2]{BCS20}. 
\end{proof}

Recall from \eqref{e:op_A} that $-\aA$ is the linearized deterministic flow with respect to the initial data, and from \eqref{e:projection} that $\pP$ is the $\lL^2$ projection onto the one dimensional subspace spanned by $m'$, and extended to all Sobolev functions. The next statement says that $e^{-t \aA}$ behaves like $e^{t\Delta}$ for small $t$, and has exponential decay outside the image of $\pP$ as $t \rightarrow +\infty$.

\begin{prop} \label{prop:LinftySG}
There exists $c>0$ such that for every $\nu_2\geq \nu_1$ and $p\in[1,+\infty]$, we have
    \begin{equation*}
    \|e^{-t \aA}-\pP\|_{\wW^{\nu_1,p} \rightarrow \wW^{\nu_2,p}} \lesssim t^{-\frac{\nu_2 - \nu_1}{2}} e^{-ct}
\end{equation*} 
for all $t \geq 0$.
\end{prop}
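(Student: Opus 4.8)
The plan is to decompose $e^{-t\aA}$ into two regimes: small $t$ (where smoothing dominates) and large $t$ (where spectral gap gives exponential decay), and to handle the projection $\pP$ using the spectral structure of $\aA$. Recall that $\aA = -\Delta - f_n'(m)$, and that $\aA m' = 0$ (differentiating the stationary equation \eqref{e:m}), so $m'$ is the ground state and $\pP$ is the $\lL^2$-orthogonal projection onto $\ker\aA = \spann\{m'\}$. The operator $\aA$ is self-adjoint on $\lL^2$, nonnegative, with $0$ a simple isolated eigenvalue and $\spec(\aA)\setminus\{0\}\subset[\lambda_0,\infty)$ for some spectral gap $\lambda_0>0$ (this is standard: $-f_n'(m)\to 2n>0$ at $\pm\infty$, so the essential spectrum starts at $2n$, and one checks there is exactly one eigenvalue below it, namely $0$). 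Write $\aA = \aA(\one - \pP)$ on the complement, so that $e^{-t\aA}(\one-\pP) = e^{-t\aA\restriction_{(\ker\aA)^\perp}}$ has operator norm on $\lL^2$ bounded by $e^{-\lambda_0 t}$.

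The main steps I would carry out are as follows. First, for $t\in(0,1]$ I would prove the smoothing estimate $\|e^{-t\aA}\|_{\wW^{\nu_1,p}\to\wW^{\nu_2,p}}\lesssim t^{-(\nu_2-\nu_1)/2}$ by writing $e^{-t\aA} = e^{t\Delta}e^{t(\aA+\Delta) \text{-type correction}}$ via the Duhamel/perturbation expansion: since $\aA + \Delta = 1 - (2n+1)m^{2n}$ is a bounded multiplication operator (call it $-b(x)$ with $b\in\cC_b^\infty$), we have $e^{-t\aA} = e^{t\Delta} - \int_0^t e^{(t-s)\Delta} b\, e^{-s\aA}\,\md s$, and the heat semigroup $e^{t\Delta}$ satisfies the claimed $\wW^{\nu_1,p}\to\wW^{\nu_2,p}$ bound with constant $t^{-(\nu_2-\nu_1)/2}$; iterating the Duhamel formula and using that the correction term is lower order (the kernel singularity $(t-s)^{-(\nu_2-\nu_1)/2}$ remains integrable since multiplication by $b$ costs no derivatives) gives the bound on $[0,1]$, with $\|\pP\|$ bounded trivially because $m'\in\sS$. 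Second, for $t\geq 1$ I would split $e^{-t\aA} - \pP = e^{-(t-1)\aA}(\one-\pP)e^{-\aA}$; the factor $e^{-\aA}$ maps $\wW^{\nu_1,p}\to\lL^2$ (using $\nu_1 < $ something, or more robustly mapping into a high-regularity $\wW^{M,2}$ by the $t=1$ smoothing estimate and then into $\lL^\infty\cap\lL^1\subset$ all $\wW^{\nu_1,p}$ nearby — one needs a small technical lemma that $e^{-s\aA}$ for $s$ of order $1$ smooths between all the relevant spaces), then $e^{-(t-1)\aA}(\one-\pP)$ has $\lL^2\to\lL^2$ norm $\lesssim e^{-\lambda_0(t-1)}$ by the spectral gap, and finally one more application of $e^{-\aA}$-type smoothing lifts $\lL^2$ back up to $\wW^{\nu_2,p}$; combining, $\|e^{-t\aA}-\pP\|_{\wW^{\nu_1,p}\to\wW^{\nu_2,p}}\lesssim e^{-\lambda_0(t-1)}\lesssim e^{-ct}$, which dominates $t^{-(\nu_2-\nu_1)/2}e^{-ct}$ for $t\geq 1$. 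Third, I would interpolate/paste the two regimes: for all $t\geq 0$ the bound $t^{-(\nu_2-\nu_1)/2}e^{-ct}$ holds (possibly after shrinking $c$), using the $t\in(0,1]$ bound together with $\|\pP\|\lesssim 1$ for the subtracted term on $(0,1]$, and the $t\geq 1$ bound otherwise.

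The main obstacle I anticipate is the transfer of the exponential decay from the $\lL^2$ setting (where self-adjointness and the spectral gap live) to general $\wW^{\nu_i,p}$ with $p\neq 2$ and negative regularity indices. The clean way around this is exactly the sandwiching trick above: absorb one unit of time at each end into smoothing estimates that move between $\wW^{\nu_1,p}$, $\lL^2$ (or better, $\wW^{M,2}$), and $\wW^{\nu_2,p}$, and only use the spectral gap for the $\lL^2\to\lL^2$ middle factor. One should double-check that $e^{-s\aA}$ for $s\sim 1$ indeed maps $\wW^{\nu_1,p}$ into $\lL^2$ for the (bounded) range of $\nu_1,p$ relevant in the paper — this follows from the $(0,1]$ smoothing estimate composed with Sobolev embedding $\wW^{\nu_2,p}\hookrightarrow\cC^{\nu'}\hookrightarrow\lL^2_{\mathrm{loc}}$, together with $\lL^\infty$-type decay of $e^{-s\aA}$ away from the interface to get genuine $\lL^2(\RR)$ (here again the boundedness of $b$ and the fact that we only need finitely many derivatives make this routine). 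A secondary point to be careful about: the constant $c>0$ in the final bound must be uniform in $\nu_1,\nu_2,p$; this is automatic since it only comes from $\lambda_0$ (the spectral gap) and the at-most-unit-time smoothing factors, none of which depend on these indices.
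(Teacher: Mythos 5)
Your small-time smoothing step (Duhamel off $e^{t\Delta}$ using that $\aA+\Delta$ is multiplication by a bounded function) is fine and is essentially what the paper invokes for the passage to general $\nu_1\leq\nu_2$. The genuine gap is in your large-time "sandwich" $e^{-t\aA}-\pP = e^{-(t-1)\aA}(\one-\pP)e^{-\aA}$, whose two outer factors are supposed to move between $\wW^{\nu_1,p}$, $\lL^2$ and $\wW^{\nu_2,p}$. On the whole line this transfer is false for $p\neq 2$: the semigroup $e^{-s\aA}$ gains regularity but creates no spatial decay, so it does not map $\lL^p(\RR)$ into $\lL^2(\RR)$ for any $p>2$. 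Concretely, for $f\equiv 1$ the function $u(s)=e^{-s\aA}f$ solves $\partial_s u=\Delta u+\big(1-(2n+1)m^{2n}\big)u$ with $1-(2n+1)m^{2n}\geq -2n$, so by comparison $u(s)\geq e^{-2ns}>0$ everywhere and $e^{-\aA}\mathbf 1\notin\lL^2(\RR)$. Your proposed remedy, an "$\lL^\infty$-type decay of $e^{-s\aA}$ away from the interface", is based on a false premise: away from the interface $\aA\approx-\Delta+2n$, whose semigroup decays exponentially in \emph{time} but does not produce decay in $x$. Symmetrically, the final lift fails for $p<2$, since $e^{-\aA}$ does not map $\lL^2(\RR)$ into $\lL^1$-based spaces (no downward gain of integrability). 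So the sandwich only proves the case $p=2$, i.e.\ exactly the case where the spectral gap is already available, and the real content of the proposition --- exponential decay in $\lL^\infty$ (and hence in all $\wW^{\nu,p}$) --- is not reached.

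For comparison, the paper does not attempt this transfer by smoothing alone: it takes the $p=2$ case from \cite[Lemma~3.1]{Fun95}, the $p=+\infty$ case from \cite[Section~2, Eq.~(23)]{BCS20} (whose proof exploits the exponential localization of $m'$ and of the potential $W=(2n+1)(m^{2n}-1)$, not just the $\lL^2$ spectral gap), then gets $p\in[2,\infty]$ by Riesz--Thorin and $p\in[1,2]$ by duality, and finally handles $\nu_1\leq\nu_2$ by the smoothing argument you also give. If you want a self-contained large-time argument, you would need to replace the sandwich by something that uses this localization, e.g.\ a Duhamel expansion of $e^{-t\aA}$ around $e^{-t(-\Delta+2n)}$ in which the exponentially decaying potential $W$ (and the exponentially decaying kernel of $\pP$) converts $\lL^\infty$ inputs into localized functions before the $\lL^2$ spectral gap is applied; as written, your argument does not close for $p\neq 2$.
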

\begin{proof}
We first consider the case $\nu_1 =\nu_2 = 0$. The case for $p=2$ is well-known; see for example \cite[Lemma~3.1]{Fun95}. For case for $p=+\infty$ is proven in \cite[Section~2, Eq.(23)]{BCS20}. The bound is then extended to all $p \geq 2$ by Riesz-Thorin theorem, and then to all $p \in [1,2]$ by duality. See also \cite[Proposition~3.2]{XZZ24} for a sketch argument. 

The statement for general $\nu_1 \leq \nu_2$ follows from the smoothing effect of $e^{t\Delta}$ and boundedness of $f_n'(m) = 1 - (2n+1) m^{2n}$.
\end{proof}

Recall from \eqref{e:deterministic_flow} that $F^t$ denotes the deterministic Allen-Cahn flow. An important feature of $F^t$ is that if the initial data $v$ is sufficiently close to the stable manifold $\mM$, then there is a unique $\zeta(v) \in \RR$ such that $F^t(v)$ converges to $m_{\zeta(v)} \in \mM$ as $t \rightarrow +\infty$. A precise statement is the following. 

\begin{prop} \label{pr:LinftyEC}
    There exist $\beta,c,C> 0$ such that for all $v\in\vV_{\beta}$, there exists a $\zeta(v)\in\RR$ such that
    \begin{equation*}
        \Vert F^t(v) - m_{\zeta(v)}\Vert_{\lL^\infty} \leq Ce^{-ct}\dist(v,\mM)\;.
    \end{equation*}
\end{prop}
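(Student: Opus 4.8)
The plan is to set up the stable manifold theory for $F^t$ near $\mM$ via a standard Lyapunov--Perron / contraction argument in $\lL^\infty$, using the spectral gap for the linearized operator provided by Proposition~\ref{prop:LinftySG}. The key structural fact is that $\aA = -\Delta - f_n'(m)$ has a one-dimensional kernel spanned by $m'$ (which follows by differentiating the stationary equation \eqref{e:m}), and that $\pP$ projects onto this kernel, with $e^{-t\aA}-\pP$ decaying exponentially on the complement by Proposition~\ref{prop:LinftySG}. The neutral direction corresponds exactly to the translation family $\mM$, so the candidate asymptotic phase $\zeta(v)$ is the translation parameter picked out along that direction.

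\textbf{Step 1: Local coordinates near $\mM$.} For $v$ with $\dist(v,\mM) < \beta$ small, pick $\theta_0$ (almost) realizing the distance and write $v = m_{\theta_0} + w$ with $\|w\|_{\lL^\infty} \lesssim \dist(v,\mM)$. One wants to choose the splitting so that the perturbation is transverse to the neutral direction at $m_{\theta_0}$; by the implicit function theorem, for $\beta$ small enough there is a unique $\theta_0 = \theta_0(v)$ near the minimizer with $\langle v - m_{\theta_0}, m'_{\theta_0}\rangle = 0$, and $\theta_0$ depends smoothly on $v$. Then $F^t(v) = m_{\theta_0} + \widetilde{w}(t)$ where $\widetilde{w}$ solves the nonlinear equation $\partial_t \widetilde{w} = -\aA_{\theta_0}\widetilde{w} + \mathcal{N}(\widetilde{w})$, with $\aA_{\theta_0}$ the operator $\aA$ centered at $m_{\theta_0}$ and $\mathcal{N}$ the superlinear remainder (a polynomial in $\widetilde{w}$ with $m_{\theta_0}$-dependent coefficients, vanishing to second order at $0$).

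\textbf{Step 2: Decompose along neutral and stable directions and run the fixed-point argument.} Write $\widetilde{w}(t) = p(t)\, m'_{\theta_0} + z(t)$ with $z$ in the range of $\mathrm{Id}-\pP_{\theta_0}$. Project the equation: the $z$-equation has the exponentially contracting semigroup $e^{-t\aA_{\theta_0}}(\mathrm{Id}-\pP_{\theta_0})$ with the bound from Proposition~\ref{prop:LinftySG} (applied with $\nu_1=\nu_2=0$, $p=\infty$), so in the mild formulation $z(t) = e^{-t\aA_{\theta_0}}(\mathrm{Id}-\pP_{\theta_0})z(0) + \int_0^t e^{-(t-s)\aA_{\theta_0}}(\mathrm{Id}-\pP_{\theta_0})\mathcal{N}(\widetilde{w}(s))\,ds$ one gets, for $\|\widetilde w\|$ small, a contraction on the space of trajectories with weight $e^{ct/2}$, giving $\|z(t)\|_{\lL^\infty} \lesssim e^{-ct'} \dist(v,\mM)$ for some $c'>0$, uniformly. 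The neutral component $p(t)$ then satisfies an ODE $\dot p = \langle \mathcal{N}(\widetilde w(t)), m'_{\theta_0}\rangle/\|m'\|_{\lL^2}^2$ (the linear term drops since $m'_{\theta_0}\in\ker\aA_{\theta_0}$), whose right side is integrable in $t$ because it is quadratic in the exponentially decaying $\widetilde w$; hence $p(t)$ converges to a limit $p_\infty$ with $|p_\infty - p(0)| \lesssim \dist(v,\mM)^2$ and $|p(t)-p_\infty| \lesssim e^{-ct''}\dist(v,\mM)$. Setting $\zeta(v) := \theta_0(v) + p_\infty(v)$ and combining, $\|F^t(v) - m_{\zeta(v)}\|_{\lL^\infty} \le \|\widetilde w(t) - p_\infty m'_{\theta_0}\|_{\lL^\infty} + \|m_{\theta_0+p_\infty} - (m_{\theta_0} + p_\infty m'_{\theta_0})\|_{\lL^\infty}$; the first term is $\lesssim e^{-ct}\dist(v,\mM)$ from the $z$ and $p$ bounds, and the second is $\lesssim p_\infty^2 \lesssim \dist(v,\mM)^2 \le \beta \dist(v,\mM)$, yielding the claimed bound after possibly shrinking $c$.

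\textbf{Main obstacle.} The delicate point is the uniformity of all constants over $v\in\vV_\beta$ together with closing the nonlinear estimate: one must ensure the trajectory $\widetilde w(t)$ stays small for all $t\ge 0$ (not just short time), which is exactly what the contraction in the exponentially weighted space delivers, but it requires $\beta$ chosen small relative to the spectral gap $c$ and the nonlinearity constants, and requires controlling the $\lL^\infty\to\lL^\infty$ norm of $e^{-t\aA_{\theta_0}}(\mathrm{Id}-\pP_{\theta_0})$ \emph{uniformly in $\theta_0$} — which holds because it is just a translate of the $\theta_0=0$ operator. A secondary subtlety is the short-time integrability of the Duhamel integral near $s=t$: here $\nu_1=\nu_2$ so there is no singularity and the bound is clean. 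I expect no genuinely new difficulty beyond carefully bookkeeping these uniform constants; this is classical invariant-manifold theory adapted to the $\lL^\infty$ setting, and references such as \cite{Fun95,BCS20,XZZ24} already contain the needed semigroup estimates.
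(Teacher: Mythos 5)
Your overall strategy (Fermi coordinate, spectral gap from Proposition~\ref{prop:LinftySG}, split along $m'_{\theta_0}$ and its complement) is the right toolkit, but as written the argument has a genuine gap coming from the fact that you freeze the base point $\theta_0$ for all time. With $\widetilde w(t) = F^t(v)-m_{\theta_0}$, the solution does \emph{not} decay to zero: it converges to $m_{\zeta(v)}-m_{\theta_0}$, which is generically nonzero (of size $O(|\zeta-\theta_0|)$). Consequently your two key decay claims are unjustified: (i) the statement that $\|z(t)\|_{\lL^\infty}\lesssim e^{-c't}\dist(v,\mM)$ cannot hold, since $z(t)=(\mathrm{Id}-\pP_{\theta_0})\widetilde w(t)$ converges to the nonzero limit $(\mathrm{Id}-\pP_{\theta_0})(m_{\zeta}-m_{\theta_0})$, so the weighted-norm contraction you describe has no fixed point in the space you work in; and (ii) the integrability of $\dot p=\langle \mathcal N(\widetilde w),m'_{\theta_0}\rangle/\|m'\|_{\lL^2}^2$ is justified by the phrase ``quadratic in the exponentially decaying $\widetilde w$'', but $\widetilde w$ is not exponentially decaying, so $\mathcal N(\widetilde w(t))$ only tends to $\mathcal N(m_\zeta-m_{\theta_0})\neq 0$; showing that its projection onto $m'$ decays requires exactly the convergence $F^t(v)\to m_\zeta$ you are trying to prove, so the argument is circular at this point.

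The same problem resurfaces in your final assembly: you bound $\|F^t(v)-m_{\zeta(v)}\|_{\lL^\infty}$ by $\|\widetilde w(t)-p_\infty m'_{\theta_0}\|_{\lL^\infty}+\|m_{\theta_0+p_\infty}-(m_{\theta_0}+p_\infty m'_{\theta_0})\|_{\lL^\infty}$ and accept a $t$-independent remainder of size $p_\infty^2\lesssim \dist(v,\mM)^2$. A constant-in-$t$ error can never be absorbed into $Ce^{-ct}\dist(v,\mM)$, no matter how you shrink $c$: for $t$ large the right-hand side is far smaller than $\dist^2$. The statement requires convergence to the exact manifold point $m_{\zeta(v)}$, not to its linear approximation around $m_{\theta_0}$. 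The standard repairs are either a time-dependent modulation (write $F^t(v)=m_{\theta(t)}+w(t)$ with $\langle w(t),m'_{\theta(t)}\rangle=0$ for every $t$, prove $w(t)\to 0$ and $\theta(t)\to\zeta$ exponentially), or an iteration over unit time intervals with re-centering of the Fermi coordinate at each step, so that the distance to $\mM$ contracts geometrically and the centers form a geometrically convergent sequence whose limit defines $\zeta(v)$. The latter is precisely the scheme the paper invokes: it combines the $\lL^\infty$ decay of $e^{-t\aA}-\pP$ (Proposition~\ref{prop:LinftySG}) with the Fermi-coordinate stability of \cite[Proposition~3.3]{XZZ24} and follows \cite[Theorem~5]{BBDMP98}. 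Your single-shot frozen-center contraction does not close without such a re-centering device.
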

\begin{proof}
    A version of $\lL^2$ convergence for $v$ in an $\lL^2$ neighborhood of $\mM$ is proven in \cite[Theorem~7.1]{Fun95}. For the above $\lL^\infty$ version, the new ingredients are the exponential decay of $\|e^{-t\aA}-\pP\|_{\lL^\infty \rightarrow \lL^\infty}$ in Proposition~\ref{prop:LinftySG} and the properties of the so-called Fermi coordinate (also known as the linear center) functional in \cite[Proposition~3.3]{XZZ24}. Based on these two ingredients, the proof follows in the same way as \cite[Theorem~5]{BBDMP98}, so we omit the details. 
\end{proof}

\begin{lem} \label{lem:fermi}
    For every $\nu>0$ and $p\in(1,+\infty]$, there exists $\beta>0$ such that for every $v$ with $\dist_{\wW^{-\nu,p}}(v ,\mM)<\beta$, there exists a unique $\ell = \ell(v) \in \RR$ such that
    \begin{equation*} 
        \bracket{v,m'_\ell} = 0\;.
    \end{equation*}
    Furthermore, there exists $C>0$ such that if $\dist_{\wW^{-\nu,p}}(v ,\mM)< \beta$ and $\theta \in \RR$ are such that
    \begin{equation*}
        \|v - m_\theta\|_{\wW^{-\nu,p}} < \beta\;,
    \end{equation*}
    then we have the bound
    \begin{equation*}
        |\ell(v) - \theta| \leq C \|v - m_\theta\|_{\wW^{-\nu,p}}\;.
    \end{equation*}
\end{lem}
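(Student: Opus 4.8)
The statement is a standard implicit-function-theorem argument applied to the ``Fermi coordinate'' (or linear center) functional, adapted to the negative Sobolev space $\wW^{-\nu,p}$. The plan is to study the real-valued function $G(v,\ell) := \bracket{v, m'_\ell}$, where $m'_\ell = m'(\cdot - \ell)$. Since $m' \in \sS$ by Lemma~\ref{lem:statationary_exponential_decay}, the pairing $\bracket{v, m'_\ell}$ is well-defined and smooth in $\ell$ for every $v \in \wW^{-\nu,p}$ (the map $\ell \mapsto m'_\ell$ is smooth from $\RR$ into $\cC^\nu \subset \wW^{\nu,p'}$, with all derivatives bounded uniformly in $\ell$ by the Schwartz property). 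The first task is to observe the key non-degeneracy: at $v = m$ (or any $m_\theta$), one has $\partial_\ell G(m_\theta, \ell)\big|_{\ell = \theta} = -\bracket{m_\theta, m''_\theta} = \bracket{m'_\theta, m'_\theta} = \|m'\|_{\lL^2}^2 > 0$ (integrating by parts, using decay of $m'$ and that $m - (\pm 1)$ decays; more carefully one writes $m_\theta = \mathbf{1} + (m_\theta - \mathbf{1})$ near $+\infty$ and symmetrically near $-\infty$ so the pairing with the Schwartz function $m''_\theta$ makes sense). So $\partial_\ell G$ is bounded away from $0$ near the manifold.

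The second task is a quantitative version of this near a general $v$ with $\dist_{\wW^{-\nu,p}}(v,\mM)$ small. Given such a $v$ and $\theta$ with $\|v - m_\theta\|_{\wW^{-\nu,p}} < \beta$, I would write, for $\ell$ in a bounded neighborhood of $\theta$,
\[
G(v,\ell) = \bracket{m_\theta, m'_\ell} + \bracket{v - m_\theta, m'_\ell}\;,
\]
and similarly expand $\partial_\ell G(v,\ell) = \bracket{m_\theta, -m''_\ell} + \bracket{v - m_\theta, -m''_\ell}$. The first term equals $\|m'\|_{\lL^2}^2 + O(|\ell - \theta|)$ (the deterministic function $\theta \mapsto \bracket{m_\theta, -m''_\ell}$ is smooth), and the second is bounded by $\|v - m_\theta\|_{\wW^{-\nu,p}} \cdot \|m''_\ell\|_{\wW^{\nu,p'}} \lesssim \beta$ uniformly in $\ell$. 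Hence for $\beta$ small and $|\ell - \theta|$ small, $\partial_\ell G(v,\ell) \geq \tfrac12 \|m'\|_{\lL^2}^2 > 0$. Likewise $G(v,\theta) = \bracket{m_\theta, m'_\theta} + \bracket{v - m_\theta, m'_\theta} = O(\beta)$ since $\bracket{m_\theta, m'_\theta} = \tfrac12 \int (m^2)'(\cdot - \theta) = 0$ (again interpreting via $m - (\pm 1)$). Combining strict monotonicity in $\ell$ with smallness of $G(v,\theta)$ gives, by the mean value theorem, a unique zero $\ell(v)$ in this neighborhood with $|\ell(v) - \theta| \leq G(v,\theta) / (\tfrac12 \|m'\|_{\lL^2}^2) \lesssim \|v - m_\theta\|_{\wW^{-\nu,p}}$, which is the claimed bound. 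Uniqueness globally (not just locally) follows because outside the neighborhood one can rule out zeros: if $|\ell| \to \infty$ then $m'_\ell \to 0$ weakly and one checks $\bracket{m_\theta, m'_\ell} \to 0$ while the derivative argument confines all zeros to a bounded set, within which monotonicity forces uniqueness — here one uses that $\dist_{\wW^{-\nu,p}}(v,\mM) < \beta$ guarantees the existence of at least one such $\theta$.

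The main obstacle, and the only genuinely delicate point, is making the pairings $\bracket{m_\theta, m'_\ell}$ and $\bracket{m_\theta, m''_\ell}$ rigorous: $m_\theta$ is not in any $\lL^p$ space (it tends to $\pm 1$), so $\bracket{m_\theta, \cdot}$ is not literally a $\wW^{-\nu,p}$ functional. The fix is to note that $m' \in \sS$ and more generally $m^{(k)} \in \sS$ for $k \geq 1$, so pairings against derivatives of $m$ are fine, and to handle $\bracket{m_\theta, m'_\ell}$ by writing $m_\theta = \operatorname{sgn}(\cdot - \theta) + r_\theta$ where $r_\theta \in \sS$ (the difference of $m$ from the step function decays exponentially by Lemma~\ref{lem:statationary_exponential_decay} integrated), so that $\bracket{m_\theta, m'_\ell} = \bracket{\operatorname{sgn}(\cdot-\theta), m'_\ell} + \bracket{r_\theta, m'_\ell}$, the first term being $m(\ell - \theta) \cdot(\pm\text{stuff})$ computed directly and the second an ordinary Schwartz pairing. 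This is routine once set up but needs to be stated carefully. Given this, the rest is the elementary monotonicity/IFT argument above, and indeed Proposition~\ref{pr:LinftyEC} and \cite[Proposition~3.3]{XZZ24} already record essentially this functional in a neighboring setting, so I would also simply cite those for the parts that transfer verbatim.
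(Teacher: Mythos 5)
Your local argument is correct and is essentially the standard Fermi-coordinate argument that the paper itself defers to by citation (\cite[Section~3]{Fun95}, \cite[Proposition~3.3]{XZZ24}): pair with $m'_\ell$, use $\bracket{m_\theta,m'_\theta}=0$, the non-degeneracy $\partial_\ell\bracket{v,m'_\ell}\approx\|m'\|_{\lL^2}^2$, and the duality bound $|\bracket{v-m_\theta,m'_\ell}|\lesssim\|v-m_\theta\|_{\wW^{-\nu,p}}\|m'_\ell\|_{\wW^{\nu,p'}}$ to get existence, local uniqueness, and the Lipschitz estimate $|\ell(v)-\theta|\lesssim\|v-m_\theta\|_{\wW^{-\nu,p}}$. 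Since the pairings against $m'_\ell,m''_\ell$ are ordinary absolutely convergent integrals ($m_\theta$ bounded, $m'$ Schwartz by Lemma~\ref{lem:statationary_exponential_decay}), the ``delicate point'' you flag is lighter than you suggest; note only that $r_\theta=m_\theta-\operatorname{sgn}(\cdot-\theta)$ is not in $\sS$ (it is not smooth at $\theta$), but bounded with exponential decay, which is all you need.

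The one genuine flaw is in your global-uniqueness step: the claim that $\bracket{m_\theta,m'_\ell}\to 0$ as $|\ell|\to\infty$ is false. Writing $h(s):=\bracket{m_\theta,m'_{\theta+s}}=\int_\RR m(y+s)\,m'(y)\,\md y$, one has $h'(s)=\int m'(y+s)m'(y)\,\md y>0$, $h(0)=0$, and $h(s)\to\pm2$ as $s\to\pm\infty$; so the pairing tends to $\pm 2$, not $0$. If it did tend to $0$, your exclusion of distant zeros would actually fail, since the perturbation $\bracket{v-m_\theta,m'_\ell}$ is only $O(\beta)$ uniformly in $\ell$ and $G(v,\cdot)$ is not globally monotone (the derivative of the leading term decays at infinity). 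The correct argument is exactly the non-vanishing you accidentally negated: $h$ is strictly increasing with its unique zero at $s=0$, hence $|h(s)|\geq c_\rho>0$ for $|s|\geq\rho$, which dominates the $O(\beta)$ perturbation once $\beta$ is small; combined with strict monotonicity of $\ell\mapsto\bracket{v,m'_\ell}$ on $\{|\ell-\theta|<\rho\}$ this gives uniqueness on all of $\RR$. With that one correction your proof is complete.
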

\begin{proof}
    A version in the $\lL^2$ neighborhood of $\mM$ is considered in \cite[Section~3]{Fun95}. A version in the $\lL^\infty$ neighborhood of $\mM$ is proved in \cite[Proposition~3.3]{XZZ24}. The proof for $v$ in the $\wW^{-\nu,p}$ neighborhood of $\mM$ follows from the same way as in \cite[Proposition~3.3]{XZZ24}, so we omit the details.
\end{proof}

\subsection{Renormalization and well-posedness} 
\label{sec:renormalization}

We now give a rigorous meaning of the solution $u_\eps$ to \eqref{e:main_eqn}. Fix $\mu> 0$. Let $X_{\eps}$ be the stationary-in-time solution to
\begin{equation} \label{e:linear_solution}
    \partial_t X_\eps = (\Delta - \mu) X_\eps + \eps^{\gamma} a_\eps \sqrt{\fD}\dot{W}\;.
\end{equation}
Recall $\qQ_\delta$ is the mollification by the heat kernel $q_{\delta^2}$. Let
\begin{equation} \label{e:variance_variable}
    \fC_\eps^{(\delta)}(x) := \EE \, |(\qQ_\delta X_\eps)(t,x)|^2\;,
\end{equation}
which does not depend on $t$ by stationarity of $X_\eps$. For every $k \in \NN$, the $k$-th Wick power of $X_\eps$, denoted by $X_\eps^{\diamond k}$, is defined as the $\delta \rightarrow 0$ limit as $H_{2n+1}\big( \qQ_\delta X_\eps; \fC_\eps^{(\delta)} \big)$. The following lemma guarantees that it is well defined. 

\begin{lem} \label{le:convergence_Wick_power}
Fix $\eps>0$ and $k \in \NN$. There is a stochastic object $X_\eps^{\diamond k}$ such that for every $T>0$, every $p \in [1, +\infty)$ and every $\nu > 0$, we have
\begin{equation*}
    \EE \sup_{t \in [0,T]} \left\| H_{k}\big( \qQ_\delta X_\eps[t]; \fC_\eps^{(\delta)} \big) -X_\eps^{\diamond k}[t]\right\|_{\cC^{-\nu}}^p \lesssim_{\eps,T} \delta^{\frac{\nu p}{2}}\;,
\end{equation*}
where we recall from \eqref{e:Hermite} that $H_k (\cdot\, ;  \fC_\eps^{(\delta)})$ is the $k$-th Hermite polynomial with variance $\fC_\eps^{(\delta)}$. The proportionality constant is independent of $\delta$. 
\end{lem}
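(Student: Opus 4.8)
\textbf{Proof proposal for Lemma~\ref{le:convergence_Wick_power}.}

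The plan is to establish the convergence of $H_k(\qQ_\delta X_\eps; \fC_\eps^{(\delta)})$ by a standard Kolmogorov / $L^p$-Cauchy argument in the Wiener chaos of order $k$. First I would fix $\eps>0$ and work with the Gaussian field $X_\eps$, which is a stationary (in time) centered Gaussian process; by \eqref{e:linear_solution}, $X_\eps(t,x) = \eps^\gamma \int_{-\infty}^t \big( e^{(t-s)(\Delta-\mu)} a_\eps \sqrt{\fD}\,\md W_s\big)(x)$, and its covariance kernel is explicit in Fourier. The key analytic input is a bound on the two-point function of the mollified field: writing $K_\delta^{(\delta')}(t,x;t',x') := \EE\big[(\qQ_\delta X_\eps)(t,x)(\qQ_{\delta'} X_\eps)(t',x')\big]$, one shows that for fixed $\eps$ this kernel, together with its increments, is controlled with only a mild (logarithmic) singularity on the diagonal --- the compactly supported smooth cutoff $a_\eps$ and the extra mass $\mu$ make all the relevant frequency integrals converge, and the half-derivative $\sqrt{\fD}$ only costs the integrability of $|\theta|^{1/2}$ against the heat semigroup symbol, which is harmless after mollification at scale $\delta>0$. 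Concretely, I expect
\[
    \EE\big|(\qQ_\delta X_\eps)(t,x) - (\qQ_{\delta'} X_\eps)(t,x)\big|^2 \lesssim_\eps |\delta - \delta'| \vee |\delta-\delta'|^{1-},
\]
and similarly $\fC_\eps^{(\delta)} - \fC_\eps^{(\delta')} \to 0$ at a polynomial rate in $\delta \vee \delta'$, with spatial H\"older regularity of the mollified field of order close to $\tfrac12$.

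Second, I would use the Wick/Hermite renormalization structure: the point of subtracting $\fC_\eps^{(\delta)}$ inside $H_k$ is precisely that $H_k(\qQ_\delta X_\eps;\fC_\eps^{(\delta)})$ lies in the $k$-th homogeneous Wiener chaos, and for elements of a fixed chaos all $L^p(\Omega)$ norms are equivalent to the $L^2(\Omega)$ norm (Gaussian hypercontractivity). Hence it suffices to estimate, for $\psi$ a test function localized at scale $\lambda$ around $x$,
\[
    \EE\big|\big\langle H_k(\qQ_\delta X_\eps[t];\fC_\eps^{(\delta)}) - H_k(\qQ_{\delta'} X_\eps[t];\fC_\eps^{(\delta')}),\ \psi_x^\lambda\big\rangle\big|^2,
\]
which by the Wick isometry expands as a sum of $k!$-type terms, each a $k$-fold integral of products of $K$-kernels (and their $\delta,\delta'$ differences) against $\psi_x^\lambda \otimes \psi_x^\lambda$. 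Using the kernel bounds from the first step, each such term is bounded by $C_\eps\, \lambda^{-k\nu'}\,(\delta\vee\delta')^{c}$ for suitable $\nu' < \nu/k$ and $c>0$; integrating over $\lambda \in (0,1)$ with the $\lambda^\nu$ weight in the definition of $\cC^{-\nu}$, and over the spatial variable $x$ (against a suitable weight reflecting the compact support of $a_\eps$ in space-time, so the integral is finite), gives a bound of the form $\lesssim_\eps \delta^{\nu p/2}$ after raising to the $p/2$ power and invoking hypercontractivity. A Besov-space characterization of $\cC^{-\nu}$ (or the wavelet/Littlewood--Paley version) lets me pass from the test-function pairings to the stated norm. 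Temporal continuity and the $\sup_{t\in[0,T]}$ are then handled by a Kolmogorov continuity argument in $t$, exploiting temporal regularity of the Gaussian field in the same chaos (again reducing to $L^2(\Omega)$ kernel estimates).

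Finally, with these uniform-in-$\delta$ Cauchy estimates in hand, the sequence $\{H_k(\qQ_\delta X_\eps; \fC_\eps^{(\delta)})\}_{\delta>0}$ is Cauchy in $L^p(\Omega; \cC([0,T];\cC^{-\nu}))$, so it converges to a limit which I define to be $X_\eps^{\diamond k}$; taking $\delta' \to 0$ in the Cauchy bound yields the claimed rate $\delta^{\nu p/2}$, and the limit is independent of the mollifier (standard). The main obstacle I anticipate is purely bookkeeping rather than conceptual: carefully tracking how the $\sqrt{\fD}$ operator and the rescaled cutoff $a_\eps = a(\sqrt{\eps}\cdot)$ enter the covariance kernel and verifying that the resulting frequency integrals are finite with the stated (logarithmic) singularity --- in particular that the half-derivative, which is exactly what forces renormalization in the full equation, still leaves the \emph{mollified} field pointwise square-integrable with a modulus of continuity good enough to run the chaos estimates. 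Since $\eps>0$ is fixed here, there is no uniformity-in-$\eps$ issue to worry about, which considerably simplifies matters; only the $\delta$-dependence must be tracked quantitatively.
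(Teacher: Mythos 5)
Your plan is, at bottom, the same standard argument the paper relies on: the paper does not write out a proof of this lemma at all (it treats it as classical), and the ingredients it does record in Appendix~\ref{app:linear_smallness} are exactly the ones you list --- the explicit Fourier form of the covariance, the logarithmic bound on the two-point function (Lemma~\ref{lem:linear_Exy_bound}), Wick's formula reducing chaos-$k$ second moments to powers of the covariance, Gaussian hypercontractivity, and Kolmogorov-type continuity in space and time, with the quantitative convergence statement stated (without detailed proof) in Lemma~\ref{lem:linear_final_approx}. So there is no divergence of method to report.

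There is, however, a concrete flaw in your first step: the two ``expected'' estimates are false, precisely because at fixed $\eps$ the field $X_\eps$ is log-correlated in space inside the support of $a_\eps$ --- this is the whole reason Wick renormalization is needed, cf.\ $\fC_\eps^{(\delta)}\approx \eps^{2\gamma}C^{(\delta)}a_\eps^2$ with $C^{(\delta)}\sim |\log\delta|/(4\pi^2)$ from Lemmas~\ref{le:renormalisation_approx} and~\ref{lem:Cdelta_divergence}. The pointwise Cauchy bound $\EE|(\qQ_\delta X_\eps)(t,x)-(\qQ_{\delta'}X_\eps)(t,x)|^2\lesssim_{\eps}|\delta-\delta'|$ cannot hold: a scaling of the Fourier integral shows this quantity is of order one (times $\eps^{2\gamma}$) for $\delta'=\delta/2$ uniformly as $\delta\to0$, and it grows like $\log(\delta/\delta')$ when $\delta'\ll\delta$. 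Likewise the mollified field does not have spatial H\"older regularity close to $\tfrac12$ uniformly in $\delta$; that would be the regularity without the $\sqrt{\fD}$, whereas here $\EE|(\qQ_\delta X_\eps)(x)-(\qQ_\delta X_\eps)(y)|^2$ grows like $\log(|x-y|/\delta)$ for $|x-y|\geq\delta$ inside the cutoff (the H\"older bound of Lemma~\ref{lem:linear_Exy_continuity_space} is only valid for $|x|,|y|>\eps^{-1/2}$, i.e.\ outside the support of $a_\eps$). The repair stays entirely within your framework: what actually feeds the Wick-expanded pairing $\EE|\langle\,\cdot\,,\psi_x^\lambda\rangle|^2$ is not a diagonal estimate but the off-diagonal bound $|\EE[(\qQ_\delta X_\eps-\qQ_{\delta'}X_\eps)(x)\,(\qQ_\delta X_\eps-\qQ_{\delta'}X_\eps)(y)]|\lesssim_{\eps}(\delta\vee\delta')^{\theta}\,(|x-y|^{-\theta}\vee1)$ for small $\theta>0$, used together with the logarithmic bound of Lemma~\ref{lem:linear_Exy_bound} for the undifferenced kernels; these yield the $\lambda^{-k\theta}(\delta\vee\delta')^{\theta}$-type estimates your second step asserts, after which hypercontractivity, Kolmogorov in $x$ and $t$, and the Besov/wavelet characterization of $\cC^{-\nu}$ run exactly as you describe.
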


The proportionality constant here depends on $\eps$ and $T$, but we do not quantify their dependence at this stage. The above lemma is to rigorously define the objects $X_\eps^{\diamond k}$ on $[0,T]$ for every fixed $\eps$ and $T$. Quantitative properties of $X_\eps^{\diamond k}$ will be investigated in Appendix~\ref{app:linear_smallness}. 

Given the objects $X_\eps^{\diamond k}$ defined in Lemma~\ref{le:convergence_Wick_power}, let $w_\eps$ be the solution to the equation
\begin{equation*}
    \partial_t w_\eps =\Delta w_\eps+ w_\eps + (1+\mu)X_\eps - \sum_{k=0}^{2n+1} \binom{2n+1}{k}X_\eps^{\diamond (2n+1-k)} w_\eps^{k}
\end{equation*}
with initial data $w_\eps[0] = u_\eps[0] - X_\eps[0]$. Then $u_\eps := X_\eps +w_\eps$ solves the SPDE
\begin{equation*}
    \partial_t u_\eps = \Delta u_\eps + u_\eps - u_\eps^{\diamond (2n+1)} + \eps^{\gamma} a_\eps \sqrt{\fD}\dot{W}\;,
\end{equation*}
where $u_\eps^{\diamond (2n+1)}$ is the Wick product with respect to the Gaussian structure induced by $X_\eps$ in the sense that (see \eqref{e:Wick} for the notation)
\begin{equation} \label{e:u_Wick_expansion}
    u_\eps^{\diamond (2n+1)} := \sum_{k=0}^{2n+1} \binom{2n+1}{k}  X_\eps^{\diamond (2n+1-k)} w_\eps^k\;.
\end{equation}
It is important to note that the definition of $u_\eps$ depends on $\mu$ since the renormalization $X_\eps^{\diamond k}$ does. Throughout most of this article, $\mu$ is treated as a fixed constant, except when calculating $\alpha_2$ (a diffusion constant in the limiting SDE \eqref{e:limit_sde}) where we specify its dependence on $\mu$. Moreover, we will see that $\alpha_2 \rightarrow \infty$ as $\mu \rightarrow 0$ (while $\alpha_1$ does not depend on $\mu$). This suggests that the analysis may fail to yield similar results if $\mu = 0$ in the definition of $X_\eps$.

We now give asymptotic behavior of the renormalization function $\fC_\eps^{(\delta)}$ as $(\eps,\delta) \rightarrow (0,0)$. Define the constant $C^{(\delta)}$ by
\begin{equation} \label{e:C_delta}
    C^{(\delta)} := \int_{\RR} \frac{|\eta| e^{-8 \pi^2 \delta^2 \eta^2}}{2(4 \pi^2 \eta^2 + \mu)}\,\md \eta\;.
\end{equation}
We have the following lemma regarding $\fC_\eps^{(\delta)}$. 

\begin{lem} \label{le:renormalisation_approx}
We have the bound
\begin{equation*}
    \left\| \fC_\eps^{(\delta)} - \eps^{2\gamma} C^{(\delta)} a_\eps^2 \right\|_{\lL^\infty} \lesssim \eps^{2\gamma+1}
\end{equation*}
for all $\eps\in[0,1]$ and $\delta\in(0,\frac{1}{2}]$. 
\end{lem}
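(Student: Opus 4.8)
The plan is to compute $\fC_\eps^{(\delta)}(x) = \EE |(\qQ_\delta X_\eps)(t,x)|^2$ explicitly via the stationary covariance of $X_\eps$ in Fourier space, and then show that replacing the cutoff $a_\eps = a(\sqrt\eps\,\cdot)$ by its value $a_\eps(x)$ at the point $x$ (i.e.\ treating it as locally constant) produces an error of order $\eps^{2\gamma+1}$ uniformly in $x$ and $\delta$. First I would write $X_\eps$ via Duhamel against the semigroup $e^{t(\Delta-\mu)}$ applied to the noise $\eps^\gamma a_\eps \sqrt\fD \dot W$; by stationarity in time, the equal-time second moment is
\begin{equation*}
    \fC_\eps^{(\delta)}(x) = \eps^{2\gamma} \int_0^\infty \!\! \int_\RR \big( \qQ_\delta e^{s(\Delta-\mu)} \sqrt\fD (a_\eps \,\delta_y)^{\sim} \big)(x)^2 \, \md y \, \md s \,,
\end{equation*}
or, more cleanly, express everything on the Fourier side: with $K(\theta) = \sum$ of the heat-kernel, mass, and fractional-derivative multipliers, one gets a representation of the form
\begin{equation*}
    \fC_\eps^{(\delta)}(x) = \eps^{2\gamma} \iint_{\RR^2} a_\eps(x-z_1) a_\eps(x-z_2) \, G_\delta(z_1, z_2) \, \md z_1 \, \md z_2
\end{equation*}
for an explicit kernel $G_\delta$ coming from $\int_0^\infty e^{-2s(4\pi^2\eta^2+\mu)}|\eta| e^{-8\pi^2\delta^2\eta^2}\,\md s = \frac{|\eta|e^{-8\pi^2\delta^2\eta^2}}{2(4\pi^2\eta^2+\mu)}$, whose $z$-integral against $1$ gives exactly $C^{(\delta)}$ as in \eqref{e:C_delta}. (The off-diagonal decay of $G_\delta$ in $|z_1-z_2|$ is what makes the localization argument work, and one should check this decay is uniform in $\delta$; the mass $\mu>0$ is crucial here for integrability at $\eta=0$.)

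The second step is the localization estimate. Write $a_\eps(x - z_i) = a_\eps(x) + \big(a_\eps(x-z_i) - a_\eps(x)\big)$ and expand the product. The leading term reproduces $\eps^{2\gamma} C^{(\delta)} a_\eps(x)^2$. Each remainder term carries a factor $a_\eps(x-z_i) - a_\eps(x)$, and since $a_\eps = a(\sqrt\eps\,\cdot)$ we have $|a_\eps(x-z) - a_\eps(x)| \lesssim \sqrt\eps\,|z| \wedge 1 \lesssim \sqrt\eps \bracket{z}$ with $a'$ bounded and compactly supported. So each remainder is bounded by $\eps^{2\gamma} \cdot \sqrt\eps \cdot \iint \bracket{z_1} \, |G_\delta(z_1,z_2)| \, \md z_1\,\md z_2$ (and similarly with a $\bracket{z_2}$ or with both), which, provided $\int |z| \sup_{z'}|G_\delta(z,z')|\,\md z$ (appropriately interpreted) is bounded uniformly in $\delta$, is of order $\eps^{2\gamma + 1/2}$ — not quite enough. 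To recover the full power $\eps^{2\gamma+1}$ one uses that the first-order term in the Taylor expansion is \emph{odd} and integrates against a kernel $G_\delta$ that I expect to be even in the relevant difference variable, so the first moment vanishes and one gets to use the second-order Taylor remainder, which carries $\eps$ (from $(\sqrt\eps)^2$) times $|z|^2$; this needs $\|a''\|_\infty$ and the uniform-in-$\delta$ finiteness of a weighted second moment of $G_\delta$. That bookkeeping — identifying the right symmetry so that the $\eps^{1/2}$ term drops and only $\eps^1$ survives — is the main obstacle; it is essentially the same mechanism as the $\oO(1/\sqrt\eps)$-length cutoff being equivalent to any $\eps^\theta$-length cutoff, and it hinges on the precise parity/scaling structure of the kernel rather than on any deep estimate.

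Finally, I would assemble the pieces: the localization error is $\lesssim \eps^{2\gamma+1}$ uniformly in $\delta\in(0,\tfrac12]$ and $\eps\in(0,1]$ (the $\delta$-uniformity being inherited from the uniform-in-$\delta$ weighted-moment bounds on $G_\delta$, which hold because the Gaussian factor $e^{-8\pi^2\delta^2\eta^2}\le 1$ only helps), take $\lL^\infty$ in $x$, and note $\|a_\eps^2\|_\infty = \|a\|_\infty^2$ is harmless. This yields $\|\fC_\eps^{(\delta)} - \eps^{2\gamma} C^{(\delta)} a_\eps^2\|_{\lL^\infty} \lesssim \eps^{2\gamma+1}$ as claimed. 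A subtlety worth flagging in the write-up: one should either show $C^{(\delta)}<\infty$ for each $\delta>0$ (clear from \eqref{e:C_delta} since $|\eta|e^{-8\pi^2\delta^2\eta^2}/(4\pi^2\eta^2+\mu)$ is integrable) or absorb any $\delta$-dependence transparently, since later sections send $(\eps,\delta)\to(0,0)$ jointly along \eqref{e:parameters_dependence} and $C^{(\delta)}$ itself diverges logarithmically as $\delta\to0$ — the point of this lemma is precisely that the \emph{spatial profile} is $a_\eps^2$ up to the stated error, with all the $\delta$-divergence quarantined in the scalar $C^{(\delta)}$.
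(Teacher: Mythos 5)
Your overall mechanism is the right one, and it is in fact the same mechanism the paper uses, just transported to physical space: expand the slowly varying cutoff to second order around the base point, observe that the first-order ($\sqrt{\eps}$) term dies by a parity cancellation, and bound the genuinely second-order remainders by $\eps$ times uniform-in-$\delta$ moment bounds. The paper executes this entirely on the Fourier side: starting from the explicit representation \eqref{e:renorm_funct_expression}, it Taylor expands the Gaussian factors $\phi_{s+\delta^2}(\eta+\sqrt{\eps}\theta_j)$ in $\sqrt{\eps}\theta_j$ as in \eqref{e:phi_product_expansion}; the zeroth-order term reproduces $\eps^{2\gamma}C^{(\delta)}a_\eps^2$ exactly, the first-order term vanishes because $|\eta|\,\phi_{s+\delta^2}(\eta)\phi_{s+\delta^2}'(\eta)$ is odd in $\eta$, and the quadratic terms are controlled by elementary Gaussian estimates \eqref{e:phi_product_remainder_1}--\eqref{e:phi_product_remainder_2}, with the $s$-integral convergent thanks to $\mu>0$. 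Your "evenness of $G_\delta$ in the difference variable" is exactly the physical-space shadow of this oddness (all the kernels involved are even, so $\iint (z_1+z_2)\,G_\delta(z_1,z_2)\,\md z_1\,\md z_2=0$), so the cancellation you hope for is real.

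The genuine gap is in the formulation of your kernel $G_\delta$ and of the moment bounds you defer. Since the noise enters as $a_\eps\sqrt{\fD}\dot W$, the covariance is $\eps^{2\gamma}\int_0^\infty\bracket{\fD\big(a_\eps K_{s,\delta}(x-\cdot)\big),\,a_\eps K_{s,\delta}(x-\cdot)}\,\md s$ with $K_{s,\delta}$ the kernel of $\qQ_\delta e^{s(\Delta-\mu)}$; the operator $\fD$ in \eqref{e:half_derivative} has a finite-part kernel behaving like $-|z_1-z_2|^{-2}$ on the diagonal. Consequently your $G_\delta$ is not an integrable function, and quantities such as $\int |z|\sup_{z'}|G_\delta(z,z')|\,\md z$ or "weighted second moments of $G_\delta$" are not meaningful as written; they must be reinterpreted through the finite-part structure (equivalently, as $\theta$-derivatives at zero of the Fourier-side $s,\eta$-integral), and their uniform-in-$\delta$ finiteness is precisely the content that needs proving -- it does not follow merely from $e^{-8\pi^2\delta^2\eta^2}\le 1$, but from the Gaussian scaling in $s+\delta^2$ together with the $e^{-2\mu s}$ damping, which is what the paper's remainder estimates actually establish. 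So the proposal identifies the correct cancellation and the correct order of error, but the quantitative inputs it labels as "expected"/"provided" are the actual proof; carrying them out is cleanest in Fourier variables, which is why the paper never leaves that representation.
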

\begin{proof}
Postponed to Appendix~\ref{app:renormalization}. 
\end{proof}

Now we show that the constant $C^{(\delta)}$ blows up logarithmically in $\delta$. 

\begin{lem} \label{lem:Cdelta_divergence}
There exists a universal constant $\alpha$ (independent of $\delta$ and $\mu$) such that
    \begin{equ}
        C^{(\delta)} - \frac{|\log \delta|}{4 \pi^2} = \alpha - \frac{\log \mu}{8 \pi^2} + \oO \big( \mu \delta^2 \big)
    \end{equ}
as $\delta \rightarrow 0$. 
\end{lem}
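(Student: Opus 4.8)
The plan is to analyze the integral defining $C^{(\delta)}$ directly and extract its logarithmic divergence in $\delta$. Recall
\begin{equation*}
    C^{(\delta)} = \int_{\RR} \frac{|\eta|\, e^{-8\pi^2 \delta^2 \eta^2}}{2(4\pi^2 \eta^2 + \mu)}\,\md\eta = \int_0^\infty \frac{\eta\, e^{-8\pi^2 \delta^2 \eta^2}}{4\pi^2 \eta^2 + \mu}\,\md\eta\;.
\end{equation*}
First I would substitute $s = \eta^2$ (so $\md s = 2\eta\,\md\eta$) to turn this into
\begin{equation*}
    C^{(\delta)} = \frac{1}{2}\int_0^\infty \frac{e^{-8\pi^2 \delta^2 s}}{4\pi^2 s + \mu}\,\md s = \frac{1}{8\pi^2}\int_0^\infty \frac{e^{-8\pi^2 \delta^2 s}}{s + \mu/(4\pi^2)}\,\md s\;,
\end{equation*}
which is (up to scaling) an exponential integral. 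A further substitution $r = s + \mu/(4\pi^2)$ gives
\begin{equation*}
    C^{(\delta)} = \frac{e^{2\delta^2 \mu}}{8\pi^2}\int_{\mu/(4\pi^2)}^\infty \frac{e^{-8\pi^2\delta^2 r}}{r}\,\md r = \frac{e^{2\delta^2\mu}}{8\pi^2}\, E_1\!\big(2\delta^2\mu\big)\;,
\end{equation*}
where $E_1(x) = \int_x^\infty \frac{e^{-t}}{t}\,\md t$ is the standard exponential integral.

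The key analytic input is then the well-known small-argument expansion $E_1(x) = -\gamma_{\mathrm{E}} - \log x + x + \oO(x^2)$ as $x \to 0^+$, where $\gamma_{\mathrm{E}}$ is the Euler--Mascheroni constant. Applying this with $x = 2\delta^2\mu$ and using $e^{2\delta^2\mu} = 1 + \oO(\mu\delta^2)$, I get
\begin{equation*}
    C^{(\delta)} = \frac{1}{8\pi^2}\Big( -\gamma_{\mathrm{E}} - \log(2\delta^2\mu) \Big) + \oO(\mu\delta^2) = \frac{|\log\delta|}{4\pi^2} - \frac{\log\mu}{8\pi^2} + \frac{-\gamma_{\mathrm{E}} - \log 2}{8\pi^2} + \oO(\mu\delta^2)\;,
\end{equation*}
using $-\log(\delta^2) = 2|\log\delta|$ for $\delta < 1$. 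This is exactly the claimed identity with the universal constant $\alpha := \frac{-\gamma_{\mathrm{E}} - \log 2}{8\pi^2}$, which is manifestly independent of both $\delta$ and $\mu$. To make the argument self-contained without quoting the $E_1$ expansion as a black box, one can instead split the integral $\int_0^\infty \frac{e^{-8\pi^2\delta^2 s}}{s + \mu/(4\pi^2)}\,\md s$ at $s = 1$: on $[1,\infty)$ the integrand is bounded and $e^{-8\pi^2\delta^2 s} = 1 + \oO(\delta^2 s)$ contributes a $\delta$-independent constant up to $\oO(\delta^2)$ corrections; on $[0,1]$ one writes $\frac{1}{s+\mu/(4\pi^2)} = \frac{1}{s} - \frac{\mu/(4\pi^2)}{s(s+\mu/(4\pi^2))}$ — wait, that is singular at $0$; better to compare $\int_0^1 \frac{e^{-8\pi^2\delta^2 s}}{s+\mu/(4\pi^2)}\md s$ with $\int_0^1 \frac{\md s}{s + \mu/(4\pi^2)} = \log(1 + 4\pi^2/\mu)$ and separately track the $\delta$-dependence via the substitution $s \mapsto s/\delta^2$, which produces the $|\log\delta|$ term cleanly.

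I do not expect a genuine obstacle here: the only mild care needed is bookkeeping the various additive constants so that the $\mu$-dependence is isolated entirely in the $-\frac{\log\mu}{8\pi^2}$ term and everything left over is a true universal constant, plus checking that the error is uniformly $\oO(\mu\delta^2)$ rather than merely $\oO(\delta^2)$ — this follows because the leading correction to $e^{-8\pi^2\delta^2 s}$ over the region where the integral is $\oO(1)$ carries a factor of $\delta^2$ together with the scale $\mu$ from the denominator cutoff, and the $e^{2\delta^2\mu}$ prefactor contributes $\oO(\mu\delta^2 \cdot |\log\delta|)$ which is absorbed. The cleanest writeup is via the $E_1$ representation together with its standard asymptotic expansion, so that is the route I would take in the final proof.
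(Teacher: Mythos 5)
Your proposal is correct and follows essentially the same route as the paper: a change of variables reducing $C^{(\delta)}$ to $\frac{e^{2\mu\delta^2}}{8\pi^2}\int_{2\mu\delta^2}^{+\infty}\frac{e^{-y}}{y}\,\md y$, followed by the standard small-argument expansion of the exponential integral, which the paper leaves implicit. Your explicit identification of $\alpha = \frac{-\gamma_{\mathrm{E}}-\log 2}{8\pi^2}$ and your remark about absorbing the $\oO(\mu\delta^2|\log\delta|)$ cross term are consistent with (indeed slightly more careful than) the paper's one-line argument.
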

\begin{proof}
By a direct change of variable, we have
\begin{equation*}
    C^{(\delta)} = \frac{e^{2 \mu \delta^2}}{8 \pi^2} \int_{2 \mu \delta^2}^{+\infty} \frac{e^{-y}}{y} \, {\rm d}y\;.
\end{equation*}
The conclusion then follows. 
\end{proof}

\section{Bounding distance from $\mM$ -- proof of Theorem~\ref{thm:Ckappacloseness}}
\label{sec:closeness}

In this section, we prove Theorem~\ref{thm:Ckappacloseness}, showing that the solution always stays close to the manifold. The argument is essentially the same as \cite[Section~5]{XZZ24} but more technically involved, since the solution $u_\eps$ here is distribution valued. 

Recall from \eqref{e:linear_solution} that $X_{\eps}$ is the stationary solution the linearized equation with an additional mass $\mu>0$. The following lemma gives the smallness of $X_{\eps}$ at any polynomial time scale.

\begin{lem} \label{lem:smallnessOfLinearSolution}
    Fix $N$ and $N'$ arbitrary. For every $p \geq 1$, let $A_\eps = A_\eps^p$ be the event
\begin{equation*}
    A_\eps:=\Big\{\omega:\sup_{\substack{1\leq k\leq 2n+1\\t \in [0, \eps^{-N}]}} \Vert X_\eps^{\diamond k}[t] \Vert_{\wW^{-\kappa',p}}\leq  \eps^{\gamma - \frac{\nu}{3}}\Big\}\;.
\end{equation*}
    Then, there exists $p_0 > 1$ such that for all $p \in [p_0, +\infty]$, we have
    \begin{equation*}
        \PP \big( A_\eps \big) \geq 1 - C \eps^{N'}
    \end{equation*}
    for all $\eps \in [0,1]$. The constant $C$ here is independent of $\eps$ and $p \in [p_0, +\infty]$ (but could depende on $p_0$). 
\end{lem}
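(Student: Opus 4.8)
The plan is to reduce everything to a pair of quantitative estimates on the linear object $X_\eps$ and its Wick powers, and then apply a union bound over a polynomially fine grid in time together with a Kolmogorov-type continuity argument to pass from a discrete grid to the supremum over $t \in [0,\eps^{-N}]$.

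\textbf{Step 1: $L^p$ estimate for a single time.} First I would establish, for fixed $t$, the moment bound
\begin{equation*}
    \EE \, \big\| X_\eps^{\diamond k}[t] \big\|_{\wW^{-\kappa',p}}^q \lesssim \eps^{q(2\gamma + \tfrac12) \cdot \frac{k}{2} - \text{(loss)}}
\end{equation*}
for every $q \geq 1$, $1 \le k \le 2n+1$. The point here is that $X_\eps$ solves the linearized equation \eqref{e:linear_solution} with the additional mass $\mu > 0$, which gives a stationary Gaussian field whose covariance is controlled by $\fC_\eps^{(\delta)}$; by Lemma~\ref{le:renormalisation_approx} and Lemma~\ref{lem:Cdelta_divergence}, the pointwise variance of the $\delta$-mollified field is of order $\eps^{2\gamma} |\log \delta|$ on the support of $a_\eps$, and in the limit $\delta \to 0$ the renormalized Wick power $X_\eps^{\diamond k}$ has ``$L^2$-type size'' of order $\eps^{k\gamma}$ (up to $\eps^{-\text{small}}$ losses from the cutoff being supported on an interval of length $\eps^{-1/2}$ and from Sobolev embedding). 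Gaussian hypercontractivity then upgrades the $L^2$-in-space, $L^2$-in-probability bound to all $L^q$ moments, with a $q$-dependent constant but no extra $\eps$-loss. Since $\gamma - \frac{\nu}{3} < k\gamma$ for $k \ge 1$ (as $\gamma > 0$ and $\nu \in (0,\gamma)$), we have a genuine surplus power of $\eps$ to play with. I expect these moment bounds to be quoted from or proved in ``Appendix~\ref{app:linear_smallness}'' as flagged after Lemma~\ref{le:convergence_Wick_power}, so in the main text I would simply invoke them.

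\textbf{Step 2: from fixed time to supremum over a grid, then to the continuum.} Fix $q$ large (to be chosen at the end in terms of $N, N'$, and $p$). Discretize $[0,\eps^{-N}]$ into $\sim \eps^{-N-M}$ points with spacing $\eps^{M}$ for a large integer $M$. Chebyshev plus Step~1 gives, for each grid point $t_j$,
\begin{equation*}
    \PP\Big( \big\| X_\eps^{\diamond k}[t_j] \big\|_{\wW^{-\kappa',p}} > \eps^{\gamma - \frac{\nu}{3}} \Big) \lesssim \eps^{q\big( (k\gamma) - (\gamma - \frac{\nu}{3}) - \text{small} \big)}\;,
\end{equation*}
and summing over the $\lesssim \eps^{-N-M}$ grid points and over $k = 1, \ldots, 2n+1$ still leaves a bound $\lesssim \eps^{N'}$ provided $q$ is taken large enough (using that $k\gamma - \gamma + \frac{\nu}{3} > \frac{\nu}{3} > 0$ strictly). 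To control the supremum between grid points, I would use a Kolmogorov continuity estimate: a bound of the form $\EE \| X_\eps^{\diamond k}[t] - X_\eps^{\diamond k}[s] \|_{\wW^{-\kappa',p}}^q \lesssim_\eps |t-s|^{c q}$ for some $c > 0$, which holds because the Wick powers inherit time-Hölder regularity from the (parabolic) linear equation. The $\eps$-dependence of that proportionality constant is harmless since we only need it to beat the tiny spacing $\eps^M$ for $M$ large; any fixed polynomial blow-up in $\eps^{-1}$ is absorbed. This gives control of $\sup_{t \in [t_j, t_{j+1}]}$ by $\sup_j$ plus a term that is $o(\eps^{\gamma - \frac{\nu}{3}})$ with overwhelming probability.

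\textbf{Step 3: uniformity in $p$.} For the claim that the constant $C$ is independent of $p \in [p_0, +\infty]$, I would note that the $\wW^{-\kappa',p}$ norms are monotone-ish in $p$ in the relevant range after adjusting the smoothness index slightly: using the embedding $\wW^{\alpha,p} \hookrightarrow \cC^{\alpha'}$ recorded in the Notations for $\alpha' < \alpha - \tfrac1p$, it suffices to prove the $p = \infty$ (i.e. $\cC^{-\kappa'}$, or rather $\cC^{-\kappa'+\text{tiny}}$) statement for one value, and the finite-$p$ statements follow with the same constant since $\|\cdot\|_{\wW^{-\kappa',p}} \lesssim \|\cdot\|_{\cC^{-\kappa''}}$ uniformly in $p \ge p_0$ for suitable $\kappa'' < \kappa'$. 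Alternatively, take $p_0$ large enough that $\cC^{-\kappa'/2} \hookrightarrow \wW^{-\kappa',p}$ boundedly with $p$-uniform norm for all $p \ge p_0$ and run Steps 1–2 at the endpoint. Choosing the exponents consistently with \eqref{e:exponents_dependence} keeps all the small losses below $\nu/3$.

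\textbf{Main obstacle.} The delicate point is Step~1: correctly tracking the $\eps$-dependence of the size of $X_\eps^{\diamond k}$. One must check that, despite the cutoff $a_\eps = a(\sqrt\eps \cdot)$ spreading the noise over a spatial region of length $\sim \eps^{-1/2}$ and despite the renormalization constant $C^{(\delta)} \sim \frac{|\log\delta|}{4\pi^2}$ diverging, the renormalized $k$-th Wick power still has size of order $\eps^{k\gamma}$ up to an arbitrarily small power $\eps^{-\nu/6}$ (and in particular is genuinely smaller than $\eps^{\gamma - \nu/3}$ for every $k \ge 1$, the worst case being $k=1$ where there is no renormalization at all and one simply needs the $\eps^\gamma$ prefactor in \eqref{e:linear_solution} to dominate, using $\gamma - \nu/3 < \gamma$). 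This requires the precise Fourier-side computations behind Lemma~\ref{le:renormalisation_approx}, which is exactly why the proof is deferred to the appendix; here I would only assemble the union-bound/continuity argument on top of those estimates.
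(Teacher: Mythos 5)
Your overall architecture---fixed-time Gaussian moment bounds for $X_\eps^{\diamond k}$ (covariance of order $\eps^{2\gamma}\log$, Wick's formula, hypercontractivity), a Kolmogorov-type argument in time to reach the supremum over $[0,\eps^{-N}]$ (the paper does this through a Duhamel decomposition and a time-H\"older estimate rather than a grid, but the two are equivalent in spirit, since any fixed polynomial loss in $\eps^{-1}$ is absorbed by taking the moment exponent large), and a final Chebyshev bound exploiting the surplus exponent $\nu/3$---is the same as the paper's, whose proof is exactly the chain of estimates in Appendix~\ref{app:linear_smallness}. Note, however, that ``quoting Step 1 from the appendix'' is circular in this instance: the appendix \emph{is} the proof of the present lemma, so the covariance computation (Lemmas~\ref{lem:linear_Exy_expression}--\ref{lem:linear_EXphi_bound}) and its consequences are precisely the content you would have to supply rather than invoke.

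The genuine gap is your treatment of space at infinity, i.e.\ Step 3 and the region $|x|>\eps^{-1/2}$. The embedding $\|\cdot\|_{\wW^{-\kappa',p}(\RR)}\lesssim\|\cdot\|_{\cC^{-\kappa''}(\RR)}$ that you invoke (in either variant) is false on the whole line: a constant belongs to $\cC^{-\kappa''}$ but to no $\wW^{-\kappa',p}$ with $p<\infty$, so the finite-$p$ statement cannot be reduced to the H\"older one by a $p$-uniform embedding. Moreover $X_\eps$ is not supported in $[-\eps^{-1/2},\eps^{-1/2}]$---only the noise cutoff $a_\eps$ is---so even the $p=\infty$ bound requires controlling $\sup_{|x|\ge \eps^{-1/2}}|X_\eps(x)|$, and the finite-$p$ bound requires genuine $L^p$-integrability of the field out there. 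The paper devotes specific estimates to this: an integration by parts in the Fourier variable giving $\EE X_\eps^2(x)\lesssim \eps^{2\gamma}|x|^{-1}$ for $|x|>\eps^{-1/2}$ (Lemma~\ref{lem:linear_Ex_decay}), a matching spatial continuity bound there (Lemma~\ref{lem:linear_Exy_continuity_space}), a dyadic Kolmogorov argument for the supremum outside the cutoff (Lemma~\ref{lem:linear_outside}), and, for finite $p$, a \emph{localized} H\"older-to-Sobolev embedding on $[-\eps^{-1/2},\eps^{-1/2}]$ costing $\eps^{-\nu}$ with $\nu\downarrow 0$ as $p\uparrow\infty$, combined with direct integration of the decay outside (Remark after Lemma~\ref{lem:linear_final_approx}). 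Without some version of these decay estimates your argument does not close for any finite $p$, and is incomplete even for $p=\infty$. (A minor further point: the exponent $q(2\gamma+\tfrac12)\tfrac{k}{2}$ displayed in your Step 1 is not the correct fixed-time size---the true order is $\eps^{k\gamma}$ up to logarithms and an $\eps^{-1/2}$ loss from the length of the cutoff interval---but since your subsequent reasoning only uses the surplus $k\gamma-(\gamma-\tfrac{\nu}{3})\ge\tfrac{\nu}{3}$, this slip is harmless.)
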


\begin{lem} \label{lem:PerturbationVersion}
Fix $n \geq 1$ and $\eta =\frac{1}{100n}$. Let $\kappa'\in(0,\frac{\eta}{2})$, $\nu\in(0,\gamma)$ and $p\in(\frac{2}{\eta},+\infty]$. Let $\Gamma_\eps^{(0)}, \dots, \Gamma_\eps^{(2n)} \in \cC(\RR^+, \sS')$ be continuous evolution in the space of distributions such that 
\begin{equation*}
    \sum_{k=0}^{2n} \sup_{t \in [0,T_\eps]} \left\|\Gamma_\eps^{(k)}[t] \right\|_{\wW^{-\kappa',p}} \leq \eps^{\gamma - \frac{\nu}{2}}
\end{equation*}
for some $T_\eps \rightarrow +\infty$ as $\eps \rightarrow 0$. Let $w_\eps$ be the solution to
\begin{equation}\label{e:perturbed_allen_cahn}
    \partial_t w_\eps = \Delta w_\eps  + w_\eps-w_\eps^{2n+1} + \sum_{k=0}^{2n} \Gamma^{(k)}_\eps  w_\eps^k
\end{equation}
with initial data $w_\eps[0]$ satisfying
\begin{equation} \label{e:w_initial_close}
    \dist_{\wW^{-\kappa',p}}(w_\eps[0], \mM) \leq\eps^{\gamma - \nu}\;.
\end{equation}
Then, there exist $\eps_0, T>0$ depending on $\gamma$, $\nu$ and $\kappa$ only such that 
\begin{equation} \label{e:closeness_longtime_1}
    \sup \limits_{t \in [T, T_\eps]} \dist_{\wW^{\eta,p}}(w_\eps[t], \mM) \leq \eps^{\gamma - \nu}
    \end{equation}
for all $\eps < \eps_0$. Moreover, if the initial data satisfies the stronger assumption
\begin{equation} \label{e:w_initial_strong}
    \dist_{\wW^{-\kappa',p}}(w_\eps[0], \mM) \leq \eps^{\gamma - \frac{\nu}{2}}\;,
\end{equation}
then we have 
\begin{equation} \label{e:closeness_longtime_2}
    \sup \limits_{t \in [0, T_\eps]} \dist_{\wW^{-\kappa',p}}(w_\eps[t], \mM) \leq \eps^{\gamma - \nu}
\end{equation}
for all $\eps<\eps_0$.
\end{lem}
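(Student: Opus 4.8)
The strategy follows the standard "contraction towards the manifold" argument for the Allen-Cahn flow, treating the polynomial terms $\sum_k \Gamma_\eps^{(k)} w_\eps^k$ as a small perturbation of the deterministic flow $F^t$. I want to split the time interval $[0, T_\eps]$ into a short initial phase $[0,T]$, during which the smoothing effect of $e^{t\Delta}$ moves $w_\eps$ from a distributional neighborhood of $\mM$ into a genuine $\wW^{\eta,p}$ (hence $\lL^\infty$, by the embedding $\wW^{\eta,p}\hookrightarrow\cC^{\eta'}$ for $\eta' < \eta - 1/p$, which holds since $p > 2/\eta$) neighborhood, and a long phase $[T, T_\eps]$, during which the contraction of the deterministic flow towards $\mM$ (Proposition~\ref{prop:LinftySG} and Proposition~\ref{pr:LinftyEC}) keeps $w_\eps$ close to $\mM$ while the cumulative effect of the perturbation stays of order $\eps^{\gamma-\nu}$.

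First I would set up the mild formulation $w_\eps[t] = e^{t\Delta} w_\eps[0] + \int_0^t e^{(t-s)\Delta}\big( f_n(w_\eps[s]) + \sum_k \Gamma_\eps^{(k)}[s] w_\eps^k[s] \big)\,\md s$ and run a short-time fixed point / a priori estimate: starting from $\dist_{\wW^{-\kappa',p}}(w_\eps[0],\mM) \leq \eps^{\gamma-\nu}$, write $w_\eps[0] = m_{\theta_0} + v_0$ with $\|v_0\|_{\wW^{-\kappa',p}}\leq 2\eps^{\gamma-\nu}$; the parabolic smoothing $\|e^{t\Delta}\|_{\wW^{-\kappa',p}\to\wW^{\eta,p}} \lesssim t^{-(\eta+\kappa')/2}$ gives control of the linear part, and since $\eta+\kappa' < 1$ the time singularity is integrable, so the Duhamel iteration closes on a fixed interval $[0,T]$ provided $\eps$ is small (the nonlinear term $f_n$ is locally Lipschitz on $\lL^\infty$ near $\mM$, and the $\Gamma^{(k)}$ contributions carry the small factor $\eps^{\gamma-\nu/2}$, which beats $\eps^{\gamma-\nu}$). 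This yields $\dist_{\wW^{\eta,p}}(w_\eps[T],\mM) \lesssim \eps^{\gamma-\nu}$, and in particular an $\lL^\infty$ bound putting $w_\eps[T]$ in $\vV_\beta$.

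For the long phase $[T,T_\eps]$ I would work in $\lL^\infty$. Let $\ell_t = \ell(w_\eps[t])$ be the Fermi coordinate from Lemma~\ref{lem:fermi}, decompose $w_\eps[t] = m_{\ell_t} + z_t$ with $\pP_{\ell_t} z_t = 0$, and derive the evolution of $z_t$: the leading linear part is governed by $-\aA$ (shifted by $\ell_t$), which by Proposition~\ref{prop:LinftySG} contracts exponentially on the complement of the projection, $\|e^{-s\aA} - \pP\|_{\lL^\infty\to\lL^\infty}\lesssim e^{-cs}$; the quadratic-and-higher remainder of $f_n$ is $\oO(\|z_t\|_{\lL^\infty}^2)$, hence negligible once $\|z_t\|$ is small; and the perturbation contributes, via Duhamel, a term bounded by $\int_T^t e^{-c(t-s)}\|\Gamma_\eps^{(\cdot)}[s]\|_{\lL^\infty}(1+\|z_s\|^{2n})\,\md s \lesssim \eps^{\gamma-\nu/2}$ (here I need $\|\Gamma^{(k)}[s]\|_{\lL^\infty}$, but the hypothesis only gives $\wW^{-\kappa',p}$; I convolve with $e^{(t-s)\Delta}$ first, paying a harmless $(t-s)^{-\kappa'/2-1/p}$ that is absorbed by the exponential, which is exactly why $w_\eps$ and not the raw $\Gamma^{(k)}$ must sit under the time integral). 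A Grönwall / bootstrap argument then shows $\|z_t\|_{\lL^\infty} \lesssim \eps^{\gamma-\nu}$ is maintained for all $t\in[T,T_\eps]$, and converting back via the Lipschitz bound in Lemma~\ref{lem:fermi} and the $\wW^{\eta,p}$ smoothing over a unit time window gives \eqref{e:closeness_longtime_1}. The improved statement \eqref{e:closeness_longtime_2} under \eqref{e:w_initial_strong} follows by the same scheme, noting that the stronger initial bound $\eps^{\gamma-\nu/2}$ survives the short phase (no loss from $T$ fixed) and then the long-phase analysis, which only ever loses down to $\eps^{\gamma-\nu/2}$ from the perturbation, still closes below $\eps^{\gamma-\nu}$ uniformly from time $0$; one uses that \eqref{e:w_initial_strong} already places $w_\eps[0]$ close enough to invoke Lemma~\ref{lem:fermi} directly (with $-\kappa'$ regularity) so no genuine "entry time" is needed.

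The main obstacle is the bookkeeping of regularities: $w_\eps$ starts merely distribution-valued, the perturbations $\Gamma_\eps^{(k)}$ are only in $\wW^{-\kappa',p}$, yet the deterministic contraction estimates (Propositions~\ref{prop:LinftySG}, \ref{pr:LinftyEC}) and the local Lipschitz control of $f_n$ live in $\lL^\infty$. One must route every term through the parabolic semigroup with the right fractional smoothing exponents so that (i) all time singularities at $s\to t$ remain integrable — this forces $\kappa' + \eta < 1$ and $p > 2/\eta$ — and (ii) the powers $w_\eps^k$ make sense, which requires $w_\eps$ to already be a function; this is why the short phase $[0,T]$ must be handled separately and first. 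Tracking the exponents so that the perturbation always contributes a genuine gain (a power $\eps^{\gamma-\nu/2}$ or $\eps^{\gamma-\nu}$, never worse) through the Grönwall loop — rather than being amplified by the nonlinearity before $\|z_t\|$ is known to be small — is the delicate point, and is exactly where the relations in \eqref{e:exponents_dependence} are used.
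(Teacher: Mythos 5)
Your overall mechanism (Fermi coordinate, spectral gap of $\aA$ off the $m'$-direction, treating the $\Gamma_\eps^{(k)}w_\eps^k$ terms as an $\eps^{\gamma-\nu/2}$-forcing, and a preliminary smoothing step so that powers of $w_\eps$ make sense) is the same one the paper uses, but the long-time step as you have written it has a genuine gap. In your phase $[T,T_\eps]$ you bound the forcing by $\int_T^t e^{-c(t-s)}\|\Gamma_\eps^{(\cdot)}[s]\|(1+\|z_s\|^{2n})\,\md s\lesssim \eps^{\gamma-\nu/2}$, i.e.\ you use full exponential decay of the linearized semigroup. But $e^{-t\aA}$ has a neutral mode: only $e^{-t\aA}-\pP$ decays (Proposition~\ref{prop:LinftySG}). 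The device that is supposed to remove the neutral direction in your scheme is the time-varying decomposition $w_\eps[t]=m_{\ell_t}+z_t$ with $\pP_{\ell_t}z_t=0$, but then the $z_t$-equation contains $-\dot\ell_t\,m'_{\ell_t}$ and a time-dependent linearization center, so one must actually derive and control the modulation ODE for $\dot\ell_t$ (including differentiability of $t\mapsto\ell(w_\eps[t])$) and the commutator errors $(\aA_{\ell_t}-\aA_{\ell_s})z$ before any Duhamel/Gr\"onwall with a fixed semigroup is legitimate. That is precisely the step you assert ("the leading linear part is governed by $-\aA$ shifted by $\ell_t$") without carrying it out; without it, the $\pP$-component of the forcing accumulates linearly in $t$ and the bound is not uniform up to $T_\eps\to\infty$. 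A secondary, related imprecision: the statement requires the bound $\le\eps^{\gamma-\nu}$ with constant $1$ uniformly on $[T,T_\eps]$, and your sketch only produces $\lesssim\eps^{\gamma-\nu}$; making the constant exactly reproducible is what forces the careful bookkeeping (a quantitative contraction factor beating the fixed constant, plus forcing strictly smaller than $\eps^{\gamma-\nu}$).

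The paper avoids the modulation machinery altogether: it freezes the Fermi coordinate at the start of a window, sets $D_\eps=w_\eps-m_{\ell(w_\eps[0])}$ so that $\bracket{D_\eps[0],m'}=0$, runs Duhamel with $e^{-t\aA}$ in $\wW^{\eta,p}$ (so no separate heat-semigroup phase is needed; the $t^{-(\kappa'+\eta)/2}$ singularity does the regularity upgrade), uses the orthogonality to extract the $e^{-ct}$ factor on the initial term, controls the window via a stopping time $\tau_\eps^*$ and a bootstrap, chooses $T$ so that $C^*(1+T^{-(\kappa'+\eta)/2})e^{-cT}<\tfrac12$, and then iterates window by window, re-centering at each step; the secular growth along $\mM$ is harmless because each window has fixed length $T$ and the distance to $\mM$ is re-measured at each restart. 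If you want to keep your two-phase structure with a continuously moving center, you must supply the modulation equation and the commutator estimates; otherwise the cleanest fix is to adopt the frozen-center, fixed-window iteration. Your treatment of \eqref{e:closeness_longtime_2} (no smoothing loss in $\wW^{-\kappa',p}$ on $[0,T]$ under the stronger hypothesis, then concatenate with \eqref{e:closeness_longtime_1}) matches the paper's and is fine once the first part is repaired.
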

\begin{proof}
We start with the bound \eqref{e:closeness_longtime_1}. We first show there exist $\eps_0, T>0$ depending on $\gamma$, $\nu$ and $\kappa'$ only such that 
\begin{equation}\label{e:closeness_onestep}
    \sup \limits_{t \in [T, 2T]} \dist_{\wW^{\eta,p}} \big(w_\eps[t], \mM \big) \leq \eps^{\gamma - \nu}
\end{equation}
for all $\eps < \eps_0$. Recall the functional $\ell$ given in Lemma~\ref{lem:fermi}. Let $D_\eps = w_\eps - m_{\ell(w_\eps[0])}$. We assume without loss of generality that $\ell(w_\eps[0]) = 0$, so that
\begin{equation*}
    \bracket{D_\eps[0], m'} = \bracket{w_\eps[0], m'} - \bracket{m, m'} = 0\;.
\end{equation*}
Then, $D_\eps$ satisfies the equation
\begin{equation*}
    \d_t D_\eps = -\aA D_\eps + Q_\eps + R_\eps\;, \quad D_\eps[0] = w_\eps[0] - m\;.
\end{equation*}
where $\aA = -\Delta -1 + (2n+1)m^{2n}$ is defined in \eqref{e:op_A}, and $Q_\eps$ and $R_\eps$ are given by
\begin{equation*}
    Q_\eps = - \sum_{k=2}^{2n+1} \begin{pmatrix} 2n+1 \\ k \end{pmatrix} m^{2n+1-k} D_\eps^k\;, \quad R_\eps = \sum_{k=0}^{2n} \Gamma_\eps^{(k)} w_\eps^k
\end{equation*}
respectively. Furthermore, by Lemma~\ref{lem:fermi} and the assumption \eqref{e:w_initial_close}, the initial data $D_\eps[0]$ satisfies the bound 
\begin{equation} \label{e:D_initial_small}
    \begin{split}
    \|D_\eps[0]\|_{\wW^{-\kappa',p}} &\leq \dist_{\wW^{-\kappa',p}} (w_\eps[0], \mM) + \|m_{\theta} - m\|_{\wW^{-\kappa',p}}\\
    &\lesssim \dist_{\wW^{-\kappa',p}} (w_\eps[0], \mM) \lesssim \eps^{\gamma-\nu}\;,
    \end{split}
\end{equation}
where $\theta \in \RR$ is such that $\|w_\eps[0] - m_\theta\|_{\wW^{-\kappa',p}} = \dist_{\wW^{-\kappa',p}}(w_\eps[0], \mM)$. By Duhamel's formula, we have
\begin{equation}\label{e:D_Duhamel}
    D_\eps[t]  =  e^{-t\aA} D_\eps[0] + \int_0^t e^{-(t-s)\aA} \big( Q_\eps[s] + R_\eps[s] \big) \,\md s\;.
\end{equation}
Taking $\wW^{\eta,p}$-norm on both sides and applying Proposition~\ref{prop:LinftySG}, we get
\begin{equation} \label{e:duhamel_D}
\begin{aligned}
    \Vert &D_\eps[t] \Vert_{\wW^{\eta,p}} \lesssim \big( 1 + t^{-\frac{\kappa'+\eta}{2}} \big) e^{-ct} \|D_\eps[0]\|_{\wW^{-\kappa',p}}\\
    &+ \int_0^t \Big( \big(1+(t-s)^{-\frac{\eta}{2}}\big) \Vert Q_\eps[s] \Vert_{\lL^p} + \big(1+(t-s)^{-\frac{\kappa'+\eta}{2}}\big) \Vert R_\eps[s] \Vert_{\wW^{-\kappa',p}} \Big)\,\md s
\end{aligned}
\end{equation}
Here we used that $\bracket{D_\eps[0], m'} = 0$ to get the exponential factor $e^{-c t}$. For $Q_\eps$ and $R_\eps$ in the integrands, we have the bounds
\begin{equation} \label{e:Q_bound}
    \Vert Q_\eps[s]\Vert_{\lL^p} \lesssim \Vert D_\eps[s]\Vert_{\lL^p}\Vert D_\eps[s]\Vert_{\lL^\infty} \big(1+\Vert D_\eps[s] \Vert_{\lL^\infty}^{2n-1}\big)\;,
\end{equation}
and
\begin{equation}\label{e:R_bound}
    \Vert R_\eps[s]\Vert_{\wW^{-\kappa',p}} \lesssim \sum_{k=0}^{2n} \|\Gamma_\eps^{(k)}[s]\|_{\wW^{-\kappa',p}} \|w_\eps[s]\|_{\cC^{\eta/2}}^{k} \lesssim \eps^{\gamma-\frac{\nu}{2}} \big( 1 + \|D_\eps[s]\|_{\wW^{\eta,p}}^{2n} \big)\;,
\end{equation}
where we have used $w_\eps = m + D_\eps$, the assumptions on $\Gamma_\eps^{(k)}$ and the embedding $\wW^{\eta,p} \hookrightarrow \cC^{\eta/2}$ in dimension one (since $p>\frac{2}{\eta}$). Plugging these bounds back into \eqref{e:duhamel_D} and using the Sobolev embedding $\wW^{\eta,p} \hookrightarrow \lL^\infty$ as well as \eqref{e:D_initial_small}, we deduce there exists $C^*>1$ such that 
\begin{equation} \label{e:duhamel_D_Ceta}
\begin{split}
    \|D_\eps[t]\|_{\wW^{\eta,p}} \leq &\phantom{1} C^* \bigg( \eps^{\gamma-\nu} (1 + t^{-\frac{\kappa'+\eta}{2}}) \, e^{-ct} + \eps^{\gamma-\frac{\nu}{2}} (1+t)\\
    &+ \int_{0}^{t} \big( 1 + (t-s)^{-\frac{\kappa'+\eta}{2}} \big) \|D_\eps[s]\|_{\wW^{\eta,p}}^2 \big(1 + \|D_\eps[s]\|_{\wW^{\eta,p}}^{2n-1} \big) {\rm d}s \bigg)\;.
\end{split}    
\end{equation}
Now, let
\begin{equation*}
    \tau_\eps^* := \inf \left\{ t > 0: \|D_\eps[t]\|_{\wW^{\eta,p}} \geq 2C^*(1+t^{-\frac{\kappa'+\eta}{2}}) \eps^{\gamma-\nu} \right\}\;.
\end{equation*}
Then from \eqref{e:duhamel_D_Ceta}, we deduce there exists $C_0>0$ such that
\begin{equation}\label{e:D_Ceta_bound}
    \|D_\eps[t]\|_{\wW^{\eta,p}} 
    \leq C_0 \big( \eps^{2(\gamma-\nu)} + \eps^{\gamma-\frac{\nu}{2}} \big) \, (1+t)+C^*(1+t^{-\frac{\kappa'+\eta}{2}})e^{-ct}\eps^{\gamma-\nu}\;,
\end{equation}
for all $t\in[0,\tau_\eps^*]$. Furthermore, by continuity, we also have
\begin{equation*}
    \|D_{\eps}[\tau_\eps^*]\|_{\wW^{\eta,p}} \geq 2C^*(1+ (\tau_\eps^*)^{-\frac{\kappa'+\eta}{2}}) \eps^{\gamma-\nu}\;.
\end{equation*}
Combining the above two bounds, we get
\begin{align*}
    &2C^*(1+(\tau_\eps^*)^{-\frac{\kappa'+\eta}{2}}) \eps^{\gamma-\nu} \leq \|D_\eps[\tau_\eps^*]\|_{\wW^{\eta,p}} \\
    \leq &C_0 \big( \eps^{2(\gamma-\nu)} + \eps^{\gamma-\frac{\nu}{2}} \big) \, (1+\tau_\eps^*)+C^*(1+(\tau_\eps^*)^{-\frac{\kappa'+\eta}{2}})e^{-c\tau_\eps^*}\eps^{\gamma-\nu}\;. 
\end{align*}
Since $\nu \in (0, \gamma)$, this further implies there exists $c', \theta>0$ such that $\tau_\eps^{*} \geq c' \eps^{-\theta} \wedge T_\eps$ for all sufficiently small $\eps$. 

We now choose $T>0$ such that
\begin{equation*}
    C^* (1+T^{-\frac{\kappa'+\eta}{2}}) \, e^{-cT} < \frac{1}{2}\;. 
\end{equation*}
For this choice of $T$, we have $\tau_\eps^* > 2T$ for sufficiently small $\eps$. Since \eqref{e:D_Ceta_bound} holds for all $t \in [0, \tau_\eps^*]$ and hence in particular up to $2T$, we deduce the bound \eqref{e:closeness_onestep} holds for sufficiently small $\eps$. 

To extend it to all $t \in [T,T_\eps]$, we note that the same argument applies to the solution starting at time $T$ and the corresponding bound in the time interval $[2T, 3T]$, and so on. The bound \eqref{e:closeness_longtime_1} then follows by iteration. 

Now we turn to showing \eqref{e:closeness_longtime_2} under the stronger assumption \eqref{e:w_initial_strong}. In view of \eqref{e:closeness_longtime_1}, it suffices to prove \eqref{e:closeness_longtime_2} with supremum taken over $t \in [0,T]$. Note that for $t < \tau_\eps^{*}$ (and hence also $t \in [0,T]$ since $\tau_\eps^* > 2T$), taking $\wW^{-\kappa',p}$-norm on both sides of \eqref{e:D_Duhamel} and using \eqref{e:Q_bound} and \eqref{e:R_bound}, we have
\begin{equation*}
    \begin{split}
    \|D_\eps[t]\|_{\wW^{-\kappa',p}} &\lesssim \|D_\eps[0]\|_{\wW^{-\kappa',p}} + \int_0^t \big( \|Q_\eps[s]\|_{\lL^p} + \|R_\eps[s]\|_{\wW^{-\kappa',p}} \big)\,\md s\\
    &\lesssim \big( \eps^{2(\gamma-\nu)} + \eps^{\gamma-\frac{\nu}{2}} \big) \, (1+t) + \eps^{\gamma-\frac{\nu}{2}}\;.
    \end{split}
\end{equation*}
The claim \eqref{e:closeness_longtime_2} then follows.
\end{proof}

We are now ready to prove Theorem~\ref{thm:Ckappacloseness}. 

\begin{proof} [Proof of Theorem~\ref{thm:Ckappacloseness}]
Recall $X_\eps$ is the stationary solution to the linearized equation \eqref{e:linear_solution} and $X_\eps^{\diamond k}$ is its $k$-th Wick power with respect to its Gaussian structure. 

Taking $\Gamma_\eps^{(k)} = - \binom{2n+1}{k} X_\eps^{\diamond (2n+1-k)}$ for $1 \leq k \leq 2n$ and $\Gamma_\eps^{(0)} = (1+\mu) X_\eps - X_\eps^{\diamond (2n+1)}$ in \eqref{e:perturbed_allen_cahn}, then $u_\eps := X_\eps + w_\eps$ solves \eqref{e:main_eqn}. The claim of the theorem then follows from Lemmas~\ref{lem:smallnessOfLinearSolution} and~\ref{lem:PerturbationVersion}. 
\end{proof}


\section{Convergence}\label{sec:convergence}

Recall $u_\eps$ is the solution to \eqref{e:main_eqn} with initial data $u_\eps[0] \in \mM$, as rigorously defined in Section~\ref{sec:renormalization}. Let
\begin{equation*}
    v_\eps[t] := u_\eps [\eps^{-2\gamma-1}t]\;.
\end{equation*}
Then $v_\eps$ satisfies the equation (in law)
\begin{equ} \label{e:v_eps}
    \d_t v_\eps = \eps^{-2\gamma-1} \big( \Delta v_\eps + v_\eps -v_\eps^{\diamond (2n+1)} \big) +\eps^{-\frac{1}{2}} a_\eps \sqrt{\fD}\dot{W}\;, \quad v_\eps[0] \in \mM\;.
\end{equ}
Here, the Wick product is taken with respect to the Gaussian structure of the stationary solution to \eqref{e:linear_solution}, and is the same with the Wick product taken in \eqref{e:main_eqn}. More precisely, we have
\begin{equation*}
    \big(v_\eps[t]\big)^{\diamond (2n+1)} = \big(u_\eps[t_\eps]\big)^{\diamond (2n+1)} = \sum_{k=0}^{2n+1} \begin{pmatrix} 2n+1 \\ k \end{pmatrix} \big( X_{\eps}[t_\eps] \big)^{\diamond (2n+1-k)} \big( w_\eps [t_\eps] \big)^k
\end{equation*}
for $t_\eps = \eps^{-2\gamma-1}t$. We now start to identify a candidate of the phase separation point and analyze its evolution. Ideally, one would like to set
\begin{equation*}
    \xi_\eps(t) := \sqrt{\eps} \zeta \big( v_\eps[t] \big)\;,
\end{equation*}
derive an SDE for $\xi_\eps(t)$ and study the asymptotics as $\eps \rightarrow 0$. Indeed, applying It\^o's formula to $\xi_\eps$, we obtain
\begin{equation*}
    \md \xi_\eps (t) = \big( b_{1,\eps}(t) + b_{2,\eps}(t) \big) \,\md t + \md \, \text{Mart}_\eps(t)\;,
\end{equation*}
where the drift terms
\begin{equation*}
    \begin{split}
    b_{1,\eps}(t) &= \eps^{-2\gamma-\frac{1}{2}} \langle D \zeta (v_\eps), \, \Delta v_\eps + v_\eps - v_\eps^{\diamond (2n+1)} \rangle\;,\\
    b_{2,\eps}(t) &= \frac{1}{2 \sqrt{\eps}} \int_\RR \sqrt{\fD}^{\otimes 2} \big( a_\eps^{\otimes 2} \cdot D^2 \zeta (v_\eps) \big)(y,y) \,\md y
    \end{split}
\end{equation*}
come respectively from the drift and quadratic variations parts of \eqref{e:v_eps}, and the martingale term $\text{Mart}_\eps$ from the stochastic integral in \eqref{e:v_eps} has the expression
\begin{equation*}
    \md \, \text{Mart}_\eps(t) = \langle D \zeta \big( v_\eps[t] \big), \, a_\eps \sqrt{\fD} \md W[t] \rangle\;.
\end{equation*}
Here, we recall from \eqref{e:cross_fractional_derivative} that the integrand in $b_{2,\eps}(t)$ above is the cross fractional derivative $\sqrt{\fD}^{\otimes 2}$ on the two-variable function $a_\eps^{\otimes 2} \cdot D^2 \zeta (v_\eps)$ and then (formally) evaluated at the diagonal. 

While one can show the martingale term $\text{Mart}_\eps$ converges to a limit as $\eps \rightarrow 0$, both $b_{1,\eps}$ and $b_{2,\eps}$ above are just formal expressions. In fact, both of them are infinite for every fixed $\eps>0$. 

To see this, we first note that by orthogonality between $D \zeta$ and the gradient flow (see \eqref{e:zeta_magical_cancellation} below), we formally have
\begin{equation*}
    \begin{split}
    b_{1,\eps}(t) &= \eps^{-2\gamma-\frac{1}{2}} \langle D \zeta (v_\eps), \, \Delta v_\eps + v_\eps - v_\eps^{2n+1} + v_\eps^{2n+1} - v_\eps^{\diamond (2n+1)} \rangle\\
    &= \eps^{-2\gamma-\frac{1}{2}} \langle D \zeta (v_\eps), \, v_\eps^{2n+1} - v_\eps^{\diamond (2n+1)} \rangle\;.
    \end{split}
\end{equation*}
The difference $v_\eps^{2n+1} - v_\eps^{\diamond (2n+1)}$ contains an infinite renormalization, and hence it is natural to expect that $b_{1,\eps}$ above is ill defined. As for $b_{2,\eps}$, it turns out that even the integrand on its right hand side is not defined for fixed $y$, and hence is also infinite. 

Magically, it turns out that the infinite parts of $b_{1,\eps}$ and $b_{2,\eps}$ cancel out, and their sum converges to a finite limit as $\eps \rightarrow 0$. This gives the limiting equation \eqref{e:limit_sde}. 

Before we give rigorous justification of this cancellation and convergence, we first define the constant $C_0^*$ given by
\begin{equation} \label{e:C0}
    C_0^* := \int_{\RR} D \zeta (y; m_{\theta}) \cdot (y-\theta) \, m_\theta^{2n-1}(y) \, {\rm d}y = - \frac{1}{\|m'\|_{\lL^2}^2} \int_{\RR} y \, m'(y) \, m^{2n-1}(y) \, {\rm d}y\;.
\end{equation}
The integral in the middle is independent of $\theta \in \RR$ by translation properties of $D \zeta$ (\eqref{e:Dzeta_translation} below), and one can take $\theta = 0$ (and hence $m_\theta = m$) for simplicity. The second equality comes from the identity $D \zeta (m) = - m' / \|m'\|_{\lL^2}^2$ (see \cite[Theorem~7.2]{Fun95} and \cite[Lemma~3.11, Proposition~4.2]{XZZ24}). 

The constant $C_0^*$ will appear in the cancellation of the formal sum $b_{1,\eps} + b_{2,\eps}$. In fact, we will show later that the divergent parts of $b_{1,\eps}$ and $b_{2,\eps}$ are from the same divergent quantity multiplied by the above two expressions of $C_0^*$ but with opposite signs, and hence they cancel out. In the special case $n=1$, we have $C_0^* = -\frac{3}{2}$.

\subsection{Derivation of the SDE -- proof of Theorem~\ref{thm:main}}
\label{sec:SDE}

We now start to justify this cancellation and convergence. This involves careful analysis of a regularized version of $\xi_\eps$ given above. Recall from \eqref{e:parameters_dependence} that we always set the regularization parameter $\delta$ depending on $\eps$ such that $\delta = \delta (\eps) = \eps^{1000 n(\gamma+1)}$. Also recall from \eqref{e:exponents_dependence} the small fixed parameters $0 < \kappa' \ll \kappa \ll \eta = \frac{1}{100n}$. Let
\begin{equation} \label{e:v_mollify}
    v_\eps^{(\delta)} := \qQ_\delta v_\eps\;,
\end{equation}
where we recall $\qQ_\delta$ is mollification by the heat kernel $q_{\delta^2}$. We also fix $p>\frac{1}{\kappa'}$ sufficiently large (such that Lemma~\ref{lem:smallnessOfLinearSolution} holds). Define the stopping times $\widetilde{\tau}_\eps^{(\delta)}$ and $\tau_\eps^{(\delta)}$ by
\begin{equation} \label{e:stoppingtime_defn}
    \begin{split}
    \widetilde{\tau}_\eps^{(\delta)} &:= \inf \Big\{t>0: \, \exists \, 1 \leq k \leq 2n+1 \text{ such that } \|X_\eps^{\diamond k}[t]\|_{\wW^{-\kappa',p}}> \eps^{\gamma-\frac{\kappa'}{3}}\\
    &\phantom{1111111}\text{ or } \| H_{k}\big( \qQ_\delta X_\eps[t]; \, \fC_\eps^{(\delta)} \big) - X_\eps^{\diamond k}[t]\|_{\cC^{-\frac{\eta}{2}}}> \delta^{\frac{\eta}{5}} \Big\}\;,\\
    \tau_\eps^{(\delta)} &:= \eps^{2\gamma+1} \, \widetilde{\tau}_\eps^{(\delta)}\;.
    \end{split}
\end{equation}
Then by Lemmas~\ref{lem:smallnessOfLinearSolution} and~\ref{lem:linear_final_approx}, for every $T >0$ and $N>0$, we have
\begin{equation} \label{e:stoppingtime_generic}
\PP(\tau_\eps^{(\delta)} \leq T) \lesssim \eps^N\;.
\end{equation}
The solution $u_\eps$ to \eqref{e:main_eqn} with $u_\eps[0] \in \mM$ has the following properties before the stopping time $\widetilde{\tau}_\eps^{(\delta)}$ (also $v_\eps^{(\delta)}$ before the stopping time $\tau_\eps^{(\delta)}$).

\begin{lem}\label{lem:solution_bound}
    We have the bounds
    \begin{equation*}
        \begin{split}
        &\dist_{\wW^{-\kappa',p}}(u_\eps[t], \mM) \leq \eps^{\gamma-\kappa'}\;,\\
        &\|w_\eps[t]\|_{\cC^\eta} \lesssim 1+ t^{-\eta}\;, \quad \|u_\eps^{\diamond (2n+1)}[t]\|_{\cC^{-\kappa}} \lesssim 1+ t^{-\frac{1}{4}}
        \end{split}
    \end{equation*}
    for all $t\in[0,\widetilde{\tau}_\eps^{(\delta)}]$ and all sufficiently small $\eps$. As a consequence (of the first bound for $u_\eps[t]$), we have
    \begin{equ}
        \dist_{\wW^{-\kappa',p}}(v_\eps[t], \mM) \leq \eps^{\gamma-\kappa'}
    \end{equ}
    for all $t\in[0,\tau_\eps^{(\delta)}]$ and all sufficiently small $\eps$.
\end{lem}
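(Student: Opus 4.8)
The plan is to establish the three bounds for $u_\eps$ (and then transfer the first one to $v_\eps$) by combining Theorem~\ref{thm:Ckappacloseness}, the perturbation estimates from Lemma~\ref{lem:PerturbationVersion}, and parabolic smoothing for the Allen-Cahn flow, all run up to the stopping time $\widetilde{\tau}_\eps^{(\delta)}$.

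\textbf{Step 1: Set-up on the good event.} Recall that on $[0,\widetilde{\tau}_\eps^{(\delta)}]$ we have by definition $\|X_\eps^{\diamond k}[t]\|_{\wW^{-\kappa',p}} \leq \eps^{\gamma-\kappa'/3}$ for all $1\leq k\leq 2n+1$, and also the mollification bound on $H_k(\qQ_\delta X_\eps; \fC_\eps^{(\delta)}) - X_\eps^{\diamond k}$. Write $u_\eps = X_\eps + w_\eps$ as in Section~\ref{sec:renormalization}, so $w_\eps$ solves \eqref{e:perturbed_allen_cahn} with $\Gamma_\eps^{(k)} = -\binom{2n+1}{k}X_\eps^{\diamond(2n+1-k)}$ for $1\leq k\leq 2n$ and $\Gamma_\eps^{(0)} = (1+\mu)X_\eps - X_\eps^{\diamond(2n+1)}$. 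On the stopping time interval these $\Gamma_\eps^{(k)}$ satisfy $\sum_k \sup \|\Gamma_\eps^{(k)}[t]\|_{\wW^{-\kappa',p}} \lesssim \eps^{\gamma-\kappa'/3} \leq \eps^{\gamma-\kappa'/2}$ (taking $\nu = \kappa'$ in the notation of Lemma~\ref{lem:PerturbationVersion}, and using $\kappa'/3 < \kappa'/2$), and since $u_\eps[0]\in\mM$ we have $w_\eps[0] = u_\eps[0] - X_\eps[0]$ with $\dist_{\wW^{-\kappa',p}}(w_\eps[0],\mM) \leq \|X_\eps[0]\|_{\wW^{-\kappa',p}} \leq \eps^{\gamma-\kappa'/2}$, which is the strong initial condition \eqref{e:w_initial_strong}. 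Hence Lemma~\ref{lem:PerturbationVersion} applies and gives $\dist_{\wW^{-\kappa',p}}(w_\eps[t],\mM) \leq \eps^{\gamma-\kappa'}$ for all $t$ up to the stopping time (for $\eps$ small), which combined with $\|X_\eps[t]\|_{\wW^{-\kappa',p}} \leq \eps^{\gamma-\kappa'/3}$ yields $\dist_{\wW^{-\kappa',p}}(u_\eps[t],\mM) \leq \eps^{\gamma-\kappa'}$ after adjusting constants; this is the first bound. The last displayed bound on $v_\eps$ is then immediate since $v_\eps[t] = u_\eps[\eps^{-2\gamma-1}t]$ and $\tau_\eps^{(\delta)} = \eps^{2\gamma+1}\widetilde\tau_\eps^{(\delta)}$.

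\textbf{Step 2: H\"older bound on $w_\eps$.} From \eqref{e:closeness_longtime_1} (with $\eta$ as in \eqref{e:exponents_dependence}) together with the short-time regularization one gets $\dist_{\wW^{\eta,p}}(w_\eps[t],\mM) \leq \eps^{\gamma-\kappa'}$ for $t\in[T,\widetilde\tau_\eps^{(\delta)}]$, and for $t\in(0,T]$ one runs the Duhamel estimate \eqref{e:D_Duhamel}--\eqref{e:duhamel_D_Ceta} directly: the $e^{-t\aA}$ semigroup gives $\|D_\eps[t]\|_{\wW^{\eta,p}} \lesssim t^{-(\kappa'+\eta)/2}\|D_\eps[0]\|_{\wW^{-\kappa',p}} + (\text{lower order})$, so that $w_\eps = m_{\ell} + D_\eps$ with $\|D_\eps[t]\|_{\wW^{\eta,p}} \lesssim t^{-(\kappa'+\eta)/2}$ for short times. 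Since $\|m_\ell\|_{\cC^\eta} \lesssim 1$ uniformly in $\ell$ and $\wW^{\eta,p}\hookrightarrow \cC^{\eta/2}$ (indeed into $\cC^{\eta'}$ for $\eta' < \eta - 1/p$, which covers $\cC^\eta$ after a harmless relabelling of the exponent, using $p$ large), one concludes $\|w_\eps[t]\|_{\cC^\eta} \lesssim 1 + t^{-\eta}$, the singularity exponent $(\kappa'+\eta)/2 < \eta$ being absorbed. (If one prefers to keep the exact exponent $\eta$ in $\cC^\eta$, interpolate the $\wW^{\eta,p}$ bound down to a slightly lower H\"older index and note the power of $t$ only improves.)

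\textbf{Step 3: Negative-H\"older bound on $u_\eps^{\diamond(2n+1)}$.} Use the expansion \eqref{e:u_Wick_expansion}, $u_\eps^{\diamond(2n+1)} = \sum_{k=0}^{2n+1}\binom{2n+1}{k} X_\eps^{\diamond(2n+1-k)} w_\eps^k$. For $k\leq 2n$ each term is a product of $X_\eps^{\diamond(2n+1-k)}$, which on the stopping interval is bounded in $\wW^{-\kappa',p}\hookrightarrow \cC^{-\kappa}$ (for $p$ large) by $\eps^{\gamma-\kappa'/3}$, against $w_\eps^k$ whose $\cC^\eta$ norm is $\lesssim (1+t^{-\eta})^k$ by Step 2; since $\eta + \kappa < 1$ the product is controlled in $\cC^{-\kappa}$ by the standard paraproduct/multiplication estimate, giving a contribution $\lesssim \eps^{\gamma-\kappa'/3}(1+t^{-\eta})^{2n}$. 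The $k=2n+1$ term is $w_\eps^{2n+1}$, bounded in $\cC^\eta \subset \cC^{-\kappa}$ by $(1+t^{-\eta})^{2n+1}$. Summing and using $(2n+1)\eta = \frac{2n+1}{100n} < \frac14$ by the choice of $\eta$ in \eqref{e:exponents_dependence}, one gets $\|u_\eps^{\diamond(2n+1)}[t]\|_{\cC^{-\kappa}} \lesssim 1 + t^{-1/4}$ as claimed (the $\eps$-power in front is $\leq 1$ so can be dropped).

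\textbf{Main obstacle.} The delicate point is Step 3: the Wick power $u_\eps^{\diamond(2n+1)}$ is genuinely a distribution and the naive product $X_\eps^{\diamond(2n+1-k)} \cdot w_\eps^k$ must be justified via the Bony multiplication theorem, which requires the sum of regularity indices $\eta - \kappa$ (or really $\eta/2 - \kappa'$ after embeddings) to be strictly positive — this is exactly what the hierarchy $\kappa' \ll \kappa \ll \eta$ in \eqref{e:exponents_dependence} is arranged to guarantee, but one must be careful that the resol\-vent/embedding losses do not eat the margin. The time-singularity bookkeeping (tracking the exponent $t^{-\eta}$ through the $(2n+1)$-fold product and checking it stays below $t^{-1/4}$) is the other place to be attentive, and is where the precise value $\eta = \frac{1}{100n}$ is used. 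Everything else is a routine application of Lemma~\ref{lem:PerturbationVersion}, Proposition~\ref{prop:LinftySG}, and Sobolev embedding on the good event $\{t \leq \widetilde\tau_\eps^{(\delta)}\}$.
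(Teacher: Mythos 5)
Your proposal is correct and follows essentially the same route as the paper: apply Lemma~\ref{lem:PerturbationVersion} on the event defining $\widetilde{\tau}_\eps^{(\delta)}$ (strong initial condition via $u_\eps[0]\in\mM$), add back $X_\eps$ for the $\wW^{-\kappa',p}$ bound, reuse \eqref{e:closeness_longtime_1} and the short-time Duhamel bound \eqref{e:D_Ceta_bound} for the $\cC^\eta$ control of $w_\eps$, and feed these into the expansion \eqref{e:u_Wick_expansion} for the Wick power, with the $v_\eps$ bound by time rescaling. The only difference is that you spell out the Besov/paraproduct multiplication estimate and the bookkeeping $(2n+1)\eta<\tfrac14$ in Step 3, which the paper leaves implicit.
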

\begin{proof}
    By definition of the stopping time $\widetilde{\tau}_\eps^{(\delta)}$ and that $u_\eps[0] \in \mM$, $w_\eps[0] = u_\eps[0] - X_\eps[0]$ satisfies
    \begin{equation*}
        \dist_{\wW^{-\kappa', p}}(w_\eps[0], \mM) \leq \eps^{\gamma-\frac{\kappa'}{3}}
    \end{equation*}
    if $\widetilde{\tau}_\eps^{(\delta)}>0$. Hence, by Lemma~\ref{lem:PerturbationVersion} and the definition \eqref{e:stoppingtime_defn} again, we have
    \begin{equation*}
        \dist_{\wW^{-\kappa', p}}(u_\eps[t], \mM) \leq \|X_\eps[t]\|_{\wW^{-\kappa', p}} + \dist_{\wW^{-\kappa', p}}(w_\eps[t], \mM) \leq \eps^{\gamma - \kappa'}
    \end{equation*}
    for all $t \in [0, \widetilde{\tau}_\eps^{(\delta)}]$ and all sufficiently small $\eps$.

    We now turn to the bounds for $w_\eps[t]$ and $u_\eps^{\diamond (2n+1)}$. By the embedding $\wW^{-\kappa',p} \hookrightarrow \cC^{-\kappa}$, we have
    \begin{equation*}
        \max_{1 \leq k \leq 2n+1} \sup_{t \in [0, \widetilde{\tau}_\eps^{(\delta)}]} \|X_\eps^{\diamond k}[t]\|_{\cC^{-\kappa}} \lesssim \eps^{\gamma- \frac{\kappa'}{3}}\;.
    \end{equation*}
    Combining \eqref{e:closeness_longtime_1}, \eqref{e:D_Ceta_bound} and that $\tau_\eps^* > 2T$ in the proof of Lemma~\ref{lem:PerturbationVersion}, we have
    \begin{equation*}
    \dist_{\cC^\eta} \big( w_\eps[t], \mM \big) \lesssim 1+ t^{-\eta}
    \end{equation*}
    for all $t\in[0,\widetilde{\tau}_\eps^{(\delta)}]$ and all sufficiently small $\eps$. Together with $\|m\|_{\cC^\eta} \lesssim 1$, this implies the desired bound for $\|w_\eps[t]\|_{\cC^\eta}$ before $\widetilde{\tau}_\eps^{(\delta)}$. 
    
    As for $u_\eps^{\diamond (2n+1)}$, by the expression \eqref{e:u_Wick_expansion}, the desired bound then follows from that for $\|X_\eps^{\diamond (2n+1-k)}\|_{\cC^{-\kappa}}$ in and that for $\|w_\eps\|_{\cC^\eta}$. 
    
    Finally, the bound for $v_\eps$ up to time $\tau_\eps^{(\delta)}$ follows directly from the one for $u_\eps$ and a rescaling of time. 
\end{proof}

Recall from \eqref{e:v_mollify} that $v_\eps^{(\delta)} := \qQ_\delta v_\eps$. We have the following immediate bounds. 

\begin{lem} \label{le:v_eps_regularization}
    We have the bounds 
    \begin{equation*}
        \dist_{\wW^{\kappa',p}} \big( v_\eps^{(\delta)}, \mM \big) \lesssim \delta + \eps^{\gamma-\kappa'} \delta^{-2\kappa'}\;, \qquad \|v_\eps-v_\eps^{(\delta)}\|_{\cC^{-\kappa}}\lesssim \delta+\eps^{\gamma-\kappa'}
    \end{equation*}
    for all $t \in [0, \tau_\eps^{(\delta)}]$ and all sufficiently small $\eps$, and the proportionality constants are deterministic and independent of $\eps$. As a consequence, for the choice $\delta = \eps^{1000n(\gamma + 1)}$ above, we have
    \begin{equation*}
        \dist \big(v_\eps^{(\delta)}, \mM \big) \lesssim \delta + \eps^{\gamma-\kappa'} \delta^{-2\kappa'}\;, \qquad  \|v_\eps^{(\delta)}\|_{\lL^\infty} \lesssim 1 + \eps^{\gamma-\kappa'} \delta^{-2\kappa'} \lesssim 1
    \end{equation*}
    for all $t \in [0, \tau_\eps^{(\delta)}]$ and all sufficiently small $\eps$. 
\end{lem}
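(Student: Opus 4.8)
The plan is to reduce the estimate to three ingredients: (i) the closeness of $v_\eps$ to $\mM$ in $\wW^{-\kappa',p}$ from Lemma~\ref{lem:solution_bound}; (ii) the elementary fact that heat mollification at scale $\delta$ barely perturbs the smooth profiles $m_\theta$, uniformly in $\theta$; and (iii) the standard smoothing and boundedness properties of the heat mollifier $\qQ_\delta = e^{\delta^2\Delta}$. Fix $t\in[0,\tau_\eps^{(\delta)}]$ and $\eps$ sufficiently small. By Lemma~\ref{lem:solution_bound} choose $\theta=\theta(t)\in\RR$ with $\|v_\eps[t]-m_\theta\|_{\wW^{-\kappa',p}}\le 2\eps^{\gamma-\kappa'}$, and use the two decompositions
\[
v_\eps^{(\delta)}-m_\theta=\big(\qQ_\delta m_\theta-m_\theta\big)+\qQ_\delta\big(v_\eps-m_\theta\big),\qquad v_\eps-v_\eps^{(\delta)}=\big(m_\theta-\qQ_\delta m_\theta\big)+\big(I-\qQ_\delta\big)\big(v_\eps-m_\theta\big).
\]

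The key input is a uniform bound on the mollification error along $\mM$. Since $m'$ is a Schwartz function with exponential decay (Lemma~\ref{lem:statationary_exponential_decay}), writing $(\qQ_\delta m_\theta-m_\theta)(x)=\int q_{\delta^2}(z)\big(m_\theta(x-z)-m_\theta(x)\big)\,\md z$ and estimating $|m_\theta(x-z)-m_\theta(x)|\le|z|\sup_{|s|\le|z|}|m'(x+s-\theta)|\lesssim|z|e^{c|z|}e^{-c|x-\theta|}$, the Gaussian tails of $q_{\delta^2}$ give $\|\qQ_\delta m_\theta-m_\theta\|_{\lL^p}\lesssim\delta$ with a constant independent of $\theta$; differentiating (so that $m'_\theta$, still Schwartz, replaces $m_\theta$) gives $\|\partial_x(\qQ_\delta m_\theta-m_\theta)\|_{\lL^p}=\|\qQ_\delta m'_\theta-m'_\theta\|_{\lL^p}\lesssim\delta^2$. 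Using $\wW^{1,p}\hookrightarrow\wW^{\kappa',p}$ this yields $\|\qQ_\delta m_\theta-m_\theta\|_{\wW^{\kappa',p}}\lesssim\delta$, and the cruder Lipschitz bound on $m_\theta$ gives $\|\qQ_\delta m_\theta-m_\theta\|_{\lL^\infty}\lesssim\delta$; all constants here are deterministic and independent of $\eps$ and $\theta$.

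For the remaining pieces I would invoke the standard heat-semigroup smoothing bound $\|\qQ_\delta\|_{\wW^{-\kappa',p}\to\wW^{\kappa',p}}\lesssim(\delta^2)^{-\kappa'}=\delta^{-2\kappa'}$, so that $\|\qQ_\delta(v_\eps-m_\theta)\|_{\wW^{\kappa',p}}\lesssim\delta^{-2\kappa'}\eps^{\gamma-\kappa'}$, together with the uniform-in-$\delta$ boundedness $\|I-\qQ_\delta\|_{\cC^{-\kappa}\to\cC^{-\kappa}}\lesssim1$ and the embedding $\wW^{-\kappa',p}\hookrightarrow\cC^{-\kappa}$ (valid since $p$ is taken large and $2\kappa'<\kappa$ by \eqref{e:exponents_dependence}), giving $\|(I-\qQ_\delta)(v_\eps-m_\theta)\|_{\cC^{-\kappa}}\lesssim\|v_\eps-m_\theta\|_{\cC^{-\kappa}}\lesssim\eps^{\gamma-\kappa'}$. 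Adding the corresponding summands in the two decompositions yields $\dist_{\wW^{\kappa',p}}(v_\eps^{(\delta)},\mM)\le\|v_\eps^{(\delta)}-m_\theta\|_{\wW^{\kappa',p}}\lesssim\delta+\eps^{\gamma-\kappa'}\delta^{-2\kappa'}$ and $\|v_\eps-v_\eps^{(\delta)}\|_{\cC^{-\kappa}}\lesssim\delta+\eps^{\gamma-\kappa'}$, which are the two asserted bounds.

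The consequences follow at once. Since $p>1/\kappa'$ we have $\wW^{\kappa',p}\hookrightarrow\lL^\infty$, so the first bound upgrades to $\dist(v_\eps^{(\delta)},\mM)\lesssim\delta+\eps^{\gamma-\kappa'}\delta^{-2\kappa'}$, and then $\|v_\eps^{(\delta)}\|_{\lL^\infty}\le\dist(v_\eps^{(\delta)},\mM)+\|m\|_{\lL^\infty}\lesssim1+\eps^{\gamma-\kappa'}\delta^{-2\kappa'}$. Plugging in $\delta=\eps^{1000n(\gamma+1)}$ from \eqref{e:parameters_dependence} gives $\eps^{\gamma-\kappa'}\delta^{-2\kappa'}=\eps^{\gamma-\kappa'-2000n(\gamma+1)\kappa'}$, whose exponent is positive because $\kappa'+2000n(\gamma+1)\kappa'<\kappa<\gamma$ by \eqref{e:exponents_dependence}; hence this term tends to $0$ and in particular is $\lesssim1$. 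I do not anticipate a genuine obstacle: the only delicate points are the uniformity in $\theta$ of the mollification error on $\mM$ (which is precisely what the exponential decay of $m'$ buys us) and the bookkeeping of exponents in \eqref{e:exponents_dependence}--\eqref{e:parameters_dependence} ensuring that $\eps^{\gamma-\kappa'}\delta^{-2\kappa'}$ stays bounded along the prescribed scaling $\delta=\delta(\eps)$.
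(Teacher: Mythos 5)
Your proof is correct and follows essentially the same route as the paper: the same decomposition around the minimizing profile $m_\theta$ provided by Lemma~\ref{lem:solution_bound}, with the mollification error along $\mM$ controlled via the decay of $m'$, the $\delta^{-2\kappa'}$ heat-smoothing bound for $\qQ_\delta:\wW^{-\kappa',p}\to\wW^{\kappa',p}$, boundedness of $\qQ_\delta$ on $\cC^{-\kappa}$ together with the embedding $\wW^{-\kappa',p}\hookrightarrow\cC^{-\kappa}$, and finally $\wW^{\kappa',p}\hookrightarrow\lL^\infty$ plus the exponent bookkeeping from \eqref{e:exponents_dependence}--\eqref{e:parameters_dependence}. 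You merely spell out the elementary estimates (uniformity in $\theta$, the semigroup smoothing rate) that the paper leaves implicit.
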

\begin{proof}
    For every $t$, there exists $\theta = \theta (v_\eps[t])$ such that
    \begin{equation*}
         \|v_\eps - m_{\theta}\|_{\wW^{-\kappa',p}} = \dist_{\wW^{-\kappa',p}} \big( v_\eps, \mM \big) \lesssim \eps^{\gamma-\kappa'}\;,
    \end{equation*}
    where we omitted the time $t$ for notational simplicity, and the second inequality above follows from Lemma~\ref{lem:solution_bound}. We have
    \begin{equation*}
        \begin{split}
        \dist_{\wW^{\kappa',p}} \big( v_\eps^{(\delta)}, \mM \big) \leq \|v_\eps^{(\delta)} - m_\theta\|_{\wW^{\kappa',p}} &\leq \left\| \qQ_{\delta} m_{\theta} - m_{\theta} \right\|_{\wW^{\kappa',p}}+\left\| \qQ_\delta \big( v_\eps - m_{\theta} \big) \right\|_{\wW^{\kappa',p}}\\
        &\lesssim \delta + \eps^{\gamma-\kappa'} \delta^{-2\kappa'}\;.
        \end{split}
    \end{equation*}
    Here, the last inequality follows from exponential decay of $m_\theta'$ for the first term (even if $m_\theta$ itself is not in $\lL^p$ for finite $p$), and smallness of $\|v_\eps - m_\theta\|_{\wW^{-\kappa',p}}$ for the second term. Similarly, we have
    \begin{equation*}
        \|v_\eps - v_\eps^{(\delta)}\|_{\cC^{-\kappa}} \leq \|\qQ_\delta m_\theta - m_\theta\|_{\cC^{-\kappa}} + \|v_\eps - m_{\theta}\|_{\cC^{-\kappa}} + \|\qQ_\delta \big( v_\eps - m_\theta \big)\|_{\cC^{-\kappa}} \lesssim \delta + \eps^{\gamma-\kappa'}\;.
    \end{equation*}
    Finally, the uniform boundedness of $\|v_\eps^{(\delta)}\|_{\lL^\infty}$ follows from the closeness of $v_\eps^{(\delta)}$ to $\mM$ in $\wW^{\kappa',p}$ norm and the embedding $\wW^{\kappa',p}\hookrightarrow \lL^\infty$. 
\end{proof}

We now start to derive the equation of motion for the separation point. We first need to provide a candidate for it. Define
\begin{equation} \label{e:xi_defn}
    \xi_\eps^{(\delta)}(t) := \sqrt{\eps} \, \zeta \big( v_\eps^{(\delta)}[t\wedge\tau_\eps^{(\delta)}] \big)\;, \quad \delta = \delta(\eps) = \eps^{1000n(\gamma+1)}\;.
\end{equation}
All the bounds below are uniform over $\eps\in[0,1]$ and $\delta \in(0,\frac{1}{2}]$ with the relationship $\delta = \eps^{1000n(\gamma+1)}$. It follows from \eqref{e:stoppingtime_generic}, Proposition~\ref{pr:LinftyEC} (applied to $v_\eps^{(\delta)}[t] = F^{0}\big(v_\eps^{(\delta)}[t]\big)$) and Lemma~\ref{le:v_eps_regularization} that there exists $C>0$ such that for every $N>0$, we have
\begin{equation*}
\PP \Big( \sup_{t \in [0,T]}
    \|v_\eps^{(\delta)}[t] - m_{\xi_\eps^{(\delta)}(t) / \sqrt{\eps}}\|_{\lL^\infty} > C\big(\delta+\eps^{\gamma-\kappa'}\delta^{-2\kappa'}\big)\Big) \lesssim \eps^N\;.
\end{equation*}
The proportionality constant depends on $N$ but is independent of $(\eps, \delta)$. 
Combining this with Lemma~\ref{le:v_eps_regularization} again, we deduce that the process $\xi_\eps^{(\delta)}$ defined in \eqref{e:xi_defn} satisfies
\begin{equation}\label{e:closeness_process_proof}
    \PP \Big( \sup_{t \in [0,T]}\|v_\eps[t] - m_{\xi_\eps^{(\delta)}(t) / \sqrt{\eps}}\|_{\cC^{-\kappa}} > \eps^{\gamma-\kappa}\Big) \lesssim \eps^N\;.
\end{equation}
It remains to derive the SDE for $\xi_\eps^{(\delta)}$ as $(\eps, \delta) \rightarrow (0,0)$ along the relation \eqref{e:xi_defn}. For this, we first note $v_\eps^{(\delta)}$ satisfies the equation
\begin{equation} \label{e:v_eps_delta}
    \d_t v_\eps^{(\delta)} = \eps^{-2\gamma-1} \big( \Delta v_\eps^{(\delta)} + f_n (v_\eps^{(\delta)}) + R_\eps^{(\delta)} \big) + \eps^{-\frac{1}{2}} \qQ_\delta \big( a_\eps \sqrt{\fD} \dot{W} \big)\;,
\end{equation}
where we write $f_n(v) = v - v^{2n+1}$, and
\begin{equation} \label{e:commutator_error}
    R_\eps^{(\delta)} = \big( v_\eps^{(\delta)} \big)^{2n+1} - \qQ_\delta(v_\eps^{\diamond(2n+1)})\;.
\end{equation}
Using It\^o's formula (with $v_\eps^{(\delta)}$ as a process in a small $\wW^{\kappa',p}$-neighborhood of $\mM$), we see $\xi_\eps^{(\delta)}(t) = \sqrt{\eps} \zeta \big( v_\eps^{(\delta)}[t\wedge \tau_\eps^{(\delta)}] \big)$ satisfies the SDE
\begin{equation} \label{e:Ito}
    {\rm d} \xi_\eps^{(\delta)}(t) = \big( \, b_{1,\eps}^{(\delta)} (t) + b_{2,\eps}^{(\delta)}(t) \, \big) \, {\rm d}t + {\rm d} \sigma_\eps^{(\delta)}(t)\;,\quad t\leq \tau_\eps^{(\delta)}\;,
\end{equation}
where the drift terms $b_{1,\eps}^{(\delta)}$ and $b_{2,\eps}^{(\delta)}$ are given by
\begin{equation} \label{e:drift_terms}
    \begin{split}
    b_{1,\eps}^{(\delta)}(t) &= \eps^{-2\gamma-\frac{1}{2}} \left\langle D \zeta \big( v_\eps^{(\delta)}\big), \, \Delta v_\eps^{(\delta)} + f_n(v_\eps^{(\delta)}) + R_\eps^{(\delta)} \, \right\rangle\;,\\
    b_{2,\eps}^{(\delta)}(t) &= \frac{1}{2 \sqrt{\eps}} \int_{\RR} \sqrt{\fD}^{\otimes 2} \Big( a_\eps^{\otimes 2} \cdot \qQ_\delta^{\otimes 2} \big( D^2 \zeta (v_\eps^{(\delta)}) \big) \Big) (y,y) \, {\rm d}y\;.
    \end{split}
\end{equation}
They come respectively from the drift and quadratic variation parts of \eqref{e:v_eps_delta}. The martingale part $\md \sigma_\eps^{(\delta)}(t)$ is given by
\begin{equation}\label{e:martingale}
    {\rm d} \sigma_\eps^{(\delta)}(t) = \left\langle \sqrt{\fD} \Big( a_\eps \cdot \qQ_\delta \big( D \zeta (v_\eps^{(\delta)}[t]) \big)  \Big), {\rm d} W(t) \right\rangle\;,\quad t\leq \tau_\eps^{(\delta)}\;. 
\end{equation}
Up to this stage, $D \zeta$ and $D^2 \zeta$ are Fr\'echet derivatives of $\zeta$ in a $\wW^{\kappa',p}$-neighborhood of $\mM$. They are well defined due to Lemma~\ref{lem:Dzeta} and the embedding $\wW^{\kappa',p} \hookrightarrow \lL^\infty$. Note that we use $\wW^{\kappa',p}$ instead of $\lL^\infty$ here since the latter is not separable and hence not suitable for It\^o's formula. 

On the other hand, by the same argument as in \cite[Section~2.3, Lemma A.4]{XZZ24}, the expressions \eqref{e:drift_terms} and \eqref{e:martingale} remain unchanged with $D \zeta$ and $D^2 \zeta$ changed to Fr\'echet derivatives with respect to the $\lL^\infty$ structure (instead of that of $\wW^{\kappa',p}$). In other words, the $\wW^{\kappa',p}$-space is used only to write down the SDE \eqref{e:Ito}. Once we have derived the SDE, only the $\lL^\infty$-bounds in Section~\ref{sec:deterministic} are needed to analyze the quantities in \eqref{e:drift_terms} and \eqref{e:martingale}. 

The convergence of $\xi_\eps^{(\delta)}$ to the desired SDE follows from the following two propositions. 

\begin{prop} \label{pr:martingale_convergence}
    There exists $\alpha_1 \in \RR$ and $\nu>0$ such that the quadratic variation process $[\sigma_\eps^{(\delta)}]$ of the martingale $\sigma_\eps^{(\delta)}$ satisfies
    \begin{equation*}
        \left| \frac{\rm d}{{\rm d} t} [\sigma_\eps^{(\delta)}](t) - \alpha_1^2 a^2 \big( \xi_\eps^{(\delta)}(t) \big) \right| \lesssim \eps^\nu+\delta^\nu
    \end{equation*}
    for all $t\in[0,\tau_\eps^{(\delta)}]$.
\end{prop}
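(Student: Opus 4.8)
\medskip

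The plan is to compute the quadratic variation directly from the expression \eqref{e:martingale} and then extract its leading-order behavior as $(\eps,\delta)\to(0,0)$. From \eqref{e:martingale}, since $W$ is the $\lL^2$ cylindrical Wiener process, we have
\begin{equation*}
    \frac{\md}{\md t}[\sigma_\eps^{(\delta)}](t) = \left\| \sqrt{\fD}\Big( a_\eps \cdot \qQ_\delta \big( D\zeta(v_\eps^{(\delta)}[t]) \big) \Big) \right\|_{\lL^2}^2
    = \int_\RR |\theta|\, \big| \widehat{a_\eps \cdot \qQ_\delta(D\zeta(v_\eps^{(\delta)}))}(\theta)\big|^2 \,\md\theta\;.
\end{equation*}
The key geometric input is that $D\zeta$ is essentially concentrated near the interface: by the translation property \eqref{e:Dzeta_translation} and the identity $D\zeta(m_\theta) = -m_\theta'/\|m'\|_{\lL^2}^2$, the function $D\zeta(v_\eps^{(\delta)})$ is, up to small errors controlled by Lemma~\ref{le:v_eps_regularization} and Lemma~\ref{lem:Dzeta}, equal to $-m_{\zeta(v_\eps^{(\delta)})}' / \|m'\|_{\lL^2}^2$, which by Lemma~\ref{lem:statationary_exponential_decay} has exponential decay around $\zeta(v_\eps^{(\delta)}) = \xi_\eps^{(\delta)}(t)/\sqrt\eps$. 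Since $a_\eps = a(\sqrt\eps\,\cdot)$ varies on the much coarser scale $\eps^{-1/2}$, on the support of $D\zeta(v_\eps^{(\delta)})$ one has $a_\eps \approx a(\xi_\eps^{(\delta)}(t))$ up to an error of order $\eps^{1/2}\cdot(\text{localization width})$, which is a positive power of $\eps$. Likewise, $\qQ_\delta$ acting on a fixed Schwartz profile changes it by $\oO(\delta)$ in any reasonable norm. Thus the leading term is
\begin{equation*}
    a^2\big(\xi_\eps^{(\delta)}(t)\big) \cdot \frac{1}{\|m'\|_{\lL^2}^4}\int_\RR |\theta|\,|\widehat{m'}(\theta)|^2\,\md\theta\;,
\end{equation*}
and one is led to define
\begin{equation*}
    \alpha_1^2 := \frac{1}{\|m'\|_{\lL^2}^4}\int_\RR |\theta|\,|\widehat{m'}(\theta)|^2\,\md\theta = \frac{\|\sqrt{\fD}\,m'\|_{\lL^2}^2}{\|m'\|_{\lL^2}^4}\;,
\end{equation*}
which is finite and strictly positive since $m'$ is Schwartz and nonzero; this matches \eqref{e:alpha1}.

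\medskip

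The execution would proceed in three steps. First, write $D\zeta(v_\eps^{(\delta)}) = D\zeta(m_{\zeta(v_\eps^{(\delta)})}) + \big(D\zeta(v_\eps^{(\delta)}) - D\zeta(m_{\zeta(v_\eps^{(\delta)})})\big)$ and bound the remainder using the mean value form of the Fr\'echet derivative together with the $\lL^\infty$-bound $\|v_\eps^{(\delta)} - m_{\xi_\eps^{(\delta)}/\sqrt\eps}\|_{\lL^\infty} \lesssim \delta + \eps^{\gamma-\kappa'}\delta^{-2\kappa'}$ from the discussion preceding \eqref{e:closeness_process_proof} and the boundedness of $D^2\zeta$ on $\lL^\infty$-neighborhoods of $\mM$ (Lemma~\ref{lem:Dzeta}); this remainder, measured in a norm strong enough to survive the $\sqrt{\fD}$ and localize against $a_\eps$, contributes $\oO(\eps^\nu + \delta^\nu)$ to the quadratic variation after expanding the square and using Cauchy--Schwarz. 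Second, replace $a_\eps$ by the constant $a(\xi_\eps^{(\delta)}(t))$ on the (exponentially localized) support, controlling the difference by $\|a'\|_{\lL^\infty}\sqrt\eps$ times the first moment of $|D\zeta(m_\theta)|$, which is finite and translation-invariant. Third, remove $\qQ_\delta$: since $\qQ_\delta$ commutes with $\sqrt{\fD}$ as Fourier multipliers, $\|\sqrt{\fD}(a_\eps\,\qQ_\delta g) - \qQ_\delta\sqrt{\fD}(a_\eps g)\|_{\lL^2}$ involves only the commutator $[\qQ_\delta, a_\eps]$ applied to $\sqrt{\fD}g$, which is $\oO(\delta\,\eps^{1/2})$ by standard commutator estimates, while $\|\sqrt{\fD}(a_\eps g) - \qQ_\delta\sqrt{\fD}(a_\eps g)\|_{\lL^2} = \oO(\delta^{1/2})$ on $\cC^{1/2}$-regular profiles; with $g = m'$ Schwartz all of this is harmless. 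Collecting the three error contributions against the relation $\delta = \eps^{1000n(\gamma+1)}$ gives the claimed bound for a suitable $\nu>0$.

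\medskip

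The main obstacle, I expect, is not any single one of these estimates but the bookkeeping of \emph{which norm} to propagate the approximation $D\zeta(v_\eps^{(\delta)}) \approx -m_{\xi_\eps^{(\delta)}/\sqrt\eps}'/\|m'\|_{\lL^2}^2$ through: the operator $\sqrt{\fD}$ loses half a derivative, so one cannot work purely in $\lL^2$ and must instead use the quantitative regularity and decay of $D\zeta$ and $D^2\zeta$ near $\mM$ established in Section~\ref{sec:deterministic} (e.g.\ bounds of the form $\|D^2\zeta(v)[\cdot]\|_{\cC^1}$ or weighted estimates) to ensure the error terms, after being hit by $\sqrt{\fD}$ and paired against $a_\eps$, are still $\lL^2$-controlled by a positive power of $\eps$. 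A secondary subtlety is that $\zeta$ is only a priori defined via $\lL^\infty$-Fr\'echet derivatives on $\vV_\beta$, so one must check — as the paragraph after \eqref{e:martingale} asserts via \cite[Section~2.3, Lemma~A.4]{XZZ24} — that replacing $\wW^{\kappa',p}$-derivatives by $\lL^\infty$-derivatives in \eqref{e:martingale} is legitimate, and that $v_\eps^{(\delta)}$ genuinely stays in $\vV_\beta$ on $[0,\tau_\eps^{(\delta)}]$, which is exactly what Lemma~\ref{le:v_eps_regularization} provides.
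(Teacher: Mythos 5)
Your proposal is correct and follows essentially the same route as the paper: decompose $D\zeta(v_\eps^{(\delta)})$ around $D\zeta\big(m_{\zeta(v_\eps^{(\delta)})}\big)$, replace $a_\eps$ by the constant $a\big(\xi_\eps^{(\delta)}\big)$, remove $\qQ_\delta$ at cost $\delta^\nu$, and use the translation property to identify $\alpha_1=\|\sqrt{\fD}D\zeta(m)\|_{\lL^2}$, which coincides with your $\|\sqrt{\fD}\,m'\|_{\lL^2}/\|m'\|_{\lL^2}^2$ since $D\zeta(m)=-m'/\|m'\|_{\lL^2}^2$. The norm-bookkeeping issue you correctly flag as the crux is resolved in the paper exactly by Lemmas~\ref{lem:Dzeta_continuity_aeps} and~\ref{lem:Dzeta_taylor} (weighted bounds plus the fractional Leibniz rule), together with Remark~\ref{rmk:Dzeta_mollification} to carry $\qQ_\delta$ through uniformly in $\delta$.
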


\begin{prop} \label{pr:drift_convergence_final}
    There exists $\alpha_2 \in \RR$ and $\nu>0$ such that
    \begin{align*}
        \phantom{111}\Big| b_{1,\eps}^{(\delta)}(t) + b_{2,\eps}^{(\delta)}(t) - &\alpha_2 a \big( \xi_\eps^{(\delta)}(t) \big) a' \big( \xi_\eps^{(\delta)}(t) \big)  \Big|\lesssim \big( \delta \eps^{-\frac{1}{2}} + \eps^{\gamma-\kappa'-\frac{1}{2}} \delta^{-2\kappa'} + \eps^\nu \big)|\log\delta|\\
        &+\eps^{-2\gamma-\frac{1}{2}}\left( \delta^{\frac{\eta}{5}}(1+t^{-\frac{1}{4}}) + \eps^{4 \gamma} |\log \delta|^2 \right)+\delta^\nu
    \end{align*}
    for all $t\in[0,\tau_\eps^{(\delta)}]$.
\end{prop}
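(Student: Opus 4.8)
The plan is to treat $b_{1,\eps}^{(\delta)}$ and $b_{2,\eps}^{(\delta)}$ in parallel, isolate their divergent parts, and show these cancel, leaving a remainder that converges to $\alpha_2 a(\xi_\eps^{(\delta)}) a'(\xi_\eps^{(\delta)})$. First I would rewrite $b_{1,\eps}^{(\delta)}$ using the orthogonality relation \eqref{e:zeta_magical_cancellation} (i.e.\ $\langle D\zeta(v), \Delta v + f_n(v)\rangle = 0$ for $v$ near $\mM$), which kills the deterministic Allen--Cahn part and leaves
\begin{equation*}
    b_{1,\eps}^{(\delta)}(t) = \eps^{-2\gamma-\frac12}\big\langle D\zeta(v_\eps^{(\delta)}),\, R_\eps^{(\delta)}\big\rangle,
\end{equation*}
with $R_\eps^{(\delta)} = (v_\eps^{(\delta)})^{2n+1} - \qQ_\delta(v_\eps^{\diamond(2n+1)})$ from \eqref{e:commutator_error}. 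The key decomposition is to expand $v_\eps^{\diamond(2n+1)}$ via \eqref{e:u_Wick_expansion}, so that after rescaling $v_\eps^{\diamond(2n+1)} = (v_\eps)^{2n+1} - \binom{2n+1}{2}\fC_\eps^{(\delta)}\text{-type corrections} + (\text{higher Wick corrections}) + (\text{commutator errors from }\qQ_\delta)$. Using Lemma~\ref{le:renormalisation_approx} to replace $\fC_\eps^{(\delta)}$ by $\eps^{2\gamma}C^{(\delta)}a_\eps^2$ up to $O(\eps^{2\gamma+1})$, and Lemma~\ref{lem:Cdelta_divergence} to write $C^{(\delta)} = \frac{|\log\delta|}{4\pi^2} + O(1)$, the leading divergent contribution to $b_{1,\eps}^{(\delta)}$ is
\begin{equation*}
    -\binom{2n+1}{2}\eps^{-2\gamma-\frac12}\cdot \eps^{2\gamma}\tfrac{|\log\delta|}{4\pi^2}\big\langle D\zeta(v_\eps^{(\delta)}),\, a_\eps^2 (v_\eps^{(\delta)})^{2n-1}\big\rangle + (\text{convergent}),
\end{equation*}
and by the identity $D\zeta(m_\theta) = -m_\theta'/\|m'\|_{\lL^2}^2$ plus closeness of $v_\eps^{(\delta)}$ to $m_{\xi_\eps^{(\delta)}/\sqrt\eps}$ (from Lemma~\ref{le:v_eps_regularization} and the bound after \eqref{e:xi_defn}), the pairing is (up to convergent error and the rescaling $\sqrt\eps$ in the variable) $\frac{1}{\sqrt\eps}$ times $-a^2(\xi_\eps^{(\delta)})\cdot C_0^*$ with $C_0^*$ as in \eqref{e:C0}; after the localization $a_\eps(x) = a(\sqrt\eps x)$ is slowly varying on the scale of $m'$. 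This produces a divergent term proportional to $\eps^{-\frac12}|\log\delta|\, a^2(\xi_\eps^{(\delta)})\, C_0^*$.

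Next I would analyze $b_{2,\eps}^{(\delta)}$. The point is that $\sqrt{\fD}^{\otimes 2}\big(a_\eps^{\otimes 2}\cdot \qQ_\delta^{\otimes 2} D^2\zeta(v_\eps^{(\delta)})\big)$ evaluated on the diagonal is divergent precisely because $\qQ_\delta^{\otimes 2} D^2\zeta$ is not continuous across the diagonal after applying $\sqrt{\fD}^{\otimes 2}$; the divergence is extracted by splitting $D^2\zeta(v_\eps^{(\delta)})(y_1,y_2)$ into its ``local'' part near the diagonal, governed by $D^2\zeta(m_\theta)$, and a smoother remainder. Using the structural formula for $D^2\zeta(m)$ (a rank-related expression involving $m'$, $m''$, and $\aA^{-1}$ restricted off the kernel, available from Section~\ref{sec:deterministic}), the diagonal value of $\sqrt{\fD}_{y_1}\sqrt{\fD}_{y_2}\qQ_{\delta^2}^{\otimes 2}$ applied to this local part is $\frac{|\log\delta|}{4\pi^2}$ times a fixed kernel, and integrating against $a_\eps^2$ the factor $\frac{1}{\sqrt\eps}$ emerges again from $a_\eps$ being slowly varying. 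I expect the coefficient of this divergence, after unwinding the second-derivative identity for $\zeta$, to be exactly $+\binom{2n+1}{2}\eps^{-\frac12}|\log\delta|\,a^2(\xi_\eps^{(\delta)})\,C_0^*$ — the same magnitude as in $b_{1,\eps}^{(\delta)}$ but with opposite sign, so that the two $|\log\delta|$ terms cancel in the sum. The surviving finite part then assembles into $\alpha_2 a(\xi_\eps^{(\delta)})a'(\xi_\eps^{(\delta)})$: the $a a'$ structure (rather than $a^2$) comes from the next-order term in the Taylor expansion of $a_\eps(y) = a(\sqrt\eps\,\cdot)$ around the interface, where the $O(\sqrt\eps)$ correction combines with the $\eps^{-\frac12}$ prefactor to give an $O(1)$ term carrying one derivative $a'$.

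Finally, the error bookkeeping: the $\delta\eps^{-\frac12}$ and $\eps^{\gamma-\kappa'-\frac12}\delta^{-2\kappa'}$ terms come from Lemma~\ref{le:v_eps_regularization} controlling $v_\eps^{(\delta)} - m_{\xi_\eps^{(\delta)}/\sqrt\eps}$ (each occurrence of a $D^j\zeta$ difference costs this, and there is a residual $|\log\delta|$ from the divergent kernel it multiplies); the $\eps^{-2\gamma-\frac12}\delta^{\eta/5}(1+t^{-\frac14})$ term is the cost of the Wick-power approximation $H_k(\qQ_\delta X_\eps;\fC_\eps^{(\delta)}) - X_\eps^{\diamond k}$ controlled by $\delta^{\eta/5}$ before $\widetilde\tau_\eps^{(\delta)}$ (Lemma~\ref{le:convergence_Wick_power} and the stopping-time definition \eqref{e:stoppingtime_defn}), paired against $D\zeta$ and weighted by $(1+t^{-1/4})$ from Lemma~\ref{lem:solution_bound}; the $\eps^{-2\gamma-\frac12}\eps^{4\gamma}|\log\delta|^2 = \eps^{2\gamma-\frac12}|\log\delta|^2$ term is the quadratic-in-$C^{(\delta)}$ remainder from expanding $v_\eps^{\diamond(2n+1)}$ to second Wick order (two factors of $\fC_\eps^{(\delta)} \sim \eps^{2\gamma}|\log\delta|$); and the bare $\delta^\nu$ and $\eps^\nu$ terms absorb the smoothing-commutator errors $\|\qQ_\delta g - g\|$ for smooth $g$ and the $\eps^{2\gamma+1}$ error from Lemma~\ref{le:renormalisation_approx}, together with the Taylor remainder in the $a_\eps$ expansion. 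The main obstacle will be the bookkeeping in $b_{2,\eps}^{(\delta)}$: one must compute the diagonal value of $\sqrt{\fD}^{\otimes 2}\qQ_\delta^{\otimes 2}$ applied to the explicit kernel $D^2\zeta(m_\theta)$ carefully enough to pin down the $|\log\delta|$ coefficient \emph{and} the finite part with the right constant, matching it against the expansion of $C^{(\delta)}$ from Lemma~\ref{lem:Cdelta_divergence}, since the cancellation is exact only if both the logarithmic coefficients and (for $\alpha_2$) the subleading constants are tracked consistently between the two drift terms — this is where the constant $\alpha_2$ in \eqref{e:alpha2} is actually read off.
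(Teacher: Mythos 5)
There is a genuine gap: you have mis-located both the divergence and the cancellation. In $b_{1,\eps}^{(\delta)}$, the term you identify as the leading divergence, namely $\eps^{-\frac12}C^{(\delta)}\langle D\zeta(v_\eps^{(\delta)}), a^2(\xi_\eps^{(\delta)})(v_\eps^{(\delta)})^{2n-1}\rangle$, is in fact negligible: since $D\zeta(m_\theta)=-m_\theta'/\|m'\|_{\lL^2}^2$ and $\langle m',m^{2n-1}\rangle=0$ by oddness, one has $|\langle D\zeta(v),v^{2n-1}\rangle|\lesssim \dist(v,\mM)$ (this is \eqref{e:Dzeta_test_v_odd_1}), and there is no $\eps^{-\frac12}$ enhancement of the pairing because the kernel of $D\zeta$ is exponentially localized — the support length $\eps^{-1/2}$ of $a_\eps$ plays no role. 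Moreover $C_0^*$ in \eqref{e:C0} carries the weight $(y-\zeta)$, i.e.\ it is the coefficient attached to the \emph{linear} term of the Taylor expansion of $a_\eps^2$ (the $aa'$ term), not to $a^2$; so the quantity "$\eps^{-\frac12}|\log\delta|\,a^2\,C_0^*$" that you propose to cancel does not occur in either drift. Similarly, in $b_{2,\eps}^{(\delta)}$ the constant-in-$y$ part $a^2(\xi_\eps^{(\delta)})$ contributes exactly zero, because $\int_\RR \sqrt{\fD}^{\otimes2}\big(\qQ_\delta^{\otimes2}D^2\zeta(m_{\zeta(v)})\big)(y,y)\,\md y=0$ by the translation property and the anti-symmetry \eqref{e:D2zeta_odd}. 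So there is nothing to cancel at the $a^2$ level.

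The genuine $|\log\delta|$ divergence sits precisely in the $aa'$ terms, which you declare to be "$O(1)$": in $b_1$ the $\sqrt\eps$ from the Taylor expansion of $a_\eps^2$ cancels the $\eps^{-\frac12}$ prefactor but leaves the factor $C^{(\delta)}\sim\frac{|\log\delta|}{4\pi^2}$, giving $2n(2n+1)C^{(\delta)}C_0^*\,a(\xi)a'(\xi)$ up to the stated errors; in $b_2$ the linear-weight term $\sqrt\eps\,aa'(y_1+y_2-2\zeta)$ paired with the diagonal of $\sqrt{\fD}^{\otimes2}\qQ_\delta^{\otimes2}D^2\zeta(m)$ produces, by Lemma~\ref{lem:D2zeta_divergence_rate}, exactly $-\big(\tfrac{n(2n+1)C_0^*}{2\pi^2}|\log\delta|+\widetilde\alpha_2\big)a(\xi)a'(\xi)$. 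The cancellation is between these two $|\log\delta|\cdot aa'$ contributions, and the limit $\alpha_2=2n(2n+1)C_0^*\big(C^{(\delta)}-\tfrac{|\log\delta|}{4\pi^2}\big)\big|_{\delta\to0}-\widetilde\alpha_2$ is read off from Lemma~\ref{lem:Cdelta_divergence}. As written, your scheme would look for a cancellation that is not there and would leave the actual divergent $aa'$ parts untreated; the error bookkeeping in your last paragraph is broadly consistent with the statement, but the central mechanism must be reassigned as above (this is also why the symmetry facts \eqref{e:Dzeta_test_v_odd_1}, \eqref{e:Dzeta_test_v_odd_2} and \eqref{e:D2zeta_odd}, absent from your outline, are indispensable).
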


We postpone the proof of Propositions~\ref{pr:martingale_convergence} and~\ref{pr:drift_convergence_final} to later in this section. Assuming these two propositions, we are ready to prove the main theorem of this article. 

\begin{proof}[Proof of Theorem~\ref{thm:main}]
    The estimate \eqref{e:closeness_process_proof} directly implies \eqref{e:closeness_process}. Based on Propositions~\ref{pr:martingale_convergence}, ~\ref{pr:drift_convergence_final} and the choice of $\delta$ that $\delta= \eps^{1000n(\gamma+1)}$, the convergence in law of $\xi_\eps^{(\delta)}$ follows from a standard martingale method, see for example \cite[Lemma 8.3]{Fun95}. This concludes the proof of Theorem~\ref{thm:main}.
\end{proof}

\subsection{Convergence of the martingale term -- proof of Proposition~\ref{pr:martingale_convergence}}

We finally turn to the martingale term $\sigma_\eps^{(\delta)}$. By the expression \eqref{e:martingale}, the time derivative of its quadratic variation process is given by
\begin{equation} \label{e:qv_expression}
    \frac{\md}{\md t}[\sigma_\eps^{(\delta)}] = \Big\| \sqrt{\fD} \Big( a_\eps \cdot \qQ_\delta \big( D \zeta (v_\eps^{(\delta)}) \big) \Big) \Big\|_{\lL^2}^2\;,
\end{equation}
where we again omit the notation $t$ for simplicity. We decompose the term inside the $\sqrt{\fD}$ operation above as
\begin{align*}
    a_\eps \cdot \qQ_\delta  D \zeta \big(v_\eps^{(\delta)}\big) 
    =&a_\eps \cdot \qQ_\delta \Big( D \zeta \big(v_\eps^{(\delta)}\big)   - D \zeta \big(m_{\zeta(v_\eps^{(\delta)})}\big) \Big)\\
    &+ \qQ_\delta D \zeta \big(m_{\zeta(v_\eps^{(\delta)})}\big)\cdot\Big( a(\xi_\eps^{(\delta)}) + \err_{1,\eps}^{(\delta)}(y)\Big)\;,
\end{align*}
where 
\begin{equ}
    \err_{1,\eps}^{(\delta)}(y) =  a_\eps(y) - a\big( \xi_\eps^{(\delta)} \big)\;.
\end{equ}
By Lemmas~\ref{lem:Dzeta_continuity_aeps}, ~\ref{lem:Dzeta_taylor}, the translation property of $D \zeta$ (Assertion 2 in Lemma~\ref{lem:Dzeta}) and Remark~\ref{rmk:Dzeta_mollification}, we have
\begin{equation*}
    \left\| \sqrt{\fD} \Big( a_\eps \cdot \qQ_\delta \big( D \zeta (v_\eps^{(\delta)}) \big) \Big) - a (\xi_\eps^{(\delta)}) \cdot \sqrt{\fD} \qQ_\delta \Big( D \zeta \big( m_{\zeta(v_\eps^{(\delta)})} \big) \Big)  \right\|_{\lL^2}^2 \lesssim \eps^\nu
\end{equation*}
for some $\nu>0$. Using again the translation property for $D \zeta$, we see
\begin{equation*}
    \left\| \sqrt{\fD} \qQ_\delta \big( D \zeta ( m_{\zeta(v)} ) \big) \right\|_{\lL^2}^2 = \left\| \sqrt{\fD} \qQ_\delta \big( D \zeta (m) \big) \right\|_{\lL^2}^2
\end{equation*}
for every $v$ sufficiently close to $\mM$ in $\lL^\infty$-distance. Plugging the above bound and identity back into \eqref{e:qv_expression}, we get
\begin{equation*}
     \left| \frac{\md}{\md t} [\sigma_\eps^{(\delta)}](t) - a^2(\xi_\eps^{(\delta)}) \cdot \| \sqrt{\fD} \qQ_\delta \big(D\zeta(m) \big)\|_{\lL^2}^2 \right| \lesssim \eps^\nu
\end{equation*}
for all $t\in[0,\tau_\eps^{(\delta)}]$. Furthermore, since
\begin{equation*}
    \left\| \qQ_\delta \sqrt{\fD} \big( D \zeta(m) \big) - \sqrt{\fD} \big( D \zeta(m) \big) \right\|_{\lL^2} \lesssim \delta^\nu
\end{equation*}
for some $\nu>0$, we then deduce Proposition~\ref{pr:martingale_convergence} with
\begin{equation}\label{e:alpha1}
    \alpha_1 = \|\sqrt{\fD}D\zeta(m)\|_{\lL^2}\;.
\end{equation}

\subsection{Analysis of the drift $b_{1,\eps}^{(\delta)}$}

By orthogonality between $D \zeta$ and the deterministic gradient flow (see \eqref{e:zeta_magical_cancellation} below), we have
\begin{equation*}
    \left\langle D \zeta (v_\eps^{(\delta)}), \, \Delta v_\eps^{(\delta)} + f_n(v_\eps^{(\delta)}) \right\rangle = 0\;,
\end{equation*}
where we recall that $f_n(v) = v - v^{2n+1}$. Hence, the expression for $b_{1,\eps}^{(\delta)}$ is reduced to
\begin{equation} \label{e:b1_reduced}
    b_{1,\eps}^{(\delta)}(t) = \eps^{-2\gamma-\frac{1}{2}} \big\langle D \zeta \big( v_\eps^{(\delta)}[t]\big), \, R_\eps^{(\delta)}[t] \, \big\rangle\;,
\end{equation}
where we recall the expression of $R_\eps^{(\delta)}$ from \eqref{e:commutator_error}. It then remains to understand the action of $D \zeta (v_\eps^{(\delta)})$ on $R_\eps^{(\delta)}$. We have the following lemma regarding $R_\eps^{(\delta)}$.

\begin{lem} \label{le:drift_1_remainder}
We have the bound
    \begin{align*}
    \|R_\eps^{(\delta)}[t] - n(2n+1) \fC_\eps^{(\delta)} \big(v_\eps^{(\delta)}[t]\big)^{2n-1} \|_{\cC^{-\frac{\eta}{2}}}
    \lesssim \delta^{\frac{\eta}{5}}(1+t^{-\frac{1}{4}}) + \eps^{4 \gamma} |\log \delta|^2
\end{align*}
for all $t\in[0,\tau_\eps^{(\delta)}]$.
\end{lem}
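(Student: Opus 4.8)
The plan is to expand the Wick-ordered power $v_\eps^{\diamond(2n+1)}$ using the binomial identity \eqref{e:u_Wick_expansion}, isolate the leading renormalization term, and control all remaining pieces by the a priori bounds available before the stopping time $\tau_\eps^{(\delta)}$. Recall $v_\eps^{(\delta)} = \qQ_\delta v_\eps$ and $R_\eps^{(\delta)} = (v_\eps^{(\delta)})^{2n+1} - \qQ_\delta(v_\eps^{\diamond(2n+1)})$. First I would write $v_\eps = X_\eps + w_\eps$ and, on the scale $\delta$, compare $\qQ_\delta(v_\eps^{\diamond(2n+1)})$ with the polynomial expression obtained by replacing each Wick power $X_\eps^{\diamond k}$ by $H_k(\qQ_\delta X_\eps; \fC_\eps^{(\delta)})$ and then binomially recombining. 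The definition of the stopping time $\widetilde{\tau}_\eps^{(\delta)}$ in \eqref{e:stoppingtime_defn} gives exactly $\|H_k(\qQ_\delta X_\eps; \fC_\eps^{(\delta)}) - X_\eps^{\diamond k}\|_{\cC^{-\eta/2}} \le \delta^{\eta/5}$, and multiplying these against the regular factors $w_\eps^j$ (whose $\cC^\eta$ norms are $\lesssim 1 + t^{-\eta}$ by Lemma~\ref{lem:solution_bound}) and applying $\qQ_\delta$ (which is bounded on the relevant Hölder/Besov spaces) produces the $\delta^{\eta/5}(1+t^{-1/4})$ contribution, after absorbing the mild time singularity into a slightly worse but integrable power.

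The main computation is then the \emph{classical} (non-random) identity: for a smooth function $g$ and variance $c$,
\begin{equation*}
    \sum_{k=0}^{2n+1} \binom{2n+1}{k} H_{2n+1-k}(x; c)\, g^k \;=\; (x+g)^{2n+1} \;-\; n(2n+1)\, c\,(x+g)^{2n-1} \;+\; (\text{terms with } c^2, c^3, \dots)\,.
\end{equation*}
Here one uses the generating-function / Wick-exponential identity $\sum_k \binom{N}{k} H_{N-k}(x;c) g^k = $ ``$(x+g)^{\diamond N}$ in the variable $x$ with auxiliary $g$'', together with the explicit expansion of $H_m(y;c) = \sum_{j} \binom{m}{2j}(2j-1)!!\,(-c)^j y^{m-2j}$; collecting the $j=0$ term gives $(x+g)^{2n+1}$, the $j=1$ term gives $-n(2n+1)c\,(x+g)^{2n-1}$, and the remaining terms carry at least $c^2$. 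Applying this \emph{before} mollification with $x = X_\eps$, $g = w_\eps$, $c = \fC_\eps^{(\delta)}$ — so that $(x+g) = v_\eps$ — and then comparing $\qQ_\delta$ of the resulting polynomial to the polynomial in $v_\eps^{(\delta)}$, one produces:
(i) the main term $\qQ_\delta\big((v_\eps)^{2n+1}\big)$, whose difference from $(v_\eps^{(\delta)})^{2n+1}$ is a commutator $[\qQ_\delta, (\cdot)^{2n+1}]$ acting on a function bounded in $\cC^\eta$, hence $\lesssim \delta^{\eta}(1+t^{-1})$, within the claimed bound;
(ii) the term $-n(2n+1)\fC_\eps^{(\delta)}\,\qQ_\delta\big((v_\eps)^{2n-1}\big)$, whose difference from $-n(2n+1)\fC_\eps^{(\delta)}(v_\eps^{(\delta)})^{2n-1}$ is again a mollification commutator, controlled the same way (and $\fC_\eps^{(\delta)} = \oO(\eps^{2\gamma}|\log\delta|)$ by Lemmas~\ref{le:renormalisation_approx} and~\ref{lem:Cdelta_divergence} keeps this harmless);
(iii) the higher-order terms carrying $(\fC_\eps^{(\delta)})^2$ or more, each bounded by $\eps^{4\gamma}|\log\delta|^2$ times a bounded polynomial in $v_\eps$, which is precisely the second error term in the statement.

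The step I expect to be the main obstacle is item (i)–(ii): justifying that one may first apply the classical Hermite-expansion identity \emph{at the level of the genuine (distributional) objects} $X_\eps^{\diamond k}$ rather than their $\delta$-mollified proxies, i.e. that
\begin{equation*}
    v_\eps^{\diamond(2n+1)} \;=\; \lim_{\delta' \to 0}\Big[(v_\eps^{(\delta')})^{2n+1} - n(2n+1)\fC_\eps^{(\delta')}(v_\eps^{(\delta')})^{2n-1} + \cdots\Big]
\end{equation*}
in an appropriate negative Hölder topology, and that inserting the \emph{fixed} scale $\delta$ in place of this limit costs only $\delta^{\eta/5}(1+t^{-1/4})$ — this is exactly where the bound built into $\widetilde{\tau}_\eps^{(\delta)}$ on $\|H_k(\qQ_\delta X_\eps;\fC_\eps^{(\delta)}) - X_\eps^{\diamond k}\|_{\cC^{-\eta/2}}$ is consumed, and one must track carefully that multiplication of a $\cC^{-\eta/2}$ distribution by the $\cC^\eta$ functions $w_\eps^j$ (with $\eta = \frac{1}{100n}$ chosen so that $\eta/2 < \eta$ and products are well-defined) stays in $\cC^{-\eta/2}$, and that $\qQ_\delta$ does not degrade this. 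Once the bookkeeping of which error carries which power of $\delta$, $\eps$, and $t^{-1}$ is done, the estimate follows; the time-singularity exponents should be checked to combine to at most $t^{-1/4}$ after using $1-\eta > \frac{3}{4}$ and the integrability of $t^{-\eta}$-type factors over short time, possibly after shrinking $\eta/5$ slightly (harmless since all exponents are fixed small constants).
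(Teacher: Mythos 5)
Your algebraic skeleton is the right one and matches the paper in spirit: Hermite-expand so that the $j=1$ term produces $-n(2n+1)\fC_\eps^{(\delta)}(\cdot)^{2n-1}$, bound the $j\geq 2$ terms by $(\eps^{2\gamma}|\log\delta|)^k\leq \eps^{4\gamma}|\log\delta|^2$ using Lemmas~\ref{le:renormalisation_approx}, \ref{lem:Cdelta_divergence} and $\|v_\eps^{(\delta)}\|_{\lL^\infty}\lesssim 1$, and pay $\delta^{\eta/5}$ through the stopping-time bound. But the way you implement the comparison in items (i)--(ii) contains a genuine gap: you pass through $\qQ_\delta\big((v_\eps)^{2n+1}\big)$ and $\fC_\eps^{(\delta)}\qQ_\delta\big((v_\eps)^{2n-1}\big)$ and estimate a commutator $[\qQ_\delta,(\cdot)^{2n+1}]$ ``acting on a function bounded in $\cC^\eta$''. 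The unmollified $v_\eps=X_\eps+w_\eps$ is not a function: $X_\eps$ has only negative H\"older regularity (this is precisely why the equation must be Wick-renormalized), so the pointwise powers $v_\eps^{2n+1}$, $v_\eps^{2n-1}$ do not exist, and the commutator bound you invoke requires positive regularity of its argument, which only $w_\eps$ has. You do flag this as the main obstacle, but the suggested repair (define the powers as a $\delta'\to 0$ limit and assert that freezing the scale at $\delta$ costs $\delta^{\eta/5}(1+t^{-1/4})$) is exactly the unproved content and cannot be recovered along the commutator route. A secondary issue: even formally, your $\delta^{\eta}(1+t^{-1})$ error is not dominated by $\delta^{\eta/5}(1+t^{-1/4})$ when $t$ is smaller than a suitable power of $\delta$.

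The correct implementation (and the paper's) never forms powers of the unmollified solution. Work in the original time scale with $u_\eps^{(\delta)}=\qQ_\delta X_\eps+\qQ_\delta w_\eps$ and use the binomial Hermite identity in the form $H_{2n+1}\big(u_\eps^{(\delta)};\fC_\eps^{(\delta)}\big)=\sum_{k}\binom{2n+1}{k}H_{k}\big(\qQ_\delta X_\eps;\fC_\eps^{(\delta)}\big)(\qQ_\delta w_\eps)^{2n+1-k}$, so every factor is a genuine function. Then (a) compare this term by term with $u_\eps^{\diamond(2n+1)}=\sum_k\binom{2n+1}{k}X_\eps^{\diamond k}\,w_\eps^{2n+1-k}$, using the bound $\|H_k(\qQ_\delta X_\eps;\fC_\eps^{(\delta)})-X_\eps^{\diamond k}\|_{\cC^{-\eta/2}}\leq\delta^{\eta/5}$ built into $\widetilde{\tau}_\eps^{(\delta)}$ together with $\|\qQ_\delta w_\eps-w_\eps\|_{\cC^{\eta/2}}\lesssim\delta^{\eta/2}\|w_\eps\|_{\cC^\eta}$ and $\|w_\eps\|_{\cC^\eta}\lesssim 1+t^{-\eta}$ from Lemma~\ref{lem:solution_bound}; (b) handle the mollification of the Wick power not by a commutator but by the smoothing estimate $\|(1-\qQ_\delta)u_\eps^{\diamond(2n+1)}\|_{\cC^{-\eta/2}}\lesssim\delta^{\eta/2-\kappa}\|u_\eps^{\diamond(2n+1)}\|_{\cC^{-\kappa}}\lesssim\delta^{\eta/4}(1+t^{-1/4})$, which is where the $t^{-1/4}$ in the statement comes from; (c) expand $H_{2n+1}(u_\eps^{(\delta)};\fC_\eps^{(\delta)})$ to isolate $(u_\eps^{(\delta)})^{2n+1}-n(2n+1)\fC_\eps^{(\delta)}(u_\eps^{(\delta)})^{2n-1}$, your step (iii) then being correct as stated. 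Rescaling time back yields the lemma. So the Hermite bookkeeping in your proposal is fine; the missing piece is replacing the ill-posed commutator step by (a)--(b) above.
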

\begin{proof}
It suffices to prove the bound for a time rescaled version of $R_\eps^{(\delta)}$ and then rescale time back. Write
\begin{equation*}
    \widetilde{R}_\eps^{(\delta)}[t] := R_\eps^{(\delta)}[\eps^{2\gamma+1} t] = \big( u_\eps^{(\delta)}[t]\big)^{2n+1} - \qQ_\delta \big( u_\eps^{\diamond (2n+1)}[t] \big)\;,
\end{equation*}
where we have written $u_\eps^{(\delta)} = \qQ_\delta u_\eps$ for simplicity. 

Recall from \eqref{e:variance_variable} that $\fC_\eps^{(\delta)}$ denotes the variance of $X_\eps$. By definition of Hermite polynomials, there exist constants $\lambda_{n,k}$ for $1 \leq k \leq n$ such that
\begin{equation*}
    \begin{split}
    H_{2n+1} \big( u_\eps^{(\delta)}\,; \, \fC_\eps^{(\delta)} \big) = \, &\big( u_\eps^{(\delta)} \big)^{2n+1} - n (2n+1) \fC_\eps^{(\delta)} \big( u_\eps^{(\delta)} \big)^{2n-1}\\
    &+ \sum_{k=2}^{n} \lambda_{n,k} \big( \fC_\eps^{(\delta)} \big)^{k} \, \big( u_\eps^{(\delta)} \big)^{2(n-k)+1}\;.
    \end{split}
\end{equation*}
Hence, we have
\begin{equation} \label{e:drift_1_remainder_expression}
    \begin{split}
    \widetilde{R}_\eps^{(\delta)} - &n(2n+1) \fC_\eps^{(\delta)} \, \big( u_\eps^{(\delta)} \big)^{2n-1} = \Big( H_{2n+1} \big( u_\eps^{(\delta)}; \, \fC_\eps^{(\delta)} \big) - u_\eps^{\diamond (2n+1)} \Big)\\
    &+ \Big( u_\eps^{\diamond (2n+1)} - \qQ_\delta \big( u_\eps^{\diamond (2n+1)} \big) \Big) - \sum_{k=2}^{n} \lambda_{n,k} \big( \fC_\eps^{(\delta)} \big)^k \big( u_\eps^{(\delta)} \big)^{2(n-k)+1}\;,
    \end{split}
\end{equation}
where we omitted the time $t$ in notation for simplicity. We control each of the three terms on the right hand side above. For the first one, since $u_\eps^{(\delta)} = \qQ_\delta X_\eps + \qQ_\delta w_\eps$ and that $\fC_\eps^{(\delta)}$ is the variance of $\qQ_\delta X_\eps$, we have
\begin{equation*}
    H_{2n+1} (u_\eps^{(\delta)}; \fC_\eps^{(\delta)}) = \sum_{k=0}^{2n+1} \begin{pmatrix} 2n+1 \\ k \end{pmatrix} H_{k}\big( \qQ_\delta X_\eps; \, \fC_\eps^{(\delta)} \big) (\qQ_\delta w_\eps)^{(2n+1-k)}\;.
\end{equation*}
Comparing it with the expression of $u_\eps^{\diamond (2n+1)}$ from \eqref{e:u_Wick_expansion} and using the definition of the stopping time $\widetilde{\tau}_\eps^{(\delta)}$ in \eqref{e:stoppingtime_defn} and Lemma~\ref{lem:solution_bound}, we deduce
\begin{equation*}
    \begin{split}
    \big\| H_{2n+1}(u_\eps^{(\delta)}; \fC_\eps^{(\delta)}) &- u_\eps^{\diamond (2n+1)} \big\|_{\cC^{-\frac{\eta}{2}}} \lesssim \sum_{k=0}^{2n+1} \Big( \| H_{k}\big( \qQ_\delta X_\eps; \, \fC_\eps^{(\delta)} \big) - X_\eps^{\diamond k} \|_{\cC^{-\frac{\eta}{2}}} \|\qQ_\delta w_\eps\|_{\cC^{\eta}}^{2n+1-k}\\
    &+ \|X_\eps^{\diamond k}\|_{\cC^{-\kappa}} \|\qQ_\delta w_\eps - w_\eps\|_{\cC^{\frac{\eta}{2}}} \big( \|\qQ_\delta w_\eps\|_{\cC^{\eta}}^{2n-k} + \|w_\eps\|_{\cC^{\eta}}^{2n-k} \big) \Big)\\
    &\phantom{1111111111111}\lesssim \delta^{\frac{\eta}{5}} \big(1 + t^{-\eta} \big) ^{2n+1} + \delta^{\frac{\eta}{2}} \big(1 + t^{-\eta} \big) ^{2n+1}\;,
    \end{split}
\end{equation*}
where the second line above is understood to be $0$ for $k=2n+1$. For the second term on the right hand side of \eqref{e:drift_1_remainder_expression}, by Lemma~\ref{lem:solution_bound}, we can bound it directly by
\begin{equation*}
    \left\| u_\eps^{\diamond (2n+1)} - \qQ_\delta \big( u_\eps^{\diamond (2n+1)} \big) \right\|_{\cC^{-\frac{\eta}{2}}} \lesssim \delta^{\frac{\eta}{2}-\kappa} \|u_\eps^{\diamond (2n+1)}\|_{\cC^{-\kappa}} \lesssim \delta^{\frac{\eta}{4}} \big( 1 + t^{-\frac{1}{4}} \big)\;.
\end{equation*}
For the third term, for each $k \geq 2$, we have
\begin{equation*}
    \left\|\big(\fC_\eps^{(\delta)}\big)^{k} \cdot \big(v_\eps^{(\delta)}\big)^{2(n-k)+1} \right\|_{\cC^{-\frac{\eta}{2}}} \lesssim \|\fC_\eps^{(\delta)}\|_{\lL^\infty}^k \|v_\eps^{(\delta)}\|_{\lL^\infty}^{2(n-k)+1} \lesssim \big( \eps^{2\gamma} |\log \delta| \big)^k\;,
\end{equation*}
where we have used Lemmas~\ref{le:renormalisation_approx} and~\ref{lem:Cdelta_divergence} to control $\|\fC_\eps^{(\delta)}\|_{\lL^\infty}$ by $\eps^{2\gamma} |\log \delta|$ and Lemma~\ref{le:v_eps_regularization} to control $\|v_\eps^{(\delta)}\|_{\lL^\infty}$ just by a constant. 

Plugging the above three bounds back into the right hand side of \eqref{e:drift_1_remainder_expression} and rescaling the time back, we have thus completed the proof of the lemma. 
\end{proof}

\begin{rmk} \label{rmk:Dzeta_reg_extra}
    Note that the term $R_\eps^{(\delta)}$ in the pairing \eqref{e:b1_reduced} is originally viewed as an element in $\lL^\infty$. But in order to use Lemma~\ref{le:drift_1_remainder} to replace $R_\eps^{(\delta)}$ by the quantity $n(2n+1) \fC_\eps^{(\delta)} (v_\eps^{(\delta)})^{2n-1}$, one needs extra regularity of the kernel of $D \zeta (v_\eps^{(\delta)})$ beyond being a linear functional on $\lL^\infty$. Fortunately, Lemma~\ref{le:Dzeta_reg_int} provides this extra regularity needed. 
\end{rmk}

According to Lemma~\ref{le:renormalisation_approx}, we decompose $\fC_\eps^{(\delta)}$ as
\begin{equation*}
    \fC_\eps^{(\delta)} = \eps^{2\gamma} C^{(\delta)} a_\eps^2 + \big( \fC_\eps^{(\delta)} - \eps^{2\gamma} C^{(\delta)} a_\eps^2 \big)\;,
\end{equation*}
where the constant $C^{(\delta)}$ is given in \eqref{e:C_delta}. Combined with Lemma~\ref{le:drift_1_remainder}, this suggests a decomposition of $R_\eps^{(\delta)}$ as
\begin{equation*}
    \begin{split}
    R_\eps^{(\delta)} = &n (2n+1) \, \eps^{2\gamma}C^{(\delta)} a_\eps^2 \big( v_\eps^{(\delta)} \big)^{2n-1} + n (2n+1) \, \big( \fC_\eps^{(\delta)} - \eps^{2\gamma}C^{(\delta)} a_\eps^2 \big) \cdot \big( v_\eps^{(\delta)} \big)^{2n-1}\\
    &+ \Big( R_\eps^{(\delta)} - n(2n+1) \fC_\eps^{(\delta)} \big(v_\eps^{(\delta)}\big)^{2n-1} \Big)\;.
    \end{split}
\end{equation*}
Hence, it is natural to define
\begin{equation} \label{e:b1_tilde}
    \widetilde{b}_{1,\eps}^{(\delta)}(t) := n(2n+1) \cdot \frac{C^{(\delta)}}{\sqrt{\eps}} \cdot \langle D \zeta \big( v_\eps^{(\delta)} \big), \, a_\eps^2 \cdot \big( v_\eps^{(\delta)} \big)^{2n-1} \rangle\;.
\end{equation}
By the bound \eqref{e:Dzeta_reg} (see Remark~\ref{rmk:Dzeta_reg_extra} above), Lemmas~\ref{le:drift_1_remainder} and~\ref{le:renormalisation_approx} and that $\|v_\eps^{(\delta)}\|_{\lL^\infty} \lesssim 1$, we have
\begin{equation} \label{e:b1_tilde_difference}
    \left| b_{1,\eps}^{(\delta)}(t) - \widetilde{b}_{1,\eps}^{(\delta)}(t) \right| \lesssim \sqrt{\eps} +\eps^{-2\gamma-\frac{1}{2}} \left( \delta^{\frac{\eta}{5}}(1+t^{-\frac{1}{4}}) + \eps^{4 \gamma} |\log \delta|^2 \right)
\end{equation}
for all $t\in[0,\tau_\eps^{(\delta)}]$. Hence, it remains to analyze $\widetilde{b}_{1,\eps}^{(\delta)}$. 

Taylor expanding $a_\eps^2$ at $\zeta \big( v_\eps^{(\delta)} \big)$ and recalling $\xi_\eps^{(\delta)} = \sqrt{\eps} \zeta \big( v_\eps^{(\delta)} \big)$, we have
\begin{equation} \label{e:b1_Taylor_a}
    a_\eps^2(y) = a^2(\sqrt{\eps} y) = a^2 \big( \xi_\eps^{(\delta)} \big) + 2 \sqrt{\eps} a \big( \xi_\eps^{(\delta)} \big) a' \big( \xi_\eps^{(\delta)} \big) \, \big( y - \zeta(v_\eps^{(\delta)}) \big) + \err_{2,\eps}^{(\delta)}\;,
\end{equation}
where the error term satisfies the pointwise bound
\begin{equation} \label{e:b1_Taylor_error}
    \big| \err_{2,\eps}^{(\delta)}(y) \big| \lesssim \eps \, \big| y - \zeta \big( v_\eps^{(\delta)} \big) \big|^2\;.
\end{equation}
According to the definition of $\widetilde{b}_{1,\eps}$ in \eqref{e:b1_tilde}, we need to multiply each term on the right hand side of \eqref{e:b1_Taylor_a} by $\big( v_\eps^{(\delta)} \big)^{2n-1}$ and then test with $D \zeta \big( v_\eps^{(\delta)} \big)$. For the ``constant term" $a^2 \big( \xi_\eps^{(\delta)} \big)$, using \eqref{e:Dzeta_test_v_odd_1} and Lemma~\ref{le:v_eps_regularization}, we have
\begin{equation} \label{e:b1_test_1}
    \begin{split}
    \big| \langle D \zeta \big(v_\eps^{(\delta)} \big), \, a^2\big(\xi_\eps^{(\delta)}\big) \cdot \big( v_\eps^{(\delta)} \big)^{2n-1} \rangle \big| &= a^2 (\xi_\eps^{(\delta)}) \cdot \big| \langle D \zeta \big(v_\eps^{(\delta)} \big), \, \big( v_\eps^{(\delta)} \big)^{2n-1} \rangle \big|\\
    &\lesssim \dist \big( v_\eps^{(\delta)}, \mM \big)\lesssim \delta + \eps^{\gamma-\kappa'} \delta^{-2\kappa'}\;.
    \end{split}
\end{equation}
For the second one, $a\big( \xi_\eps^{(\delta)} \big) a' \big( \xi_\eps^{(\delta)} \big)$ is now the ``constant term". Neglecting this ``constant term" for the moment, by the bound \eqref{e:Dzeta_test_v_odd_2}, the first expression of $C_0^*$ in \eqref{e:C0} and Lemma~\ref{le:v_eps_regularization}, we have
\begin{equation} \label{e:b1_test_2}
    \begin{split}
    &\phantom{111}\left| \int_{\RR} D \zeta \big( y; v_\eps^{(\delta)} \big) \, \big( y - \zeta (v_\eps^{(\delta)}) \big) \, \big( v_\eps^{(\delta)}(y) \big)^{2n-1} {\rm d}y - C_0^*  \right|\\
    &\lesssim \dist \big( v_\eps^{(\delta)}, \mM \big) \lesssim \delta + \eps^{\gamma-\kappa'} \delta^{-2\kappa'}\;.
    \end{split}
\end{equation}
For the error term, by Assertion 3 in Lemma~\ref{lem:Dzeta}, the bound \eqref{e:b1_Taylor_error} and that $\|v_\eps^{(\delta)}\|_{\lL^\infty} \lesssim 1$, we have
\begin{equation} \label{e:b1_test_3}
    \left| \langle D \zeta \big( v_\eps^{(\delta)} \big), \, \err_{2,\eps}^{(\delta)} \cdot \big( v_\eps^{(\delta)} \big)^{2n-1} \rangle \right| \lesssim \eps\;.
\end{equation}
Combining the bounds \eqref{e:b1_test_1}, \eqref{e:b1_test_2} and \eqref{e:b1_test_3} and noting the form of the Taylor expansion in \eqref{e:b1_Taylor_a}, we get
\begin{equation*}
    \left| \langle D \zeta \big( v_\eps^{(\delta)} \big), \, a_\eps^2 \cdot \big( v_\eps^{(\delta)} \big)^{2n-1}  \rangle - 2 C_0^* \sqrt{\eps} \, a(\xi_\eps^{(\delta)}) \, a'(\xi_\eps^{(\delta)}) \right| \lesssim \eps + \delta + \eps^{\gamma-\kappa'} \delta^{-2\kappa'}\;.
\end{equation*}
By the expression of $\widetilde{b}_{1,\eps}^{(\delta)}$ in \eqref{e:b1_tilde}, multiplying both sides above by $n(2n+1) \cdot \frac{C^{(\delta)}}{\sqrt{\eps}}$ and employing the bound of $C^{(\delta)}$ in Lemma~\ref{lem:Cdelta_divergence}, we obtain
\begin{equation*}
    \left| \tilde{b}_{1,\eps}^{(\delta)} - 2n (2n+1) C^{(\delta)} C_{0}^{*} a \big( \xi_\eps^{(\delta)} \big) a' \big( \xi_\eps^{(\delta)} \big) \right| \lesssim \Big( \frac{\eps + \delta}{\sqrt{\eps}} + \eps^{\gamma-\kappa'-\frac{1}{2}} \delta^{-2\kappa'} \Big) \cdot |\log\delta|
\end{equation*}
for all $t\in[0,\tau_\eps^{(\delta)}]$. Combing it with \eqref{e:b1_tilde_difference}, we get
\begin{equation} \label{e:b1_approx_final}
    \begin{split}
    \Big| b_{1,\eps}^{(\delta)} - 2n (2n+1) C^{(\delta)} &C_{0}^{*} a \big( \xi_\eps^{(\delta)} \big) a' \big( \xi_\eps^{(\delta)} \big) \Big| \lesssim \Big( \frac{\eps + \delta}{\sqrt{\eps}} + \eps^{\gamma-\kappa'-\frac{1}{2}} \delta^{-2\kappa'} \Big) \cdot |\log\delta|\\
    &+\eps^{-2\gamma-\frac{1}{2}}\left( \delta^{\frac{\eta}{5}} (1+t^{-\frac{1}{4}}) + \eps^{4 \gamma} |\log \delta|^2 \right)
    \end{split}
\end{equation}
for all $t\in[0,\tau_\eps^{(\delta)}]$.

\subsection{Analysis of the drift $b_{2,\eps}^{(\delta)}$}

We now turn to the drift term $b_{2,\eps}^{(\delta)}$ in \eqref{e:drift_terms}. We have
\begin{equation*}
    \begin{split}
    \qQ_{\delta}^{\otimes 2} &\big( D^2 \zeta \big( v_\eps^{(\delta)} \big) \big) \cdot a_\eps^{\otimes 2} = \qQ_{\delta}^{\otimes 2} \Big( D^2 \zeta \big( v_\eps^{(\delta)} \big) - D^2 \zeta \big( m_{\zeta(v_\eps^{(\delta)})} \big) \Big) \cdot a_\eps^{\otimes 2}\\
    &+ \Big[ \Big(  \qQ_{\delta}^{\otimes 2} D^2 \zeta \big( m_{\zeta(v_\eps^{(\delta)})} \big) \Big) \times \Big( a^2 \big( \xi_\eps^{(\delta)} \big)\\
    &+ \sqrt{\eps} a \big( \xi_\eps^{(\delta)} \big) a' \big( \xi_\eps^{(\delta)} \big) \big( y_1 - \zeta (v_\eps^{(\delta)}) + y_2 - \zeta (v_\eps^{(\delta)}) \big) + \err_{3,\eps}^{(\delta)} \Big) \Big]\;,
    \end{split}
\end{equation*}
where
\begin{equation*}
    \begin{split}
    \err_{3,\eps}^{(\delta)}(y_1, y_2) := &a_\eps(y_1) a_\eps(y_2) - a^2 \big( \xi_\eps^{(\delta)} \big)\\
    &- \sqrt{\eps} a \big( \xi_\eps^{(\delta)} \big) a' \big( \xi_\eps^{(\delta)} \big) \Big( y_1 - \zeta (v_\eps^{(\delta)}) + y_2 - \zeta (v_\eps^{(\delta)}) \Big)\;.
    \end{split}
\end{equation*}
Let
\begin{equation*}
    \begin{split}
    \widetilde{b}_{2,\eps}^{(\delta)}(t) &:= \frac{1}{2} \, a \big( \xi_\eps^{(\delta)}(t) \big) \, a' \big( \xi_\eps^{(\delta)}(t) \big)\\
    &\cdot \int_{\RR} \Big[ \sqrt{\fD}^{\otimes 2} \Big( \qQ_\delta^{\otimes 2} D^2 \zeta \big( m_{\zeta (v_\eps^{(\delta)})} \big) \cdot \big( y_1 + y_2 - 2 \zeta(v_\eps^{(\delta)}) \big) \Big) \Big](y,y) {\rm d}y\;,
    \end{split}
\end{equation*}
where the actions $\qQ_\delta^{\otimes 2}$ and $\sqrt{\fD}^{\otimes 2}$ are on the variables $y_1$ and $y_2$, and after that, one further restricts to $y_1 = y_2$ for integration. 

Note that by translation property of $D^2 \zeta$ (\eqref{e:Dzeta_translation} in Lemma~\ref{lem:Dzeta}) and anti-symmetry of $D^2 \zeta (m)$ (\eqref{e:D2zeta_odd} in Lemma~\ref{lem:Dzeta}), we have
\begin{equation*}
\int_{\RR} \sqrt{\fD}^{\otimes 2} \Big( \qQ_\delta^{\otimes 2} D^2 \zeta \big( m_{\zeta(v)} \big) \Big) \, (y,y) \,\md y = 0\;.
\end{equation*}
Hence, comparing the expression of $b_{2,\eps}^{(\delta)}$ in \eqref{e:drift_terms} and that of $\widetilde{b}_{2,\eps}^{(\delta)}$, we deduce from Lemmas~\ref{lem:D2zeta_continuity_aeps}, ~\ref{lem:D2zeta_taylor}, Assertion 2 in Lemma~\ref{lem:Dzeta} and Lemma~\ref{le:v_eps_regularization} that there exists $\nu>0$ such that
\begin{equation} \label{e:difference_b2_tilde}
    \left| b_{2,\eps}^{(\delta)} - \widetilde{b}_{2,\eps}^{(\delta)}  \right| \lesssim\Big( \frac{\delta}{\sqrt{\eps}} + \eps^{\gamma-\kappa'-\frac{1}{2}} \delta^{-2\kappa'} + \eps^\nu \Big)|\log \delta| \;.
\end{equation}
By the translation property (\eqref{e:Dzeta_translation} in Lemma~\ref{lem:Dzeta}), we see $\widetilde{b}_{2,\eps}^{(\delta)}$ has the expression
\begin{equation*}
    \widetilde{b}_{2,\eps}^{(\delta)}(t) = \frac{1}{2} \, a \big( \xi_\eps^{(\delta)}(t) \big) \, a' \big( \xi_\eps^{(\delta)}(t) \big)  \int_{\RR}  \Big( \sqrt{\fD}^{\otimes 2} \big( \qQ_\delta^{\otimes 2} D^2 \zeta(m) \cdot (y_1 + y_2) \big) \Big)(y,y) {\rm d}y\;.
\end{equation*}
Now, applying Lemma~\ref{lem:D2zeta_divergence_rate} to the expression above and then combining \eqref{e:difference_b2_tilde}, we deduce there exist $\widetilde{\alpha}_2 \in \RR$ and $\nu>0$ such that
\begin{equation} \label{e:b2_approx_final}
\begin{aligned}
    &\left| b_{2,\eps}^{(\delta)} + \Big( \frac{n (2n+1) C_0^*}{2 \pi^2} \cdot |\log \delta| + \widetilde{\alpha}_2 \Big) a \big( \xi_\eps^{(\delta)} \big) a' \big( \xi_\eps^{(\delta)} \big) \right| \\
    \lesssim &\Big( \frac{\delta}{\sqrt{\eps}} + \eps^{\gamma-\kappa'-\frac{1}{2}} \delta^{-2\kappa'} + \eps^\nu \Big) |\log \delta| + \delta^\nu
\end{aligned}
\end{equation}
for all $t\in[0,\tau_\eps^{(\delta)}]$. 

\subsection{Cancellations of the drifts -- proof of Proposition~\ref{pr:drift_convergence_final}}

Combining \eqref{e:b1_approx_final} and \eqref{e:b2_approx_final}, we see there exists $\nu>0$ such that
\begin{equation*}
    \begin{split}
    \big| b_{1,\eps}^{(\delta)}(t) + b_{2,\eps}^{(\delta)}(t) - &\alpha_2^{(\delta)} a \big( \xi_\eps^{(\delta)}(t) \big) a' \big( \xi_\eps^{(\delta)}(t) \big) \big| \lesssim \Big( \frac{\delta}{\sqrt{\eps}} + \eps^{\gamma-\kappa'-\frac{1}{2}} \delta^{-2\kappa'} + \eps^\nu \Big)|\log\delta|\\
    &+\eps^{-2\gamma-\frac{1}{2}}\left( \delta^{\frac{\eta}{5}}(1+t^{-\frac{1}{4}}) + \eps^{4 \gamma} |\log \delta|^2 \right) + \delta^\nu\;,
    \end{split}
\end{equation*}
where  
\begin{equation*}
    \alpha_2^{(\delta)} = 2 n (2n+1) C_0^* \Big( C^{(\delta)} - \frac{|\log \delta|}{4 \pi^2} \Big) - \widetilde{\alpha}_2\;.
\end{equation*}
Applying Lemma~\ref{lem:Cdelta_divergence}, we see $\alpha_2^{(\delta)}$ converges to
\begin{equation}\label{e:alpha2}
    \alpha_2 = 2n(2n+1)C_0^*\left(\alpha-\frac{\log\mu}{8\pi^2}\right)-\widetilde{\alpha}_2
\end{equation}
as $\delta \rightarrow 0$. This completes the proof of Proposition~\ref{pr:drift_convergence_final}.

\section{Analysis of the deterministic flow}
\label{sec:deterministic}

The aim of this section is to prove the deterministic statements that have been used as ingredients in the proofs of Propositions~\ref{pr:martingale_convergence} and~\ref{pr:drift_convergence_final}. Most of them concern quantities of the form $\sqrt{\fD}^{\otimes k} \big( a_\eps^{\otimes k} \, D^k \zeta (v_\eps^{(\delta)}) \big)$ for $k=1,2$, possibly also with regularization by $\qQ_\delta^{\otimes k}$. 

We first give in Section~\ref{sec:zeta_prelim} preliminaries on (twice) Fr\'echet differentiability of $\zeta$ in $\lL^\infty$ neighborhood of $\mM$, together with some properties of the kernels of $D^k \zeta$ and $p_t^{(k)} = D^k F^t$ for $k=1,2$. Then in Section~\ref{sec:deterministic_statements}, we list the precise statements to be proved about the quantities $\sqrt{\fD}^{\otimes k} \big( a_\eps^{\otimes k} \, D^k \zeta (v_\eps^{(\delta)}) \big)$ and their variants. We need to have good understanding on its stability (when $v_\eps^{(\delta)}$ is close to $\mM$ in $\lL^\infty$) as well as the error terms from Taylor expansions of $a_\eps$. 

We choose to work with $\lL^\infty$-norm since it allows us to directly use existing $\lL^\infty$-based estimates from \cite{XZZ24}. Note that even though $v_\eps$ itself is not function valued, all our analysis are based on the intermediate object $v_\eps^{(\delta)}$. The validity of using $\lL^\infty$-norm to measure the distance between $v_\eps^{(\delta)}$ and $\mM$ is guaranteed by Lemma~\ref{le:v_eps_regularization}. These are the norms used in the proof of Propositions~\ref{pr:martingale_convergence} and~\ref{pr:drift_convergence_final}. 

Hence, in the rest of this section, we always assume $v \in \vV_\beta$ in the sense that
\begin{equation*}
    \dist (v, \mM) := \dist_{\lL^\infty} (v, \mM) < \beta\;,
\end{equation*}
and let $\zeta: \vV_\beta \rightarrow \RR$ denote the limiting functional given by Proposition~\ref{pr:LinftyEC}.

\subsection{Kernel and limiting functional of the deterministic flow}
\label{sec:zeta_prelim}

We collect known results on properties of $D^k \zeta$ and the kernel $p_t^{(k)}(\cdot  \,, \cdot\,, v)$ for $k=1,2$. The statements below are mostly from \cite{Fun95} and \cite{XZZ24}. 

For $v \in \lL^\infty$  and $y \in \RR$, let $p_t(y, \cdot\,; v)$ be the solution to
\begin{equation*} 
    \d_t \, p_t (y, \cdot\,; v) = \Delta \, p_t (y, \cdot\,; v) + f_n'\big( F^t(v) \big) \, p_t (y, \cdot\,; v)\;, \qquad p_0(y, \cdot\,; v) = \delta_y\;.
\end{equation*}
For $0\leq s\leq t$, we also write
\begin{equation*}
    p_{s,t}(y,z;v):=p_{t-s}\big(y,z;F^s(v)\big)\;.
\end{equation*}
For $v\in\lL^\infty$ and $(y_1,y_2)\in\RR^2$, let $p_t^{(2)}(y_1,y_2, \cdot\,; v)$ be the solution $u$ to the equation
\begin{equation*} 
    \d_t u = \Delta u + f_n' \big( F^t(v) \big) u + f_n''\big(F^t(v)\big) \, p_t(y_1,\cdot\,;v) \, p_t(y_2,\cdot\,;v)\;, \quad u[0] = 0\;.
\end{equation*}
Then by Duhamel's principle, we have
\begin{equation*}
    p_{t}^{(2)}(y_1,y_2,z;v) = \int_0^t\int_\RR p_{s,t}(z',z;v)f_n''\big(F^s(v)\big)(z') \prod_{j=1}^{2} p_{s}(y_j,z';v) \,\md z' \,\md s\;.
\end{equation*}
It is shown in \cite[Lemma~9.7]{Fun95} and \cite[Proposition~3.19]{XZZ24} that $p_t(v)$ and $p_t^{(2)}(v)$ are the kernels of $DF^t(v)$ and $D^2F^t(v)$ for $v \in \vV_\beta$ in the sense that
\begin{equation*}
    \begin{split}
    \bracket{D F^t(v), \varphi}(z) &= \int_{\RR} p_{t}(y, z; v) \varphi(y) \,\md \vec{y}\;,\\
    \bracket{D^2 F^t(v), (\varphi_1, \varphi_2)}(z) &= \iint\limits_{\RR^2} p_{t}^{(2)}(y_1, y_2, z; v) \varphi_1(y_1) \varphi_2 (y_2)\, \md y_1 \,\md y_2
    \end{split}
\end{equation*}
for $\varphi, \varphi_1, \varphi_2 \in \lL^\infty$.

\begin{lem}\label{lem:Dzeta}
    The limiting point functional $\zeta$ satisfies the following pointwise bounds and properties: 
    \begin{enumerate}
        \item $\zeta$ is twice Fr\'echet differentiable in $\vV_\beta$, and its Fr\'echet derivatives $D^k\zeta(v)$ has a kernel for $k=1,2$ and $v\in\vV_{\beta}$ (still denoted by $D^k\zeta(v)$). Moreover, the kernels have the expressions
        \begin{equation*} 
            D\zeta(y;v) = -\frac{1}{\|m'\|_{\lL^2}^2}\Big(m_{\zeta(v)}'(y)+\int_0^{+\infty}\int_{\RR} m_{\zeta(v)}'(z) \mathbf{r}_t(z;v)p_t(y,z;v) \,\md z\,\md t\Big)\;,
        \end{equation*}
        \begin{equs}
        D^2\zeta(y_1,y_2;v)= -\frac{1}{\|m'\|_{\lL^2}^2} &\int_{0}^{+\infty} \int_\RR\Big(  f_n'' \big(F^t (v)(z) \big) \prod_{j=1}^{2} p_t(y_j, z; v) \\
        &+  \r_t(z;v) \, p_t^{(2)}(y_1, y_2, z; v)\Big)m_{\zeta(v)}'(z)\,\md z\,\md t\;,
        \end{equs}
        where
        \begin{equation*}
        \mathbf{r}_t(v):= f_n'\big(F^t(v)\big) - f_n'(m_{\zeta(v)})
        \end{equation*}
        satisfies
        \begin{equation} \label{e:r_decay}
        \Vert \mathbf{r}_t(v) \Vert_{\lL^\infty}\leq C e^{-ct} \dist (v, \mM)
        \end{equation}
        for some $c, C>0$, uniformly over $v \in \vV_\beta$ and $t\geq 0$. 
        
        \item The derivatives of $\zeta$ satisfy the translation property
        \begin{equation} \label{e:Dzeta_translation}
            \begin{split}
            D \zeta \big(y; v(\cdot - z) \big) &= D \zeta \big(y-z; v\big)\;,\\
            D^2 \zeta \big(y_1, y_2 ; v(\cdot - z) \big) &= D \zeta \big(y_1-z, y_2 - z; v\big)\;.
            \end{split}
        \end{equation}
        Furthermore, $D^2 \zeta (m)$ is anti-symmetric in the sense that
        \begin{equation} \label{e:D2zeta_odd}
            D^2 \zeta (y_1, y_2; m) = - D^2 \zeta (-y_1, -y_2; m)\;.
        \end{equation}
        
        \item There exists $\lambda>0$ such that
        \begin{align}
            |D\zeta(y;v)| &\lesssim e^{-\lambda|y-\zeta(v)|}\label{e:Dzeta_bound}\;,\\
            |D^2\zeta(y_1,y_2;v)| &\lesssim e^{-\lambda(|y_1-\zeta(v)|+|y_2-\zeta(v)|)}\;,\label{e:D2zeta_bound}\\
            |D\zeta(y;v) - D\zeta(y;m_{\zeta(v)})| &\lesssim e^{-\lambda|y-\zeta(v)|} \dist(v,\mM)\;,\label{e:Dzeta_difference}\\
            |D^2\zeta(y_1,y_2;v)-D^2\zeta(y_1,y_2;m_{\zeta(v)})| &\lesssim e^{-\lambda(|y_1-\zeta(v)|+|y_2-\zeta(v)|)}\dist(v,\mM)\label{e:D2zeta_difference}
        \end{align}
        uniformly over $v \in \vV_{\beta}$ and $y, y_1, y_2 \in \RR$. 
    \end{enumerate}
\end{lem}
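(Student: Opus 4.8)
The plan is to build everything from the two explicit kernel formulae for $D\zeta$ and $D^2\zeta$, which in turn rest on the known differentiability of $F^t$ and the semigroup estimates in Proposition~\ref{prop:LinftySG} together with the exponential convergence $F^t(v)\to m_{\zeta(v)}$ from Proposition~\ref{pr:LinftyEC}. So I would first recall (citing \cite[Lemma~9.7]{Fun95}, \cite[Proposition~3.19]{XZZ24}) that $DF^t(v)$ and $D^2F^t(v)$ have kernels $p_t(v)$, $p_t^{(2)}(v)$, and that $\zeta$ is the $\lL^\infty$-limit functional. Differentiating the relation $\zeta(v)=\lim_t \mathrm{(Fermi\ coordinate\ of\ }F^t(v))$ — or, more directly, using the characterization $\langle F^t(v)-m_{\zeta(v)},\cdot\rangle$-type identity established in \cite{Fun95,XZZ24} — yields the stated formulae; the factor $-1/\|m'\|_{\lL^2}^2$ and the appearance of $m'_{\zeta(v)}$ come from $D\zeta(m)=-m'/\|m'\|_{\lL^2}^2$ (already quoted in \eqref{e:C0}) and the chain rule through $f_n'$. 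The decay bound \eqref{e:r_decay} is immediate from $\mathbf{r}_t(v)=f_n'(F^t(v))-f_n'(m_{\zeta(v)})$, smoothness (hence local Lipschitzness) of $f_n'$, and $\|F^t(v)-m_{\zeta(v)}\|_{\lL^\infty}\le Ce^{-ct}\dist(v,\mM)$ from Proposition~\ref{pr:LinftyEC}.

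For Assertion~2, the translation identities \eqref{e:Dzeta_translation} follow from the equivariance $F^t(v(\cdot-z))=(F^t v)(\cdot-z)$ and $\zeta(v(\cdot-z))=\zeta(v)+z$, which propagate through the kernel formulae since $p_t(y,z;v(\cdot-\cdot))$ shifts accordingly; one just substitutes and changes variables. The anti-symmetry \eqref{e:D2zeta_odd} comes from the reflection symmetry of $m$: since $m$ is odd (as $m(0)=0$ and $m(\pm\infty)=\pm1$ with $f_n$ odd), one has $m(-x)=-m(x)$, hence $F^t(m)(-x)=-F^t(m)(x)=-m(x)$ for all $t$ (as $m$ is stationary), so $p_t(y,z;m)=p_t(-y,-z;m)$, $f_n''(m(z))$ is odd in $z$, $m'(z)$ is even, and plugging into the $D^2\zeta(m)$ formula and substituting $z\to-z$, $y_j\to-y_j$ gives the sign flip. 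I should be a little careful to track that $f_n''$ is odd and $p_t^{(2)}$ picks up the right parity.

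For Assertion~3, the pointwise exponential bounds \eqref{e:Dzeta_bound}–\eqref{e:D2zeta_difference} are the most calculational part and, I expect, the main obstacle. After using \eqref{e:Dzeta_translation} to reduce to $\zeta(v)=0$, I would feed into the kernel formulae three ingredients: (i) Gaussian-type upper bounds on $p_t(y,z;v)$ of the form $p_t(y,z;v)\lesssim t^{-1/2}e^{-c|y-z|^2/t}e^{Ct}$ uniformly over $v\in\vV_\beta$ (from Proposition~\ref{prop:LinftySG}-type heat-kernel comparison, since $f_n'$ is bounded), and the analogous bound for $p_t^{(2)}$ via its Duhamel representation; (ii) the exponential spatial decay of $m'$ (Lemma~\ref{lem:statationary_exponential_decay}); (iii) the exponential-in-$t$ decay \eqref{e:r_decay} of $\mathbf{r}_t$, which is what makes the $t$-integral converge and produces the extra $\dist(v,\mM)$ factor in \eqref{e:Dzeta_difference}–\eqref{e:D2zeta_difference}. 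Concretely, for \eqref{e:Dzeta_bound} one splits $\int_0^\infty\!\!\int_\RR$ into $t\le 1$ and $t\ge 1$: for $t\le 1$ the Gaussian in $p_t$ against $e^{-c|z|}$ from $m'$ gives $e^{-\lambda|y|}$ after a standard convolution-of-Gaussian-with-exponential estimate; for $t\ge1$ the factor $e^{-ct}$ from $\mathbf{r}_t$ kills the $e^{Ct}$ growth and leaves an integrable tail, while $p_t(y,z;v)$ integrated in $z$ stays bounded. The difference bounds \eqref{e:Dzeta_difference}, \eqref{e:D2zeta_difference} are obtained the same way, additionally using $|m'_{\zeta(v)}-m'_{\zeta(v)}|$ is trivial but the $\dist(v,\mM)$ comes directly from \eqref{e:r_decay} and from $\|p_t(v)-p_t(m_{\zeta(v)})\|\lesssim e^{Ct}\dist(v,\mM)$, a Duhamel/Gronwall comparison of the two linear equations whose coefficients differ by $\mathbf{r}_t$. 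Since essentially identical estimates are carried out in \cite[Section~3]{XZZ24} in the $\lL^\infty$ setting, I would state the bounds, indicate the split and the two mechanisms (short-time Gaussian smoothing; long-time exponential gap), and refer to \cite{Fun95,XZZ24} for the routine details rather than reproducing them in full.
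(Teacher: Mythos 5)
The paper does not actually prove this lemma: its ``proof'' is a citation to \cite[Lemma~3.11, Lemma~3.13, Proposition~4.2]{XZZ24} (and \cite{Fun95}), so your proposal is essentially a reconstruction of the cited argument. Your treatment of Assertion~1 (kernel formulae, \eqref{e:r_decay} from Lipschitzness of $f_n'$ and Proposition~\ref{pr:LinftyEC}) and of Assertion~2 (equivariance $F^t(v(\cdot-z))=(F^tv)(\cdot-z)$, $\zeta(v(\cdot-z))=\zeta(v)+z$, and the parity argument using that $m$ is odd, $m'$ even, $f_n''\circ m$ odd, and $\mathbf{r}_t(m)\equiv 0$) is sound and matches what the references do.

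There is, however, a genuine gap in the one place you sketch the mechanism yourself, namely the large-time part of Assertion~3. You propose the crude comparison $p_t(y,z;v)\lesssim t^{-1/2}e^{-c|y-z|^2/t}e^{Ct}$ (Gr\"onwall with $\|f_n'(F^t(v))\|_{\lL^\infty}\lesssim 1$, so effectively $C=1$) and claim the factor $e^{-ct}$ from $\mathbf{r}_t$ ``kills'' the $e^{Ct}$ growth; but $c$ is the nonlinear relaxation rate tied to the spectral gap of $\aA$ and there is no reason that $c>C$, so the $t\ge 1$ integral does not converge by this route. Moreover, even if it converged, ``$p_t(y,z;v)$ integrated in $z$ stays bounded'' yields only boundedness of the tail, not the factor $e^{-\lambda|y-\zeta(v)|}$ demanded by \eqref{e:Dzeta_bound}--\eqref{e:D2zeta_difference}: for $t\geq 1$ one needs spatial decay of $p_t(y,\cdot\,;v)$ in $y$ \emph{uniformly in $t$}, which is exactly the content of the decomposition \eqref{e:decomposition1} with \eqref{e:decomposition1-2} and \eqref{e:bound1-1} in Lemma~\ref{lem:pt}. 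That estimate is obtained not by Gaussian comparison but by writing Duhamel around $e^{-t\aA}$, projecting out $m'$ and using the decay of $\|e^{-t\aA}-\pP\|$ from Proposition~\ref{prop:LinftySG} together with \eqref{e:r_decay} and Gr\"onwall; this must be established \emph{before} the bounds on $D\zeta$, otherwise your use of the decomposition would be circular. A related inefficiency: for \eqref{e:Dzeta_difference} you do not need any comparison $\|p_t(v)-p_t(m_{\zeta(v)})\|\lesssim e^{Ct}\dist(v,\mM)$ (which has the same unjustified growth), since $\mathbf{r}_t(m_{\zeta(v)})\equiv 0$ makes the difference of the kernel formulae consist only of the $\mathbf{r}_t(v)$ term, and the $\dist(v,\mM)$ factor comes solely from \eqref{e:r_decay} once the uniform-in-$t$ bounds on $p_t$ are in hand.
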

\begin{proof}
    This is the content of \cite[Lemma~3.11, Lemma~3.13, Proposition~4.2]{XZZ24}, and it is also studied in \cite{Fun95}.
\end{proof}

\begin{lem} \label{le:Dzeta_reg_int}
    For every $\alpha \in [0,2)$ and $v\in\vV_{\beta}$, we have $D\zeta(v)\in\wW^{\alpha,1}$ with
    \begin{equ}\label{e:Dzeta_reg}
        \sup_{v\in\vV_\beta} \|D\zeta(v)\|_{\wW^{\alpha,1}}<+\infty\;.
    \end{equ}
    In addition, for every $v\in \vV_\beta\cap\cC^{5/2}$, we have the identity 
    \begin{equation}\label{e:zeta_magical_cancellation}
        \bracket{D\zeta(v), \Delta v+f_n(v)} = 0\;.
    \end{equation}
    Moreover, we have the bounds
    \begin{equ}\label{e:Dzeta_test_v_odd_1}
        |\bracket{D\zeta(v),v^{2n-1}}|\lesssim \dist(v,\mM)\;,
    \end{equ}
    \begin{equ}\label{e:Dzeta_test_v_odd_2}
        |\bracket{D\zeta(v),\big(\cdot-\zeta(v)\big)v^{2n-1}}-\bracket{D\zeta(m_{\zeta(v)}),\big(\cdot-\zeta(v)\big)m_{\zeta(v)}^{2n-1}}|\lesssim \dist(v,\mM)\;.
    \end{equ}
\end{lem}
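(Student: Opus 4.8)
The plan is to establish the four assertions of Lemma~\ref{le:Dzeta_reg_int} one at a time, in each case exploiting the explicit kernel representation of $D\zeta(v)$ from Lemma~\ref{lem:Dzeta} together with the identity $D\zeta(m_\theta) = -m_\theta'/\|m'\|_{\lL^2}^2$. The regularity claim \eqref{e:Dzeta_reg} is the heart of the matter. Writing $\big(DF^t(v)\big)^{*}$ for the $\lL^2$-adjoint of the linearized flow, which acts by $g \mapsto \int_\RR p_t(\cdot\,,z;v)g(z)\,\md z$, the formula in Lemma~\ref{lem:Dzeta} reads
\[
D\zeta(\cdot\,;v) = -\frac{1}{\|m'\|_{\lL^2}^2}\Big( m_{\zeta(v)}' + \int_0^{+\infty} \big(DF^t(v)\big)^{*}\!\big( m_{\zeta(v)}'\,\mathbf{r}_t(v) \big)\,\md t \Big)\;.
\]
The first summand is a translate of the Schwartz function $-m'/\|m'\|_{\lL^2}^2$ (Lemma~\ref{lem:statationary_exponential_decay}), hence lies in $\wW^{\alpha,1}$ with norm independent of $v$. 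For the integral term I would use that $\big(DF^t(v)\big)^{*}$ is the propagator of the time-reversed linearized equation -- a heat-type evolution with bounded zeroth-order coefficient, since $F^t(v)$ stays bounded -- so that it enjoys the short-time smoothing bound $\|\big(DF^t(v)\big)^{*}g\|_{\wW^{\alpha,1}} \lesssim t^{-\alpha/2}\|g\|_{\lL^1}$ for $t\in(0,1]$, while for $t\geq 1$ it is close to $e^{-t\aA_{\zeta(v)}}$ (the discrepancy controlled by Duhamel and \eqref{e:r_decay}), whence $\|\big(DF^t(v)\big)^{*}\|_{\lL^1\to\wW^{\alpha,1}}\lesssim 1$ by Proposition~\ref{prop:LinftySG}. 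Since $\|m_{\zeta(v)}'\mathbf{r}_t(v)\|_{\lL^1}\lesssim e^{-ct}\dist(v,\mM)$ by \eqref{e:r_decay} and the decay of $m'$, the $t$-integral is bounded by $\big(\int_0^1 t^{-\alpha/2}\,\md t + \int_1^{+\infty}e^{-ct}\,\md t\big)\dist(v,\mM)$, which is finite precisely because $\alpha<2$ (it is the integration of the per-time smoothing bound that forces the strict inequality; $\alpha=2$ would instead require elliptic regularity for $\aA^{-1}$).

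For the cancellation \eqref{e:zeta_magical_cancellation}, the point is that $\zeta$ is invariant along the deterministic flow: the semigroup property gives $F^{s}\big(F^t(v)\big)=F^{s+t}(v)\to m_{\zeta(v)}$, so by the uniqueness in Proposition~\ref{pr:LinftyEC} we have $\zeta\big(F^t(v)\big)=\zeta(v)$ for all small $t\geq 0$ (and $F^t(v)\in\vV_\beta$ there by continuity of the flow). When $v\in\cC^{5/2}$, standard parabolic regularity (losing two derivatives) shows $t\mapsto F^t(v)$ is differentiable at $t=0$ into $\lL^\infty$ with derivative $\Delta v+f_n(v)\in\cC^{1/2}\hookrightarrow\lL^\infty$, so the chain rule for the Fr\'echet-differentiable functional $\zeta$ gives $0=\frac{\md}{\md t}\big|_{t=0}\zeta\big(F^t(v)\big)=\langle D\zeta(v),\,\Delta v+f_n(v)\rangle$.

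The two testing bounds \eqref{e:Dzeta_test_v_odd_1}--\eqref{e:Dzeta_test_v_odd_2} follow by comparison with the value at $m_{\zeta(v)}$. Because $D\zeta(m_\theta)=-m_\theta'/\|m'\|_{\lL^2}^2$ and $m'm^{2n-1}=\tfrac1{2n}(m^{2n})'$ has vanishing integral, one has $\langle D\zeta(m_{\zeta(v)}),m_{\zeta(v)}^{2n-1}\rangle=0$, so for \eqref{e:Dzeta_test_v_odd_1} we may write
\[
\langle D\zeta(v),v^{2n-1}\rangle = \langle D\zeta(v)-D\zeta(m_{\zeta(v)}),\,v^{2n-1}\rangle + \langle D\zeta(m_{\zeta(v)}),\,v^{2n-1}-m_{\zeta(v)}^{2n-1}\rangle\;,
\]
and \eqref{e:Dzeta_test_v_odd_2} is already a difference of the same shape (with $v^{2n-1}$ replaced by $(\cdot-\zeta(v))v^{2n-1}$). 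In both cases the first term is estimated via \eqref{e:Dzeta_difference}, which contributes a factor $\dist(v,\mM)$ and a weight $e^{-\lambda|y-\zeta(v)|}$ that absorbs both the polynomial factor and the bounded factor $v^{2n-1}$ (recall $\|v\|_{\lL^\infty}\lesssim 1$ on $\vV_\beta$), and the second term is estimated via \eqref{e:Dzeta_bound} for the same decay together with $\|v^{2n-1}-m_{\zeta(v)}^{2n-1}\|_{\lL^\infty}\lesssim\|v-m_{\zeta(v)}\|_{\lL^\infty}\lesssim\dist(v,\mM)$, the last step being the $t=0$ case of Proposition~\ref{pr:LinftyEC}. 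Finiteness of $\int_\RR e^{-\lambda|y-\zeta(v)|}(1+|y-\zeta(v)|)\,\md y$ closes both estimates.

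The main obstacle is \eqref{e:Dzeta_reg}: one must upgrade the formal kernel identity to genuine $\wW^{\alpha,1}$ bounds, which needs the short-time parabolic smoothing of the adjoint linearized propagator together with its long-time contraction off $\spann\{m'\}$ (an $\lL^1$-based version of Proposition~\ref{prop:LinftySG}), and the threshold $\alpha<2$ is exactly the range of integrability of $t^{-\alpha/2}$ at the origin. The remaining three assertions are routine once the explicit kernel and the pointwise bounds of Lemma~\ref{lem:Dzeta} are in hand.
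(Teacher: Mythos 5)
Your proposal is correct and follows essentially the same route as the paper: the key bound \eqref{e:Dzeta_reg} is obtained, as in the paper, from an $\lL^1\to\wW^{\alpha,1}$ estimate of order $1+t^{-\alpha/2}$ on the adjoint linearized propagator $T_{t;v}=\big(DF^t(v)\big)^*$ (Proposition~\ref{prop:LinftySG} plus Duhamel and \eqref{e:r_decay}), integrated against $m'\mathbf{r}_t(v)$, with the threshold $\alpha<2$ coming exactly from the integrability of $t^{-\alpha/2}$; the testing bounds use the same decomposition via \eqref{e:Dzeta_bound}--\eqref{e:Dzeta_difference} and $\bracket{m',m^{2n-1}}=0$. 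The only difference is that you prove \eqref{e:zeta_magical_cancellation} directly by flow invariance of $\zeta$ and the chain rule, whereas the paper cites \cite{XZZ24,Fun95} for this identity; your argument is the standard one from those references.
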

\begin{proof}
The bounds \eqref{e:Dzeta_test_v_odd_1} and \eqref{e:Dzeta_test_v_odd_2} follow from Lemma~\ref{lem:Dzeta} and the fact that $\bracket{m', m^{2n-1}} = 0$. The identity \eqref{e:zeta_magical_cancellation} was proved in \cite[Theorem 2.3]{XZZ24}, and was also studied in \cite[Theorem 7.4]{Fun95}. The $\lL^2$ version of \eqref{e:Dzeta_reg} was established in \cite[Theorem 7.4]{Fun95}, but the same argument applies in our $\lL^\infty$ setting. For completeness, we reproduce the proof here. By the translation property \eqref{e:Dzeta_translation} of $D\zeta$, we can without loss of generality assume $v\in\vV_{\beta,0}$. For $t>0$ and $v\in\vV_{\beta,0}$, define $T_{t;v}$ by 
    \begin{equation*}
        (T_{t;v} \phi)(y) := \int_\RR p_{t}(y,z;v)\phi(z)\,\md z\;.
    \end{equation*} 
We first prove that for every $\alpha\in[0,2)$, we have
    \begin{equation}\label{e:T_reg}
        \Vert T_{t;v}\Vert_{\lL^1\to\wW^{\alpha,1}}\lesssim 1+ t^{-\alpha/2}\;.
    \end{equation}
    uniformly over all $v\in\vV_{\beta}$  and $t>0$. We first consider the case $v=m$, in which $T_{t;v}=e^{-t\aA}$. Then the conclusion follows from Proposition~\ref{prop:LinftySG}.

    Now for general $v\in\vV_{\beta,0}$, by Duhamel's formula we have
    \begin{equation*}
        T_{t;v} = e^{-t\aA} + \int_0^t T_{s;v} \r_s(v) e^{-(t-s)\aA}\,\md s\;.
    \end{equation*}
So \eqref{e:T_reg} follows from the decay property of $\r_s(v)$ in \eqref{e:r_decay} and Gr\"onwall's inequality. Now for \eqref{e:Dzeta_reg}, by Lemma~\ref{lem:Dzeta}, we have
\begin{equation*} 
    D\zeta(y;v) = -\frac{1}{\|m'\|_{\lL^2}^2}\Big(m'(y)+\int_0^{+\infty} T_{t;v} \big(m'\mathbf{r}_t(v)\big)(y)\,\md t\Big)\;.
\end{equation*}
Then the conclusion follows from \eqref{e:T_reg}, Lemma~\ref{lem:statationary_exponential_decay} and the decay property of $\r_t(v)$ in \eqref{e:r_decay}. 
\end{proof}

For every $\lambda \in \RR$ and $p\in[1,+\infty]$, we define the weighted $\lL^p$-norm by
\begin{equation*}
    \|g\|_{\lL^p_{\lambda}} := \Vert e^{\lambda |\cdot|}g\Vert_{\lL^p}=\left(\int_{\RR} e^{p\lambda |y|} |g(y)|^p \,\md y\right)^{\frac{1}{p}}\;,
\end{equation*}
where $p=+\infty$ corresponds to the supremum norm. We have the following collection of statements regarding the kernel $p$. 

\begin{lem}\label{lem:pt}
    There exists $c>0$ and $\overline{\lambda}>0$ such that for every $v\in \vV_{\beta,0}$, the kernel $p_t(y,z;v)$ can be decomposed as
    \begin{equation}\label{e:decomposition1}
        p_t(y,z;v) = -D\zeta(y;v) \, m'(z) + \widetilde{p}_{t}(y,z;v)\;,
    \end{equation}
    where for every $\lambda\in(0,\overline{\lambda}]$, the ``remainder" $\widetilde{p}_{t}$ satisfies the bound
    \begin{equation}\label{e:decomposition1-2}
        \Vert \widetilde{p}_{t}(y,\cdot\,;v)\Vert_{\lL^\infty_{-\lambda}}\lesssim t^{-\frac{1}{2}}e^{-c t}e^{-\frac{1}{2}\lambda |y|}
    \end{equation}
    uniformly over all $t>0$ and $v\in\vV_{\beta,0}$. As a consequence, for every $\lambda\in(0,\overline{\lambda}]$, we have
    \begin{equation}\label{e:bound1-1}
        \Vert p_t(y,\cdot\,;v)\Vert_{\lL^\infty_{-\lambda}}\lesssim (1+t^{-\frac{1}{2}})e^{-\frac{1}{2}\lambda |y|}
    \end{equation}
    uniformly over all $t>0$ and $v\in\vV_{\beta,0}$. Moreover, for every $\lambda\in(0,\overline{\lambda}]$, we have
    \begin{equation}\label{e:difference1-2}
        \Vert \widetilde{p}_{t}(y,\cdot\,;v)-\widetilde{p}_{t}(y,\cdot\,;m)\Vert_{\lL^\infty_{-\lambda}}\lesssim e^{-ct}e^{-\frac{1}{2}\lambda |y|}\dist(v,\mM)
    \end{equation}
    uniformly over all $t>0$ and $v\in\vV_{\beta,0}$. Furthermore, $\widetilde{p}_{t}$ has the expression
    \begin{equation} \label{e:tilde_p_v_m}
    \begin{aligned}
        \widetilde{p}_{t}(y,z;v)=&\widetilde{p}_{t}(y,z;m)+ \int_0^t\int_{\RR} \widetilde{p}_{t-s}(z',z;m)\mathbf{r}_s(z';v)p_s(y,z';v) \,\md z' \,\md s \\&+\Big(\int_t^\infty\int_{\RR} D\zeta(z';m)\mathbf{r}_s(z';v)p_s(y,z';v) \,\md z' \,\md s\Big)m'(z)\;.
    \end{aligned}
    \end{equation}
    Finally, for every $\lambda\in(0,\overline{\lambda}]$, we have the bound
    \begin{equation}\label{e:bound1-2}
         \Vert p_{s,t}(y,\cdot\,;v)\Vert_{\lL^1_{-\lambda}}\lesssim e^{-\frac{1}{4}\lambda |y|}
    \end{equation}
    for all $0\leq s<t$.
\end{lem}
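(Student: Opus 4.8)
The plan is to reduce the whole statement to the base case $v = m$ and then propagate it to general $v \in \vV_{\beta,0}$ by a Duhamel bootstrap. At $v = m$ the kernel $p_t(\cdot,\cdot\,;m)$ is that of $e^{-t\aA}$, i.e.\ $p_t(y,\cdot\,;m) = e^{-t\aA}\delta_y$, and $\pP$ (the $\lL^2$-orthogonal projection onto $\spann\{m'\}$) has kernel $\|m'\|_{\lL^2}^{-2} m'(y) m'(z)$, which by the identity $D\zeta(m) = -m'/\|m'\|_{\lL^2}^2$ from Lemma~\ref{lem:Dzeta} equals $-D\zeta(y;m)\, m'(z)$. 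Thus \eqref{e:decomposition1} at $v = m$ simply asserts that $\widetilde p_t(\cdot,\cdot\,;m)$ is the kernel of $e^{-t\aA} - \pP$. For general $v \in \vV_{\beta,0}$ I would take \eqref{e:decomposition1} as the definition of $\widetilde p_t$, setting $\widetilde p_t(y,z;v) := p_t(y,z;v) + D\zeta(y;v) m'(z)$, so that the content lies entirely in the bounds and in \eqref{e:tilde_p_v_m}. To derive \eqref{e:tilde_p_v_m}, write Duhamel for $p_t(\cdot\,;v)$ around the $v = m$ flow: since $f_n'(F^t(v)) = f_n'(m) + \mathbf{r}_t(v)$ (using $\zeta(v) = 0$), $p_t(y,z;v) = p_t(y,z;m) + \int_0^t \int_\RR p_{t-s}(z',z;m)\, \mathbf{r}_s(z';v)\, p_s(y,z';v)\,\md z'\,\md s$; inserting the $v = m$ decomposition for both $p_t(\cdot\,;m)$ and $p_{t-s}(\cdot\,;m)$ and matching the term proportional to $m'(z)$ against the integral representation of $D\zeta(v)$ in Lemma~\ref{lem:Dzeta} produces exactly \eqref{e:tilde_p_v_m}; the ``$\int_t^\infty$'' piece appears because $D\zeta(v)$ integrates over all of $[0,\infty)$ whereas the Duhamel integral runs only to $t$.

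For the base case I would combine two ingredients. First, a Gaussian upper bound $|p_t(y,z;v)| \lesssim t^{-1/2} e^{Ct} e^{-|y-z|^2/(4t)}$, uniform over $v \in \vV_\beta$ and all $t > 0$ (Feynman--Kac, or Duhamel comparison with the free heat kernel, using boundedness of $f_n'(F^t(v))$); for $t \le 1$ this already gives $\sup_{y,z} e^{\lambda(|z|-|y|)} |p_t(y,z;m)| \lesssim t^{-1/2}$ for small $\lambda > 0$, since $e^{\lambda|y-z|} e^{-|y-z|^2/(4t)} \lesssim 1$. Second, the spectral gap of $\aA$ ($\aA \ge 0$ with $0$ a simple eigenvalue, eigenfunction $m'$), which by Proposition~\ref{prop:LinftySG} gives $\|e^{-t\aA} - \pP\|_{\lL^p \to \lL^p} \lesssim e^{-ct}$ and the smoothing $\|e^{-t\aA}\|_{\lL^p \to \wW^{s,p}} \lesssim t^{-s/2}$; these persist under the conjugation $\aA_\lambda := e^{\lambda\phi}\aA e^{-\lambda\phi}$ (with $\phi$ smooth, $\phi(x) = |x|$ for $|x| \ge 1$, bounded derivatives), which differs from $\aA$ by a first- plus zeroth-order operator of size $O(\lambda)$ and hence for small $\lambda$ retains the gap, while $\pP$ stays bounded on the weighted spaces because $m'$ decays exponentially. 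Writing $e^{-t\aA} = \pP + e^{-\aA/2}(e^{-(t-1)\aA} - \pP) e^{-\aA/2}$ for $t \ge 1$ (using $\aA m' = 0$) and inserting the one-dimensional smoothing $e^{-\aA/2}: \lL^1 \to \lL^2 \to \lL^\infty$ (weighted versions from $\aA_\lambda$ and the Gaussian bound), one obtains $\|e^{-t\aA} - \pP\|_{\lL^1_{-\lambda} \to \lL^\infty_{-\lambda}} \lesssim t^{-1/2} e^{-ct}$, uniformly. Applying this to $\delta_y$ (whose $\lL^1_{-\lambda}$-norm is $e^{-\lambda|y|}$) yields \eqref{e:decomposition1-2} at $v = m$, and adding back the $\pP$-kernel gives \eqref{e:bound1-1} at $v = m$, with room to spare in the $y$-decay.

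To reach general $v \in \vV_{\beta,0}$, bootstrap using the Duhamel identity of the first paragraph together with the $v = m$ bounds and the decay $\|\mathbf{r}_s(v)\|_{\lL^\infty} \lesssim e^{-cs}\dist(v,\mM)$ from \eqref{e:r_decay}: for fixed $y$, the quantity $\sup_{s \le t}(1 + s^{-1/2})^{-1} e^{\lambda|y|/2} \|p_s(y,\cdot\,;v)\|_{\lL^\infty_{-\lambda}}$ satisfies a closed Volterra inequality whose kernel is controlled by $(1 + (t-s)^{-1/2}) e^{-cs}$, which is integrable (the $e^{-cs}$ compensates the non-decay of $1 + (t-s)^{-1/2}$), so a Gr\"onwall argument --- entirely analogous to the one used for $T_{t;v}$ in the proof of Lemma~\ref{le:Dzeta_reg_int} --- closes and gives \eqref{e:decomposition1-2} and \eqref{e:bound1-1} for all such $v$ (shrinking $\beta$ if necessary). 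Then \eqref{e:difference1-2} is immediate from \eqref{e:tilde_p_v_m}: both correction terms carry a factor $\mathbf{r}_s(v)$, contributing $\dist(v,\mM)$ and an extra $e^{-cs}$ which, integrated against the now-available bounds on $\widetilde p_{t-s}(\cdot\,;m)$, $D\zeta(\cdot\,;m)$ and $p_s(\cdot\,;v)$, produces the claimed $e^{-ct} e^{-\lambda|y|/2}\dist(v,\mM)$ (possibly after a harmless reduction of $c$). Finally, for \eqref{e:bound1-2} write $p_{s,t}(y,\cdot\,;v) = p_{t-s}(y,\cdot\,;F^s(v))$ with $F^s(v) \in \vV_{\beta,0}$ (since $\zeta$ is constant along the flow; shrink $\beta$ so that $F^s$ keeps $\vV_\beta$ invariant) and split at $\tau := t - s = 1$: for $\tau \le 1$ use the Gaussian bound together with $\int_\RR e^{\lambda|u|} e^{-|u|^2/(4\tau)}\,\md u \lesssim \tau^{1/2}$ to get $\int_\RR e^{-\lambda|z|} |p_\tau(y,z;F^s(v))|\,\md z \lesssim e^{-\lambda|y|} \le e^{-\lambda|y|/4}$; for $\tau \ge 1$ use \eqref{e:decomposition1}, bounding the $D\zeta$-term via \eqref{e:Dzeta_bound} and the exponential decay of $m'$, and the $\widetilde p_\tau$-term via \eqref{e:decomposition1-2}.

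The main obstacle is the uniform weighted heat-kernel estimate for $\aA$: establishing that its spectral gap survives conjugation by an exponential weight for small $\lambda$, and that this produces a bound on $e^{-t\aA} - \pP$ combining short-time smoothing ($t^{-1/2}$) with long-time decay ($e^{-ct}$) on the weighted spaces $\lL^1_{-\lambda} \to \lL^\infty_{-\lambda}$. Everything else --- the Duhamel identity \eqref{e:tilde_p_v_m}, the bootstrap to general $v$, and the $\lL^1$-in-$z$ bound --- is then routine and runs parallel to the argument for Lemma~\ref{le:Dzeta_reg_int} above and to the corresponding statements in \cite{Fun95, XZZ24}.
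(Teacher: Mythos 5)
Your proposal is sound in structure, but it is worth noting how it relates to what the paper actually does: the paper's proof of this lemma is essentially a citation. The decomposition \eqref{e:decomposition1}, the bounds \eqref{e:decomposition1-2} and \eqref{e:bound1-1}, and the identity \eqref{e:tilde_p_v_m} are quoted from \cite[Lemma~3.11, Proposition~4.2]{XZZ24}; the only parts proved here are the two new statements, namely \eqref{e:difference1-2} (derived from \eqref{e:tilde_p_v_m} together with \eqref{e:decomposition1-2}, \eqref{e:bound1-1}, \eqref{e:r_decay} and \eqref{e:Dzeta_bound}) and \eqref{e:bound1-2} (Gaussian bound for $t-s\le 1$, and \eqref{e:bound1-1} plus the embedding $\lL^\infty_{-\lambda/2}\hookrightarrow\lL^1_{-\lambda}$ for $t-s\ge 1$). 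On these two new points your argument coincides with the paper's. Where you differ is that you re-derive the cited ingredients from scratch: identifying $\widetilde p_t(\cdot,\cdot;m)$ with the kernel of $e^{-t\aA}-\pP$ via $D\zeta(m)=-m'/\|m'\|_{\lL^2}^2$, obtaining \eqref{e:tilde_p_v_m} by matching the Duhamel expansion of $p_t(\cdot;v)$ around the $m$-flow against the integral formula for $D\zeta(v)$ in Lemma~\ref{lem:Dzeta} (this computation is correct, including the appearance of the $\int_t^\infty$ tail), proving the $v=m$ bounds by conjugating $\aA$ with an exponential weight to transfer the spectral gap of Proposition~\ref{prop:LinftySG} to weighted spaces, and then bootstrapping to general $v\in\vV_{\beta,0}$ by a Volterra/Gr\"onwall argument. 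This buys self-containedness at the cost of redoing the hardest ingredient — the weighted decay of $e^{-t\aA}-\pP$ — which you yourself flag as the main obstacle and which is precisely the content of the citation.

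Two technical points would need care in a full write-up. First, when you combine kernel bounds in the Duhamel corrections (e.g.\ pairing $|\widetilde p_{t-s}(z',z;m)|\lesssim (t-s)^{-1/2}e^{-c(t-s)}e^{-\lambda|z'|/2}e^{\lambda|z|}$ with $|p_s(y,z';v)|\lesssim(1+s^{-1/2})e^{-\lambda|y|/2}e^{\lambda|z'|}$), the $z'$-integral diverges if you use the \emph{same} $\lambda$ in both factors; you must invoke the bounds at different weight parameters (say $\lambda$ and $\lambda/4$), which is exactly why the statement allows every $\lambda\in(0,\overline\lambda]$ and only claims the halved/quartered exponents in $y$. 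Second, your stated short-time estimate $\sup_{y,z}e^{\lambda(|z|-|y|)}|p_t(y,z;m)|\lesssim t^{-1/2}$ has the weight in the opposite direction to what \eqref{e:decomposition1-2} requires (decay in $y$, growth allowed in $z$); this is harmless at $v=m$ by symmetry of the kernel, and your use of $\|\delta_y\|_{\lL^1_{-\lambda}}=e^{-\lambda|y|}$ shows you intend the correct direction, but the sign of the conjugation weight should be chosen accordingly. Neither issue affects the viability of the approach.
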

\begin{proof}
    This is the content of \cite[Lemma~3.11, Proposition~4.2]{XZZ24}, except for \eqref{e:difference1-2} and \eqref{e:bound1-2}. Actually, \eqref{e:difference1-2} follows directly from \eqref{e:tilde_p_v_m}, \eqref{e:decomposition1-2}, \eqref{e:bound1-1}, \eqref{e:r_decay} and \eqref{e:Dzeta_bound}. Moreover, \eqref{e:bound1-2} follows from the Gaussian bound (see for example \cite[Lemma 9.3]{Fun95}) for the case $t-s\leq 1$ and \eqref{e:bound1-1} as well as the embedding $\lL^\infty_{-\lambda/2}\hookrightarrow \lL^1_{-\lambda}$ for the case $t-s\geq 1$ respectively.
\end{proof}

\subsection{Overview of the statements}
\label{sec:deterministic_statements}

The following two lemmas are needed in the proof of Proposition~\ref{pr:martingale_convergence} to control the error terms. 

\begin{lem} \label{lem:Dzeta_continuity_aeps}
We have
\begin{equ}
    \left\| \sqrt{\fD} \Big( a_\eps \, \big(D\zeta(v) - D\zeta(m_{\zeta(v)}) \big) \Big) \right\|_{\lL^2}\lesssim \dist(v,\mM)
\end{equ}
uniformly over $\eps\in[0,1]$ and $v\in\vV_\beta$. 
\end{lem}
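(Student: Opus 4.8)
The plan is to estimate the $\lL^2$ norm of $\sqrt{\fD}$ applied to $a_\eps \cdot g_v$, where $g_v := D\zeta(v) - D\zeta(m_{\zeta(v)})$, using the translation property \eqref{e:Dzeta_translation} to reduce to the case $\zeta(v)=0$ (so $m_{\zeta(v)}=m$), since $\sqrt{\fD}$ commutes with translation and multiplication by $a_\eps$ only sees the translated argument through $a$ being globally bounded with bounded derivatives. Concretely, set $\theta=\zeta(v)$, write $y\mapsto y+\theta$, and observe that $a_\eps(y)=a(\sqrt\eps y)$ is uniformly bounded in $\cC^1$ with norm $\lesssim 1$ uniformly in $\eps\in[0,1]$. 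The key input is the pointwise decay estimate \eqref{e:Dzeta_difference}: $|g_v(y)|\lesssim e^{-\lambda|y-\zeta(v)|}\dist(v,\mM)$, and the regularity estimate \eqref{e:Dzeta_reg} in Lemma~\ref{le:Dzeta_reg_int}, which gives $\sup_{v\in\vV_\beta}\|D\zeta(v)\|_{\wW^{\alpha,1}}<\infty$ for every $\alpha\in[0,2)$; in particular $\|g_v\|_{\wW^{1,1}}\lesssim 1$ uniformly (it is a difference of two such kernels).

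The main step is to show $\|\sqrt{\fD}(a_\eps g_v)\|_{\lL^2}\lesssim \dist(v,\mM)$. I would bound $\|\sqrt{\fD}h\|_{\lL^2}$ for $h=a_\eps g_v$ by interpolating: $\|\sqrt{\fD}h\|_{\lL^2}^2 = \int |\theta|\,|\hat h(\theta)|^2\,\md\theta \lesssim \|h\|_{\lL^2}\,\|h\|_{\wW^{1,2}}$ by Cauchy--Schwarz on the Fourier side (splitting $|\theta|\le \langle\theta\rangle$ and using $|\theta|\,|\hat h|^2 \le |\hat h|\cdot |\theta|^{}|\hat h| \le \tfrac12(|\hat h|^2 + |\theta|^2|\hat h|^2)$ is too crude; better: $\int|\theta||\hat h|^2 \le (\int |\hat h|^2)^{1/2}(\int |\theta|^2|\hat h|^2)^{1/2}$). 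So it suffices to control $\|h\|_{\lL^2}$ and $\|h\|_{\wW^{1,2}}$, i.e. $\|h\|_{\lL^2}$ and $\|h'\|_{\lL^2}$. Since $h = a_\eps g_v$ with $a_\eps$ bounded in $\cC^1$ uniformly and $\sqrt\eps\le1$, we get $\|h\|_{\lL^2}\lesssim \|g_v\|_{\lL^2}$ and $\|h'\|_{\lL^2}\lesssim \|g_v\|_{\lL^2}+\|g_v'\|_{\lL^2}=\|g_v\|_{\wW^{1,2}}$. Finally, from \eqref{e:Dzeta_difference} we get the $\lL^\infty$ bound $\|g_v\|_{\lL^\infty}\lesssim \dist(v,\mM)$; combining with $\|g_v\|_{\wW^{1,1}}\lesssim 1$ (uniform, from \eqref{e:Dzeta_reg}) and the interpolation $\|g_v\|_{\wW^{1,2}}\lesssim \|g_v\|_{\lL^\infty}^{1/2}\|g_v\|_{\wW^{2,1}}^{1/2}$ — using $\alpha=2-\epsilon$ if needed, or more simply $\|f\|_{\lL^2}^2\le\|f\|_{\lL^\infty}\|f\|_{\lL^1}$ applied to $g_v$ and $g_v'$ — yields $\|g_v\|_{\wW^{1,2}}\lesssim \dist(v,\mM)^{1/2}$, hence $\|h\|_{\lL^2}\|h\|_{\wW^{1,2}}\lesssim \dist(v,\mM)\cdot\dist(v,\mM)^{1/2}$; since $\dist(v,\mM)<\beta$ is bounded this gives $\|\sqrt{\fD}h\|_{\lL^2}\lesssim\dist(v,\mM)^{3/4}\lesssim\dist(v,\mM)$ (and a sharper bookkeeping, bounding $\|g_v'\|_{\lL^\infty}\lesssim\dist(v,\mM)$ directly from the kernel expression in Lemma~\ref{lem:Dzeta}, gives the clean power $\dist(v,\mM)$ if desired).

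The step I expect to be the main obstacle is getting the uniform $\wW^{1,1}$ (or $\wW^{1,2}$) control on the \emph{difference} $g_v = D\zeta(v)-D\zeta(m_{\zeta(v)})$ with the right power of $\dist(v,\mM)$, rather than just on each term separately. The pointwise bound \eqref{e:Dzeta_difference} handles the $\lL^\infty$/$\lL^1$ side cleanly, but differentiating the kernel formula for $D\zeta(v)$ in Lemma~\ref{lem:Dzeta} and tracking that the derivative of the difference is also $\oO(\dist(v,\mM))$ in the relevant weighted norm requires redoing the Duhamel/Gr\"onwall argument from the proof of \eqref{e:Dzeta_reg} for the difference kernel, using \eqref{e:r_decay}, \eqref{e:difference1-2} and \eqref{e:tilde_p_v_m}; this is where most of the real work sits. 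If one only wants the stated bound (power $1$) and is willing to accept a slightly lossy interpolation, one can instead avoid differentiating the difference: use $\|g_v\|_{\lL^2}\lesssim\dist(v,\mM)$ together with the uniform (non-difference) bound $\|D\zeta(v)\|_{\wW^{1,1}}+\|D\zeta(m_{\zeta(v)})\|_{\wW^{1,1}}\lesssim1$, and interpolate $\|\sqrt{\fD}(a_\eps g_v)\|_{\lL^2}^2\lesssim\|a_\eps g_v\|_{\lL^2}\cdot\|a_\eps g_v\|_{\wW^{1,1}}^{?}$ — but $\wW^{1,1}$ does not embed into $\wW^{1/2,2}$, so one genuinely needs the $\wW^{1,2}$ bound, forcing the difference-kernel computation above. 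I would therefore carry out the Duhamel argument for $g_v$ explicitly, which is routine given Lemmas~\ref{lem:Dzeta} and~\ref{lem:pt} but is the technical heart of the proof.
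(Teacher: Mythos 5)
Your reduction to $\zeta(v)=0$, the bound $\|g_v\|_{\lL^2}\lesssim\dist(v,\mM)$ from \eqref{e:Dzeta_difference}, and the Fourier-side Cauchy--Schwarz step $\|\sqrt{\fD}h\|_{\lL^2}^2\le\|h\|_{\lL^2}\|h'\|_{\lL^2}$ are all fine, but the argument as written has a genuine gap, and you have in fact located it yourself: everything hinges on a bound for $\partial_y\big(D\zeta(y;v)-D\zeta(y;m_{\zeta(v)})\big)$ in $\lL^2$ (or $\lL^\infty\cap\lL^1$) \emph{with the factor} $\dist(v,\mM)$. No such estimate appears in the paper --- Lemma~\ref{le:Dzeta_reg_int} only gives uniform $\wW^{\alpha,1}$ bounds without the distance factor, and the kernel bounds of Lemma~\ref{lem:pt} control $p_t$, $\widetilde p_t$ and $\sqrt{\fD_y}$ of these, never $\partial_y$ --- so claiming it ``directly from the kernel expression in Lemma~\ref{lem:Dzeta}'' is not justified; it would require a new Duhamel/Gr\"onwall argument for $\partial_y$ of the difference kernel, which you defer rather than carry out. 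Moreover your fallback route is actually wrong as stated: the lossy interpolation only yields $\|\sqrt{\fD}(a_\eps g_v)\|_{\lL^2}\lesssim\dist(v,\mM)^{3/4}$, and for $\dist(v,\mM)$ small one has $\dist(v,\mM)^{3/4}\ge\dist(v,\mM)$, so the inequality ``$\dist(v,\mM)^{3/4}\lesssim\dist(v,\mM)$'' goes in the wrong direction and does not give the lemma.

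For comparison, the paper avoids any first-derivative estimates altogether. It applies the fractional Leibniz rule of Lemma~\ref{lem:leibniz_2} to write $\sqrt{\fD}\big(a_\eps g_v\big)=a_\eps\,\sqrt{\fD}g_v+C_\ff\int_\RR|y-z|^{-3/2}\,g_v(z)\big(a_\eps(y)-a_\eps(z)\big)\,\md z$, and then uses two pointwise inputs with the distance factor built in: \eqref{e:Dzeta_difference} for $g_v$ itself and \eqref{e:regularity_Dzeta_difference} (proved in Lemma~\ref{lem:regularity_phatt1} via the identity relating $D\zeta(v)$ to $D\zeta(m)$ and the bounds on $\sqrt{\fD_y}p_s$) for $\sqrt{\fD}g_v$; the commutator integral is controlled by Lemma~\ref{lem:technical_3}, exploiting that $a_\eps$ has Lipschitz constant $\sqrt\eps\lesssim1$. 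Both resulting terms decay like $\bracket{y-\zeta(v)}^{-3/2}\dist(v,\mM)$ pointwise, hence lie in $\lL^2$ with the stated bound. If you want to keep your Sobolev-interpolation scheme, you must genuinely prove $\|g_v'\|_{\lL^2}\lesssim\dist(v,\mM)$ (a new kernel estimate of comparable difficulty to Lemma~\ref{lem:regularity_phatt1}); otherwise the Leibniz-rule route using the estimates already in Section~\ref{sec:deterministic} is the shorter path.
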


\begin{lem}\label{lem:Dzeta_taylor}
For every $\nu\in(0,\frac{1}{2})$, we have
    \begin{equ}
        \left\| \sqrt{\fD} \Big( D \zeta (m) \cdot \big( a(x_0 + \sqrt{\eps} \cdot) - a(x_0) \big)  \Big) \right\|_{\lL^2}\lesssim\eps^\nu
    \end{equ}
    for all $\eps\in[0,1]$ and $x_0\in\RR$.
\end{lem}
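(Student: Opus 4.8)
The plan is to reduce the statement to a one–dimensional Sobolev interpolation estimate. Recall that, by the identity $D\zeta(m) = -m'/\|m'\|_{\lL^2}^2$ (stated after \eqref{e:C0}) together with Lemma~\ref{lem:statationary_exponential_decay}, the function $g := D\zeta(m)$ is a fixed Schwartz function, independent of $\eps$ and $x_0$. Recall also from \eqref{e:half_derivative} and Plancherel that $\|\sqrt{\fD}f\|_{\lL^2}^2 = \int_{\RR}|\theta|\,|\widehat f(\theta)|^2\,\md\theta$; in other words $\|\sqrt{\fD}f\|_{\lL^2}$ is exactly the homogeneous $\dot H^{1/2}$–seminorm of $f$. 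Writing $h_\eps := a(x_0+\sqrt\eps\,\cdot\,) - a(x_0)$, the quantity to be bounded is $\|g\,h_\eps\|_{\dot H^{1/2}}$, and I would estimate it by interpolating between $\lL^2$ and $\dot H^1$.

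First I would record three elementary facts about $h_\eps$, all uniform in $x_0 \in \RR$: it is bounded, $\|h_\eps\|_{\lL^\infty}\le 2\|a\|_{\lL^\infty}$; it satisfies the pointwise bound $|h_\eps(y)|\le\sqrt\eps\,\|a'\|_{\lL^\infty}|y|$ (integrate $a'$ along the segment joining $x_0$ to $x_0+\sqrt\eps y$); and its derivative $h_\eps' = \sqrt\eps\,a'(x_0+\sqrt\eps\,\cdot\,)$ obeys $\|h_\eps'\|_{\lL^\infty}\le\sqrt\eps\,\|a'\|_{\lL^\infty}$. The case $\eps=0$ is trivial since then $h_\eps\equiv 0$, so I would assume $\eps\in(0,1]$.

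Next, Cauchy--Schwarz in the Fourier variable gives the interpolation inequality
\begin{equation*}
    \|\sqrt{\fD}f\|_{\lL^2} \;\le\; \|f\|_{\lL^2}^{1/2}\,\|\partial_x f\|_{\lL^2}^{1/2}\;,
\end{equation*}
valid for $f\in\wW^{1,2}$. I would apply it with $f = g\,h_\eps$, which lies in $\wW^{1,2}$ since $g$ is Schwartz and $h_\eps$ is smooth and bounded. For the first factor, the bound $|h_\eps(y)|\le\sqrt\eps\|a'\|_{\lL^\infty}|y|$ together with $y\,g\in\lL^2$ (Schwartz decay of $g$) gives $\|g\,h_\eps\|_{\lL^2}\lesssim\sqrt\eps$. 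For the second, $\partial_x(g\,h_\eps)=g'h_\eps+g\,h_\eps'$, and again $|h_\eps(y)|\le\sqrt\eps\|a'\|_{\lL^\infty}|y|$ with $y\,g'\in\lL^2$ gives $\|g'\,h_\eps\|_{\lL^2}\lesssim\sqrt\eps$, while $\|h_\eps'\|_{\lL^\infty}\le\sqrt\eps\|a'\|_{\lL^\infty}$ and $g\in\lL^2$ give $\|g\,h_\eps'\|_{\lL^2}\lesssim\sqrt\eps$; hence $\|\partial_x(g\,h_\eps)\|_{\lL^2}\lesssim\sqrt\eps$. Feeding these two bounds into the interpolation inequality yields $\|\sqrt{\fD}(g\,h_\eps)\|_{\lL^2}\lesssim\eps^{1/2}$, with a constant depending only on $\|m'\|_{\lL^2}$ and $\|a\|_{\cC^1}$. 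Since $\eps\le 1$ and $\nu<\tfrac12$, this is $\lesssim\eps^\nu$, uniformly in $x_0$ and $\eps$, which is the claim (in fact with the better exponent $\tfrac12$).

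I do not expect a genuine obstacle here: the only points requiring (routine) attention are the verification that $g\,h_\eps$ really lies in $\wW^{1,2}$ so the Fourier-side interpolation is legitimate, and the bookkeeping that all constants are independent of $x_0$ and $\eps$. One could alternatively run the argument through the Gagliardo double-integral representation of the $\dot H^{1/2}$–seminorm; that works as well but is slightly more delicate because of the non-local behaviour of that norm at large scales, so I would favour the interpolation route above.
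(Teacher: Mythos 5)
Your proof is correct, and it takes a genuinely different route from the paper. The paper proves this lemma with the fractional Leibniz rule (Lemma~\ref{lem:leibniz_2}): it splits $\sqrt{\fD}\big(D\zeta(m)\,h_\eps\big)$ into $\big(\sqrt{\fD}D\zeta(m)\big)\,h_\eps$ plus a commutator integral, and then estimates both pieces using only the pointwise kernel bounds $|D\zeta(y;m)|\lesssim e^{-\lambda|y|}$ (\eqref{e:Dzeta_bound}), $|\sqrt{\fD}D\zeta(y;m)|\lesssim\bracket{y}^{-3/2}$ (\eqref{e:regularity_Dzeta_bound}), Lemma~\ref{lem:techninal_1} and the Lipschitz bound on $a$; since $\bracket{y}^{-3/2}\cdot\sqrt{\eps}\,|y|$ just fails to be in $\lL^2$, that route naturally caps out at $\eps^{\nu}$ for $\nu<\tfrac12$, which is why the lemma is stated that way. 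You instead exploit the exact identity $D\zeta(m)=-m'/\|m'\|_{\lL^2}^2$, so that the prefactor is a fixed Schwartz function, and conclude by $\lL^2$--$\dot H^1$ interpolation on the Fourier side; all your intermediate bounds are uniform in $x_0$ and the $2\pi$ normalisation only improves the Cauchy--Schwarz constant, so the argument is sound and even yields the sharper rate $\eps^{1/2}$. The trade-off is robustness: the paper's Leibniz-plus-kernel-bounds scheme is exactly the one reused for the companion statements where no closed formula is available (the difference estimate in Lemma~\ref{lem:Dzeta_continuity_aeps}, the $D^2\zeta$ estimates in Lemmas~\ref{lem:D2zeta_continuity_aeps} and~\ref{lem:D2zeta_taylor}, and the mollified variant in Remark~\ref{rmk:Dzeta_mollification}), whereas your interpolation shortcut is specific to $v=m$, where $D\zeta(m)$ happens to be Schwartz. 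As a standalone proof of this lemma, however, it is complete, more elementary, and quantitatively stronger.
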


\begin{rmk} \label{rmk:Dzeta_mollification}
    Since $\qQ_\delta D \zeta$ satisfies the same bounds as those needed for $D \zeta$ in Lemmas~\ref{lem:Dzeta_continuity_aeps} and~\ref{lem:Dzeta_taylor}, the conclusions of these two lemmas still hold with $D \zeta$ replaced by $\qQ_\delta D \zeta$, and the proportionality constants in the bounds are also independent of $\delta$. 
\end{rmk}

The following two lemmas are used to control small error terms from the drift $b_{2,\eps}^{(\delta)}$. 

\begin{lem}\label{lem:D2zeta_continuity_aeps}
We have
    \begin{align*}
        &\Big|\int_\RR \sqrt{\fD}^{\otimes 2}\Big(\qQ_\delta^{\otimes2}\big(D^2\zeta(v)-D^2\zeta(m_{\zeta(v)})\big)\cdot a_\eps^{\otimes 2}\Big)(y,y)\,\md y\Big|\lesssim |\log\delta|\dist(v,\mM)
    \end{align*}
    uniformly over $\eps\in[0,1]$, $\delta\in(0,\frac{1}{2}]$ and $v\in\vV_\beta$.
\end{lem}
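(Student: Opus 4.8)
The plan is to reduce everything to the pointwise kernel bound \eqref{e:D2zeta_difference}, which says that $g := D^2\zeta(v) - D^2\zeta(m_{\zeta(v)})$ satisfies $|g(y_1,y_2)| \lesssim e^{-\lambda(|y_1-\zeta(v)|+|y_2-\zeta(v)|)}\dist(v,\mM)$. Writing $h := \dist(v,\mM)^{-1} g$, it suffices to show that the functional $\Phi(h) := \int_\RR \sqrt{\fD}^{\otimes 2}(\qQ_\delta^{\otimes 2}(h\cdot a_\eps^{\otimes 2}))(y,y)\,\md y$ is bounded by $C|\log\delta|$ uniformly over all two-variable functions $h$ with $|h(y_1,y_2)| \le e^{-\lambda(|y_1-\theta|+|y_2-\theta|)}$ for some $\theta\in\RR$ (here $\theta = \zeta(v)$), uniformly over $\eps\in[0,1]$. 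By translation we may assume $\theta = 0$ (the cutoff $a_\eps$ only helps, being bounded by $\|a\|_{\lL^\infty}$, so we can drop it after taking absolute values). So the core analytic claim is: for $h$ with the above Gaussian-type exponential localization, $\big|\int_\RR \sqrt{\fD}^{\otimes 2}(\qQ_\delta^{\otimes 2} h)(y,y)\,\md y\big| \lesssim |\log\delta|$, where the $|\log\delta|$ captures exactly the logarithmic divergence one expects from restricting $\sqrt{\fD}_{x_1}\sqrt{\fD}_{x_2}$ of a function to the diagonal.

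The key computation is to pass to Fourier variables. By Plancherel / the definition of $\sqrt{\fD}^{\otimes 2}$ in \eqref{e:cross_fractional_derivative}, writing $\widehat h(\eta_1,\eta_2)$ for the two-dimensional Fourier transform of $h$,
\begin{equation*}
    \int_\RR \sqrt{\fD}^{\otimes 2}\big(\qQ_\delta^{\otimes 2} h\big)(y,y)\,\md y
    = \int_\RR |\eta|^{1/2}|{-\eta}|^{1/2} e^{-8\pi^2\delta^2\eta^2}\,\widehat h(\eta, -\eta)\,\md\eta
    = \int_\RR |\eta|\, e^{-8\pi^2\delta^2\eta^2}\,\widehat h(\eta,-\eta)\,\md\eta\;,
\end{equation*}
where the Gaussian factor $e^{-8\pi^2\delta^2\eta^2}$ comes from the two heat kernels $q_{\delta^2}$ at frequency $\eta$ (one for each variable, matching the constant in \eqref{e:C_delta}). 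The $|\eta|$ weight here is precisely the source of the logarithmic divergence: one then needs a decay estimate on $\widehat h(\eta,-\eta)$. From the exponential localization of $h$ one obtains $|\widehat h(\eta_1,\eta_2)| \lesssim \langle\eta_1\rangle^{-1}\langle\eta_2\rangle^{-1}$ (each variable of $h$ is in $BV$ with uniformly bounded total variation, giving one power of frequency decay per variable — alternatively, if extra smoothness of the kernel is available one gets faster decay, but one power suffices), hence $|\widehat h(\eta,-\eta)|\lesssim \langle\eta\rangle^{-2}$. Plugging in, $\big|\int_\RR |\eta|\,e^{-8\pi^2\delta^2\eta^2}\langle\eta\rangle^{-2}\,\md\eta\big| \lesssim \int_{|\eta|\le 1/\delta}\frac{|\eta|}{1+\eta^2}\,\md\eta + \int_{|\eta|>1/\delta}|\eta|^{-1}e^{-8\pi^2\delta^2\eta^2}\,\md\eta \lesssim |\log\delta| + 1 \lesssim |\log\delta|$, which is exactly the claimed bound. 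Restoring the factor $\dist(v,\mM)$ and the constant $\|a\|_{\lL^\infty}^2$ finishes the argument.

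The main obstacle is justifying that the formal manipulations are legitimate: the object $\sqrt{\fD}^{\otimes 2}(\qQ_\delta^{\otimes 2}h)$ restricted to the diagonal and then integrated must be shown to coincide with the absolutely convergent Fourier integral above. This is where the regularization $\qQ_\delta$ is essential — for $\delta>0$ the Gaussian factor makes everything in $L^1$ in Fourier space, so Fubini and the Fourier inversion on the diagonal are unambiguous; one should spell out that $\qQ_\delta^{\otimes 2}h$ is a Schwartz function of $(x_1,x_2)$ (convolution of a compactly-localized, bounded $h$ with a Gaussian), so that $\sqrt{\fD}^{\otimes 2}$ of it is well-defined pointwise and its diagonal restriction is a genuine continuous function with the stated Fourier representation. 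A secondary technical point is the decay estimate $|\widehat h(\eta_1,\eta_2)|\lesssim\langle\eta_1\rangle^{-1}\langle\eta_2\rangle^{-1}$: this needs the kernels $D^2\zeta(v)$ and $D^2\zeta(m_{\zeta(v)})$ to be, in each variable, of bounded variation with a bound uniform over $v\in\vV_\beta$, which follows from the integral representation in Lemma~\ref{lem:Dzeta} together with the kernel bounds in Lemma~\ref{lem:pt} (differentiability in the $y$-variables of $p_t$ with exponential-in-$t$ and exponential-in-$y$ control); this is the sort of estimate already implicit in \cite{XZZ24}, so I would cite it and only indicate the mechanism rather than redo it.
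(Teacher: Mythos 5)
Your Fourier-side strategy is conceptually reasonable, but it has a genuine gap at its central step. The trace identity $\int_\RR \sqrt{\fD}^{\otimes 2}(\qQ_\delta^{\otimes 2}h)(y,y)\,\md y=\int_\RR|\eta|\,e^{-8\pi^2\delta^2\eta^2}\,\widehat h(\eta,-\eta)\,\md\eta$ is fine, but everything then rests on the decay $|\widehat h(\eta,-\eta)|\lesssim\langle\eta\rangle^{-2}$ for $h=\dist(v,\mM)^{-1}\big(D^2\zeta(v)-D^2\zeta(m_{\zeta(v)})\big)$ (times the cutoffs), and the justification you give does not deliver it. Separate bounded variation in each variable only yields $|\widehat h(\eta_1,\eta_2)|\lesssim\min\big(\langle\eta_1\rangle^{-1},\langle\eta_2\rangle^{-1}\big)$, not the product $\langle\eta_1\rangle^{-1}\langle\eta_2\rangle^{-1}$; on the anti-diagonal this is only $\langle\eta\rangle^{-1}$, and the weighted integral then comes out of order $\delta^{-1}$, not $|\log\delta|$. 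To get the product decay you would need the mixed derivative $\partial_{y_1}\partial_{y_2}$ of the kernel difference to be a finite measure of mass $\lesssim\dist(v,\mM)$, and that is exactly what fails: in the representation of $D^2\zeta$ through $p_t$ the short-time piece behaves like $\int_0^1\int q_t(y_1-z)\,q_t(y_2-z)\,w(z)\,\md z\,\md t$, whose mixed derivative has $\lL^1$-norm of order $t^{-1}$, hence a logarithmically divergent time integral — this divergence is precisely the source of the $|\log\delta|$ in the statement (it survives in the difference; compare the $|\log\delta|$ in Lemmas~\ref{lem:regularity_g2} and~\ref{lem:regularity_g4}). So the estimate you defer to an ``implicit in \cite{XZZ24}'' citation is not a routine kernel bound: it is essentially the whole content of the lemma, which the paper proves in Section~\ref{sec:deterministic_zeta} via the decomposition into $g_1,\dots,g_4$ and $h_1,h_2$ together with Lemmas~\ref{lem:technical_2}, \ref{lem:regularity_rt}--\ref{lem:regularity_pt2}.

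Two secondary points. First, you cannot both ``drop $a_\eps$ after taking absolute values'' and later invoke BV/Fourier regularity of $h$: discarding the cutoff is legitimate only if the argument uses nothing beyond the pointwise bound \eqref{e:D2zeta_difference}, whereas your decay step needs derivative information, so the factor $a_\eps^{\otimes 2}$ must be carried along (harmless in Fourier variables since $\widehat{a_\eps}$ has $\eps$-uniform $\lL^1$-norm, or, as the paper does, peeled off with the fractional Leibniz rule of Lemma~\ref{lem:leibniz_3}, whose commutator terms are then handled by Lemma~\ref{lem:regularity_D2zeta_half}, Lemma~\ref{lem:technical_3} and \eqref{e:D2zeta_difference}, with the diagonal term controlled by Lemma~\ref{lem:D2zeta_continuity}). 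Second, your functional applies $\qQ_\delta^{\otimes 2}$ to the product $h\cdot a_\eps^{\otimes 2}$, while the statement mollifies only the kernel difference and multiplies by $a_\eps^{\otimes 2}$ afterwards; this mismatch is fixable but should be tracked, since commuting $\qQ_\delta$ past $a_\eps$ is itself an estimate.
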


\begin{lem}\label{lem:D2zeta_taylor}
For every $\nu\in(0,\frac{1}{2})$, we have
    \begin{align*}
        \bigg|\int_\RR \sqrt{\fD}^{\otimes 2}\Big(\qQ_\delta^{\otimes2}&D^2\zeta(m)\times \big( a(x_0 + \sqrt{\eps}y_1)a(x_0 + \sqrt{\eps}y_2)\\
        &-(a(x_0)^2 + a(x_0)a'(x_0)\sqrt{\eps}(y_1+y_2))\big)\Big)(y,y)\,\md y\bigg|\lesssim\eps^{\frac{1}{2}+\nu}|\log\delta|
    \end{align*}
    for all $\eps\in[0,1]$, $\delta\in(0,\frac{1}{2}]$ and $x_0\in\RR$.
\end{lem}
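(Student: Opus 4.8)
\emph{Plan.} The plan is to reduce the quantity to a one-dimensional Fourier integral and then play the smallness and scaling of the Taylor remainder against the (limited) regularity of $D^2\zeta(m)$. Write $r_\eps(y_1,y_2)$ for the bracketed Taylor remainder,
\begin{equation*}
  r_\eps(y_1,y_2):=a(x_0+\sqrt\eps y_1)a(x_0+\sqrt\eps y_2)-a(x_0)^2-a(x_0)a'(x_0)\sqrt\eps(y_1+y_2)\;,
\end{equation*}
and set $K_\eps:=\big(\qQ_\delta^{\otimes2}D^2\zeta(m)\big)\cdot r_\eps$, so the quantity to bound is $\int_\RR(\sqrt\fD^{\otimes2}K_\eps)(y,y)\,\md y$. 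I would pass to Fourier variables and use $\int_\RR e^{2\pi i(\theta_1+\theta_2)y}\,\md y=\delta_0(\theta_1+\theta_2)$ to rewrite this as $\int_\RR|\theta|\,\widehat{K_\eps}(\theta,-\theta)\,\md\theta=\int_\RR|\theta|\,\widehat{G_\eps}(\theta)\,\md\theta$, where $\widehat{G_\eps}$ is the $1$D Fourier transform of $G_\eps(u):=\int_\RR K_\eps(y+u,y)\,\md y$. Since $\qQ_\delta^{\otimes2}$ is a Gaussian mollification, crude bounds $\|\partial_u^{2N}G_\eps\|_{\lL^1}\lesssim\eps\,\delta^{-2(N-1)}$ make the contribution of $|\theta|\gtrsim\delta^{-1}$ of order $\eps$; on $|\theta|\lesssim\delta^{-1}$ one interpolates $|\widehat{G_\eps}(\theta)|\le\|G_\eps\|_{\lL^1}$ with $|\widehat{G_\eps}(\theta)|\lesssim|\theta|^{-2}\|G_\eps''\|_{\lL^1}$. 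So the whole lemma follows once I establish the two bounds $\|G_\eps\|_{\lL^1}\lesssim\eps$ and $\|G_\eps''\|_{\lL^1}\lesssim\eps$, uniformly in $\delta\in(0,\tfrac12]$: these give $\int_\RR|\theta||\widehat{G_\eps}(\theta)|\,\md\theta\lesssim\eps+\eps\int_1^{\delta^{-1}}\tfrac{\md\theta}{\theta}+\eps\lesssim\eps|\log\delta|\le\eps^{1/2+\nu}|\log\delta|$ since $\eps\le1$ and $\nu<\tfrac12$. (This route in fact yields the stronger bound $\eps|\log\delta|$; I would keep the weaker statement since it is what is needed and is robust to the precise nature of the diagonal singularity discussed below.)

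\emph{The two $\lL^1$ bounds.} For the inputs, a second-order Taylor expansion of each factor $a(x_0+\sqrt\eps y_i)$ around $y_i=0$, together with $a\in\cC^\infty_c(-1,1)$ to treat large $|y_i|$, gives $|\partial^\alpha r_\eps(y_1,y_2)|\lesssim\eps(1+|y_1|+|y_2|)^2$ for $|\alpha|\le2$: the subtracted constant and linear terms kill the $\eps^0$- and $\eps^{1/2}$-contributions, so $r_\eps$ and its derivatives are a power of $\sqrt\eps$ smaller than those of $a^{\otimes2}(x_0+\sqrt\eps\,\cdot\,)$ itself. Since $D^2\zeta(m)$ decays like $e^{-\lambda(|y_1|+|y_2|)}$ by \eqref{e:D2zeta_bound}, the polynomial factors are absorbed, so $\|K_\eps\|_{\lL^1(\RR^2)}\lesssim\eps$ and hence $\|G_\eps\|_{\lL^1}\lesssim\eps$. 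For $G_\eps''$ I would expand $\partial_u^2[(\qQ_\delta^{\otimes2}D^2\zeta(m))\cdot r_\eps]$ by Leibniz; the two terms carrying at most one derivative on $\qQ_\delta^{\otimes2}D^2\zeta(m)$ are immediately $O(\eps)$ by the above and the fact that $D^2\zeta(m)$ and its first $y_1$-derivative lie in $\lL^1_{\mathrm{loc}}$ with exponential decay. The hard term is $\int_\RR\big(\qQ_\delta^{\otimes2}\partial_{y_1}^2D^2\zeta(m)\big)(y+u,y)\,r_\eps(y+u,y)\,\md y$: from the representation in Lemma~\ref{lem:Dzeta} (for $v=m$ the $\r_t$-term drops out since $\r_t(m)=0$), $D^2\zeta(y_1,y_2;m)=-\|m'\|_{\lL^2}^{-2}\int_0^\infty\!\!\int_\RR f_n''(m(z))p_t(y_1,z;m)p_t(y_2,z;m)m'(z)\,\md z\,\md t$, and the only obstruction to smoothness comes from the $t\to0$ behaviour of $\int_\RR f_n''(m)\,p_t(y_1,\cdot;m)p_t(y_2,\cdot;m)\,m'$, where the two kernels behave like Gaussians of width $\sqrt t$, generating a mild ($|y_1-y_2|$-type) singularity along the diagonal with a smooth, exponentially decaying coefficient. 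Equivalently, $\partial_{y_1}^2D^2\zeta(m)$ is a Dirac on $\{y_1=y_2\}$ with exponentially decaying coefficient plus an exponentially decaying $\lL^1_{\mathrm{loc}}$ function; because a $\qQ_\delta$-mollified Dirac has $\lL^1$-norm $O(1)$ uniformly in $\delta$, pairing with the $O(\eps)$ factor $r_\eps$ yields $O(\eps)$ here as well. All regularity and decay facts for $D^2\zeta(m)$ used above follow from the semigroup estimates of Proposition~\ref{prop:LinftySG}, exactly as in the proof of Lemma~\ref{le:Dzeta_reg_int}; $\qQ_\delta$ only improves them, with $\delta$-independent constants.

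\emph{Main obstacle.} Everything after the Fourier reduction is routine bookkeeping except the claim that $\qQ_\delta$-mollification controls $\partial_{y_1}^2D^2\zeta(m)$ uniformly in $\delta$; this will require a precise description of the diagonal singularity of $D^2\zeta(m)$ (it is not $\cC^2$ across $\{y_1=y_2\}$), to be extracted from the double-heat-kernel structure of its $t$-integral representation — the same mechanism producing the genuine $|\log\delta|$ divergence in Lemma~\ref{lem:D2zeta_divergence_rate}. If that singularity turns out to be slightly worse than Lipschitz (e.g. a logarithm), the argument simply acquires an extra $|\log\delta|$, still comfortably within the stated bound.
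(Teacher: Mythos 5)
Your Fourier reduction and the bound $\|G_\eps\|_{\lL^1}\lesssim\eps$ are fine, but the whole argument hinges on the estimate $\|G_\eps''\|_{\lL^1}\lesssim\eps$ uniformly in $\delta$, and this is exactly the point you do not prove. Since $G_\eps''(u)=\int(\partial_{y_1}^2K_\eps)(y+u,y)\,\md y$, you need two full derivatives of $\qQ_\delta^{\otimes2}D^2\zeta(m)$ transverse to the diagonal, and no such information is available: every kernel estimate in the paper (Lemma~\ref{lem:regularity_D2zeta_half}, Lemma~\ref{lem:D2zeta_decay}, Lemma~\ref{lem:regularity_rt}, Lemma~\ref{lem:regularity_phatt1}) stops at half a derivative per variable, and the paper's proof of this very lemma is built so that no more than that ever lands on the kernel. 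Your structural claim that $\partial_{y_1}^2D^2\zeta(m)$ is ``a Dirac on the diagonal with decaying coefficient plus an exponentially decaying $\lL^1_{\mathrm{loc}}$ function'' is plausible at leading order for the small--time heat-kernel contribution (the second derivative of the smeared logarithm splits into a delta on the diagonal plus a principal-value part that is integrable only after exploiting cancellation against the smooth weights), but making it rigorous requires second-derivative control of $p_t-q_t$, of $\widetilde p_t$, and of the large-time part $h_2$ --- a new layer of kernel analysis of the same order as Sections~\ref{sec:deterministic_kernel}--\ref{sec:deterministic_zeta}, not routine bookkeeping. As it stands, the key bound (and likewise the crude bound $\|\partial_u^{2N}G_\eps\|_{\lL^1}\lesssim\eps\delta^{-2(N-1)}$ used for the high frequencies, which presupposes the same structure) is an assertion, not a proof.

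Your fallback does not repair this either: if the diagonal singularity costs an extra logarithm, the intermediate-frequency range yields $\eps|\log\delta|^2$, and $\eps|\log\delta|^2\not\lesssim\eps^{\frac12+\nu}|\log\delta|$ uniformly over independent $\eps\in[0,1]$ and $\delta\in(0,\tfrac12]$ (fix $\eps$ and let $\delta\to0$), which is the range the lemma asserts. For contrast, the paper's proof applies the fractional Leibniz rule (Lemma~\ref{lem:leibniz_3}) with $\varphi=\qQ_\delta^{\otimes2}D^2\zeta(m)$ and $\psi$ the Taylor remainder: the diagonal term pays $|\log\delta|$ through Lemma~\ref{lem:D2zeta_decay}, the cross and double-difference terms use only the half-derivative bounds together with H\"older-type estimates $|\psi(y,y)|\lesssim|\sqrt\eps y|^{1+2\nu}$ and $|A|\lesssim|\sqrt\eps z_1|^{\frac12+\nu}|\sqrt\eps z_2|^{\frac12+\nu}$. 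This is precisely why the statement carries the exponent $\eps^{\frac12+\nu}$ rather than $\eps$; that your route promises the stronger rate $\eps|\log\delta|$ is itself a sign that it consumes regularity of $D^2\zeta(m)$ that has not been established.
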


The following lemma is used in identifying the exact asymptotic behaviour of $\widetilde{b}_{2,\eps}^{(\delta)}$, which is the main part of the drift $b_{2,\eps}^{(\delta)}$. It allows us to identify exact cancellations in $b_{1,\eps}^{(\delta)} + b_{2,\eps}^{(\delta)}$. 

\begin{lem}\label{lem:D2zeta_divergence_rate}
    There exists $\widetilde{\alpha}_2\in\RR$ and $\nu>0$ such that 
    \begin{equation*}
    \begin{split}
    &\phantom{111}\frac{1}{2}\int \sqrt{\fD}^{\otimes 2}\Big(\qQ_\delta^{\otimes2}\big(D^2\zeta(m)\big)\times (y_1+y_2)\Big)(y,y)\,\md y\\
    &= - \Big(\frac{n(2n+1)C^*_0}{2\pi^2}|\log \delta| + \widetilde{\alpha}_2 \Big) + \oO(\delta^\nu)\;.
    \end{split}
\end{equation*}
\end{lem}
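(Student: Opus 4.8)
The starting point is the explicit kernel formula for $D^2\zeta(m)$ from Lemma~\ref{lem:Dzeta}, together with the decomposition $p_t(y,z;m)=-D\zeta(y;m)m'(z)+\widetilde p_t(y,z;m)$ from Lemma~\ref{lem:pt} (here $\mathbf r_t(m)=0$, so $p_t(\cdot\,;m)=e^{-t\aA}$ and the formula for $D^2\zeta(m)$ simplifies to only the first ($f_n''$) term). I would write $D^2\zeta(y_1,y_2;m)$ as a sum of a singular part coming from $-D\zeta(y_j;m)m'(z)$ factors and a smoother remainder built from $\widetilde p_t$. The key point is that the leading short-distance singularity of $D^2\zeta(y_1,y_2;m)$ on the diagonal $y_1=y_2$ should be of the form $-n(2n+1)\,K(y_1)\,\mathbf 1_{y_1\approx y_2}$-type behaviour, where $K$ is an integrable profile whose relevant moment against $(y_1+y_2)$ reproduces $C_0^*$; everything else is a genuinely better-behaved two-variable function. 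More concretely, I expect to extract from the $t\to 0$ end of the time integral the local term $\int_0^{1}\!\int_\RR f_n''(m(z))\,p_t(y_1,z;m)p_t(y_2,z;m)\,m'(z)\,dz\,dt$, whose small-$t$ behaviour is governed by the Gaussian heat kernel and produces a $\log$ when paired with $\sqrt{\fD}^{\otimes2}$ and restricted to the diagonal.

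The next step is to understand how $\sqrt{\fD}^{\otimes2}\qQ_\delta^{\otimes2}$ acts on such a kernel and why it yields $|\log\delta|$. For a two-variable function $g(y_1,y_2)$, the quantity $\int_\RR(\sqrt{\fD}^{\otimes2}\qQ_\delta^{\otimes2} g)(y,y)\,dy$ equals, in Fourier variables, $\iint |\theta_1|^{1/2}|\theta_2|^{1/2} e^{-4\pi^2\delta^2(\theta_1^2+\theta_2^2)}\widehat g(\theta_1,\theta_2)\,\delta_{\rm diag}\,\ldots$ — more usefully, after writing $\int_\RR g_{\rm diag-reg}$, one sees that the diagonal restriction forces a convolution in Fourier and the product $|\theta_1\theta_2|^{1/2}$ is what generates a logarithmic divergence once $g$ decays like the relevant power. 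I would isolate the divergent contribution by replacing $g=D^2\zeta(m)\cdot(y_1+y_2)$ with its leading diagonal singularity $g_{\rm sing}$ (the $f_n''$ short-time piece above times $(y_1+y_2)$), show that $\int_\RR(\sqrt{\fD}^{\otimes2}\qQ_\delta^{\otimes2}(g-g_{\rm sing}))(y,y)\,dy$ converges as $\delta\to0$ with a rate $\oO(\delta^\nu)$ (because $g-g_{\rm sing}$ is, say, in $\cC^{\eta}$ in each variable near the diagonal, using the remainder bounds \eqref{e:decomposition1-2}, \eqref{e:D2zeta_bound} and integrability from \eqref{e:Dzeta_bound}/Lemma~\ref{le:Dzeta_reg_int}), and then compute the $\delta\to0$ asymptotics of $\int_\RR(\sqrt{\fD}^{\otimes2}\qQ_\delta^{\otimes2}g_{\rm sing})(y,y)\,dy$ explicitly. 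The explicit computation should give precisely $-\tfrac{n(2n+1)}{2\pi^2}|\log\delta|\cdot(\text{a moment})+\text{const}+\oO(\delta^\nu)$, and the moment will be arranged — using $\bracket{m',m^{2n-1}}=0$ and integration by parts — to equal $C_0^*$ as defined in \eqref{e:C0}. The constant that remains after removing the $|\log\delta|$ piece is then named $\widetilde\alpha_2$.

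The identification of the coefficient of $|\log\delta|$ is where I would be most careful. The heat-kernel short-time analysis gives $p_t(y_1,z;m)p_t(y_2,z;m)\approx q_t(y_1-z)q_t(y_2-z)$ up to exponentially decaying corrections, and for the Gaussian pair the relevant object $\int_0^1 f_n''(m(z))\,q_t(y_1-z)q_t(y_2-z)\,m'(z)\,dz\,dt$ behaves near the diagonal like (a multiple of) $\log\tfrac{1}{|y_1-y_2|}$ times $f_n''(m(y))m'(y)$ at $y\approx y_1\approx y_2$; note $f_n''(v)=-2n(2n+1)v^{2n-1}$, which is where the factor $n(2n+1)$ and the sign come from. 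Combining with the extra weight $(y_1+y_2)\approx 2y$ and integrating $f_n''(m(y))m'(y)\cdot y = -2n(2n+1)\,y\,m^{2n-1}(y)m'(y)$ over $y$ reproduces $-2n(2n+1)\|m'\|_{\lL^2}^2 C_0^*\cdot(\text{half})$; tracking the normalization constants in $\fD=|\d_x|/2\pi$ and in the $D^2\zeta$ prefactor $1/\|m'\|_{\lL^2}^2$ is what must be done with care to land exactly on $\tfrac{n(2n+1)C_0^*}{2\pi^2}$. The main obstacle is therefore not any single hard estimate but the bookkeeping: cleanly separating the genuinely divergent diagonal contribution from the convergent remainder (so that the remainder bound with rate $\oO(\delta^\nu)$ really holds, which needs the weighted $\lL^1$/$\lL^\infty$ decay from Lemmas~\ref{lem:Dzeta} and~\ref{lem:pt} to justify all the integral manipulations near $y_1=y_2$ and for large $|y|$), and then matching the constant in front of $|\log\delta|$ against the precise definition of $C_0^*$ in \eqref{e:C0} including all $2\pi$ and $\|m'\|_{\lL^2}$ factors.
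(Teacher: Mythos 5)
Your overall architecture matches the paper's proof: split $D^2\zeta(m)$ (via its kernel formula, where only the $f_n''$ term survives since $\mathbf r_t(m)=0$) into a short-time Gaussian singular part and smoother remainders, show the remainders contribute a constant plus $\oO(\delta^\nu)$ using the kernel bounds, and compute the Gaussian part explicitly to produce $\tfrac{n(2n+1)C_0^*}{2\pi^2}|\log\delta|$. This is exactly the paper's decomposition into $h_1+h_2$ with $h_1$ further compared to $G(\vec y)=\int_0^1\int m'(z)m^{2n-1}(z)\prod_j q_t(y_j-z)\,\md z\,\md t$, with the remainders handled by Lemmas~\ref{lem:regularity_decay_h2y} and~\ref{lem:regularity_h1tildey}. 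However, your identification of the divergence mechanism is wrong at the crucial point. The $z$-integrated kernel $\int_0^1\int f_n''(m(z))\,q_t(y_1-z)q_t(y_2-z)\,m'(z)\,\md z\,\md t$ does \emph{not} behave like $\log\frac{1}{|y_1-y_2|}\cdot f_n''(m(y))m'(y)$ near the diagonal: by the Gaussian factorization $q_t(y_1-z)q_t(y_2-z)=q_{2t}(y_1-y_2)\,q_{t/2}\big(z-\tfrac{y_1+y_2}{2}\big)$ and the finiteness of $\int_0^1 q_{2t}(w)\,\md t$, this kernel is bounded on the diagonal; the logarithm only appears after applying the two half-derivatives and restricting to $y_1=y_2$. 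Taken literally, a kernel with a log singularity in $|y_1-y_2|$ would produce a $\delta^{-1}$-type divergence under $\sqrt{\fD}^{\otimes2}\qQ_\delta^{\otimes2}$, not $|\log\delta|$, so this heuristic cannot be what pins down the constant. The paper instead extracts the constant from the exact Plancherel identity $\int_0^1\int_\RR\big(\sqrt{\fD_y}q_{t+\delta^2}(y)\big)^2\md y\,\md t=\frac{1}{8\pi^2}\log\frac{1+\delta^2}{\delta^2}$, multiplied by the moment $\int z\,m'(z)m^{2n-1}(z)\,\md z=-\|m'\|_{\lL^2}^2\,C_0^*$ (no integration by parts is needed; this is the definition \eqref{e:C0}).

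The second gap is the treatment of the weight $(y_1+y_2)$. Since $\sqrt{\fD}$ is nonlocal, you cannot simply set $(y_1+y_2)\approx 2y$ on the diagonal: one must use the fractional Leibniz rule (Lemmas~\ref{lem:leibniz_3} and~\ref{lem:leibniz_4}), which produces commutator terms involving $\sqrt{\fD_{y_1}}$ of the kernel against the increment of the weight. In the paper, for the Gaussian part these commutator contributions vanish identically after integrating in $y$, thanks to the change of variables $y\mapsto y+z$ and the vanishing moment $\int m'\,m^{2n-1}=0$; the same translation-plus-odd-moment argument is also what reduces the main term to the clean product of the two integrals above, and Lemma~\ref{lem:leibniz_4} is what makes the weighted remainders ($h_2$ and $h_1$ minus the Gaussian part) integrable with an $\oO(\delta^\nu)$ mollification error. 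Your plan acknowledges ``bookkeeping'' but does not supply either of these mechanisms, and without them both the coefficient of $|\log\delta|$ and the finiteness of the weighted remainder integrals are unjustified.
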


\subsection{Fractional derivative on the kernel and $D \zeta$}
\label{sec:deterministic_kernel}

We start with some technical lemmas.

\begin{lem}\label{lem:sqrtD_decay}
    For every $\varphi\in\sS$, we have
    \begin{equ}
        |(\sqrt{\fD}\varphi)(x)| \lesssim \bracket{x}^{-\frac{3}{2}}
    \end{equ}
    for all $x \in \RR$. 
\end{lem}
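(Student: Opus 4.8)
The plan is to work entirely on the Fourier side. By definition of $\sqrt{\fD}$ we have
$(\sqrt{\fD}\varphi)(x) = \int_{\RR} |\theta|^{1/2}\,\widehat{\varphi}(\theta)\,e^{2\pi i\theta x}\,\md\theta$.
For $|x|\le 1$ there is nothing to do: since $\widehat\varphi\in\sS$ we have $|\theta|^{1/2}\widehat\varphi\in\lL^1$, hence $|(\sqrt{\fD}\varphi)(x)|\le\||\theta|^{1/2}\widehat\varphi\|_{\lL^1}\le C\lesssim\bracket{x}^{-3/2}$ (the last inequality because $\bracket{x}^{-3/2}$ is bounded below on $\{|x|\le1\}$). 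So I fix $|x|\ge 1$ and aim for the bound $|x|^{-3/2}$.

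First I separate frequencies. Fix $\chi\in\cC^\infty(\RR)$ with compact support and $\chi\equiv 1$ near $0$, and write $|\theta|^{1/2}\widehat\varphi=g_{\mathrm{hi}}+g_{\mathrm{lo}}$ with $g_{\mathrm{hi}}=(1-\chi)|\theta|^{1/2}\widehat\varphi$ and $g_{\mathrm{lo}}=\chi|\theta|^{1/2}\widehat\varphi$. Since $|\theta|^{1/2}$ is smooth away from the origin with derivatives of at most polynomial growth, $g_{\mathrm{hi}}\in\sS$, so its inverse Fourier transform decays faster than any polynomial; in particular its contribution to $(\sqrt{\fD}\varphi)(x)$ is $\lesssim\bracket{x}^{-3/2}$. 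It remains to bound $I(x):=\int_{\RR}\psi(\theta)|\theta|^{1/2}e^{2\pi i\theta x}\,\md\theta$, where $\psi:=\chi\widehat\varphi\in\cC^\infty_c(\RR)$.

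The core of the argument is to split $I(x)$ at the scale $|\theta|\sim 1/|x|$. On $\{|\theta|\le 1/|x|\}$ I bound trivially: $\int_{|\theta|\le 1/|x|}|\theta|^{1/2}|\psi(\theta)|\,\md\theta\lesssim\int_0^{1/|x|}\theta^{1/2}\,\md\theta\lesssim|x|^{-3/2}$. On $\{|\theta|\ge 1/|x|\}$ the integrand is $\cC^2$ away from the origin, so I integrate by parts twice in $\theta$ using $e^{2\pi i\theta x}=(2\pi i x)^{-1}\partial_\theta e^{2\pi i\theta x}$, together with $\partial_\theta|\theta|^{1/2}=\tfrac12\operatorname{sgn}(\theta)|\theta|^{-1/2}$ and $\partial_\theta^2|\theta|^{1/2}=-\tfrac14|\theta|^{-3/2}$. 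The boundary terms at $|\theta|=1/|x|$ produced by the first and second integration by parts are of size $|x|^{-1}\cdot(1/|x|)^{1/2}=|x|^{-3/2}$ and $|x|^{-2}\cdot(1/|x|)^{-1/2}=|x|^{-3/2}$ respectively (there are no boundary terms at infinity since $\psi$ has compact support). The surviving interior term is $(2\pi i x)^{-2}\int_{|\theta|\ge 1/|x|}\partial_\theta^2\big(|\theta|^{1/2}\psi(\theta)\big)e^{2\pi i\theta x}\,\md\theta$; the worst term in $\partial_\theta^2(|\theta|^{1/2}\psi)$ is $\lesssim|\theta|^{-3/2}$ on the compact support of $\psi$, the remaining terms are bounded there, and $\int_{|\theta|\ge 1/|x|}|\theta|^{-3/2}\,\md\theta\lesssim|x|^{1/2}$, so this term is $\lesssim|x|^{-2}\cdot|x|^{1/2}=|x|^{-3/2}$. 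Collecting all pieces gives $|I(x)|\lesssim|x|^{-3/2}$, and hence the claim.

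The main (and essentially only) delicate point is the bookkeeping around the singularity of $|\theta|^{1/2}$ at $\theta=0$: every boundary and interior exponent has to be checked to conspire to the exact power $|x|^{-3/2}$, which is expected since $|\theta|^{1/2}$ has a singularity ``of order $-\tfrac12$'' and the inverse Fourier transform of $|\theta|^{-s}$ decays like $|x|^{-(1-s)}$. An alternative, essentially equivalent route — avoiding Fourier analysis but requiring one first to justify the singular-integral representation $(\sqrt{\fD}\varphi)(x)=c\int_{\RR}\tfrac{\varphi(x)-\varphi(x+z)}{|z|^{3/2}}\,\md z$ — is to split the $z$-integral at $|z|\sim|x|$: for $|z|\gtrsim|x|$ one uses $\varphi\in\lL^1$ together with the rapid decay of $\varphi(x)$, and for $|z|\lesssim|x|$ one Taylor expands $\varphi$ around $x$ to second order (the linear term integrates to zero by oddness) and uses that $\varphi''$ is $\lesssim\bracket{x}^{-N}$ on the relevant range.
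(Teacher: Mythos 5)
Your argument is correct, but it takes a genuinely different route from the paper. The paper's proof is a two-line application of the singular-integral representation $(\sqrt{\fD}\varphi)(x)=C_\ff\int_\RR\frac{\varphi(x)-\varphi(x-z)}{|z|^{3/2}}\,\md z$ established in Lemma~\ref{lem:leibniz_1} (Appendix~\ref{app:fractional_Leibniz}), splitting the $z$-integral into $\{|z|\le 1\}$ (Taylor/mean-value plus rapid decay of $\varphi'$) and $\{|z|>1\}$ (rapid decay of $\varphi$ plus integrability of $|z|^{-3/2}$); this is essentially the alternative you sketch in your last paragraph, with a split at $|z|\sim 1$ rather than $|z|\sim|x|$. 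Your main proof instead stays on the Fourier side: a smooth low/high frequency cutoff, the trivial bound on $\{|\theta|\le 1/|x|\}$, and two integrations by parts on $\{|\theta|\ge 1/|x|\}$ with the boundary and interior terms all conspiring to $|x|^{-3/2}$. This buys self-containedness (no need for the representation formula, which the paper proves anyway because it is needed for the fractional Leibniz rules) and makes transparent that the decay exponent $3/2$ is exactly dictated by the order-$\tfrac12$ singularity of the multiplier at $\theta=0$; the paper's route is shorter given that Lemma~\ref{lem:leibniz_1} is available and matches the real-space splitting techniques used throughout Section~\ref{sec:deterministic}. One small imprecision: in the interior term after two integrations by parts, the cross term $\operatorname{sgn}(\theta)|\theta|^{-1/2}\psi'(\theta)$ is not uniformly bounded on $\{|\theta|\ge 1/|x|\}$, but it is dominated by a constant times $|\theta|^{-3/2}$ on the compact support of $\psi$, so your stated bound $\lesssim|x|^{-2}\cdot|x|^{1/2}$ still covers it and the estimate is unaffected.
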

\begin{proof}
    With the expression of $\sqrt{\fD} \varphi$ from Lemma~\ref{lem:leibniz_1}, the conclusion follows by splitting the integral into the domains $\{|z| \leq 1\}$ and $\{|z|>1\}$. 
\end{proof}

\begin{lem}\label{lem:techninal_1}
For every $\alpha > 0$, we have
    \begin{equ}
        \int_\RR \bracket{y-z}^{-\alpha}e^{-\lambda|z|}\,\md z\lesssim\bracket{y}^{-\alpha}
    \end{equ}
    for all $y\in\RR$. Moreover, if $\alpha\in(0,1)$, then $\bracket{y-z}^{-\alpha}$ can be replaced by $|y-z|^{-\alpha}$.
\end{lem}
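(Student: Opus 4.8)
The plan is to prove this by a standard ``near/far'' decomposition of the integration variable $z$ relative to $y$, using repeatedly that exponential decay dominates any polynomial decay. Fix $\alpha>0$ and the constant $\lambda>0$; all implicit constants below are allowed to depend on $\alpha$ and $\lambda$.

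First I would establish the main claim with $\bracket{y-z}^{-\alpha}$. Split $\RR = \{|z|\leq |y|/2\}\cup\{|z|>|y|/2\}$. On the first region, $|y-z|\geq |y|-|z|\geq |y|/2$, and since $\bracket{\cdot}\geq 1$ this yields $\bracket{y-z}\gtrsim\bracket{y}$ uniformly (the case of small $|y|$ being trivial as both quantities are $\asymp 1$), hence $\bracket{y-z}^{-\alpha}\lesssim\bracket{y}^{-\alpha}$; integrating the remaining factor $e^{-\lambda|z|}$ over $\RR$ gives a finite constant, so this piece is $\lesssim\bracket{y}^{-\alpha}$. On the second region, $|z|>|y|/2$ gives $e^{-\lambda|z|}=e^{-\lambda|z|/2}e^{-\lambda|z|/2}\leq e^{-\lambda|y|/4}e^{-\lambda|z|/2}$, while $\bracket{y-z}^{-\alpha}\leq 1$; integrating $e^{-\lambda|z|/2}$ over $\RR$ yields a constant, so this piece is $\lesssim e^{-\lambda|y|/4}\lesssim\bracket{y}^{-\alpha}$ because the exponential decays faster than any polynomial. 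Adding the two pieces proves the bound.

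For the refinement with $|y-z|^{-\alpha}$ and $\alpha\in(0,1)$, the only new feature is the local singularity at $z=y$, which is integrable precisely because $\alpha<1$. I would first dispose of bounded $y$, say $|y|\leq 2$, where $\bracket{y}^{-\alpha}\asymp 1$ and $\int_\RR |y-z|^{-\alpha}e^{-\lambda|z|}\,\md z\leq \int_{|w|\leq 1}|w|^{-\alpha}\,\md w+\int_\RR e^{-\lambda|z|}\,\md z<+\infty$ uniformly in such $y$. For $|y|>2$ I would repeat the near/far split: on $\{|z|\leq|y|/2\}$ one has $|y-z|\geq|y|/2\geq 1$, so $|y-z|^{-\alpha}\leq(|y|/2)^{-\alpha}\lesssim\bracket{y}^{-\alpha}$ and the previous integration applies; on $\{|z|>|y|/2\}$ I would further split according to whether $|y-z|\leq 1$ or $|y-z|>1$. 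In the first sub-case, $|z|\geq|y|-1\geq|y|/2$ forces $e^{-\lambda|z|}\leq e^{\lambda}e^{-\lambda|y|}$, while $\int_{|y-z|\leq 1}|y-z|^{-\alpha}\,\md z$ is a finite constant, giving a contribution $\lesssim e^{-\lambda|y|}\lesssim\bracket{y}^{-\alpha}$; in the second sub-case $|y-z|^{-\alpha}\leq 1$ and the earlier exponential-gain argument (producing $e^{-\lambda|y|/4}$) applies verbatim.

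None of the steps is genuinely hard; the only point requiring a little care is the bookkeeping of constants near $y=0$ (and near $z=y$ in the refinement), which is exactly why I separate out the regime $|y|\leq 2$. The one place where the implicit constant acquires its dependence on $\alpha$ and $\lambda$ is the elementary inequality ``exponential beats polynomial'', i.e. $e^{-\lambda|y|/4}\lesssim_{\alpha,\lambda}\bracket{y}^{-\alpha}$ for all $y\in\RR$, which is harmless here.
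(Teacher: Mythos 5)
Your proof is correct and uses exactly the decomposition the paper indicates ($\{|z|\leq |y|/2\}$ versus $\{|z|>|y|/2\}$, with the exponential factor absorbing the far region and the integrable singularity handled when $\alpha\in(0,1)$); the paper merely states this split and omits the details you supply.
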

\begin{proof}
Splitting the integral into $\{|z| \leq \frac{|y|}{2}\}$ and $\{|z| > \frac{|y|}{2}\}$, the bounds then follow easily. 
\end{proof}

\begin{lem}\label{lem:technical_2}
For every $\lambda >0$, we have
    \begin{equ}
        \int_{0}^1\int_\RR e^{-\lambda|z|}|\sqrt{\fD_y}q_{s+\delta^2}(y,z)|^2\,\md z \,\md s \lesssim |\log \delta|\bracket{y}^{-3}
    \end{equ}
    for all $\delta\in(0,\frac{1}{2}]$ and $y\in\RR$.
\end{lem}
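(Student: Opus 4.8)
The plan is to combine the exact scaling of $\sqrt{\fD}$ acting on the heat kernel with a splitting of the $z$-integral according to whether the exponential weight $e^{-\lambda|z|}$ is effective. Throughout I read $q_{s+\delta^2}(y,z)$ as the one-dimensional heat kernel $q_{s+\delta^2}(y-z)$.

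\textbf{Step 1 (scaling and two basic bounds).} Write $g_t := \sqrt{\fD}q_t$, so that $\sqrt{\fD_y}q_{s+\delta^2}(y,z) = g_{s+\delta^2}(y-z)$. Since $\widehat{g_t}(\theta) = |\theta|^{\frac12}e^{-4\pi^2 t\theta^2}$ and the multiplier $|\theta|^{\frac12}$ is homogeneous of degree $\frac12$, a direct Fourier-side check gives the scaling identity $g_t(x) = t^{-3/4}G(x/\sqrt t)$ with $G := \sqrt{\fD}q_1$. As $q_1 \in \sS$, Lemma~\ref{lem:sqrtD_decay} yields $|G(w)| \lesssim \bracket{w}^{-3/2}$; in particular $G \in \lL^2$, $\|g_t\|_{\lL^2}^2 = t^{-1}\|G\|_{\lL^2}^2$, and pointwise $|g_t(x)|^2 \lesssim t^{-3/2}\bracket{x/\sqrt t}^{-3} = (t+x^2)^{-3/2}$. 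From these I would extract two facts: (i) $\int_0^1 \|g_{s+\delta^2}\|_{\lL^2}^2\,\md s = \|G\|_{\lL^2}^2\int_0^1 (s+\delta^2)^{-1}\,\md s \lesssim |\log\delta|$ for $\delta \in (0,\frac12]$; and (ii) for $|x| \ge 1$, bounding the decreasing integrand by its value at $s=0$, $\int_0^1 |g_{s+\delta^2}(x)|^2\,\md s \lesssim \int_0^1 (s+\delta^2+x^2)^{-3/2}\,\md s \le |x|^{-3} \lesssim \bracket{x}^{-3}$.

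\textbf{Step 2 (splitting).} If $|y| \le 2$, then $\bracket{y}^{-3} \asymp 1$, so after discarding the weight ($e^{-\lambda|z|} \le 1$) the left-hand side is $\le \int_0^1 \|g_{s+\delta^2}\|_{\lL^2}^2\,\md s \lesssim |\log\delta| \lesssim |\log\delta|\bracket{y}^{-3}$ by (i). If $|y| > 2$, split the $z$-integral at $|z| = |y|/2$. On $\{|z| > |y|/2\}$ one has $e^{-\lambda|z|} \le e^{-\lambda|y|/2}$, so by (i) this part contributes $\lesssim e^{-\lambda|y|/2}|\log\delta| \lesssim |\log\delta|\bracket{y}^{-3}$. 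On $\{|z| \le |y|/2\}$ one has $|y-z| \ge |y|/2 > 1$, so (ii) and Fubini give $\int_0^1 |g_{s+\delta^2}(y-z)|^2\,\md s \lesssim \bracket{y-z}^{-3}$, whence this part is bounded by $\int_\RR e^{-\lambda|z|}\bracket{y-z}^{-3}\,\md z \lesssim \bracket{y}^{-3}$ by Lemma~\ref{lem:techninal_1} with $\alpha = 3$, which is $\le |\log\delta|\bracket{y}^{-3}$. Adding the pieces proves the claim.

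\textbf{Where the difficulty lies.} The substance is the division of labour: the logarithmic divergence is produced solely by the $\lL^2$-bound (i) — used where $z$ is away from the origin, or where $y$ is bounded — while the spatial decay $\bracket{y}^{-3}$ comes from the pointwise bound (ii) together with Lemma~\ref{lem:techninal_1}, used precisely where $z$ is near the origin so the weight is of order one and cannot supply any decay. Neither estimate alone yields both factors: integrating (ii) against $e^{-\lambda|z|}$ over all of $\RR$ is no better than $|\log\delta|$ near $y=0$, and (i) carries no decay in $y$. Once this split is set up, the heat-kernel scaling and Lemmas~\ref{lem:sqrtD_decay} and~\ref{lem:techninal_1} do all the remaining work.
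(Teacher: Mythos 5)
Your proof is correct and follows essentially the same route as the paper: both rest on the scaling bound $|\sqrt{\fD_y}q_{s+\delta^2}(y,z)|\lesssim (s+\delta^2)^{-3/4}\bracket{(y-z)/\sqrt{s+\delta^2}}^{-3/2}$, split the $z$-integral at $|z|=|y|/2$, draw the $\bracket{y}^{-3}$ decay from the kernel on the near region and from the weight $e^{-\lambda|z|}$ on the far region, and extract the $|\log\delta|$ from $\int_0^1(s+\delta^2)^{-1}\,\md s$. Your packaging of the far-region and small-$|y|$ estimates through the $\lL^2$ identity $\|g_t\|_{\lL^2}^2=t^{-1}\|G\|_{\lL^2}^2$ is only a cosmetic variation on the paper's direct integration.
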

\begin{proof}
By scaling property of the heat kernel and Lemma~\ref{lem:sqrtD_decay}, we have the pointwise bound
\begin{equation} \label{e:frac_derivative_heat}
    \big| \sqrt{\fD_y} q_{s+\delta^2}(y,z) \big| \lesssim 
    \big( s+\delta^2 \big)^{-\frac{3}{4}} \cdot \left\langle \frac{y-z}{\sqrt{s+\delta^2}} \right\rangle^{-\frac{3}{2}}\;.
\end{equation}
Hence, we have
\begin{equation*}
    \int_{0}^1\int_\RR e^{-\lambda|z|}|\sqrt{\fD_y}q_{s+\delta^2}(y,z)|^2\,\md z \,\md s \lesssim \int_{0}^{1} \int_\RR \frac{e^{-\lambda |z|}}{(s+\delta^2)^{\frac{3}{2}}} \cdot \left\langle \frac{y-z}{\sqrt{s+\delta^2}} \right\rangle^{-3} \,\md z \,\md s\;.
\end{equation*}
We distinguish the two cases whether $|y| \leq 1$ or $|y|>1$. If $|y| \leq 1$, we simply bound $e^{-\lambda |z|}$ by $1$. Then, successively integrating out $z$ and $s$ variable, we see that the integral can be bounded by a constant multiple of $|\log \delta|$ for $|y| \leq 1$. 

For $|y| \geq 1$, we split the domain of integration in $z$ variable into $\{|z| \leq \frac{|y|}{2}\}$ and $\{|z| > \frac{|y|}{2}\}$. In the former, we have
\begin{equation*}
    \left\langle \frac{y-z}{\sqrt{s+\delta^2}} \right\rangle^{-3} \lesssim \left\langle \frac{y}{\sqrt{s+\delta^2}} \right\rangle^{-3} \lesssim (s+\delta^2)^{\frac{3}{2}} \, \langle y \rangle^{-3} \;, \qquad |z| \leq \frac{|y|}{2}\;, \; |y|>1\;,
\end{equation*}
and hence the corresponding integral is bounded by $\langle y \rangle^{-3}$. 

In the domain $\{|z| > \frac{|y|}{2}\}$, we have $e^{-\lambda |z|} \leq e^{-\frac{\lambda |y|}{2}}$, and hence the corresponding integral can be bounded by
\begin{equation*}
    e^{-\frac{\lambda |y|}{2}} \int_{0}^{1} (s+\delta^2)^{-\frac{3}{2}} \int_\RR \left\langle \frac{y-z}{\sqrt{s+\delta^2}} \right\rangle^{-3} \,\md z \,\md s \lesssim |\log \delta| \, \langle y \rangle^{-3}\;.
\end{equation*}
The assertion of the lemma then follows by combining all of the above cases. 
\end{proof}

To analyze the behaviour of $p_t$ near $t=0$, we subtract the heat kernel $q_t$ from $p_t$. More precisely, we write
\begin{equation*}
    q_t (y,z) := q_t (y-z)
\end{equation*}
where $q_t$ is the standard heat kernel given in \eqref{e:heat_kernel}. We write $r_t:=p_t-q_t$ for the difference, which turns out to be well-behaved near $t=0$.

\begin{lem}\label{lem:regularity_rt}
For every $\lambda>0$, we have
\begin{equation*}
    |r_t(y,z;v)| \lesssim e^{-\lambda|y|+\lambda|z|}\;.
\end{equation*}
Furthermore, the half derivative in $y$ of $r_t$ satisfies
\begin{equation*}
    |\sqrt{\fD_y}r_t(y,z;v)| \lesssim \bracket{y-z}^{-\frac{3}{2}}\;.
\end{equation*}
Both bounds are uniform over $v \in \vV_\beta$, $t \in (0,1]$ and $y,z \in \RR$. 
\end{lem}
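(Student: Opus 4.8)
The starting point is a Duhamel representation for $r_t = p_t - q_t$. Since $p_t(y,\cdot\,;v)$ solves $\partial_t p = \Delta p + f_n'(F^t(v))\,p$ with $p_0 = \delta_y$, while $q_t$ solves the free heat equation with the same initial data, the difference satisfies $\partial_t r_t = \Delta r_t + f_n'(F^t(v))\,p_t$ with $r_0 = 0$, whence
\[
r_t(y,z;v) = \int_0^t\int_\RR q_{t-s}(z-z')\,f_n'\big(F^s(v)\big)(z')\,p_s(y,z';v)\,\md z'\,\md s\;.
\]
Two inputs are used throughout. First, $F^s(v)$ stays in a fixed bounded subset of $\lL^\infty$ uniformly over $s\geq 0$ and $v\in\vV_\beta$, so $f_n'(F^s(v))$ is bounded by a constant independent of $s$ and $v$. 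Second, by standard parabolic estimates for equations with bounded coefficients (see \cite[Lemma~9.3]{Fun95}) one has, for $s\in(0,1]$ and uniformly over $v\in\vV_\beta$, the Gaussian bounds $|p_s(y,z';v)|\lesssim s^{-1/2}e^{-c|y-z'|^2/s}$ and $|\partial_y p_s(y,z';v)|\lesssim s^{-1}e^{-c|y-z'|^2/s}$, the exponential-in-time factors being harmless on $s\leq 1$. Note also that $p_s(\cdot\,,z';v)$ is smooth for $s>0$, so all pointwise manipulations below are legitimate.

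\textbf{First bound.} Insert the Gaussian estimate for $p_s$ and the uniform bound on $f_n'(F^s(v))$ into the Duhamel formula. The convolution of two Gaussians in $z'$ gives $\int_\RR q_{t-s}(z-z')\,s^{-1/2}e^{-c|y-z'|^2/s}\,\md z'\lesssim t^{-1/2}e^{-c'|y-z|^2/t}$, uniformly over $0<s<t$, and integrating in $s\in(0,t)$ yields $|r_t(y,z;v)|\lesssim t^{1/2}e^{-c'|y-z|^2/t}$. For $t\in(0,1]$ the right-hand side is bounded, for every $\lambda>0$, by a constant multiple of $e^{-\lambda|y-z|}$, and since $|y-z|\geq|y|-|z|$ this is at most $e^{-\lambda|y|+\lambda|z|}$. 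This proves the first estimate (indeed with a Gaussian rather than merely exponential profile).

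\textbf{Half-derivative bound.} In the Duhamel formula the variable $y$ enters only through $p_s(\cdot\,,z';v)$, so $\sqrt{\fD_y}r_t(y,z;v) = \int_0^t\int_\RR q_{t-s}(z-z')\,f_n'(F^s(v))(z')\,\big(\sqrt{\fD_y}p_s(\cdot\,,z';v)\big)(y)\,\md z'\,\md s$, the interchange being justified by the bounds just below. The crux is the pointwise estimate
\[
\big|\big(\sqrt{\fD_y}p_s(\cdot\,,z';v)\big)(y)\big|\lesssim s^{-3/4}\,\Big\langle\tfrac{y-z'}{\sqrt s}\Big\rangle^{-3/2}\;,
\]
i.e.\ exactly the bound \eqref{e:frac_derivative_heat} that the free heat kernel satisfies, now for $p_s$. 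One proves it by inserting the integral representation of $\sqrt{\fD}$ from Lemma~\ref{lem:leibniz_1} and splitting the $h$-integral at scale $|h|\sim\sqrt s$: on $|h|\lesssim\sqrt s$ one bounds the increment $|p_s(y,z';v)-p_s(y+h,z';v)|$ by $|h|$ times the supremum of $|\partial_y p_s|$, using the Gaussian gradient estimate; on $|h|\gtrsim\sqrt s$ one uses the Gaussian bound on $p_s$ itself together with a computation of the same type as Lemma~\ref{lem:techninal_1} to recombine the two contributions into the stated profile. Feeding this into the formula for $\sqrt{\fD_y}r_t$ and using the uniform bound on $f_n'(F^s(v))$, it remains to estimate $\int_0^t\int_\RR q_{t-s}(z-z')\,s^{-3/4}\langle(y-z')/\sqrt s\rangle^{-3/2}\,\md z'\,\md s$, which is of the same type as the double integral handled in the proof of Lemma~\ref{lem:technical_2}; carrying out the $z'$-integration first (it produces an $\lL^1$-mass $\sim s^{-1/4}$ spread at scale $\sqrt t$ around $y-z$) and then the $s$-integration, and using $t\leq 1$ to absorb powers of $t$, bounds it by a constant multiple of $\bracket{y-z}^{-3/2}$, as claimed.

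\textbf{Main obstacle.} The real work is the pointwise half-derivative estimate on $p_s$ displayed above: unlike for the explicit heat kernel there is no closed form, so the short-range part of $\sqrt{\fD_y}p_s$ must be controlled through the parabolic gradient bound and the genuinely nonlocal long-range part through the Gaussian decay of $p_s$, the two being recombined to reproduce the $\langle(y-z')/\sqrt s\rangle^{-3/2}$ profile; a minor additional point is to check that $\sqrt{\fD_y}$ may be moved inside the Duhamel integral and that the resulting $s$-integral converges near $s=0$, which both follow from the smoothness of $p_s(\cdot\,,z';v)$ for $s>0$ and the integrability of $s^{-1/4}$ on $(0,t)$.
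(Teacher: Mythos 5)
Your argument is correct, but it follows a genuinely different route from the paper. The paper writes the Duhamel formula self-referentially, $r_t(y,\cdot\,;v)=\int_0^t e^{(t-s)\Delta}\big(f_n'(F^s(v))(q_s(y,\cdot)+r_s(y,\cdot\,;v))\big)\,\md s$, and closes both estimates by Gr\"onwall: the first in the weighted norm $\lL^\infty_{-\lambda}$ (using only $\|q_s(y,\cdot)\|_{\lL^\infty_{-\lambda}}\lesssim s^{-1/2}e^{-\lambda|y|}$), the second on the quantity $A(t)=\sup_z\bracket{y-z}^{3/2}|\sqrt{\fD_y}r_t(y,z;v)|$ (using only the heat-kernel bound \eqref{e:frac_derivative_heat_small_time}); no a priori information on $p_s$ beyond its defining equation is needed. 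You instead keep $p_s$ in the Duhamel integrand and import a priori Gaussian bounds on $p_s$ and on $\partial_y p_s$ from standard parabolic theory with a bounded zeroth-order coefficient, then prove the heat-kernel-type bound $|\sqrt{\fD_y}p_s(y,z';v)|\lesssim s^{-3/4}\langle (y-z')/\sqrt s\rangle^{-3/2}$ directly by splitting the representation of Lemma~\ref{lem:leibniz_1} at $|h|\sim\sqrt s$, and finally integrate; your convolution and time-integration estimates check out, and your first bound is in fact stronger (Gaussian in $|y-z|$) than the anisotropic exponential stated. What your route buys is a cleaner, non-recursive argument and, as a by-product, the small-time half-derivative bound on $p_s$ itself, which makes the $t\leq 1$ part of Lemma~\ref{lem:regularity_pt} immediate (the paper obtains that bound only afterwards, from $r_t$). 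What it costs is the extra external input: the gradient bound in the backward variable $y$ is not something the paper ever invokes, and \cite[Lemma~9.3]{Fun95} should be checked to cover derivatives in that variable; if it does not, you should note that it follows by the same Duhamel iteration applied to $\partial_y p_s$ (or by viewing $p$ in the backward variables as solving the adjoint equation, which again has only a bounded potential), with constants depending only on $\sup_{s,v\in\vV_\beta}\|f_n'(F^s(v))\|_{\lL^\infty}$ so that uniformity over $\vV_\beta$ is preserved. With that point made explicit, the proof is complete.
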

\begin{proof}
By definition of $r_t$, we have
\begin{equation} \label{e:r_Duhamel}
    r_t(y,\cdot\,;v) = \int_0^t e^{(t-s)\Delta}\Big( f_n'\big(F^s(v)\big) \, \big(q_s(y,\cdot)+r_s(y,\cdot\,;v)\big)\Big)\,\md s\;.
\end{equation}
Taking $\lL^\infty_{-\lambda}$ norm and noticing that $\|q_s(y,\cdot)\|_{\lL^\infty_{-\lambda}}\lesssim s^{-\frac{1}{2}}e^{-\lambda|y|}$ as well as $\|f_n'(F^s(v))\|_{\lL^\infty} \lesssim 1$, one gets
\begin{equ}
\|r_t(y,\cdot\,;v)\|_{\lL^\infty_{-\lambda}} \lesssim \int_0^t \big( s^{-\frac{1}{2}}e^{-\lambda|y|}+\|r_s(y,\cdot\,;v)\|_{\lL^\infty_{-\lambda}} \big)\,\md s\;.
\end{equ}
Gr\"onwall's inequality then implies
\begin{equation*}
    \|r_t \big(y, \cdot; v\big)\|_{\lL_{-\lambda}^\infty} \lesssim \sqrt{t} e^{ct}e^{-\lambda|y|} \lesssim e^{-\lambda|y|} \quad \text{for} \; t \leq 1\;,
\end{equation*}
which proves the first claim. For the second one, writing out the integration kernel of $e^{(t-s)\Delta}$ in \eqref{e:r_Duhamel}, applying the operator $\sqrt{\fD_y}$ to both sides, and using $\|f_n'(F^s(v))\|_{\lL^\infty} \lesssim 1$, we get 
\begin{equation*}
    \big| \sqrt{\fD_y} r_t(y,z;v) \big| \lesssim \int_{0}^{t} \int_{\RR} | q_{t-s}(z,z') | \cdot \Big( \big| \sqrt{\fD_y} q_s (y,z') \big| + \big| \sqrt{\fD_y} r_s(y,z';v) \big| \Big) \,\md z' \,\md s\;.
\end{equation*}
We now fix $y \in \RR$, and write
\begin{equation*}
    A(t) := \sup_{z \in \RR} \Big( \bracket{y-z}^{\frac{3}{2}} \, \left| \sqrt{\fD_y} r_t(y,z;v) \right| \Big)\;.
\end{equation*}
By \eqref{e:frac_derivative_heat}, we have
\begin{equation} \label{e:frac_derivative_heat_small_time}
    \big| \sqrt{\fD_y} q_s(y,z') \big| \lesssim s^{-\frac{3}{4}} \left\langle \frac{y-z'}{\sqrt{s}} \right\rangle^{-\frac{3}{2}} \lesssim s^{-\frac{3}{4}} \langle y-z' \rangle^{-\frac{3}{2}}\;, \quad s \leq 1\;.
\end{equation}
Plugging \eqref{e:frac_derivative_heat_small_time} back into the inequality for $|\sqrt{\fD_y} r_t(y,z;v)|$, we get
\begin{equation*}
    \big| \sqrt{\fD_y} r_t(y,z;v) \big| \lesssim \int_{0}^{t} \big( s^{-\frac{3}{4}} + A(s) \big) \bigg( \int_{\RR} |q_{t-s}(z,z')| \cdot \bracket{y-z'}^{-\frac{3}{2}} \,\md z' \bigg) \,\md s\;.
\end{equation*}
Performing a change of variable $\frac{z-z'}{\sqrt{t-s}} \mapsto z'$ and integrating out the $z'$ variable, we obtain
\begin{equation*}
    \big| \sqrt{\fD_y} r_t(y,z;v) \big| \lesssim \bracket{y-z}^{-\frac{3}{2}} \int_{0}^{t} \big( s^{-\frac{3}{4}} + A(s) \big) \,\md s\;.
\end{equation*}
The conclusion then follows from the definition of $A$ and Gr\"onwall's inequality. 
\end{proof}

\begin{lem}\label{lem:regularity_pt}
For every $\lambda>0$, we have
\begin{equ}
    |\sqrt{\fD_y}p_t(y,z;v)|\lesssim (1+t^{-\frac{3}{4}}) \, \bracket{y}^{-\frac{3}{2}} \, e^{\lambda|z|}
\end{equ}
uniformly over $v\in\vV_{\beta,0}$, $t>0$ and $y,z\in\RR$.
\end{lem}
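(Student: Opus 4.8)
The plan is to treat the regimes $t\in(0,1]$ and $t\ge1$ separately, proving the small-time bound first and then bootstrapping to all times by the propagator identity. For $t\in(0,1]$ I would use the splitting $p_t=q_t+r_t$ of Lemma~\ref{lem:regularity_rt}. The remainder is already under control there: $|\sqrt{\fD_y}r_t(y,z;v)|\lesssim\bracket{y-z}^{-3/2}$, and since $\bracket{y}\le\sqrt{2}\,\bracket{y-z}\bracket{z}$ and $\bracket{z}^{3/2}\lesssim e^{\lambda|z|}$, this is $\lesssim\bracket{y}^{-3/2}e^{\lambda|z|}$. For the heat part, heat-kernel scaling and Lemma~\ref{lem:sqrtD_decay} give $|\sqrt{\fD_y}q_t(y,z)|\lesssim t^{-3/4}\bracket{(y-z)/\sqrt{t}}^{-3/2}$ (this is \eqref{e:frac_derivative_heat} with $\delta=0$); for $t\le1$ one has $\bracket{(y-z)/\sqrt{t}}\ge\bracket{y-z}$, so the same manipulation yields $\lesssim t^{-3/4}\bracket{y}^{-3/2}e^{\lambda|z|}$. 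Summing the two gives the lemma for $t\in(0,1]$, uniformly over $v\in\vV_{\beta,0}$; evaluating at $t=\tfrac12$ in particular, for every $\mu>0$ we obtain $|\sqrt{\fD_y}p_{1/2}(y,w;v)|\lesssim_\mu\bracket{y}^{-3/2}e^{\mu|w|}$.

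For $t\ge1$ I would use the kernel identity $p_t(y,z;v)=\int_\RR p_{1/2}(y,w;v)\,p_{1/2,t}(w,z;v)\,\md w$, which is the chain rule $DF^t(v)=DF^{t-1/2}(F^{1/2}(v))\circ DF^{1/2}(v)$ written out on kernels, and move $\sqrt{\fD_y}$ inside (it acts on the first slot only, and the exchange is legitimate by Fubini given the bounds below). By the $t=\tfrac12$ estimate just obtained,
\[
  |\sqrt{\fD_y}p_t(y,z;v)|\ \lesssim\ \bracket{y}^{-3/2}\int_\RR e^{\mu|w|}\,|p_{1/2,t}(w,z;v)|\,\md w\;,
\]
so everything reduces to the weighted $\lL^1$ bound in the \emph{first} variable
\begin{equation*}
  \int_\RR e^{\mu|w|}\,|p_{s,t}(w,z;v)|\,\md w\ \lesssim\ e^{\lambda|z|}\qquad(t-s\ge\tfrac12)\tag{$\star$}
\end{equation*}
for $\mu>0$ small relative to the exponential rates appearing in Lemmas~\ref{lem:Dzeta} and~\ref{lem:pt}. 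For $(\star)$ I would insert the decomposition \eqref{e:decomposition1} into $p_{s,t}(\cdot,\cdot;v)=p_{t-s}(\cdot,\cdot;F^s(v))$, which is permissible since $\zeta(F^s(v))=\zeta(v)=0$ places $F^s(v)$ in the class to which Lemma~\ref{lem:pt} applies: the rank-one part $-D\zeta(\cdot;F^s(v))\,m'(z)$ contributes $|m'(z)|\int_\RR e^{\mu|w|}|D\zeta(w;F^s(v))|\,\md w\lesssim e^{\lambda|z|}$ by \eqref{e:Dzeta_bound} (take $\mu$ below the decay rate there), while the remainder $\widetilde{p}_{t-s}(\cdot,z;F^s(v))$ contributes $\lesssim(t-s)^{-1/2}e^{-c(t-s)}e^{\lambda|z|}\lesssim e^{\lambda|z|}$ by \eqref{e:decomposition1-2} and $t-s\ge\tfrac12$ (take $\mu$ below half the rate $\overline{\lambda}$). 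Combined with the small-time case this yields $|\sqrt{\fD_y}p_t(y,z;v)|\lesssim(1+t^{-3/4})\bracket{y}^{-3/2}e^{\lambda|z|}$ for all $t>0$, uniformly over $v\in\vV_{\beta,0}$.

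The step I expect to be the main obstacle is the regime $t\ge1$: a direct attempt through the integral representations of $D\zeta(v)$ in Lemma~\ref{lem:Dzeta} or through \eqref{e:tilde_p_v_m} is circular, since those formulas re-express $\sqrt{\fD_y}p_t$ in terms of $\sqrt{\fD_y}p_s$ with $s$ running all the way up to $t$ itself. Peeling off a fixed unit of time via the propagator identity breaks the circularity: it transfers the half-derivative onto the short-time kernel $p_{1/2}$ (controlled purely by heat-kernel scaling in the first step) and leaves only a weighted $\lL^1$ bound on the long-time propagator $p_{1/2,t}$ in its first variable — an estimate not recorded in Lemma~\ref{lem:pt} (which controls the second variable), but one that drops out of \eqref{e:decomposition1} once one notices $\zeta(F^s(v))=0$. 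What remains is bookkeeping: keeping every constant uniform over $v\in\vV_{\beta,0}$ and fixing $\mu>0$ small enough relative to $\lambda$ and $\overline{\lambda}$; no circular constraint on $\mu$ arises, because the small-time bound on $\sqrt{\fD_y}p_{1/2}$ is valid for \emph{every} $\mu>0$.
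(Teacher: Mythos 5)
Your proof is correct and follows essentially the same route as the paper: the small-time regime via the splitting $p_t=q_t+r_t$ with \eqref{e:frac_derivative_heat_small_time} and Lemma~\ref{lem:regularity_rt}, and the large-time regime by peeling off $p_{1/2}$ through the semigroup identity and controlling $p_{1/2,t}$ in its first variable. The only remark is that the bound $(\star)$ you re-derive from \eqref{e:decomposition1}, \eqref{e:Dzeta_bound} and \eqref{e:decomposition1-2} is already recorded: \eqref{e:bound1-1} gives $|p_{s,t}(w,z;v)|\lesssim(1+(t-s)^{-1/2})e^{-\frac{\lambda}{2}|w|}e^{\lambda|z|}$ (the $\lL^\infty_{-\lambda}$ norm there is in the second variable, so the stated decay is in the first), which is exactly what the paper cites at this step.
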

\begin{proof}
For $t \leq 1$, by \eqref{e:frac_derivative_heat_small_time} and Lemma~\ref{lem:regularity_rt}, we have
\begin{equation*}
    \big| \sqrt{\fD_y} p_t(y,z;v) \big| \lesssim t^{-\frac{3}{4}} \langle y-z \rangle^{-\frac{3}{2}} \lesssim t^{-\frac{3}{4}} \langle y \rangle^{-\frac{3}{2}} e^{\lambda |z|}\;.
\end{equation*}
For $t\geq 1$, we use semigroup property to write
\begin{equ}
    p_t(y,z;v) = \int_\RR p_\frac{1}{2}(y,z';v) \, p_{\frac{1}{2},t}(z',z;v)\,\md z'\;.
\end{equ}
By Lemma~\ref{lem:regularity_rt}, we have
\begin{equ}
    |\sqrt{\fD_y}p_{\frac{1}{2}}(y,z';v)|\lesssim\bracket{y-z'}^{-\frac{3}{2}}\;.
\end{equ}
Then by \eqref{e:bound1-1} (replacing $p_{0,t}$ by $p_{\frac{1}{2},t}$ and the factor $t$ in the bound by $t-\frac{1}{2}$) and Lemma~\ref{lem:techninal_1}, we get
\begin{equ}
    |\sqrt{\fD_y}p_t(y,z;v)|\lesssim\int_\RR \bracket{y-z'}^{-\frac{3}{2}}e^{-\frac{\lambda}{2}|z'|+\lambda|z|}\,\md z'\lesssim \bracket{y}^{-\frac{3}{2}}e^{\lambda|z|}\;.
\end{equ}
This completes the proof.
\end{proof}

\begin{lem}\label{lem:regularity_phatt1}
There exists $c>0$ such that for every $\lambda>0$, we have
\begin{equ}\label{e:regularity_Dzeta_bound}
    |\sqrt{\fD} D\zeta(y;v)|\lesssim \bracket{y}^{-\frac{3}{2}}\;,
\end{equ}
\begin{equ}
    |\sqrt{\fD_y} \widetilde{p}_{t}(y,z;v)|\lesssim t^{-\frac{3}{4}}e^{-ct}\bracket{y}^{-\frac{3}{2}}e^{\lambda|z|}\;,
\end{equ}
\begin{equ}\label{e:regularity_Dzeta_difference}
    |\sqrt{\fD} \big(D\zeta(y;v)-D\zeta(y;m)\big)|\lesssim \bracket{y}^{-\frac{3}{2}}\dist(v,\mM)\;,
\end{equ}
\begin{equ}
    |\sqrt{\fD_y} \big(\widetilde{p}_{t}(y,z;v)-\widetilde{p}_{t}(y,z;m)\big)|\lesssim e^{-ct}\bracket{y}^{-\frac{3}{2}}e^{\lambda|z|}\dist(v,\mM)
\end{equ}
uniformly over $v\in\vV_{\beta,0}$, $t>0$ and $y,z\in\RR$.
\end{lem}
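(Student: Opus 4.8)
Looking at Lemma~\ref{lem:regularity_phatt1}, the plan is to bootstrap off the decomposition \eqref{e:decomposition1} for $p_t$ and the expression for $D\zeta$ in Lemma~\ref{lem:Dzeta}, combining them with the small-time regularity results for $r_t$ (Lemma~\ref{lem:regularity_rt}) and the spatial decay of the kernel $p_t$ (Lemmas~\ref{lem:pt}, \ref{lem:regularity_pt}).

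\textbf{Step 1: the bound on $\sqrt{\fD}D\zeta$.} Using Lemma~\ref{lem:Dzeta}, write
\[
    D\zeta(y;v) = -\frac{1}{\|m'\|_{\lL^2}^2}\Big(m'(y) + \int_0^{+\infty}\int_\RR m'(z)\, \r_t(z;v)\, p_t(y,z;v)\,\md z\,\md t\Big)\;,
\]
where I have used that $v\in\vV_{\beta,0}$ so $\zeta(v)=0$. Apply $\sqrt{\fD_y}$. For the first term, $m'\in\sS$ so Lemma~\ref{lem:sqrtD_decay} gives $|\sqrt{\fD}m'(y)|\lesssim\bracket{y}^{-3/2}$. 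For the integral term, split $\int_0^\infty = \int_0^1 + \int_1^\infty$. On $t\in(0,1]$, use $p_t = q_t + r_t$; the heat kernel part gives $|\sqrt{\fD_y}q_t(y,z)|\lesssim t^{-3/4}\bracket{y-z}^{-3/2}$ by \eqref{e:frac_derivative_heat_small_time}, and the $r_t$ part gives $|\sqrt{\fD_y}r_t(y,z;v)|\lesssim\bracket{y-z}^{-3/2}$ by Lemma~\ref{lem:regularity_rt}; in both cases integrate against $|m'(z)|\,\|\r_t(v)\|_{\lL^\infty}\lesssim e^{-c|z|}$, apply Lemma~\ref{lem:techninal_1} to collapse $\int_\RR\bracket{y-z}^{-3/2}e^{-c|z|}\md z\lesssim\bracket{y}^{-3/2}$, and the $t^{-3/4}$ singularity is integrable on $(0,1]$. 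On $t\in[1,\infty)$, use Lemma~\ref{lem:regularity_pt}: $|\sqrt{\fD_y}p_t(y,z;v)|\lesssim(1+t^{-3/4})\bracket{y}^{-3/2}e^{\lambda|z|}$, and absorb $e^{\lambda|z|}$ against the exponential decay $e^{-ct}\,e^{-c|z|}$ coming from $m'(z)\r_t(z;v)$ (using \eqref{e:r_decay}), with the time integral converging by the $e^{-ct}$ factor. This proves \eqref{e:regularity_Dzeta_bound}.

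\textbf{Step 2: the bound on $\sqrt{\fD_y}\widetilde p_t$.} From \eqref{e:decomposition1}, $\widetilde p_t(y,z;v) = p_t(y,z;v) + D\zeta(y;v)m'(z)$. Apply $\sqrt{\fD_y}$: the second term is $\sqrt{\fD}D\zeta(y;v)\cdot m'(z)$, already bounded by $\bracket{y}^{-3/2}$ (times $m'(z)$, which is $\lesssim e^{\lambda|z|}$ trivially, in fact decaying). For $\sqrt{\fD_y}p_t$ we need the sharpened exponential-in-$t$ decay, so I would not just quote Lemma~\ref{lem:regularity_pt} but instead exploit the structure: for $t\le 1$ bound directly via $p_t=q_t+r_t$ and $t^{-3/4}e^{-ct}\le t^{-3/4}$; for $t\ge 1$ use the semigroup splitting $p_t = \int p_{1/2}(y,z';v)\,\widetilde p_{t-1/2}(z',z;v)\,\md z' + (\text{rank-one piece})$ together with the exponential decay \eqref{e:decomposition1-2} of $\widetilde p_{t-1/2}$ and $|\sqrt{\fD_y}p_{1/2}(y,z';v)|\lesssim\bracket{y-z'}^{-3/2}$ from Lemma~\ref{lem:regularity_rt}, then Lemma~\ref{lem:techninal_1}. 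The rank-one correction from the decomposition contributes $\sqrt{\fD}D\zeta(y;v)$ times a factor that decays like $e^{-ct}$ because it involves $\int_t^\infty \r_s(v)$. This yields the stated bound $|\sqrt{\fD_y}\widetilde p_t(y,z;v)|\lesssim t^{-3/4}e^{-ct}\bracket{y}^{-3/2}e^{\lambda|z|}$.

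\textbf{Step 3: the difference bounds \eqref{e:regularity_Dzeta_difference} and the $\widetilde p_t$ difference.} These follow the same scheme but feed in the difference expression \eqref{e:tilde_p_v_m} for $\widetilde p_t(y,z;v) - \widetilde p_t(y,z;m)$ and the corresponding difference identity for $D\zeta$ obtained by subtracting the integral representations at $v$ and at $m$. Each term in \eqref{e:tilde_p_v_m} carries an explicit factor $\r_s(z';v)$, which by \eqref{e:r_decay} is $\lesssim e^{-cs}\dist(v,\mM)$; combined with $\|\widetilde p_{t-s}(z',\cdot;m)\|_{\lL^\infty_{-\lambda}}\lesssim (t-s)^{-1/2}e^{-c(t-s)}$, $\|p_s(y,\cdot;v)\|_{\lL^1_{-\lambda}}\lesssim e^{-\lambda|y|/4}$, the regularity $|\sqrt{\fD_y}\widetilde p_{t-s}(z',z;m)|$ just established, and $|\sqrt{\fD_y}p_s(y,\cdot;v)|$ from Step 2, the $s$-integral converges and produces the single overall factor $\dist(v,\mM)$, with the polynomial-in-$y$ and exponential-in-$z$ weights surviving as stated. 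For the $D\zeta$ difference one simply drops the $t^{-3/4}$ and $e^{-ct}$ factors since the time integral runs over all of $(0,\infty)$ and is dominated by the $e^{-cs}$ decay of $\r_s$.

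\textbf{Main obstacle.} The only delicate point is tracking uniformity: one must verify that all constants are uniform over $v\in\vV_{\beta,0}$ and over $t>0$, and in particular that applying $\sqrt{\fD_y}$ \emph{inside} the time integrals is legitimate (dominated convergence / Fubini) given the $t^{-3/4}$ near-diagonal singularity of $\sqrt{\fD_y}q_t$. Handling the short-time regime $t\le 1$ — where one cannot afford to use Lemma~\ref{lem:regularity_pt} naively because it loses the $e^{-ct}$ gain — by peeling off $q_t$ and invoking Lemma~\ref{lem:regularity_rt} is where the bookkeeping is heaviest; once that is in place, the long-time regime is routine via the semigroup splitting and the already-established exponential decay of $\widetilde p_t$.
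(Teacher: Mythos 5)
Your proposal is essentially the paper's argument: the $D\zeta$ bounds come from the representation in Lemma~\ref{lem:Dzeta} (which, since $D\zeta(\cdot\,;m)=-m'/\|m'\|_{\lL^2}^2$, is exactly the identity the paper uses), with $\sqrt{\fD_y}$ falling on $m'$ and on $p_s(y,\cdot\,;v)$ via Lemma~\ref{lem:regularity_pt} and the factor $e^{-cs}\dist(v,\mM)$ supplied by \eqref{e:r_decay}; the $\widetilde p_t$ bounds come from a small-time/large-time split with a semigroup factorization at large times; and the differences come from \eqref{e:tilde_p_v_m}. (Your manual $q_t+r_t$ treatment of small times in Step 1 is not needed, since $\int_0^\infty(1+s^{-3/4})e^{-cs}\,\md s<\infty$ lets one quote Lemma~\ref{lem:regularity_pt} for all $s$, but it is harmless.)

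The one place you genuinely deviate, and where you should tighten the argument, is Step 2. The paper first proves the $\widetilde p_t$ bound at $v=m$, where the factorization $\widetilde p_t(y,z;m)=\int_\RR p_s(y,z';m)\,\widetilde p_{t-s}(z',z;m)\,\md z'$ is immediate (the flow fixes $m$, so no rank-one correction appears), and only then passes to general $v$ and to the differences through \eqref{e:tilde_p_v_m}. You instead attack general $v$ directly, and there two points are loose: (i) the semigroup splitting produces $\widetilde p_{t-1/2}(z',z;F^{1/2}v)$, not $\widetilde p_{t-1/2}(z',z;v)$, so you must also note that $F^{1/2}v$ stays in the neighborhood where \eqref{e:decomposition1-2} applies; (ii) your claim that the rank-one piece ``contributes $\sqrt{\fD}D\zeta(y;v)$ times a factor decaying like $e^{-ct}$ because it involves $\int_t^\infty\r_s(v)$'' does not follow from the splitting you wrote down: that splitting yields the rank-one term $-\big(\int_\RR\sqrt{\fD_y}p_{1/2}(y,z';v)\,D\zeta(z';F^{1/2}v)\,\md z'\big)m'(z)$, and one needs the flow-invariance identity $D\zeta(y;v)=\int_\RR p_{1/2}(y,z';v)\,D\zeta(z';F^{1/2}v)\,\md z'$ (i.e.\ $\zeta\circ F^{1/2}=\zeta$ plus the chain rule) to see it cancels exactly against the term $\sqrt{\fD}D\zeta(y;v)m'(z)$ from \eqref{e:decomposition1}; the ``$\int_t^\infty\r_s$'' mechanism you invoke is the one already packaged in \eqref{e:tilde_p_v_m}, which you could simply quote instead, as the paper does. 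With either fix the step closes, so this is an imprecision rather than a fatal gap. A last cosmetic point on Step 3: after applying $\sqrt{\fD_y}$ to \eqref{e:tilde_p_v_m}, the half-derivative only hits $p_s(y,z';v)$, so the relevant inputs are Lemma~\ref{lem:regularity_pt}, \eqref{e:decomposition1-2} and \eqref{e:r_decay}; the bounds on $\sqrt{\fD_y}\widetilde p_{t-s}(\cdot\,;m)$ and on $\|p_s(y,\cdot\,;v)\|_{\lL^1_{-\lambda}}$ that you list are not the ones actually used.
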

\begin{proof}
We first consider the two bounds for $D \zeta$. Since $D \zeta (m)$ is a constant multiple of $m'$ and hence a Schwartz function, by Lemma~\ref{lem:sqrtD_decay}, we have
\begin{equation*}
    | \sqrt{\fD} D \zeta (y;m)| \lesssim \bracket{y}^{-\frac{3}{2}}\;.
\end{equation*}
To get the bounds with $m$ replaced by $v \in \vV_{\beta,0}$, we note that $D \zeta (v)$ and $D \zeta (m)$ are related by
\begin{equation*}
    D\zeta(y;v)=D\zeta(y;m)+\int_0^{\infty}\int_{\RR} D\zeta(z';m)\mathbf{r}_s(z';v)p_s(y,z';v) \,\md z' \,\md s\;.
\end{equation*}
Applying $\sqrt{\fD_y}$ on both sides above, the desired bound for $\sqrt{\fD} \big( D \zeta(v) - D \zeta(m) \big)$ then follows directly from Assertion 3 in Lemma~\ref{lem:Dzeta}, the bound \eqref{e:r_decay} and Lemma~\ref{lem:regularity_pt}. 

For the two bounds with $\widetilde{p}_t$, we first note that by semigroup property, we have
\begin{equation*}
    \widetilde{p}_{t}(y,z;m) = \int_\RR  p_s(y,z';m) \, \widetilde{p}_{t-s}(z',z;m) \,\md z'\;,
\end{equation*}
which holds for every $s \in [0,t]$. Applying $\sqrt{\fD_y}$ to both sides above and using Lemma~\ref{lem:regularity_pt} and the bound \eqref{e:decomposition1-2}, we get
\begin{equation*}
    |\sqrt{\fD} \widetilde{p}_{t}(y,z;m)| \lesssim t^{-\frac{3}{4}} e^{-ct} \bracket{y}^{-\frac{3}{2}} e^{\lambda |z|}\;.
\end{equation*}
To get the bounds with $m$ replaced by $v$ as well as their difference, we note that they are related by the identity \eqref{e:tilde_p_v_m}. The desired bounds then follow similarly. 
\end{proof}

\begin{lem}\label{lem:regularity_pt2}
For every $\lambda>0$ sufficiently small, we have
\begin{equation*}
    \begin{split}
    \|\sqrt{\fD_{y_1}}p_t^{(2)}(y_1,y_2,z;v) \|_{\lL^1_{-\lambda}(z)} &\lesssim (1+t)\bracket{y_1}^{-\frac{3}{2}}e^{-\frac{\lambda}{32}|y_2|}\;,\\
    \left\|\Big( \sqrt{\fD}^{\otimes 2}\qQ_\delta^{\otimes2}p_t^{(2)}(\cdot,\cdot,z;v) \Big) (y,y) \right\|_{\lL^1_{-\lambda}(z)} &\lesssim (|\log \delta|+t)\bracket{y}^{-3}
    \end{split}
\end{equation*}
uniformly over $v\in\vV_{\beta,0}$, $\delta\in(0,\frac{1}{2}]$, $t>0$ and $y_1,y_2,y\in\RR$.
\end{lem}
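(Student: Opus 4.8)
The plan is to start from the Duhamel representation of $p_t^{(2)}$ recorded just before Lemma~\ref{lem:Dzeta} and to use that the only $(y_1,y_2)$-dependence there sits inside the factor $p_s(y_1,z';v)\,p_s(y_2,z';v)$; hence $\sqrt{\fD_{y_1}}$ in the first bound, and $\sqrt{\fD}^{\otimes2}\qQ_\delta^{\otimes2}$ followed by the restriction $y_1=y_2=y$ in the second, commute with the $(s,z')$-integration and act only on these two factors. After taking absolute values, inserting $\|p_{s,t}(z',\cdot\,;v)\|_{\lL^1_{-\lambda}}\lesssim e^{-\frac14\lambda|z'|}$ from \eqref{e:bound1-2} and $\|f_n''(F^s(v))\|_{\lL^\infty}\lesssim1$ (valid since $\|F^s(v)\|_{\lL^\infty}\lesssim1$ for $v\in\vV_\beta$ by Proposition~\ref{pr:LinftyEC}), and moving $\|\cdot\|_{\lL^1_{-\lambda}(z)}$ inside by Tonelli, I would reduce the two assertions to the scalar estimates
\[
    \int_0^t\int_\RR e^{-\frac14\lambda|z'|}\big|\sqrt{\fD_{y_1}}p_s(y_1,z';v)\big|\,\big|p_s(y_2,z';v)\big|\,\md z'\,\md s\lesssim(1+t)\bracket{y_1}^{-\frac32}e^{-\frac{\lambda}{32}|y_2|}
\]
and
\[
    \int_0^t\int_\RR e^{-\frac14\lambda|z'|}\big|\big(\sqrt{\fD}\qQ_\delta p_s(\cdot,z';v)\big)(y)\big|^2\,\md z'\,\md s\lesssim(|\log\delta|+t)\bracket{y}^{-3}\,.
\]
In both cases I would split the $s$-integral at $s=1$. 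On $\{s\geq1\}$ there is no small-time singularity: Lemma~\ref{lem:regularity_pt} gives $|\sqrt{\fD_y}p_s(y,z';v)|\lesssim\bracket{y}^{-3/2}e^{\lambda'|z'|}$ and \eqref{e:bound1-1} gives $|p_s(y_2,z';v)|\lesssim e^{-\frac12\lambda'|y_2|}e^{\lambda'|z'|}$ for every $\lambda'>0$ (using for the second estimate that $\qQ_\delta$ commutes with $\sqrt{\fD}$ and preserves these weighted bounds); taking $\lambda'=\lambda/16$ makes the $z'$-integral converge and leaves the factor $(t-1)_+\lesssim t$. The delicate regime is $\{s\leq1\}$, where $|\sqrt{\fD_y}p_s|$ carries a factor $s^{-3/4}$ and $|p_s(y_2,\cdot)|$ a factor $s^{-1/2}$, whose product $s^{-5/4}$ is \emph{not} integrable near $s=0$, so a factorwise bound fails. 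The remedy I would use is the decomposition $p_s=q_s+r_s$ of Lemma~\ref{lem:regularity_rt}, whose bounds on $|r_s|$ and $|\sqrt{\fD_y}r_s|$ carry no negative power of $s$.

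For the first estimate on $\{s\leq1\}$ this produces four terms. Whenever the second factor is taken to be $r_s(y_2,z';v)\lesssim e^{-\lambda'|y_2|+\lambda'|z'|}$ the $s^{-1/2}$ disappears, the $s$-integrand is $\lesssim s^{-3/4}$ or $\lesssim1$, Lemma~\ref{lem:techninal_1} converts $\int_\RR\bracket{y_1-z'}^{-3/2}e^{-c|z'|}\,\md z'$ into $\bracket{y_1}^{-3/2}$, and the $y_2$-decay is explicit. When the first factor is $\sqrt{\fD_{y_1}}r_s$ and the second $q_s(y_2-\cdot)$, I would integrate $\bracket{y_1-z'}^{-3/2}e^{-\frac14\lambda|z'|}$ against $e^{-|y_2-z'|^2/4s}\leq1$ and extract a factor $e^{-c|y_2|}$ via the dichotomy $|z'-y_2|\leq|y_2|/2$ versus $|z'-y_2|>|y_2|/2$, with $\int_0^1 s^{-1/2}\,\md s<\infty$. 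The genuinely constrained contribution is $q\cdot q$: here I would exploit the Gaussian concentration of $q_s(y_2-\cdot)$, so that after the substitution $z'=y_2+\sqrt s\,u$ the $z'$-integral is $\lesssim\sqrt s\,\bracket{(y_1-y_2)/\sqrt s}^{-3/2}e^{-c|y_2|}\lesssim\sqrt s\,\bracket{y_1-y_2}^{-3/2}e^{-c|y_2|}$ for $s\leq1$, which upgrades $s^{-3/4}\cdot s^{-1/2}$ to the integrable $s^{-3/4}$, and then Peetre's inequality $\bracket{y_1-y_2}^{-3/2}\lesssim\bracket{y_1}^{-3/2}\bracket{y_2}^{3/2}$ splits the result into $\bracket{y_1}^{-3/2}$ times $e^{-c'|y_2|}$. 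Choosing all auxiliary exponents so that the surviving decay rate is at least $\lambda/32$ finishes the first bound.

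For the second estimate on $\{s\leq1\}$ I would use $\qQ_\delta q_s=q_{s+\delta^2}$, so that $\qQ_\delta p_s(\cdot,z';v)=q_{s+\delta^2}(\cdot-z')+\qQ_\delta r_s(\cdot,z';v)$, and by $2|ab|\leq a^2+b^2$ it suffices to bound $(\sqrt{\fD_y}q_{s+\delta^2}(y-z'))^2$ and $(\sqrt{\fD_y}\qQ_\delta r_s(y,z';v))^2$ separately. Integrated against $e^{-\frac14\lambda|z'|}$ over $z'\in\RR$ and $s\in(0,1)$, the first is exactly $\lesssim|\log\delta|\bracket{y}^{-3}$ by Lemma~\ref{lem:technical_2} --- this is precisely where the logarithm, and no power of $\delta$, enters. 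For the second, commuting $\sqrt{\fD}$ past $\qQ_\delta$, using $|\sqrt{\fD_y}r_s(y,z';v)|\lesssim\bracket{y-z'}^{-3/2}$ from Lemma~\ref{lem:regularity_rt} and the fact that convolution with $q_{\delta^2}$, $\delta\leq\tfrac12$, preserves such polynomial decay, one gets $(\sqrt{\fD_y}\qQ_\delta r_s)^2\lesssim\bracket{y-z'}^{-3}$, whence Lemma~\ref{lem:techninal_1} and $\int_0^1\md s=1$ give $\lesssim\bracket{y}^{-3}$; combined with the $\{s\geq1\}$ contribution $\lesssim t\bracket{y}^{-3}$ this is the second bound. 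The interchange of the Fourier multiplier $\sqrt{\fD}$ with the $(s,z')$-integral is legitimate once the pointwise bounds above establish absolute convergence. The main obstacle, and the only step beyond bookkeeping, is the non-integrable $s^{-5/4}$ singularity at $s=0$; it is defused by the $q_s+r_s$ splitting combined with the $\sqrt s$-gain from Gaussian concentration in the $q\cdot q$ term, together with the sharp logarithmic input of Lemma~\ref{lem:technical_2} for the $\delta$-dependence.
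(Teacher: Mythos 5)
Your proposal is correct, and for the second bound it coincides with the paper's argument almost line by line (same reduction via the Duhamel formula for $p_t^{(2)}$, integration of the $z$-variable through \eqref{e:bound1-2}, time splitting at $s=1$, the identity $\sqrt{\fD_y}\qQ_{\delta,y}p_s=\sqrt{\fD_y}q_{s+\delta^2}+\qQ_{\delta,y}\sqrt{\fD_y}r_s$, and Lemmas~\ref{lem:regularity_rt}, \ref{lem:technical_2}, \ref{lem:regularity_pt}). The one place where you genuinely deviate is the first bound: the ``non-integrable $s^{-5/4}$ singularity'' you flag is an artefact of your choice to estimate the second factor $p_s(y_2,\cdot\,;v)$ pointwise via the $\lL^\infty$-type bound \eqref{e:bound1-1}, which carries $s^{-1/2}$. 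The paper instead pairs the pointwise bound of Lemma~\ref{lem:regularity_pt} on $\sqrt{\fD_{y_1}}p_s(y_1,\cdot\,;v)$ (singularity $(1+s^{-3/4})$, integrable) with the weighted $\lL^1$ bound \eqref{e:bound1-2} applied to $p_s(y_2,\cdot\,;v)$, i.e.\ $\int e^{\frac{\lambda}{8}|z'|}|p_s(y_2,z';v)|e^{-\frac{\lambda}{4}|z'|}\,\md z'\lesssim e^{-\frac{\lambda}{32}|y_2|}$ uniformly in $s$, so the factorwise estimate goes through with no small-time issue and no further decomposition. Your remedy --- splitting $p_s=q_s+r_s$, using Lemma~\ref{lem:regularity_rt} for the $r$-pieces and the Gaussian concentration/change of variables plus Peetre's inequality for the $q\cdot q$ piece --- does work and reaches the same conclusion, but it is considerably more laborious, and in the $\sqrt{\fD}r_s\cdot q_s$ term your sketch should be careful to recover the $\bracket{y_1}^{-\frac32}$ decay in the region $|z'-y_2|\le|y_2|/2$ (e.g.\ via the same Peetre argument you invoke for the $q\cdot q$ term), since there the weight $e^{-\frac{\lambda}{4}|z'|}$ is spent on producing decay in $y_2$. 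In short: correct, same skeleton, with an avoidable detour where the paper's use of \eqref{e:bound1-2} on the undifferentiated factor is the cleaner move.
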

\begin{proof}
Recall that $p^{(2)}_t$ has the expression
\begin{equation} \label{e:expression_p2}
    p_{t}^{(2)}(y_1,y_2,z;v) = \int_0^t\int_\RR p_{s,t}(z',z;v) f_n'' \big((F^s v)(z')\big) \prod_{j=1}^{2} p_{s}(y_j,z';v) \,\md z' \,\md s\;.
\end{equation}
Since $\|f_n''\big(F^s(v) \big)\|_{\lL^\infty} \lesssim 1$, we have
\begin{equation*}
    \begin{split}
    &\phantom{111}\big\| \sqrt{\fD_{y_1}}p_t^{(2)}(y_1,y_2,z;v) \big\|_{\lL^1_{-\lambda}(z)}\\
    &\lesssim \int_{0}^{t} \int_\RR |\sqrt{\fD_{y_1}} p_s (y_1, z';v)| \cdot |p_s(y_2, z';v)| \, \Big(\int_{\RR} e^{-\lambda |z|} |p_{s,t}(z',z;v)| \md z \Big) \,\md z' \,\md s\;.
    \end{split}
\end{equation*}
Using Lemma~\ref{lem:regularity_pt} to control $|\sqrt{\fD_{y_1}} p_s (y_1, z';v)|$, and the bound \eqref{e:bound1-2} to successively control the integrations over $z$ and $z'$, we obtain
\begin{equation*}
    \begin{split}
    &\phantom{111}\big\| \sqrt{\fD_{y_1}}p_t^{(2)}(y_1,y_2,z;v) \big\|_{\lL^1_{-\lambda}(z)}\\
    &\lesssim \bracket{y_1}^{-\frac{3}{2}} \int_{0}^{t} (1 + s^{-\frac{3}{4}}) \bigg( \int_\RR e^{\frac{\lambda}{8} \cdot |z'|} \cdot |p_s(y_2,z';v)| \cdot e^{-\frac{\lambda}{4} |z'|} \,\md z' \bigg) \,\md s\\
    &\lesssim (1+t) \, \bracket{y_1}^{-\frac{3}{2}} \, e^{-\frac{\lambda}{32} \cdot |y_2|}\;.
    \end{split}
\end{equation*}
For the second bound, using the expression \eqref{e:expression_p2} and that $\|f_n''\big( F^s(v) \big)\|_{\lL^\infty} \lesssim 1$, we get the pointwise bound
\begin{equation*}
    \big| \sqrt{\fD}^{\otimes 2}\qQ_\delta^{\otimes2}p_t^{(2)}(\cdot,\cdot,z;v) (y,y) \big| \lesssim \int_{0}^{t} \int_\RR \big| p_{s,t}(z',z;v) \big| \,  \Big( \sqrt{\fD_y} \qQ_{\delta,y} p_s(y,z';v) \Big)^2 \,\md z' \,\md s\;,
\end{equation*}
where $\qQ_{\delta,y}$ means the mollification is in the $y$ variable. Multiplying $e^{-\lambda |z|}$ on both sides above and applying \eqref{e:bound1-2} to integrate $z$ out, we get
\begin{equation*}
    \|\sqrt{\fD}^{\otimes 2}\qQ_\delta^{\otimes2}p_t^{(2)}(\cdot,\cdot,z;v) (y,y)\|_{\lL^1_{-\lambda}(z)} \lesssim \int_{0}^{t} \int_\RR e^{-\frac{\lambda}{4} \cdot |z'|} \Big( \sqrt{\fD_y} \qQ_{\delta,y} p_s(y,z';v) \Big)^2  \,\md z' \,\md s\;.
\end{equation*}
We split the integration in time into $s \in [0, t \wedge 1]$ and $s \in [t \wedge 1, t]$. For the first part, by Lemmas~\ref{lem:regularity_rt} and~\ref{lem:technical_2}, it is bounded by
\begin{equation*}
    \begin{split}
    &\phantom{111}\int_{0}^{t\wedge1} \int_\RR e^{-\frac{\lambda}{4} \cdot |z'|} \Big(|\sqrt{\fD_y}q_{s+\delta^2}(y,z')| + \bracket{y-z'}^{-\frac{3}{2}} \Big)^2 \, \md z' \,\md s\\
    &\lesssim \bracket{y}^{-3} + \int_{0}^1 \int_\RR e^{-\frac{\lambda}{4} \cdot |z'|}|\sqrt{\fD_y}q_{s+\delta^2}(y,z')|^2\,\md z' \,\md s \lesssim |\log \delta| \cdot \bracket{y}^{-3}\;.
    \end{split}
\end{equation*}
For the second part, by Lemma~\ref{lem:regularity_pt}, the quantity without the mollification is bounded by
\begin{align*}
    \int_{t\wedge1}^t\int_\RR e^{-\frac{\lambda}{4} \cdot |z'|} \Big(\bracket{y}^{-\frac{3}{2}} \, e^{\frac{\lambda}{10}\cdot|z'|}\Big)^2\,\md z' \,\md s \lesssim t \, \bracket{y}^{-3}\;.
\end{align*}
Mollification with $\qQ_\delta$ does not change the above bound. The claim then follows. 
\end{proof}

\subsection{Cross fractional derivatives on $D^2 \zeta$}
\label{sec:deterministic_zeta}
The main goal of this subsection is to prove the following three lemmas. 
\begin{lem}\label{lem:regularity_D2zeta_half}
There exists $\lambda >0$ such that
    \begin{align*}
        \Big|\sqrt{\fD_{y_1}} D^2\zeta(v)(y_1,y_2)\Big|&\lesssim \bracket{y_1-\zeta(v)}^{-\frac{3}{2}}e^{-\lambda|y_2-\zeta(v)|}\;,\\
        \Big|\sqrt{\fD_{y_1}}\big(D^2\zeta(v) - D^2\zeta(m_{\zeta(v)})\big)(y_1,y_2)\Big|&\lesssim \bracket{y_1-\zeta(v)}^{-\frac{3}{2}}e^{-\lambda|y_2-\zeta(v)|}\dist(v,\mM)
    \end{align*}
    uniformly over $v\in\vV_\beta$ and $y_1,y_2\in\RR$.
\end{lem}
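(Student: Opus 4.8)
The plan is to insert $\sqrt{\fD_{y_1}}$ into the kernel representation of $D^2\zeta$ from Lemma~\ref{lem:Dzeta} and estimate the resulting $(z,t)$-integral. First I would reduce to the case $\zeta(v)=0$, so that $m_{\zeta(v)}=m$: this is legitimate because $\sqrt{\fD}$ and $\dist(\cdot,\mM)$ are translation invariant and $D^2\zeta$ transforms by \eqref{e:Dzeta_translation}, so it suffices to bound $|\sqrt{\fD_{y_1}}D^2\zeta(y_1,y_2;v)|$ by $\bracket{y_1}^{-3/2}e^{-\lambda|y_2|}$ (resp.\ times $\dist(v,\mM)$ for the difference with $D^2\zeta(m)$) for $v\in\vV_{\beta,0}$. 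After a standard truncation of the $t$-integral to $[0,N]$, on which the integrand is absolutely integrable and $\sqrt{\fD_{y_1}}$ commutes with $\int_0^N\!\!\int_\RR$, one obtains (up to the factor $-\|m'\|_{\lL^2}^{-2}$)
\begin{align*}
\sqrt{\fD_{y_1}}D^2\zeta(y_1,y_2;v)=\int_0^{\infty}\!\!\int_\RR\Big(&f_n''\big(F^t(v)(z)\big)\,\big(\sqrt{\fD_{y_1}}p_t(y_1,z;v)\big)\,p_t(y_2,z;v)\\
&+\r_t(z;v)\,\big(\sqrt{\fD_{y_1}}p_t^{(2)}(y_1,y_2,z;v)\big)\Big)m'(z)\,\md z\,\md t\;,
\end{align*}
and I would treat the $\r_t p_t^{(2)}$ part and the $f_n''\prod p_t$ part separately.

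The $\r_t p_t^{(2)}$ part is the easy one. Since $|m'(z)|\lesssim e^{-c|z|}\le e^{-\lambda|z|}$ for $\lambda$ small, it is bounded by $\int_0^\infty\|\r_t(v)\|_{\lL^\infty}\,\|\sqrt{\fD_{y_1}}p_t^{(2)}(y_1,y_2,\cdot;v)\|_{\lL^1_{-\lambda}(z)}\,\md t$, and \eqref{e:r_decay} together with the first estimate of Lemma~\ref{lem:regularity_pt2} makes this $\lesssim\dist(v,\mM)\,\bracket{y_1}^{-3/2}e^{-\lambda|y_2|/32}\int_0^\infty e^{-ct}(1+t)\,\md t$, which is of the desired form and already carries the factor $\dist(v,\mM)$; moreover it vanishes when $v=m$ (then $\r_t(m)\equiv0$), so it contributes acceptably to both parts of the statement. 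For the $f_n''\prod p_t$ part I would split $\int_0^\infty=\int_0^1+\int_1^\infty$. On $(0,1]$ I write $p_t=q_t+r_t$ and use the $t$-uniform bounds $|\sqrt{\fD_{y_1}}r_t(y_1,z;v)|\lesssim\bracket{y_1-z}^{-3/2}$ and $|r_t(y_2,z;v)|\lesssim e^{-\lambda|y_2|+\lambda|z|}$ (Lemma~\ref{lem:regularity_rt}), $\|f_n''(F^t(v))\|_{\lL^\infty}\lesssim1$, and the scaling bound $|\sqrt{\fD_{y_1}}q_t(y_1-z)|\lesssim t^{-3/4}\bracket{(y_1-z)/\sqrt t}^{-3/2}$ (see \eqref{e:frac_derivative_heat}); the only genuinely singular contribution, the one with $\sqrt{\fD_{y_1}}q_t$ against $q_t(y_2-z)$, becomes after the $z$-integration $\lesssim t^{-3/4}\bracket{(y_1-y_2)/\sqrt t}^{-3/2}e^{-c|y_2|}$ by heat-kernel scaling, which is integrable over $t\in(0,1)$ and yields $\lesssim\bracket{y_1-y_2}^{-3/2}e^{-c|y_2|}\lesssim\bracket{y_1}^{-3/2}e^{-\lambda|y_2|}$, while the remaining contributions on $(0,1]$ are strictly better.

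The heart of the proof is the tail $\int_1^\infty$ of the $f_n''\prod p_t$ part, because there neither $\sqrt{\fD_{y_1}}p_t(y_1,z;v)$ nor $p_t(y_2,z;v)$ decays in $t$. Here I would use the splitting \eqref{e:decomposition1}, $\sqrt{\fD_{y_1}}p_t(y_1,z;v)=-\sqrt{\fD}D\zeta(y_1;v)\,m'(z)+\sqrt{\fD_{y_1}}\widetilde{p}_t(y_1,z;v)$ and $p_t(y_2,z;v)=-D\zeta(y_2;v)\,m'(z)+\widetilde{p}_t(y_2,z;v)$, together with $f_n''(F^t(v)(z))=f_n''(m(z))+\big(f_n''(F^t(v)(z))-f_n''(m(z))\big)$. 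Expanding gives eight terms; every one containing a factor $\sqrt{\fD_{y_1}}\widetilde{p}_t(y_1,z;v)$, $\widetilde{p}_t(y_2,z;v)$, or $f_n''(F^t(v)(z))-f_n''(m(z))$ carries an $e^{-ct}$ decay in $t$ --- by the $\widetilde{p}_t$-estimate in Lemma~\ref{lem:regularity_phatt1}, by \eqref{e:decomposition1-2}, and by Proposition~\ref{pr:LinftyEC} with Lipschitz continuity of $f_n''$ on bounded sets, respectively --- while the $z$-integral against $m'$ converges (for $\lambda$ small) and produces $\bracket{y_1}^{-3/2}$ (from \eqref{e:regularity_Dzeta_bound} or the $\sqrt{\fD_{y_1}}\widetilde{p}_t$-bound) and $e^{-\lambda|y_2|}$ (from \eqref{e:Dzeta_bound} or \eqref{e:decomposition1-2}). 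The single term with no $t$-decay is $\sqrt{\fD}D\zeta(y_1;v)\,D\zeta(y_2;v)\int_\RR f_n''(m(z))\,m'(z)^3\,\md z$, and this vanishes: $f_n''(m)=-2n(2n+1)m^{2n-1}$, and by the first integral of \eqref{e:m} one has $(m')^2=\frac{m^{2n+2}}{n+1}-m^2+\frac{n}{n+1}$, an even function of $m$, so the substitution $u=m(z)$ turns $\int_\RR m^{2n-1}(m')^3\,\md z$ into $\int_{-1}^1 u^{2n-1}\big(\frac{u^{2n+2}}{n+1}-u^2+\frac{n}{n+1}\big)\,\md u=0$ by oddness. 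Hence $\int_1^\infty$ of this part is $\lesssim\bracket{y_1}^{-3/2}e^{-\lambda|y_2|}$, completing the first bound. The difference bound is obtained by running the same three steps while tracking the factor $\dist(v,\mM)$, now using the ``difference'' estimates \eqref{e:difference1-2}, \eqref{e:Dzeta_difference}, \eqref{e:regularity_Dzeta_difference}, the $\sqrt{\fD}$-difference bound for $\widetilde{p}_t$ in Lemma~\ref{lem:regularity_phatt1}, and $\|f_n''(F^t(v))-f_n''(m)\|_{\lL^\infty}\lesssim e^{-ct}\dist(v,\mM)$ from \eqref{e:r_decay}; the cancellation term is independent of $v$ and so drops out of the difference entirely.

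The main obstacle is precisely this tail estimate: without decomposing $p_t$ into its $\lL^2$-projection part $-D\zeta\otimes m'$ and the exponentially small remainder $\widetilde{p}_t$, the time integral of the $f_n''\prod p_t$ term diverges, and the only thing that rescues it is the algebraic identity $\int_\RR f_n''(m)(m')^3\,\md z=0$ --- the same cancellation already responsible for the convergence of the representation of $D^2\zeta$ itself. The small-$t$ analysis is routine but needs a little care to keep the time integral convergent and simultaneously extract the sharp $\bracket{y_1}^{-3/2}$ decay from the heat-kernel scaling, rather than the lossy bound one would get from a naive application of Lemma~\ref{lem:regularity_pt}.
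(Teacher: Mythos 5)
Your proof is correct and follows essentially the same route as the paper: the paper organizes the identical computation into the pieces $g_1,\dots,g_4$ and $h_1,h_2$ (Lemmas~\ref{lem:regularity_g1}--\ref{lem:regularity_g4}), reduces to $\zeta(v)=0$ by translation, uses the decomposition \eqref{e:decomposition1} to isolate the non-decaying large-time term and kills it via $\bracket{m^{2n-1},(m')^3}=0$ (oddness of $m$; your first-integral computation gives the same cancellation), treats the $\r_t\,p_t^{(2)}$ part through Lemma~\ref{lem:regularity_pt2} and \eqref{e:r_decay}, and obtains the difference bound from the same difference estimates you cite. The only cosmetic deviation is at small times, where the paper just combines Lemma~\ref{lem:regularity_pt} with \eqref{e:bound1-2} --- contrary to your closing remark this is not lossy, since the $\bracket{y_1}^{-3/2}$ decay and the integrable $t^{-3/4}$ singularity are already built into Lemma~\ref{lem:regularity_pt} --- so your extra $q_t+r_t$ splitting there is unnecessary, though harmless.
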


\begin{lem}\label{lem:D2zeta_continuity}
We have
    \begin{align*}
    \int_\RR \Big|\sqrt{\fD}^{\otimes 2}\Big(\qQ_\delta^{\otimes2}\big(D^2\zeta(v)-D^2\zeta(m_{\zeta(v)})\big)\Big)\Big|(y,y) \,\md y&\lesssim|\log\delta|\dist(v,\mM)
    \end{align*}
    uniformly over $v\in\vV_\beta$ and $\delta\in(0,\frac{1}{2}]$.
\end{lem}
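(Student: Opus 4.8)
The approach is to work directly with the integral representation of $D^2\zeta$ from Lemma~\ref{lem:Dzeta}, rather than to bootstrap from the one--variable bound of Lemma~\ref{lem:regularity_D2zeta_half}. By the translation property \eqref{e:Dzeta_translation} one may assume $\zeta(v)=0$, so that $m_{\zeta(v)}=m$ and $v\in\vV_{\beta,0}$; since $\r_t(m)=0$, the difference $g:=D^2\zeta(v)-D^2\zeta(m)$ equals, up to the factor $-\|m'\|_{\lL^2}^{-2}$, the sum of the \emph{product term}
\[
\int_0^\infty\!\!\int_\RR\Big(f_n''(F^t(v)(z))\,p_t(y_1,z;v)p_t(y_2,z;v)-f_n''(m(z))\,p_t(y_1,z;m)p_t(y_2,z;m)\Big)m'(z)\,\md z\,\md t
\]
and the \emph{$p_t^{(2)}$ term} $\int_0^\infty\!\!\int_\RR \r_t(z;v)\,p_t^{(2)}(y_1,y_2,z;v)\,m'(z)\,\md z\,\md t$. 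For fixed $\delta$ every kernel is smooth with Schwartz--type decay, so $\sqrt{\fD}^{\otimes 2}\qQ_\delta^{\otimes 2}$ can be moved under the integrals; applying $\sqrt{\fD_{y_j}}\qQ_{\delta,y_j}$ to the factor depending on $y_j$ and setting $y_1=y_2=y$ converts each product into a product of two copies of $\sqrt{\fD_y}\qQ_\delta p_t(y,z;\cdot)$, and the $p_t^{(2)}$ factor into $\big(\sqrt{\fD}^{\otimes 2}\qQ_\delta^{\otimes 2}p_t^{(2)}(\cdot,\cdot,z;v)\big)(y,y)$; all bounds below will be uniform in $\delta\in(0,\tfrac12]$ and $v\in\vV_{\beta,0}$.

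The $p_t^{(2)}$ term is immediate from Lemma~\ref{lem:regularity_pt2}: its $\lL^1_{-\lambda}(z)$--norm is $\lesssim(|\log\delta|+t)\bracket{y}^{-3}$, so bounding $\|\r_t(v)\|_{\lL^\infty}\lesssim e^{-ct}\dist(v,\mM)$ via \eqref{e:r_decay} (and absorbing $|m'(z)|$ into $e^{-\lambda|z|}$), then integrating $e^{-ct}(|\log\delta|+t)$ over $t$ and $\bracket{y}^{-3}$ over $y$, produces a contribution $\lesssim|\log\delta|\dist(v,\mM)$.

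For the product term I abbreviate $p_t^{w}:=p_t(y,z;w)$ and telescope the (post--operation, diagonal) integrand as
\[
\big(f_n''(F^t(v))-f_n''(m)\big)\big(\sqrt{\fD_y}\qQ_\delta p_t^{v}\big)^2+f_n''(m)\,\sqrt{\fD_y}\qQ_\delta(p_t^{v}-p_t^{m})\cdot\sqrt{\fD_y}\qQ_\delta(p_t^{v}+p_t^{m}),
\]
and split the $t$--integral at $t=1$. For $t\le 1$ I write $p_t=q_t+r_t$ and use $\qQ_\delta q_t=q_{t+\delta^2}$: the pointwise heat bound \eqref{e:frac_derivative_heat} and Lemma~\ref{lem:technical_2} give the logarithm through $\int_0^1\!\int_\RR e^{-\lambda|z|}|\sqrt{\fD_y}q_{t+\delta^2}(y,z)|^2\,\md z\,\md t\lesssim|\log\delta|\bracket{y}^{-3}$, while the $r_t$--contributions are controlled by Lemma~\ref{lem:regularity_rt}; in the second summand, Lemma~\ref{lem:regularity_phatt1} applied to $D\zeta(v)-D\zeta(m)$ and to $\widetilde p_t(v)-\widetilde p_t(m)$ shows the difference factor is regular, so that summand carries only one singular heat factor $(t+\delta^2)^{-3/4}$, integrable on $[0,1]$ and producing no logarithm. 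Moreover $|f_n''(F^t(v))-f_n''(m)|\lesssim e^{-ct}\dist(v,\mM)$ (as in \eqref{e:r_decay}, using Proposition~\ref{pr:LinftyEC}), so these $t\le1$ contributions are all $\lesssim|\log\delta|\dist(v,\mM)$ after integrating over $y$. For $t\ge1$ I insert \eqref{e:decomposition1}, $\sqrt{\fD_y}\qQ_\delta p_t^{w}=-\big(\sqrt{\fD_y}\qQ_\delta D\zeta(w)\big)(y)\,m'(z)+\sqrt{\fD_y}\qQ_\delta\widetilde p_t(y,z;w)$: every resulting term that contains a factor $\widetilde p_t$ or the factor $f_n''(F^t(v))-f_n''(m)$ decays like $e^{-ct}$ (using the $\widetilde p_t(w)$ and $\widetilde p_t(v)-\widetilde p_t(m)$ bounds of Lemma~\ref{lem:regularity_phatt1}, the $D\zeta(v)-D\zeta(m)$ bound, and \eqref{e:r_decay}) and carries a factor $\dist(v,\mM)$, hence integrates over $t\ge1$ and $y\in\RR$ to $\lesssim\dist(v,\mM)$.

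The one point that needs care is the single remaining large--$t$ term, $f_n''(m(z))\big((\sqrt{\fD_y}\qQ_\delta D\zeta(v))(y)^2-(\sqrt{\fD_y}\qQ_\delta D\zeta(m))(y)^2\big)m'(z)^3$: it is genuinely $t$--independent, so a crude bound would make $\int_1^\infty\md t$ diverge. However, after integrating in $z$ it is proportional to $\int_\RR f_n''(m(z))\,m'(z)^3\,\md z$, which vanishes because $m$ is odd (so $f_n''(m)$ is odd and $(m')^3$ even); hence this contribution is exactly $0$ and the $t$--integral converges. Summing everything gives $\int_\RR|\sqrt{\fD}^{\otimes 2}(\qQ_\delta^{\otimes 2}g)(y,y)|\,\md y\lesssim|\log\delta|\dist(v,\mM)$. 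A secondary subtlety is that one must not lose any power of $\delta$ in the small--$t$ analysis: this is exactly why the half--derivatives are routed through the mollified heat kernel $\sqrt{\fD}q_{t+\delta^2}$ and Lemma~\ref{lem:technical_2}, rather than through the cruder $\|\sqrt{\fD}\qQ_\delta\|_{\lL^\infty\to\lL^\infty}\lesssim\delta^{-1/2}$, which would only give $\lesssim\delta^{-1}\dist(v,\mM)$.
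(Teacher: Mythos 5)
Your proposal is correct and follows essentially the same route as the paper: the paper proves this lemma by decomposing $D^2\zeta(v)-D^2\zeta(m)$ into the pieces $g_1,\dots,g_4$ (Lemmas~\ref{lem:regularity_g1}--\ref{lem:regularity_g4}) and integrating the resulting pointwise bounds $\lesssim|\log\delta|\bracket{y}^{-3}\dist(v,\mM)$ in $y$, using exactly the ingredients you invoke — Lemma~\ref{lem:technical_2} with $\qQ_\delta q_t=q_{t+\delta^2}$ for the logarithm on $[0,1]$, the $q_t/r_t$ and $D\zeta/\widetilde p_t$ decompositions with the difference bounds of Lemma~\ref{lem:regularity_phatt1}, Lemma~\ref{lem:regularity_pt2} plus \eqref{e:r_decay} for the $p_t^{(2)}$ part, and the vanishing of $\int m^{2n-1}(m')^3$ by oddness to kill the $t$-independent term. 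Your telescoping of the product term (nonlinearity difference first, kernel-square difference second, with a global time split at $t=1$) is only a minor reorganization of the paper's $g_1$ versus $g_2+g_3$ split and does not change the substance of the argument.
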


\begin{lem}\label{lem:D2zeta_decay}
We have
    \begin{equ}
        \Big|\sqrt{\fD}^{\otimes 2}\qQ_\delta^{\otimes2}\big(D^2\zeta(m)\big)\Big|(y,y)\lesssim |\log \delta| \cdot \bracket{y}^{-3}
    \end{equ}
    for all $\delta\in(0,\frac{1}{2}]$ and $y\in\RR$.
\end{lem}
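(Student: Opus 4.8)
Lemma~\ref{lem:D2zeta_decay} asks for the pointwise bound $|\sqrt{\fD}^{\otimes 2}\qQ_\delta^{\otimes 2}(D^2\zeta(m))|(y,y)\lesssim |\log\delta|\bracket{y}^{-3}$. The plan is to start from the kernel representation of $D^2\zeta(m)$ from Lemma~\ref{lem:Dzeta}, namely
\[
D^2\zeta(y_1,y_2;m) = -\frac{1}{\|m'\|_{\lL^2}^2}\int_0^{+\infty}\int_\RR \Big( f_n''(F^t(m)(z))\,p_t(y_1,z;m)\,p_t(y_2,z;m) + \r_t(z;m)\,p_t^{(2)}(y_1,y_2,z;m)\Big)m'(z)\,\md z\,\md t\;,
\]
and to observe that for $v=m$ the flow is stationary, so $F^t(m)=m$, $\r_t(m)=0$, and the second term vanishes entirely. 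Thus $D^2\zeta(y_1,y_2;m)$ reduces to a single clean double integral $-\|m'\|_{\lL^2}^{-2}\int_0^\infty \int_\RR f_n''(m(z))\,p_t(y_1,z;m)\,p_t(y_2,z;m)\,m'(z)\,\md z\,\md t$, where moreover $p_t(\cdot,\cdot;m)=e^{-t\aA}$ has the standard heat-kernel-plus-remainder structure.

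The main step is then to commute $\sqrt{\fD}^{\otimes 2}\qQ_\delta^{\otimes 2}$ inside the integrals and estimate. Writing $p_t(y_j,z;m) = q_t(y_j,z) + r_t(y_j,z;m)$ as in Lemma~\ref{lem:regularity_rt} for $t\le 1$ and using the decomposition \eqref{e:decomposition1} plus \eqref{e:decomposition1-2} for $t\ge 1$, I would apply $\sqrt{\fD_{y_j}}\qQ_{\delta,y_j}$ to each factor separately. For the short-time part $t\in(0,1]$, the key pointwise bound is \eqref{e:frac_derivative_heat}, giving $|\sqrt{\fD_{y_j}}\qQ_{\delta,y_j}q_t(y_j,z)|\lesssim (t+\delta^2)^{-3/4}\bracket{(y_j-z)/\sqrt{t+\delta^2}}^{-3/2}$, together with the bound $|\sqrt{\fD_{y_j}}r_t(y_j,z;m)|\lesssim \bracket{y_j-z}^{-3/2}$ from Lemma~\ref{lem:regularity_rt} (mollification by $\qQ_\delta$ only helps). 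Setting $y_1=y_2=y$, multiplying the two factors, multiplying by $|f_n''(m(z))m'(z)|\lesssim e^{-c|z|}$, integrating over $z$ and then over $t\in(0,1]$ produces the $|\log\delta|$ from the $\int_0^1 (t+\delta^2)^{-3/2}\md t$ type integral exactly as in Lemma~\ref{lem:technical_2}, and the spatial decay $\bracket{y}^{-3}$ comes from combining the two $\bracket{y-z}^{-3/2}$ factors with the exponential weight via Lemma~\ref{lem:techninal_1} (splitting $\{|z|\le |y|/2\}$ versus $\{|z|>|y|/2\}$). For the long-time part $t\ge 1$, using \eqref{e:regularity_Dzeta_bound} and the bound on $\sqrt{\fD_y}\widetilde p_t$ from Lemma~\ref{lem:regularity_phatt1} gives a factor $t^{-3/4}e^{-ct}\bracket{y}^{-3/2}$ per variable (up to harmless exponential growth $e^{\lambda|z|}$ absorbed by $e^{-c|z|}$), so the $t$-integral converges and contributes a bounded constant times $\bracket{y}^{-3}$. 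Summing the two ranges yields the claim.

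Concretely, the steps in order: (i) record $D^2\zeta(\cdot,\cdot;m)$ as the single double integral above; (ii) justify interchanging $\sqrt{\fD}^{\otimes2}\qQ_\delta^{\otimes2}$ with the $z$- and $t$-integrals (dominated convergence using the decay bounds, treating $\sqrt{\fD}$ via its Schwartz-tested definition or the explicit formula referenced in Lemma~\ref{lem:leibniz_1}); (iii) split $\int_0^\infty = \int_0^1 + \int_1^\infty$; (iv) on $(0,1]$ insert $p_t=q_t+r_t$, expand the product $p_t(y,z;m)^2$ into $q q$, $q r$, $r r$ cross terms, bound each using \eqref{e:frac_derivative_heat} and Lemma~\ref{lem:regularity_rt}, integrate in $z$ with the weight $e^{-c|z|}$ using Lemma~\ref{lem:techninal_1}, then integrate in $t$ to extract $|\log\delta|$; (v) on $[1,\infty)$ use the decomposition \eqref{e:decomposition1} and Lemma~\ref{lem:regularity_phatt1} to get exponential-in-$t$ decay, giving an $\oO(1)$ contribution; (vi) combine.

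The main obstacle I expect is the bookkeeping around the diagonal restriction $y_1=y_2=y$ combined with the mollification $\qQ_\delta^{\otimes 2}$: one must be careful that applying $\sqrt{\fD_{y_1}}\qQ_{\delta,y_1}$ and $\sqrt{\fD_{y_2}}\qQ_{\delta,y_2}$ to $q_t(y_1,z)q_t(y_2,z)$ and then setting $y_1=y_2$ is legitimate and that the resulting integrand is genuinely the square $(\sqrt{\fD_y}\qQ_{\delta,y}q_{t}(y,z))^2$ (so the sign is controlled and the integral in $z$ is of a single nonnegative quantity), mirroring the computation already done in Lemma~\ref{lem:regularity_pt2}. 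Once that is set up, the estimates are entirely parallel to Lemmas~\ref{lem:technical_2} and~\ref{lem:regularity_pt2}, so the remaining work is routine. One minor subtlety is making sure the short-time logarithm is not worsened to a power of $\delta$: this is guaranteed because each $\sqrt{\fD}$ costs only half a derivative, so the product of two of them against $q_t q_t$ gives the borderline $\int_0^1(t+\delta^2)^{-3/2}\md t \sim \delta^{-1}$ in $L^1_z$ of a single factor but, crucially, after integrating the \emph{squared} mollified heat kernel over $z$ one gets $(t+\delta^2)^{-3/2}$ and then $\int_0^1$ of that against the slowly varying weight is what is actually needed — and this is exactly $|\log\delta|$, as in Lemma~\ref{lem:technical_2}.
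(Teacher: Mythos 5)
Your reduction of $D^2\zeta(m)$ to the single kernel integral (using $F^t(m)=m$, $\r_t(m)=0$) and your short-time analysis on $t\in(0,1]$ are exactly the paper's treatment of $h_1$ in \eqref{e:D2zeta_h} (see the proof of Lemma~\ref{lem:regularity_g2}): write $\sqrt{\fD_y}\qQ_{\delta,y}p_t = \sqrt{\fD_y}q_{t+\delta^2} + \qQ_{\delta,y}\sqrt{\fD_y}r_t$, use \eqref{e:frac_derivative_heat} and Lemma~\ref{lem:regularity_rt}, integrate in $z$ against $e^{-c|z|}$ and in $t$ via Lemmas~\ref{lem:techninal_1} and~\ref{lem:technical_2}; this correctly produces the factor $|\log\delta|\,\bracket{y}^{-3}$, and your diagonal/mollification bookkeeping mirrors Lemma~\ref{lem:regularity_pt2}. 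That part is sound.

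The gap is in your long-time part $t\ge 1$ (step (v)). You assert that, after invoking \eqref{e:decomposition1}, each factor carries a bound $t^{-3/4}e^{-ct}\bracket{y}^{-3/2}e^{\lambda|z|}$ so that ``the $t$-integral converges''. That bound holds only for $\widetilde p_t$, not for $p_t$ itself: by \eqref{e:decomposition1}, $p_t(y,z;m)=-D\zeta(y;m)m'(z)+\widetilde p_t(y,z;m)$, and the first summand is the $t\to\infty$ limit of $p_t$ and has no decay in $t$ at all. Hence the expansion \eqref{e:product_pt_expansion} of $p_t(y_1,z;m)p_t(y_2,z;m)$ contains the term $D\zeta(y_1;m)D\zeta(y_2;m)(m'(z))^2$, whose contribution over $t\in[1,\infty)$ diverges if estimated in absolute value, and your proposed dominated-convergence interchange breaks down there as well. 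The missing ingredient is algebraic rather than analytic: integrate in $z$ first and use that $m$ is odd and $m'$ even, so $m^{2n-1}(m')^3$ is odd and $\bracket{m^{2n-1},(m')^3}=0$; the non-decaying term therefore contributes exactly zero for every $t$, and only the cross terms $D\zeta(y_i;m)\widetilde p_t(y_j,z;m)$ and the product $\widetilde p_t(y_1,z;m)\widetilde p_t(y_2,z;m)$ survive, for which \eqref{e:Dzeta_bound}, \eqref{e:regularity_Dzeta_bound} and Lemma~\ref{lem:regularity_phatt1} do give exponential decay in $t$ and the decay $\bracket{y}^{-3}$ on the diagonal. This cancellation is precisely how the paper handles $h_2$ in Lemma~\ref{lem:regularity_g3} (and it recurs for $g_1$ and in Lemma~\ref{lem:D2zeta_divergence_rate}). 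With it inserted, your argument coincides with the paper's proof; without it, the $t\ge1$ estimate as you wrote it is false.
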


By the translation invariance (Assertion 2 in Lemma~\ref{lem:Dzeta}), we can without loss of generality assume that $\zeta(v) = 0 $. Recall that
\begin{equs}
    D^2\zeta(\vec{y};v)= - \frac{1}{\|m'\|_{\lL^2}^2} \bigg[&\int_0^\infty \int_\RR m'(z) \bigg( f_n''\big((F^t v)(z)\big) \prod_{j=1}^{2} p_t(y_j,z;v) \\
    &+ \Big( f_n'\big( (F^t v)(z) \big) -f_n'\big(m(z) \big) \Big) \, p_t^{(2)}(\vec{y},z;v) \bigg)\,\md z\,\md t \bigg]\;.
\end{equs}
When $v=m$, the expression is reduced to the first line only. We split the time integration into $[0,1]$ and $[1,+\infty)$, and write
\begin{equation} \label{e:D2zeta_decomposition}
    D^2 \zeta(\vec{y}; m) = \frac{2n(2n+1)}{\|m'\|_{\lL^2}^2} \Big( h_1 (\vec{y}) + h_2 (\vec{y}) \Big)\;,
\end{equation}
where
\begin{equation} \label{e:D2zeta_h}
    \begin{split}
    h_1 (\vec{y}) &= \int_{0}^{1} \int_\RR m'(z) \, m^{2n-1}(z) \, \prod_{j=1}^{2} p_t (y_j, z; m) \, \md z \, \md t\;,\\
    h_2 (\vec{y}) &= \int_{1}^{+\infty} \int_\RR m'(z) \, m^{2n-1}(z) \, \prod_{j=1}^{2} p_t (y_j, z; m) \, \md z \, \md t\;.
    \end{split}
\end{equation}
We also decompose $D^2 \zeta (v) - D^2 \zeta(m)$ into several parts as
\begin{equation*}
    D^2 \zeta (\vec{y}; v) - D^2 \zeta (\vec{y}; m) = \frac{(2n+1)}{\|m'\|_{\lL^2}^2} \bigg( 2n \sum_{k=1}^{3} g_k(\vec{y}) + g_4(\vec{y}) \bigg)\;,
\end{equation*}
where the functions $g_k$'s are given by
\begin{equation*}
    \begin{split}
    g_1 (\vec{y}) &= \int_{0}^{+\infty} \int_\RR m'(z) \, m^{2n-1}(z) \, \Big( \prod_{j=1}^{2} p_t (y_j, z; v) - \prod_{j=1}^{2} p_t (y_j, z; m) \Big) \, \md z \, \md t\;,\\
    g_2 (\vec{y}) &= \int_{0}^{1} \int_\RR m'(z) \, \Big( \big(F^t(v)\big)^{2n-1}(z) - m^{2n-1}(z) \Big) \, \prod_{j=1}^{2} p_t(y_j, z; v) \, \md z \, \md t\;,\\
    g_3 (\vec{y}) &= \int_{1}^{+\infty} \int_\RR m'(z) \, \Big( \big(F^t(v)\big)^{2n-1}(z) - m^{2n-1}(z) \Big) \, \prod_{j=1}^{2} p_t(y_j, z; v) \, \md z \, \md t\;,\\
    g_4 (\vec{y}) &= \int_{0}^{+\infty} \int_\RR m'(z) \, \Big( \big(F^t(v)\big)^{2n}(z) - m^{2n}(z) \Big) \, p_t^{(2)}(\vec{y}, z;v) \, \md z \, \md t\;.
    \end{split}
\end{equation*}
We have omitted the dependence of the $g_k$'s on $v$ for notational simplicity. We now start to control each of the $g_k$'s and $h_k$'s.

\begin{lem}\label{lem:regularity_g1}
There exists $\lambda >0$ such that
\begin{equ}
    \big|\sqrt{\fD_{y_1}}g_1\big|(y_1,y_2) \lesssim \bracket{y_1}^{-\frac{3}{2}}e^{-\lambda|y_2|}\dist(v,\mM)\;.
\end{equ}
We also have the bound
    \begin{equ}
        \big|\sqrt{\fD}^{\otimes 2}g_1\big|(y,y) \lesssim \bracket{y}^{-3}\dist(v,\mM)\;.
    \end{equ}
Both bounds are uniformly over $v\in\vV_{\beta,0}$ and $y_1,y_2,y\in\RR$.
\end{lem}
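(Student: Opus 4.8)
The plan is to expand the integrand defining $g_1$ by means of the decomposition $p_t(y,z;v) = -D\zeta(y;v)\,m'(z) + \widetilde p_t(y,z;v)$ from \eqref{e:decomposition1} (recall that, by translation invariance, we have reduced to $\zeta(v)=0$, so $m\in\vV_{\beta,0}$ and \eqref{e:decomposition1} applies to $v$ and to $m$), together with the elementary identity
\begin{equation*}
\prod_{j=1}^2 p_t(y_j,z;v) - \prod_{j=1}^2 p_t(y_j,z;m) = \big(p_t(y_1,z;v)-p_t(y_1,z;m)\big)\,p_t(y_2,z;v) + p_t(y_1,z;m)\,\big(p_t(y_2,z;v)-p_t(y_2,z;m)\big)\;.
\end{equation*}
Substituting the decomposition into each of the four factors and multiplying out produces eight terms, of which two are ``pure'' terms of the form $\big(D\zeta(y_1;v)-D\zeta(y_1;m)\big)D\zeta(y_2;v)\,m'(z)^2$ and $D\zeta(y_1;m)\big(D\zeta(y_2;v)-D\zeta(y_2;m)\big)\,m'(z)^2$, and the remaining six each contain at least one factor of the form $\widetilde p_t(\cdot\,;v)$, $\widetilde p_t(\cdot\,;m)$, or a difference $\widetilde p_t(\cdot\,;v)-\widetilde p_t(\cdot\,;m)$.

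\textbf{Disposing of the pure terms, and estimating the rest.} After multiplying by the weight $m'(z)m^{2n-1}(z)$ and integrating in $z$, each pure term acquires the factor $\int_{\RR} m'(z)^3\,m^{2n-1}(z)\,\md z$, which vanishes because $m$ is odd (so $m'^3$ is even and $m^{2n-1}$ is odd), the integrand being integrable by Lemma~\ref{lem:statationary_exponential_decay}. Since $\sqrt{\fD_{y_1}}$ (and, for the second bound, also $\sqrt{\fD_{y_2}}$) acts only in the $y$-variables and commutes with the $z$-integration, these two terms drop out entirely; this cancellation is precisely what makes the time integral defining $g_1$ convergent, since the remaining six terms, after integrating in $z$, decay exponentially in $t$. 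For those six terms I would estimate the integrand pointwise: on any differentiated $D\zeta$-factor use $|\sqrt{\fD}D\zeta(\cdot\,;v)|\lesssim\bracket{\cdot}^{-3/2}$ from \eqref{e:regularity_Dzeta_bound} (applied to $v$ and to $m$) and $|\sqrt{\fD}\big(D\zeta(\cdot\,;v)-D\zeta(\cdot\,;m)\big)|\lesssim\bracket{\cdot}^{-3/2}\dist(v,\mM)$ from \eqref{e:regularity_Dzeta_difference}; on any differentiated $\widetilde p$-factor use the bounds $|\sqrt{\fD_y}\widetilde p_t(y,z;v)|\lesssim t^{-3/4}e^{-ct}\bracket{y}^{-3/2}e^{\lambda|z|}$ and $|\sqrt{\fD_y}\big(\widetilde p_t(y,z;v)-\widetilde p_t(y,z;m)\big)|\lesssim e^{-ct}\bracket{y}^{-3/2}e^{\lambda|z|}\dist(v,\mM)$ from Lemma~\ref{lem:regularity_phatt1}; on undifferentiated factors use \eqref{e:Dzeta_bound}, \eqref{e:Dzeta_difference}, \eqref{e:decomposition1-2}, \eqref{e:difference1-2}; and absorb $|m'(z)m^{2n-1}(z)|\lesssim e^{-c|z|}$. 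Choosing $\lambda$ small enough that $2\lambda<c$, the $z$-integral converges to an $O(1)$ constant, and each $\widetilde p$-factor supplies exponential decay $e^{-ct}$ with at worst an integrable $t^{-3/4}$ singularity near $t=0$ — and since in each surviving term at least one $\widetilde p$-factor occurs as a difference (which carries no small-$t$ blow-up), the combined time singularity never exceeds $t^{-3/4}$, so every $t$-integral converges. Each differentiated factor in the $y_j$-variable leaves behind $\bracket{y_j}^{-3/2}$, while an undifferentiated $y_2$-factor leaves $e^{-\lambda|y_2|}$ (from \eqref{e:Dzeta_bound}, \eqref{e:Dzeta_difference}, \eqref{e:decomposition1-2}, \eqref{e:difference1-2}); the factor $\dist(v,\mM)$ is produced by the unique difference present in each term. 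This gives $\bracket{y_1}^{-3/2}e^{-\lambda|y_2|}\dist(v,\mM)$ for the single-derivative bound, and $\bracket{y_1}^{-3/2}\bracket{y_2}^{-3/2}\dist(v,\mM)$ for the cross-derivative, which on restricting to $y_1=y_2=y$ (as in \eqref{e:cross_fractional_derivative}) becomes $\bracket{y}^{-3}\dist(v,\mM)$.

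\textbf{Main difficulty.} The essential point is the cancellation of the two $t$-independent ``pure $D\zeta$'' contributions: without it the time integral diverges, and recognizing that it is killed by the oddness identity $\int_{\RR} m'(z)^3 m^{2n-1}(z)\,\md z=0$ after integrating in $z$ is the non-routine step; the rest is bookkeeping to make sure the single factor of $\dist(v,\mM)$ is visibly extracted in each surviving term and that no time singularity worse than $t^{-3/4}$ is produced. A secondary, routine point is the justification that $\sqrt{\fD_{y_1}}$ and $\sqrt{\fD_{y_2}}$ may be brought inside the $\md z\,\md t$ integrals, which follows from the integral representation of $\sqrt{\fD}$ used in the proof of Lemma~\ref{lem:sqrtD_decay} together with the absolute convergence furnished by the above bounds.
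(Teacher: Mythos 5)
Your argument is correct and essentially identical to the paper's: both expand through the decomposition \eqref{e:decomposition1}, kill the time-non-decaying pure $D\zeta$ contributions via the oddness of $(m')^3 m^{2n-1}$ after the $z$-integration, and bound the surviving terms with Lemmas~\ref{lem:pt} and~\ref{lem:regularity_phatt1} together with \eqref{e:Dzeta_difference} and \eqref{e:regularity_Dzeta_difference}. One sentence is inexact -- two of your six surviving terms contain a $D\zeta$-difference and no $\widetilde p$-difference -- but since every surviving term has at most one plain $\widetilde p$-factor (the other factor being a $t$-independent $D\zeta$-type term or a singularity-free $\widetilde p$-difference), your conclusion that the time singularity never exceeds $t^{-3/4}$ still stands.
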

\begin{proof}
By the decomposition \eqref{e:decomposition1}, we have
\begin{equation} \label{e:product_pt_expansion}
    \begin{split}
    \prod_{j=1}^{2} p_t(y_j, &z;v) = \, \big( m'(z) \big)^2 \, \prod_{j=1}^{2} D \zeta (y_j; v)\\
    &- m'(z) \, \sum_{i \neq j} D \zeta (y_i; v) \, \widetilde{p}_t(y_j, z; v) + \prod_{j=1}^{2} \widetilde{p}_t (y_j,z;v)\;.
    \end{split}
\end{equation}
Note that $g_1$ involves integration in time over the whole positive real line, but the first term on the right hand side above has no decay in time. However, since $m$ is odd, integrating $z$ variable out implies the contribution to $g_1(\vec{y})$ from the first term on the right hand side of \eqref{e:product_pt_expansion} (together with $m' \cdot m^{2n-1}$) is actually $0$. Hence, we have
\begin{equation*}
    \begin{split}
    g_1(\vec{y}) =  &\int_{0}^{+\infty} \int_\RR m'(z) \, m^{2n-1}(z) \, \bigg[ \Big( \prod_{j=1}^{2} \widetilde{p}_t(y_j,z;v) - \prod_{j=1}^{2} \widetilde{p}_t (y_j,z;m) \Big)\\
    &- m'(z) \, \sum_{i \neq j} \Big( D \zeta(y_i;v) \widetilde{p}_t (y_j,z;v) - D \zeta(y_i;m) \widetilde{p}_t (y_j,z;m) \Big) \bigg] \md z \,\md t\;.
    \end{split}
\end{equation*}
Applying $\sqrt{\fD_{y_1}}$ and $\sqrt{\fD}^{\otimes 2}$ to both sides above respectively (and taking $y_1 = y_2 = y$ in the second case), the desired bounds for $\sqrt{\fD_{y_1}} g_1$ and $(\sqrt{\fD}^{\otimes 2} g_1)(y,y)$ then follows from the relevant bounds in Lemmas~\ref{lem:pt} and~\ref{lem:regularity_phatt1}. 
\end{proof}

\begin{lem}\label{lem:regularity_g2}
There exists $\lambda >0$ such that
\begin{align*}
    \big|\sqrt{\fD_{y_1}}g_2\big|(y_1,y_2) &\lesssim \bracket{y_1}^{-\frac{3}{2}}e^{-\lambda|y_2|}\dist(v,\mM)\;,\\
    \big|\sqrt{\fD_{y_1}} h_1\big|(y_1,y_2) &\lesssim \bracket{y_1}^{-\frac{3}{2}}e^{-\lambda|y_2|}\;.  
\end{align*}
We also have the bounds
\begin{align*}
    \big|\sqrt{\fD}^{\otimes 2}\qQ_\delta^{\otimes 2} g_2\big|(y,y) &\lesssim |\log \delta|\bracket{y}^{-3}\dist(v,\mM)\;,\\
    \big|\sqrt{\fD}^{\otimes 2}\qQ_\delta^{\otimes 2} h_1\big|(y,y) &\lesssim |\log \delta|\bracket{y}^{-3}\;.
\end{align*}
All proportionality constants above are independent of $v\in\vV_{\beta,0}$, $\delta\in(0,\frac{1}{2}]$ and $y_1,y_2,y\in\RR$.
\end{lem}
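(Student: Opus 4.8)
The plan is to treat $g_2$ and $h_1$ uniformly, exploiting that both are integrals over the \emph{bounded} time interval $[0,1]$. Writing $w_t(z)$ for $m'(z)\big((F^t v)^{2n-1}(z)-m^{2n-1}(z)\big)$ in the case of $g_2$ and for $m'(z)m^{2n-1}(z)$ in the case of $h_1$, and writing $P_t$ for $p_t(\cdot,\cdot;v)$ resp.\ $p_t(\cdot,\cdot;m)$, both objects have the form $\int_0^1\int_\RR w_t(z)\,P_t(y_1,z)P_t(y_2,z)\,\md z\,\md t$. For $g_2$, Proposition~\ref{pr:LinftyEC} (recall $\zeta(v)=0$) together with boundedness of $F^t(v)$ and $m$ gives $|w_t(z)|\lesssim e^{-c|z|}\dist(v,\mM)$ uniformly in $t\in[0,1]$, while for $h_1$ one has $|w_t(z)|\lesssim e^{-c|z|}$ by Lemma~\ref{lem:statationary_exponential_decay}. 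Thus it suffices to prove the $h_1$ bounds assuming only $|w_t(z)|\lesssim e^{-c|z|}$ and then to insert the extra factor $\dist(v,\mM)$ for $g_2$.

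Next I would insert the decomposition $P_t=q_t+r_t$ from Lemma~\ref{lem:regularity_rt} (valid for $t\in(0,1]$), with $q_t(y,z)=q_t(y-z)$ the heat kernel and $r_t$ \emph{bounded}: $|r_t(y,z)|\lesssim e^{-\lambda|y|+\lambda|z|}$ and $|\sqrt{\fD_y}r_t(y,z)|\lesssim\bracket{y-z}^{-3/2}$, uniformly in $t\in(0,1]$ with no time singularity. Applying $\sqrt{\fD_{y_1}}$ (for the first two bounds) resp.\ $\sqrt{\fD}^{\otimes2}\qQ_\delta^{\otimes2}$ and restricting to $y_1=y_2=y$ (for the last two) to the product $P_t(y_1,z)P_t(y_2,z)$ expands it into four pieces according to whether each factor supplies its $q$-part or its $r$-part. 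The three pieces containing at least one $r$-factor are controlled by the $r_t$-bounds of Lemma~\ref{lem:regularity_rt} together with the elementary heat-kernel estimates $|\sqrt{\fD_y}q_t(y-z)|\lesssim t^{-3/4}\bracket{(y-z)/\sqrt t}^{-3/2}$ and $|q_t(y-z)|\lesssim t^{-1/2}e^{-|y-z|^2/4t}$ (and $\sqrt{\fD}\qQ_\delta q_t=\sqrt{\fD}q_{t+\delta^2}$ in the mollified case): in each such piece at most one factor of $t^{-3/4}$ (or $t^{-1/2}$, or $(t+\delta^2)^{-3/4}$) survives, which is integrable over $[0,1]$ uniformly in $\delta$; the $z$-integral against $e^{-c|z|}$ converges once $\lambda$ is chosen small; and the claimed spatial decay ($\bracket{y_1}^{-3/2}e^{-\lambda|y_2|}$ resp.\ $\bracket{y}^{-3}$) follows from Lemma~\ref{lem:techninal_1} using that $e^{-c|z|}$ localizes $z$ near the origin. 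None of these pieces produces a logarithm.

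The remaining piece is the pure heat-kernel contribution, and this is where the small-time behaviour is delicate. For the non-mollified bounds it is $\int_0^1\int_\RR w_t(z)\,\sqrt{\fD_{y_1}}q_t(y_1-z)\,q_t(y_2-z)\,\md z\,\md t$: pointwise power counting gives the \emph{non-integrable} rate $t^{-5/4}$, and the resolution is to integrate the undifferentiated factor $q_t(y_2-z)$ in $z$ first, which against the bounded weight $w_t$ yields a quantity bounded uniformly in $t$, so that only the integrable $t^{-3/4}$ from $\sqrt{\fD_{y_1}}q_t$ is left; the decay $\bracket{y_1}^{-3/2}$ then comes from $|\sqrt{\fD_{y_1}}q_t(y_1-z)|\lesssim\bracket{y_1-z}^{-3/2}$, valid once $|y_1-z|\gtrsim1$ (hence for $|y_1|$ large since $z$ is localized, and otherwise from the crude bound $t^{-3/4}$ together with $\bracket{y_1}\lesssim1$), and the decay $e^{-\lambda|y_2|}$ from the Gaussian factor via $\int_0^1 t^{-1/2}e^{-a/t}\,\md t\lesssim e^{-a}$ with $a=|y_2-z|^2/4$. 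For the mollified bounds the pure heat-kernel piece is $\int_0^1\int_\RR w_t(z)\big(\sqrt{\fD_y}q_{t+\delta^2}(y-z)\big)^2\,\md z\,\md t$, which is precisely the object estimated in Lemma~\ref{lem:technical_2} (up to the harmless weight $w_t$), giving $|\log\delta|\,\bracket{y}^{-3}$; this is the sole source of the logarithm. Assembling all pieces yields the four asserted bounds.

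The main obstacle, as indicated, is the $t\to0$ behaviour of this last piece: naively the integrand diverges like $t^{-5/4}$ (non-mollified) or $(t+\delta^2)^{-3/2}$ (mollified), and neither is tamed by a brute-force pointwise estimate — one must use either the probability-density normalization of the undifferentiated heat kernel or the sharp $L^2_z$-type estimate of Lemma~\ref{lem:technical_2}. It is also essential that the time integration here runs only over $[0,1]$: that is what renders the surviving single factor $t^{-3/4}$ harmless, in contrast with $g_1$ and $g_3$ (integrated over $[0,\infty)$), where the corresponding terms instead have to be controlled through the better small-time behaviour of \emph{differences} of kernels.
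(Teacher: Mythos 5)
Your proof is correct and takes essentially the same route as the paper: the logarithmic bounds come from splitting $\sqrt{\fD_y}\qQ_{\delta,y}p_t = \sqrt{\fD_y}q_{t+\delta^2} + \qQ_{\delta,y}\sqrt{\fD_y}r_t$ and applying Lemma~\ref{lem:technical_2} to the squared heat-kernel part and Lemma~\ref{lem:regularity_rt} to the remainder, with the weight $m'\cdot\big((F^tv)^{2n-1}-m^{2n-1}\big)$ supplying the factor $\dist(v,\mM)$ exactly as in the paper. The only difference is cosmetic: for the two non-mollified bounds the paper simply cites Lemma~\ref{lem:regularity_pt} for $\sqrt{\fD_{y_1}}p_t$ together with the $\lL^1_{-\lambda}$ bound \eqref{e:bound1-2} on the undifferentiated kernel (which encapsulates your ``integrate $q_t(y_2-z)$ in $z$ first'' step), whereas you re-derive these via the $q_t+r_t$ split — a valid, equivalent unpacking of the same estimates.
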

\begin{proof}
Using exponential decay of $m'$ and that
\begin{equation}\label{e:Ftv_closeness}
    \big\|\big(F^t(v)\big)^{2n-1} - m^{2n-1}\big\|_{\lL^\infty} \lesssim \dist (v, \mM)\;,
\end{equation}
the two bounds for $\sqrt{\fD_{y_1}} g_2$ and $\sqrt{\fD_{y_1}} h_1$ then follow directly from \eqref{e:bound1-2} and Lemma~\ref{lem:regularity_pt}.

The proof for the other two bounds are similar to the argument in Lemma~\ref{lem:regularity_pt2}. We give sketch for the one with $g_2$. Using exponential decay of $m'$ and \eqref{e:Ftv_closeness} again, we get
\begin{equation*}
    |(\sqrt{\fD}^{\otimes 2} \qQ_\delta^{\otimes 2} g_2)(y,y)| \lesssim \dist (v, \mM) \int_{0}^{1} \int_{\RR} e^{-\lambda |z|} \big| \sqrt{\dD_y} \qQ_{\delta,y} p_t(y,z;v) \big|^2\, \md z \,\md t\;.
\end{equation*}
Since
\begin{equation*}
    \sqrt{\fD_y} \qQ_{\delta,y} p_t(y,z;v) = \sqrt{\fD_y} q_{t+\delta^2} (y,z) + \qQ_{\delta,y} \sqrt{\fD_y} r_t(y,z;v)\;, 
\end{equation*}
it follows from Lemmas~\ref{lem:regularity_rt} and~\ref{lem:technical_2} that
\begin{equation*}
    \begin{split}
    |(\sqrt{\fD}^{\otimes 2} \qQ_\delta^{\otimes 2} g_2)(y,y)| &\lesssim \dist (v, \mM) \int_{0}^{1} \int_\RR e^{-\lambda |z|} \, \Big( \big| \sqrt{\fD_y} q_{s+\delta^2}(y,z) \big|^2 + \bracket{y-z}^{-3} \Big) \,\md z\, \md t\\
    &\lesssim |\log \delta| \, \bracket{y}^{-3} \dist (v, \mM)\;.
    \end{split}
\end{equation*}
The proof for the bound for $\sqrt{\fD}^{\otimes 2}\qQ_\delta^{\otimes 2} h_1$ is essentially the same except that one does not have the $\dist (v, \mM)$ factor. This completes the proof of the lemma. 
\end{proof}

\begin{lem}\label{lem:regularity_g3}
There exists $\lambda >0$ such that
\begin{equs}
    \big|\sqrt{\fD_{y_1}}g_3\big|(y_1,y_2) &\lesssim \bracket{y_1}^{-\frac{3}{2}}e^{-\lambda|y_2|}\dist(v,\mM)\;,\\
    \big|\sqrt{\fD_{y_1}} h_2\big|(y_1,y_2) &\lesssim\bracket{y_1}^{-\frac{3}{2}}e^{-\lambda|y_2|}\;.\label{e:regularity_h2_half}
\end{equs}
We also have the bounds
\begin{equs}
    \big|\sqrt{\fD}^{\otimes 2}g_3\big|(y,y) &\lesssim \bracket{y}^{-3}\dist(v,\mM)\;,\\
    \big|\sqrt{\fD}^{\otimes 2} h_2\big|(y,y) &\lesssim\bracket{y}^{-3}\;.\label{e:regularity_h2}
\end{equs}
All proportionality constants above are independent of $v\in\vV_{\beta,0}$ and $y_1,y_2,y\in\RR$.
\end{lem}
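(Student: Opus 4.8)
The plan is to prove the four bounds in parallel, treating $g_3$ and $h_2$ by two slightly different mechanisms. Since both time integrals run over $[1,+\infty)$, there is no issue at $t=0$: the only thing to arrange is convergence at $t=+\infty$, together with the right spatial decay. Throughout we work under the standing assumption $\zeta(v)=0$ (so $m_{\zeta(v)}=m$), which is in force in this subsection by the translation property of $D^2\zeta$.

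For $g_3$ the weight already decays in time. Indeed, since $\zeta(v)=0$, Proposition~\ref{pr:LinftyEC} gives $\|F^t(v)-m\|_{\lL^\infty}\lesssim e^{-ct}\dist(v,\mM)$, hence $\|(F^t v)^{2n-1}-m^{2n-1}\|_{\lL^\infty}\lesssim e^{-ct}\dist(v,\mM)$ for $t\geq 1$. I would therefore apply $\sqrt{\fD_{y_1}}$ (respectively $\sqrt{\fD}^{\otimes 2}$ followed by the diagonal restriction $y_1=y_2=y$) directly to $\prod_{j}p_t(y_j,z;v)$, estimate $|\sqrt{\fD_{y_1}}p_t(y_1,z;v)|$ and $|\sqrt{\fD}\,p_t(\cdot,z;v)(y)|^2$ by Lemma~\ref{lem:regularity_pt}, estimate the remaining factor $|p_t(y_2,z;v)|$ by \eqref{e:bound1-1}, absorb the resulting $e^{\lambda|z|}$ growth against the exponential decay of $m'$ (taking $\lambda$ small), and then integrate out $z$ and $t$. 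The time integral $\int_1^\infty e^{-ct}(1+t^{-3/4})(1+t^{-1/2})\,\md t$ (respectively with $(1+t^{-3/4})^2$) is finite, which yields the $\dist(v,\mM)$ prefactor together with the decay $\bracket{y_1}^{-3/2}e^{-\lambda|y_2|}$ (respectively $\bracket{y}^{-3}$). Note that no $|\log\delta|$ appears, since there is no mollification here and $t$ stays bounded away from $0$.

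For $h_2$ the weight $m'(z)m^{2n-1}(z)$ does \emph{not} decay in $t$, so I would instead insert the decomposition \eqref{e:decomposition1} and expand $\prod_j p_t(y_j,z;m)$ exactly as in \eqref{e:product_pt_expansion}. The leading term $(m'(z))^2\,D\zeta(y_1;m)D\zeta(y_2;m)$ factorizes in the $y$-variables, so applying $\sqrt{\fD_{y_1}}$ (respectively $\sqrt{\fD}^{\otimes2}$ and the diagonal restriction) leaves the $z$-dependence untouched; after multiplying by the weight the $z$-integrand is proportional to $(m'(z))^3 m^{2n-1}(z)$, an odd function of $z$, hence its $z$-integral vanishes. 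This removes the only time-nondecaying contribution. Each of the remaining terms in \eqref{e:product_pt_expansion} carries at least one factor $\widetilde p_t$, which by \eqref{e:decomposition1-2} (and, after a half-derivative, by Lemma~\ref{lem:regularity_phatt1}) decays like $t^{-1/2}e^{-ct}$ (respectively $t^{-3/4}e^{-ct}$); combining with the bounds \eqref{e:Dzeta_bound} and \eqref{e:regularity_Dzeta_bound} for $D\zeta$ and $\sqrt{\fD}D\zeta$, the $z$-integral converges (exponential decay of $m'$ versus $e^{\lambda|z|}$, $\lambda$ small) and the $t$-integral over $[1,\infty)$ is finite, giving the claimed $\bracket{y_1}^{-3/2}e^{-\lambda|y_2|}$ and $\bracket{y}^{-3}$ bounds.

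The $g_3$ estimates, and the $\widetilde p_t$-type summands in the $h_2$ estimate, are routine and parallel to Lemmas~\ref{lem:regularity_g1} and~\ref{lem:regularity_g2}. The one genuinely new point, and the step I expect to be the crux, is the oddness cancellation of the leading term of $h_2$: one must verify that $\sqrt{\fD_{y_1}}$ (respectively $\sqrt{\fD}^{\otimes 2}$ with the diagonal restriction) commutes with the $z$-integration of this factorized term, so that the vanishing of $\int_\RR (m'(z))^3 m^{2n-1}(z)\,\md z$ can be invoked to kill the otherwise divergent contribution at $t=+\infty$. Everything else is bookkeeping combining Lemmas~\ref{lem:pt}, \ref{lem:regularity_pt} and~\ref{lem:regularity_phatt1} with the exponential decay of $m'$.
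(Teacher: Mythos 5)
Your proposal is correct and follows essentially the same route as the paper: for $g_3$ you extract the time decay from $\|(F^t v)^{2n-1}-m^{2n-1}\|_{\lL^\infty}\lesssim e^{-ct}\dist(v,\mM)$ (Proposition~\ref{pr:LinftyEC}) and then combine Lemmas~\ref{lem:pt} and~\ref{lem:regularity_pt} with the exponential decay of $m'$, while for $h_2$ you insert the expansion \eqref{e:product_pt_expansion} and kill the time-nondecaying term via $\bracket{m^{2n-1},(m')^3}=0$, bounding the remaining $\widetilde p_t$-terms by Lemma~\ref{lem:regularity_phatt1} — exactly the paper's argument. The commutation point you flag is harmless, since after the oddness cancellation the offending term vanishes identically in $(y_1,y_2)$ for each $t$ before any fractional derivative is applied.
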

\begin{proof}
The difference with the previous lemma is that the time integration is in $[1,+\infty)$ instead of $[0,1]$, and hence one needs to get decay in time from the integrands. 

For $g_3$, the decay in time is provided by
\begin{equation*}
    \big\| \big( F^t(v) \big)^{2n-1} - m^{2n-1} \big\|_{\lL^\infty} \lesssim \big\| F^t(v) - m \big\|_{\lL^\infty} \lesssim e^{-ct} \dist (v,\mM)\;,
\end{equation*}
where we used Proposition~\ref{pr:LinftyEC}. Hence, the two bounds for $\sqrt{\fD_{y_1}} g_3$ and $\sqrt{\fD}^{\otimes 2} g_3$ follow directly from the relevant bounds for $p_t(y,z;v)$ and $\sqrt{D_y} p_t(y,z;v)$ in Lemmas~\ref{lem:pt} and~\ref{lem:regularity_pt} and the exponential decay of $m'$. 

We now turn to the two bounds concerning $h_2$. Similar as $g_1$, the part of the integrand which does not decay in time integrates to $0$ in $z$ variable. More precisely, with the expression \eqref{e:product_pt_expansion} and that $\bracket{m^{2n-1}, \, (m')^3} = 0$, we have
\begin{equation*}
    \begin{split}
    h_2(\vec{y}) = \int_{1}^{+\infty} \int_\RR m'(z) \, &m^{2n-1}(z) \, \bigg( \prod_{j=1}^{2} \widetilde{p}_t(y_j, z;m)\\
    &- m'(z) \, \sum_{i \neq j} D \zeta (y_i; m) \, \widetilde{p}_t(y_j, z;m) \bigg) \, \md z \, \md t\;.
    \end{split}
\end{equation*}
The desired bounds then follow from Lemma~\ref{lem:regularity_phatt1}. 
\end{proof}

As a result, one has the following lemma.
\begin{lem}\label{lem:regularity_decay_h2y}
We have
\begin{equ}\label{e:regularity_decay_h2y}
    \Big|\sqrt{\fD}^{\otimes 2}\big(h_2\times(y_1+y_2)\big)\Big|(y,y)\lesssim  \bracket{y}^{-2}\;.
\end{equ}
As a result, the integration of the left hand side above over $y \in \RR$ is well defined, and there exists $\nu>0$ such that
\begin{equ}
    \int_\RR \sqrt{\fD}^{\otimes 2}\big(\qQ_\delta^{\otimes 2}h_2\times(y_1+y_2)\big)(y,y)\,\md y  = \int_\RR \sqrt{\fD}^{\otimes 2}\big(h_2\times(y_1+y_2)\big)(y,y)\,\md y  +\oO(\delta^\nu)\;.
\end{equ}
\end{lem}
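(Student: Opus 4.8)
The plan is to establish first the pointwise bound \eqref{e:regularity_decay_h2y}, which renders the diagonal restriction integrable, and then to obtain the mollification identity by splitting the $y$-integral into $\{|y|\le L\}$ and $\{|y|>L\}$ and optimizing over $L$. For \eqref{e:regularity_decay_h2y}: split $h_2\cdot(y_1+y_2)=h_2\,y_1+h_2\,y_2$, and by the symmetry $h_2(y_1,y_2)=h_2(y_2,y_1)$ it suffices to treat $h_2\,y_1$. Since $y_1$ is a parameter for $\sqrt{\fD_{y_2}}$, we have $\sqrt{\fD_{y_2}}(h_2\,y_1)=y_1\,\sqrt{\fD_{y_2}}h_2$, and then the Leibniz-type identity for $\sqrt{\fD}$ (Lemma~\ref{lem:leibniz_1}), which for a linear factor reads $\sqrt{\fD}\big(x\,\varphi(x)\big)=x\,(\sqrt{\fD}\varphi)(x)+c\int_\RR|z|^{-1/2}\sgn(z)\,\varphi(x-z)\,\md z$ (a direct consequence of the Fourier definition \eqref{e:half_derivative}), gives
\begin{equation*}
\sqrt{\fD}^{\otimes2}(h_2\,y_1)(y_1,y_2)=y_1\,(\sqrt{\fD}^{\otimes2}h_2)(y_1,y_2)+c\int_\RR\frac{\sgn(z)}{|z|^{1/2}}\,(\sqrt{\fD_{y_2}}h_2)(y_1-z,y_2)\,\md z\;.
\end{equation*}
On the diagonal $y_1=y_2=y$, \eqref{e:regularity_h2} bounds the first term by $|y|\bracket{y}^{-3}\lesssim\bracket{y}^{-2}$; and by symmetry together with \eqref{e:regularity_h2_half} we have $|(\sqrt{\fD_{y_2}}h_2)(y-z,y)|\lesssim\bracket{y}^{-3/2}e^{-\lambda|y-z|}$, so Lemma~\ref{lem:techninal_1} (with exponent $\tfrac12\in(0,1)$) bounds the convolution term by $\bracket{y}^{-3/2}\bracket{y}^{-1/2}=\bracket{y}^{-2}$. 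This proves \eqref{e:regularity_decay_h2y}, and in particular integrability of the diagonal restriction over $\RR$.

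For the mollification identity: the displayed identity holds verbatim with $h_2$ replaced by $\qQ_\delta^{\otimes2}h_2$, since $\qQ_\delta$ commutes with $\sqrt{\fD}$ and, for $\delta\le\tfrac12$, maps $\bracket{\cdot}^{-\alpha}$ and $e^{-\lambda|\cdot|}$ into constant multiples of themselves; hence $|\sqrt{\fD}^{\otimes2}(\qQ_\delta^{\otimes2}h_2\times(y_1+y_2))(y,y)|\lesssim\bracket{y}^{-2}$ uniformly in $\delta$, both integrals in the statement are well defined, and $\{|y|>L\}$ contributes $\oO(L^{-1})$ to each. Commuting $\qQ_\delta^{\otimes2}$ through $\sqrt{\fD}^{\otimes2}$ and through the singular convolution, the difference of the two integrands equals, up to constants,
\begin{equation*}
y\,\big[(\qQ_\delta^{\otimes2}-\id)\sqrt{\fD}^{\otimes2}h_2\big](y,y)+\int_\RR\frac{\sgn(z)}{|z|^{1/2}}\,\big[(\qQ_\delta^{\otimes2}-\id)\sqrt{\fD_{y_2}}h_2\big](y-z,y)\,\md z\;.
\end{equation*}
Because the time integral defining $h_2$ is over $[1,\infty)$, both $\sqrt{\fD}^{\otimes2}h_2$ and $\sqrt{\fD_{y_2}}h_2$ are smooth with bounded derivatives (differentiate the representation of $h_2$ under the $t$- and $z$-integrals, using Lemma~\ref{lem:regularity_phatt1} and integrability in $t$ and $z$), so $\|(\qQ_\delta^{\otimes2}-\id)\Phi\|_{\lL^\infty}\lesssim\delta$ for $\Phi$ either of them. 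The same representation and Lemma~\ref{lem:regularity_phatt1} also give the uniform-in-$\delta$ decay $|(\qQ_\delta^{\otimes2}-\id)\sqrt{\fD}^{\otimes2}h_2(y,y)|\lesssim\bracket{y}^{-3}$ and $|(\qQ_\delta^{\otimes2}-\id)\sqrt{\fD_{y_2}}h_2(a,b)|\lesssim e^{-\lambda'|a|}\bracket{b}^{-3/2}$. Interpolating the $\delta$-gain with these decay bounds, the difference integrand is bounded on $\{|y|\le L\}$ by $C\delta^{1/2}\big(|y|\bracket{y}^{-3/2}+\bracket{y}^{-5/4}\big)$ (the second term after applying Lemma~\ref{lem:techninal_1} to the $z$-integral), whose $y$-integral over $\{|y|\le L\}$ is $\lesssim\delta^{1/2}L^{1/2}$. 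Hence the difference of the two integrals is $\lesssim L^{-1}+\delta^{1/2}L^{1/2}$, and choosing $L=\delta^{-1/3}$ yields $\oO(\delta^{1/6})$, so $\nu=\tfrac16$ works.

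The only delicate step is this last one: one must commute $\qQ_\delta^{\otimes2}$ past $\sqrt{\fD}^{\otimes2}$ and past the singular convolution coming from the Leibniz rule, and then combine a uniform mollification gain in $\delta$ with the spatial decay of $\sqrt{\fD}^{\otimes2}h_2$ and $\sqrt{\fD_{y_2}}h_2$ so as to obtain a bound that is \emph{integrable} in $y$ up to only a small power of the cut-off $L$; this integrability is what permits $L$ to be balanced against the $\delta$-gain. That the objects in play are smooth with bounded derivatives — needed both for the $\delta$-gain and for the $z$-integration — is where the restriction to the regime $t\ge1$ in the definition of $h_2$ is essential; the remaining estimates are routine combinations of Lemmas~\ref{lem:regularity_phatt1}, \ref{lem:pt} and~\ref{lem:techninal_1}.
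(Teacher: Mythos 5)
Your proof of the pointwise bound \eqref{e:regularity_decay_h2y} is correct and is essentially the paper's: the paper invokes Lemma~\ref{lem:leibniz_4}, whose proof (via Lemma~\ref{lem:leibniz_3} with $\psi=y_1+y_2$) produces exactly your decomposition into the diagonal term $2y\,\big(\sqrt{\fD}^{\otimes2}h_2\big)(y,y)$ plus two singular convolutions against $\sqrt{\fD_{y_i}}h_2$ evaluated off the diagonal, estimated with the same inputs \eqref{e:regularity_h2_half}, \eqref{e:regularity_h2}, the symmetry of $h_2$, and Lemma~\ref{lem:techninal_1}. Your one-variable Leibniz identity is the $\psi(x)=x$ case of Lemma~\ref{lem:leibniz_2}, so this part is a faithful re-derivation.

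For the $\oO(\delta^\nu)$ statement your scheme (uniform $\bracket{y}^{-2}$ bound for the mollified integrand, tail cut at $L$, interpolation of a mollification gain against spatial decay, optimization in $L$) is sound and in fact more explicit than the paper, which only asserts the estimate follows from \eqref{e:regularity_decay_h2y}. The one step that is not justified as written is the claim that $\sqrt{\fD}^{\otimes2}h_2$ and $\sqrt{\fD_{y_2}}h_2$ are ``smooth with bounded derivatives, by differentiating under the integrals using Lemma~\ref{lem:regularity_phatt1}'': that lemma bounds $\sqrt{\fD_y}\widetilde{p}_{t}$, not $\partial_y \widetilde{p}_{t}$ or $\partial_y\sqrt{\fD_y}\widetilde{p}_{t}$, so it does not deliver the Lipschitz bound needed for $\|(\qQ_\delta^{\otimes2}-\id)\Phi\|_{\lL^\infty}\lesssim\delta$. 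Moreover, to differentiate under the $t$-integral with bounds uniform in $(y_1,y_2)$ one must first remove the non-decaying part of $p_t$, writing $p_t(y,z;m)=-D\zeta(y;m)m'(z)+\widetilde{p}_{t}(y,z;m)$ and using $\int m^{2n-1}(m')^3\,\md z=0$ (as in the derivation of \eqref{e:regularity_h2}); bounding the differentiated integrand termwise with $p_t$ gives no decay in $t$ on $[1,+\infty)$. Both points are repairable: since $h_2$ involves only $t\ge1$, the kernel of $e^{-t\aA}-\pP$ (which is $\widetilde{p}_{t}(\cdot,\cdot;m)$) inherits derivative bounds with exponential decay in $t$ from the factorization $e^{-t\aA}-\pP=e^{-\frac12\aA}\big(e^{-(t-1)\aA}-\pP\big)e^{-\frac12\aA}$, Proposition~\ref{prop:LinftySG}, and smoothness of the kernel of $e^{-\frac12\aA}$; with that input your interpolation and cut-off argument goes through. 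So this is a gap in justification rather than a wrong route. (Minor arithmetic: with $L=\delta^{-1/3}$ your error is $\oO(\delta^{1/3})$, not $\delta^{1/6}$; either way some $\nu>0$ suffices.)
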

\begin{proof}
    By \eqref{e:regularity_h2_half}, \eqref{e:regularity_h2} and that $h_2$ is symmetric in its two variables, it follows that $h_2$ satisfies the assumption of Lemma~\ref{lem:leibniz_4}. Hence the bound \eqref{e:regularity_decay_h2y} holds. The approximation error estimate $\oO(\delta^\nu)$ is a direct consequence of \eqref{e:regularity_decay_h2y}. 
\end{proof}

\begin{lem}\label{lem:regularity_g4}
There exists $\lambda >0 $ such that
\begin{equation*}
    \big|\sqrt{\fD_{y_1}} g_4\big|(y_1,y_2) \lesssim \bracket{y_1}^{-\frac{3}{2}}e^{-\lambda|y_2|}\dist(v,\mM)\;.
\end{equation*}
We also have the bound
\begin{equ}
    \big|\sqrt{\fD}^{\otimes 2}\qQ_\delta^{\otimes 2} g_4\big|(y,y) \lesssim |\log \delta| \, \bracket{y}^{-3} \dist(v,\mM)\;.
\end{equ}
Both bounds are uniformly over $v\in\vV_{\beta,0}$, $\delta\in(0,\frac{1}{2}]$ and $y_1,y_2,y\in\RR$.
\end{lem}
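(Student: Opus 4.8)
The plan is to bound $g_4$ by the same strategy already used for $g_1$, $g_2$ and $g_3$: isolate the source of decay in time, estimate the kernels $p_t$ and $p_t^{(2)}$ using the lemmas in Section~\ref{sec:deterministic_kernel}, and integrate out the $z$ variable against $m'$ using its exponential decay. The key difference from the earlier $g_k$'s is that the relevant kernel is the second-order kernel $p_t^{(2)}(\vec{y},z;v)$ rather than a product of two first-order kernels $p_t$. Fortunately, Lemma~\ref{lem:regularity_pt2} already provides exactly the bounds on $\sqrt{\fD_{y_1}} p_t^{(2)}$ and on $\sqrt{\fD}^{\otimes 2}\qQ_\delta^{\otimes 2}p_t^{(2)}(\cdot,\cdot,z;v)(y,y)$ that we need, with the mild price of a polynomial prefactor $(1+t)$ (resp. $|\log\delta|+t$) which will be killed by the exponential-in-$t$ decay of the coefficient.

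First I would write down the factor that supplies the time decay. Since $\big(F^t(v)\big)^{2n}(z) - m^{2n}(z)$ appears inside the $z$-integral of $g_4$, I use
\begin{equation*}
    \big\| \big( F^t(v) \big)^{2n} - m^{2n} \big\|_{\lL^\infty} \lesssim \big\| F^t(v) - m \big\|_{\lL^\infty} \lesssim e^{-ct} \dist(v,\mM)
\end{equation*}
from Proposition~\ref{pr:LinftyEC} together with boundedness of $F^t(v)$ and $m$. Then, applying $\sqrt{\fD_{y_1}}$ to the expression for $g_4$ (this is legitimate since the differentiation is only in $y_1$ and passes through the $t$ and $z$ integrals by the pointwise decay estimates), I get
\begin{equation*}
    \big| \sqrt{\fD_{y_1}} g_4 \big|(y_1,y_2) \lesssim \dist(v,\mM) \int_0^{+\infty} e^{-ct} \int_\RR e^{-\lambda_0 |z|} \big| \sqrt{\fD_{y_1}} p_t^{(2)}(y_1, y_2, z; v) \big| \, \md z \, \md t
\end{equation*}
for a suitable $\lambda_0>0$ coming from the decay of $m'$. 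Bounding the inner $z$-integral by the first estimate of Lemma~\ref{lem:regularity_pt2} (with $\lambda$ there chosen $\leq \lambda_0$, so that the weighted $\lL^1_{-\lambda}$ norm dominates the integral against $e^{-\lambda_0|z|}$) yields $(1+t)\bracket{y_1}^{-3/2}e^{-\frac{\lambda}{32}|y_2|}$, and then $\int_0^{+\infty} (1+t) e^{-ct}\,\md t < \infty$ gives the claimed bound on $\sqrt{\fD_{y_1}} g_4$.

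For the second bound I proceed identically, but now apply $\sqrt{\fD}^{\otimes 2}\qQ_\delta^{\otimes 2}$ in the variables $(y_1,y_2)$, restrict to $y_1=y_2=y$, and use the second estimate of Lemma~\ref{lem:regularity_pt2}, which controls $\big\|\big(\sqrt{\fD}^{\otimes 2}\qQ_\delta^{\otimes 2}p_t^{(2)}(\cdot,\cdot,z;v)\big)(y,y)\big\|_{\lL^1_{-\lambda}(z)}$ by $(|\log\delta|+t)\bracket{y}^{-3}$. Combined with the $e^{-ct}\dist(v,\mM)$ factor and $\int_0^{+\infty}(|\log\delta|+t)e^{-ct}\,\md t \lesssim |\log\delta|$, this gives $\big|\sqrt{\fD}^{\otimes 2}\qQ_\delta^{\otimes 2} g_4\big|(y,y) \lesssim |\log\delta|\,\bracket{y}^{-3}\dist(v,\mM)$, uniformly in $v\in\vV_{\beta,0}$, $\delta\in(0,\tfrac12]$ and $y\in\RR$. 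The main (and only mildly delicate) point to get right is the interchange of $\sqrt{\fD}$ with the $t$- and $z$-integrals and with $\qQ_\delta^{\otimes2}$: this is justified because all the bounds of Lemma~\ref{lem:regularity_pt2} are integrable in $z$ and in $t$ against the exponential weight, so dominated convergence applies; the mollifier $\qQ_\delta$ commutes with $\sqrt{\fD}$ as a Fourier multiplier and does not worsen any of the estimates, exactly as noted at the end of the proof of Lemma~\ref{lem:regularity_pt2}.
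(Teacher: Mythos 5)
Your proof is correct and follows essentially the same route as the paper: the paper's (one-line) argument likewise combines the kernel bounds of Lemma~\ref{lem:regularity_pt2}, the exponential decay of $m'$, and the exponential convergence $\|F^t(v)-m\|_{\lL^\infty}\lesssim e^{-ct}\dist(v,\mM)$ from Proposition~\ref{pr:LinftyEC}, with the $(1+t)$ resp. $(|\log\delta|+t)$ prefactors absorbed by the $e^{-ct}$ time decay exactly as you do.
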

\begin{proof}
    The claim follows directly from Lemma~\ref{lem:regularity_pt2}, the exponential decay of $m'$ and the exponential convergence of the deterministic flow in Proposition~\ref{pr:LinftyEC}.
\end{proof}

Combining the above (Lemmas~\ref{lem:regularity_g1}, ~\ref{lem:regularity_g2}, ~\ref{lem:regularity_g3} and~\ref{lem:regularity_g4}) together, one can get Lemmas~\ref{lem:regularity_D2zeta_half}, ~\ref{lem:D2zeta_continuity} and~\ref{lem:D2zeta_decay}. In preparation of the proof of Lemma~\ref{lem:D2zeta_divergence_rate}, we also need the following lemma.

\begin{lem}\label{lem:regularity_h1tildey}
Define $\tilde{h}_1$ by
\begin{equation*}
    \tilde{h}_1(y_1,y_2) := \int_0^1\int_\RR m'(z) \, m^{2n-1}(z) \Big( \prod_{j=1}^{2}p_t(y_j,z;m) - \prod_{j=1}^{2}q_t(y_j,z) \Big) \,\md z\,\md t\;.
\end{equation*}
We have the bound
\begin{equ} \label{e:regularity_decay_h1tildey}
    \Big|\sqrt{\fD}^{\otimes 2}\big(\tilde{h}_1\times(y_1+y_2)\big)\Big|(y,y)\lesssim  \bracket{y}^{-2}\;.
\end{equ}
As a result, there exists $\nu>0$ such that
\begin{equ}
    \int_\RR \sqrt{\fD}^{\otimes 2}\Big(\qQ_\delta^{\otimes2}\tilde{h}_1\times (y_1+y_2)\Big)(y,y)\,\md y = \int_\RR\sqrt{\fD}^{\otimes 2}\Big(\tilde{h}_1\times (y_1+y_2)\Big)(y,y)\,\md y +\oO(\delta^\nu)\;.
\end{equ}
\end{lem}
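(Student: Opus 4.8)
The plan is to reduce the pointwise bound \eqref{e:regularity_decay_h1tildey} to Lemma~\ref{lem:leibniz_4}, in exactly the way $h_2$ was handled in Lemma~\ref{lem:regularity_decay_h2y}. Concretely, I would check that $\tilde{h}_1$ is symmetric in $(y_1,y_2)$ — which is immediate from its definition — and that it obeys the two bounds $|\sqrt{\fD_{y_1}}\tilde{h}_1|(y_1,y_2)\lesssim\bracket{y_1}^{-\frac{3}{2}}e^{-\lambda|y_2|}$ and $|\sqrt{\fD}^{\otimes 2}\tilde{h}_1|(y,y)\lesssim\bracket{y}^{-3}$ for some $\lambda>0$; then Lemma~\ref{lem:leibniz_4} immediately delivers \eqref{e:regularity_decay_h1tildey}. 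The common starting point for both bounds is the algebraic identity $\prod_{j=1}^{2}p_t(y_j,z;m)-\prod_{j=1}^{2}q_t(y_j,z)=q_t(y_1,z)\,r_t(y_2,z;m)+r_t(y_1,z;m)\,q_t(y_2,z)+\prod_{j=1}^{2}r_t(y_j,z;m)$ with $r_t=p_t-q_t$, which breaks $\tilde{h}_1$ into three pieces, each of which is manifestly better behaved near $t=0$ than the omitted term $\int_0^1\!\int_\RR m'(z)m^{2n-1}(z)\prod_{j}q_t(y_j,z)\,\md z\,\md t$. Throughout, the operators $\sqrt{\fD_{y_1}},\sqrt{\fD_{y_2}}$ will be moved inside the $\md z\,\md t$ integral, which is legitimate given the decay estimates used below.

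First I would treat the single half-derivative bound. Applied under the integral, $\sqrt{\fD_{y_1}}$ acting on a factor $q_t(y_1,z)$ is controlled by \eqref{e:frac_derivative_heat_small_time}, $|\sqrt{\fD_{y_1}}q_t(y_1,z)|\lesssim t^{-\frac{3}{4}}\bracket{y_1-z}^{-\frac{3}{2}}$, and acting on a factor $r_t(y_1,z;m)$ it is controlled by Lemma~\ref{lem:regularity_rt}, $|\sqrt{\fD_{y_1}}r_t(y_1,z;m)|\lesssim\bracket{y_1-z}^{-\frac{3}{2}}$ uniformly in $t\le 1$; the undifferentiated $y_2$-factor is either $q_t(y_2-z)$ or bounded by $e^{-\lambda|y_2|+\lambda|z|}$ via Lemma~\ref{lem:regularity_rt}, and in either case it supplies an $e^{-\lambda|y_2|}$ decay after the $z$-integration (directly when it is $r_t$, and through the Gaussian tail bound for $t\le 1$ when it is $q_t$). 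Multiplying by $|m'(z)m^{2n-1}(z)|\lesssim e^{-c|z|}$ (with $\lambda$ chosen so small that $2\lambda<c$) and integrating in $z$ — Lemma~\ref{lem:techninal_1} converting $\bracket{y_1-z}^{-\frac{3}{2}}e^{-c|z|}$ into $\bracket{y_1}^{-\frac{3}{2}}$ — reduces the integrand to $\bracket{y_1}^{-\frac{3}{2}}e^{-\lambda|y_2|}$ times a factor $1$ or $t^{-\frac34}$; the remaining $t$-integral over $[0,1]$ is finite since its only singularity is the integrable $t^{-\frac34}$ coming from $\sqrt{\fD_{y_1}}q_t$. This yields $|\sqrt{\fD_{y_1}}\tilde{h}_1|(y_1,y_2)\lesssim\bracket{y_1}^{-\frac{3}{2}}e^{-\lambda|y_2|}$, and symmetry gives the analogous bound in $y_2$.

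The diagonal bound on $\sqrt{\fD}^{\otimes 2}\tilde{h}_1$ is the crux, and is where subtracting the heat-kernel product pays off. Applying $\sqrt{\fD_{y_1}}\sqrt{\fD_{y_2}}$ to each of the three products and then setting $y_1=y_2=y$: the $r_tr_t$-piece contributes $\big(\sqrt{\fD_y}r_t(y,z;m)\big)^2\lesssim\bracket{y-z}^{-3}$, and each cross piece contributes $\big(\sqrt{\fD_y}q_t(y,z)\big)\big(\sqrt{\fD_y}r_t(y,z;m)\big)\lesssim t^{-\frac34}\bracket{y-z}^{-3}$, again by \eqref{e:frac_derivative_heat_small_time} and Lemma~\ref{lem:regularity_rt}. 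Multiplying by $e^{-c|z|}$, integrating in $z$ via Lemma~\ref{lem:techninal_1}, and then integrating in $t\in[0,1]$ (the singularity being at worst the integrable $t^{-\frac34}$) gives $|\sqrt{\fD}^{\otimes2}\tilde{h}_1|(y,y)\lesssim\bracket{y}^{-3}$. The contrast with $h_1$ is exactly the point: the omitted term would have contributed $\int_0^1\!\int_\RR m'(z)m^{2n-1}(z)\big(\sqrt{\fD_y}q_t(y,z)\big)^2\,\md z\,\md t$, and since $|\sqrt{\fD_y}q_t(y,z)|^2\lesssim t^{-\frac32}\big\langle\tfrac{y-z}{\sqrt{t}}\big\rangle^{-3}$ integrates in $z$ to order $t^{-1}$, this diverges logarithmically — precisely the $|\log\delta|$ present in Lemma~\ref{lem:regularity_g2} — and removing it leaves only integrable $t^{-\frac34}$ singularities.

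Finally, having verified symmetry together with the two bounds, $\tilde{h}_1$ meets the hypotheses of Lemma~\ref{lem:leibniz_4}, so \eqref{e:regularity_decay_h1tildey} holds; since $\bracket{y}^{-2}$ is integrable on $\RR$, the displayed integral is well defined, and the $\oO(\delta^\nu)$ for the $\qQ_\delta^{\otimes2}$-regularized version follows, just as in Lemma~\ref{lem:regularity_decay_h2y}, as a direct consequence of \eqref{e:regularity_decay_h1tildey}. The step I expect to be the main obstacle is the diagonal bound together with its bookkeeping: one must simultaneously keep the $t$-integral near $0$ convergent once the heat-kernel product is subtracted, retain $\bracket{y_1-z}^{-3/2}$-type decay after convolving in $z$ against the exponentially decaying weight $m'm^{2n-1}$, and extract $e^{-\lambda|y_2|}$ decay from the undifferentiated Gaussian factor; by contrast, once these pointwise bounds are in place the appeal to the Leibniz-type Lemma~\ref{lem:leibniz_4} for the degree-one weight $y_1+y_2$ is routine.
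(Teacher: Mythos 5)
Your proposal is correct and follows essentially the same route as the paper: decompose $\prod_j p_t-\prod_j q_t$ using $r_t=p_t-q_t$, verify the single and double half-derivative bounds $\big|\sqrt{\fD_{y_1}}\tilde h_1\big|(y_1,y_2)\lesssim\bracket{y_1}^{-3/2}e^{-\lambda|y_2|}$ and $\big|\sqrt{\fD}^{\otimes2}\tilde h_1\big|(y,y)\lesssim\bracket{y}^{-3}$ via Lemmas~\ref{lem:regularity_rt}, \ref{lem:regularity_pt} (or the heat-kernel bound) and \ref{lem:techninal_1}, and then conclude with Lemma~\ref{lem:leibniz_4} and symmetry, the $\oO(\delta^\nu)$ step being handled exactly as in Lemma~\ref{lem:regularity_decay_h2y}. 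The only cosmetic difference is your three-term expansion $q_tr_t+r_tq_t+r_tr_t$ versus the paper's two-term telescoping $r_tp_t+q_tr_t$, which is immaterial.
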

\begin{proof}
We have
\begin{equation*}
    \prod_{j=1}^{2} p_t (y_j,z;m) - \prod_{j=1}^{2} q_t(y_j,z) = r_t(y_1,z;m) \, p_t (y_2,z;m) + q_t (y_1,z) \, r_t(y_2,z;m)\;.
\end{equation*}
Hence, by Lemmas~\ref{lem:regularity_rt} and~\ref{lem:regularity_pt}, we have
\begin{equation*}
    \big|\sqrt{\fD_{y_1}} \tilde{h}_1\big|(y_1,y_2) \lesssim\bracket{y_1}^{-\frac{3}{2}}e^{-\lambda|y_2|}\;, \quad \big|\sqrt{\fD}^{\otimes 2} \tilde{h}_1\big|(y,y) \lesssim\bracket{y}^{-3}\;.
\end{equation*}
The claim then follows from Lemma~\ref{lem:leibniz_4} and the symmetry of $\tilde{h}_1$ in its two variables. 
\end{proof}

\subsection{Proofs of the statements in Section~\ref{sec:deterministic_statements}}
We first give a final ingredient to prove the statements in Section~\ref{sec:deterministic_statements}. 
\begin{lem}\label{lem:technical_3}
For every $\lambda >0$, we have the bound
    \begin{equ}
        \int_\RR \frac{e^{-\lambda|y-z-\zeta|}\big|a_\eps(y)-a_\eps(y-z)\big|}{|z|^{\frac{3}{2}}}\,\md z \lesssim \bracket{y-\zeta}^{-\frac{3}{2}}
    \end{equ}
    for all $\eps\in[0,1]$ and $y,\zeta\in\RR$.
\end{lem}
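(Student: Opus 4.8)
The plan is a routine dyadic split in $|z|$ at the scale $|z|=1$, using only two facts: the uniform-in-$\eps$ size and Lipschitz bounds on $a_\eps$, and the convolution estimate of Lemma~\ref{lem:techninal_1}. Write $w := y-\zeta$, so that the claim reads
$$\int_\RR e^{-\lambda|w-z|}\,\big|a_\eps(y)-a_\eps(y-z)\big|\,|z|^{-\frac32}\,\md z \lesssim \bracket{w}^{-\frac32}\;.$$
Since $a\in\cC^\infty$ has compact support, for $\eps\in[0,1]$ we have $\|a_\eps\|_{\lL^\infty}\le\|a\|_{\lL^\infty}$ and $\|a_\eps'\|_{\lL^\infty}=\sqrt{\eps}\,\|a'\|_{\lL^\infty}\le\|a'\|_{\lL^\infty}$, hence
$$\big|a_\eps(y)-a_\eps(y-z)\big| \le C\min(1,|z|)$$
with $C$ independent of $\eps,y,z$. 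This is the only place the structure of $a_\eps$ enters.

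On the region $|z|>1$ I bound $|a_\eps(y)-a_\eps(y-z)|\le C$ and $|z|^{-3/2}\le 2^{3/4}\bracket{z}^{-3/2}$, so this contribution is $\lesssim\int_\RR\bracket{z}^{-3/2}e^{-\lambda|w-z|}\,\md z$; after the change of variables $z\mapsto w-z$ this is precisely of the form treated in Lemma~\ref{lem:techninal_1} with $\alpha=\tfrac32$, giving $\lesssim\bracket{w}^{-3/2}$. On the region $|z|\le1$ I instead use the Lipschitz bound $|a_\eps(y)-a_\eps(y-z)|\le C|z|$, which turns the integrand into the locally integrable $e^{-\lambda|w-z|}|z|^{-1/2}$. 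If $|w|\le2$ I bound the exponential by $1$ and use $\int_{|z|\le1}|z|^{-1/2}\,\md z<\infty$, noting $\bracket{w}^{-3/2}\gtrsim1$ there; if $|w|>2$ then $|w-z|\ge|w|/2$ on $\{|z|\le1\}$, so this contribution is $\lesssim e^{-\lambda|w|/2}\lesssim_\lambda\bracket{w}^{-3/2}$ since exponential decay beats any polynomial. Summing the two contributions yields the claim.

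There is essentially no obstacle here: the argument is a standard splitting, and the only points requiring (trivial) care are that the Lipschitz constant of $a_\eps$ and the comparison $|z|^{-3/2}\asymp\bracket{z}^{-3/2}$ for $|z|>1$ are uniform in $\eps\in[0,1]$, and that the proportionality constant is permitted to depend on $\lambda$, so that the passage $e^{-\lambda|w|/2}\lesssim\bracket{w}^{-3/2}$ is harmless. If one wished to avoid referencing Lemma~\ref{lem:techninal_1}, the same estimate for $\int_\RR\bracket{z}^{-3/2}e^{-\lambda|w-z|}\,\md z$ can be reproved directly by splitting into $\{|z-w|\le|w|/2\}$ and its complement, but invoking the lemma is cleaner.
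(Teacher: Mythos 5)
Your proof is correct and follows essentially the same route as the paper: split at $|z|=1$, use the (uniform in $\eps$) Lipschitz bound $|a_\eps(y)-a_\eps(y-z)|\lesssim|z|$ on $\{|z|\le1\}$ and the uniform boundedness of $a_\eps$ on $\{|z|>1\}$, then estimate the resulting integrals. The paper leaves these final estimates implicit, while you carry them out (via Lemma~\ref{lem:techninal_1} and the $|w|\le 2$ versus $|w|>2$ case split), which is a faithful filling-in of the same argument.
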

\begin{proof}
We split the integral into the domains $\{|z| \leq 1\}$ and $\{|z| > 1\}$, where we bound $|a_\eps(y) - a_\eps(y-z)|$ by $C |z|$ and constant $C$ respectively. The desired bound then follows. 
\end{proof}

We now have all the ingredients to prove the statements in Section~\ref{sec:deterministic_statements}. 

\begin{proof} [Proof of Lemma~\ref{lem:Dzeta_continuity_aeps}]
By Lemma~\ref{lem:leibniz_2}, we have
\begin{equation*}
    \begin{split}
    \Big( \sqrt{\fD} \Big( \big( D\zeta(v) - &D \zeta (m_{\zeta(v)}) \big) a_\eps \Big) \Big)(y) = \Big( \sqrt{\fD} \big( D \zeta(v) - D \zeta (m_{\zeta(v)}) \big) \Big)(y) \cdot a_\eps(y)\\
    &+ C_{\ff} \int_\RR \frac{\big( D \zeta(z;v) - D \zeta (z; m_{\zeta(v)}) \big) \big( a_\eps(y) - a_\eps(z) \big)}{|y-z|^{\frac{3}{2}}} \,\md z\;.
    \end{split}
\end{equation*}
Then the conclusion follows from \eqref{e:Dzeta_difference}, \eqref{e:regularity_Dzeta_difference} and Lemma~\ref{lem:technical_3}. 
\end{proof}

\begin{proof} [Proof of Lemma~\ref{lem:Dzeta_taylor}]
By Lemma~\ref{lem:leibniz_2}, we have
\begin{align*}
    \bigg( \sqrt{\fD} \Big(D\zeta(m) \, &\big( a(x_0 + \sqrt{\eps}\cdot)-a(x_0) \big) \Big) \bigg)(y) = \big(\sqrt{\fD} D\zeta\big)(y;m) \cdot \big( a(x_0 + \sqrt{\eps}y) - a(x_0) \big)\\
    &+ C_\ff\int_\RR \frac{D\zeta(y-z;m) \big(a(x_0+\sqrt{\eps}y)-a(x_0+\sqrt{\eps}(y-z))\big)}{|z|^\frac{3}{2}}\,\md z\;.
\end{align*}
Then the conclusion follows from \eqref{e:Dzeta_bound}, \eqref{e:regularity_Dzeta_bound}, Lemma~\ref{lem:techninal_1} and that $a \in \cC_c^\infty$. 
\end{proof}

\begin{proof} [Proof of Lemma~\ref{lem:D2zeta_continuity_aeps}]
Taking $\varphi = \qQ_\delta^{\otimes2}\big(D^2\zeta(v)-D^2\zeta(m_{\zeta(v)})\big)$ and $\psi = a_\eps^{\otimes 2}$ in Lemma~\ref{lem:leibniz_3}, we have 
\begin{equation*}
    \begin{split}
    \big(&\sqrt{\fD}^{\otimes 2} \varphi \psi \big)(y,y) = \big(\sqrt{\fD}^{\otimes 2}\varphi\big)(y,y) \cdot a_\eps^2(y)\\
    &+ 2C_\ff \, a_\eps(y) \int_\RR \frac{ (\sqrt{\fD_{y_1}} \varphi)(y,y-z)  \cdot \big( a_\eps(y) - a_\eps(y-z) \big)}{|z|^{\frac{3}{2}}} \md z\\
    &+C_\ff^2 \iint\limits_{\RR^2} \frac{\varphi(y-z_1,y-z_2)\big(a_\eps(y)-a_\eps(y-z_1)\big)\big(a_\eps(y)-a_\eps(y-z_2)\big)}{|z_1z_2|^\frac{3}{2}}\,\md z_1 \,\md z_2\;,
    \end{split}
\end{equation*}
where we have used the fact that $\varphi$ is symmetric in its two variables, and hence $(\sqrt{\fD_{y_1}} \varphi)(y,y-z) = (\sqrt{\fD_{y_2}}\varphi)(y-z,y)$. Then the conclusion follows from Lemmas~\ref{lem:D2zeta_continuity}, ~\ref{lem:regularity_D2zeta_half}, ~\ref{lem:technical_3} and \eqref{e:D2zeta_bound}.
\end{proof}

\begin{proof} [Proof of Lemma~\ref{lem:D2zeta_taylor}]
Taking $\varphi = \qQ_\delta^{\otimes2}D^2\zeta(m)$ and
\begin{equation*}
    \psi(\vec{y}) = \prod_{j=1}^{2} a(x_0 + \sqrt{\eps} y_j) - a^2(x_0) - \sqrt{\eps} \, a(x_0) \, a'(x_0) \, (y_1 + y_2)
\end{equation*}
in Lemma~\ref{lem:leibniz_3} and noting that $\varphi$ and $\psi$ are symmetric in their two variables, we then have
\begin{align*}
    \big( \sqrt{\fD}^{\otimes 2}&(\varphi\psi) \big)(y,y)      = \big(\sqrt{\fD}^{\otimes 2} \qQ_\delta^{\otimes2}D^2\zeta(m) \big)(y,y) \cdot \psi(y,y)\\
    &+2C_\ff \int_\RR \frac{ \big(\sqrt{\fD_{y_1}} \qQ_\delta^{\otimes2}D^2\zeta(m)\big)(y,y-z) \cdot \big(\psi(y,y)-\psi(y,y-z)\big)}{|z|^{\frac{3}{2}}}\,\md z\\
    &+C_\ff^2 \iint\limits_{\RR^2} \frac{ \qQ_\delta^{\otimes2}D^2\zeta(m)(y-z_1,y-z_2)A}{|z_1z_2|^\frac{3}{2}}\,\md z_1 \,\md z_2\;,
\end{align*}
where $A$ satisfies
    \begin{equation*}
        \begin{split}
        |A| &= |\psi(y,y)-\psi(y-z_1,y)-\psi(y,y-z_2)+\psi(y-z_1,y-z_2)|\\
        &\lesssim |\sqrt{\eps}z_1|^{\frac{1}{2}+\nu}|\sqrt{\eps}z_2|^{\frac{1}{2}+\nu}\;.
        \end{split}
    \end{equation*}
    Then the conclusion follows from Lemmas~\ref{lem:D2zeta_decay}, ~\ref{lem:regularity_D2zeta_half}, ~\ref{lem:techninal_1}, \eqref{e:D2zeta_bound} and the fact that $|\psi(y,y)|\lesssim |\sqrt{\eps}y|^{1+2\nu}$ as well as $|\psi(y,y)-\psi(y,y-z)|\lesssim|\sqrt{\eps}z|(|\sqrt{\eps}y|^{2\nu}+|\sqrt{\eps}z|^{2\nu})$.
\end{proof}

\begin{proof}[Proof of Lemma~\ref{lem:D2zeta_divergence_rate}]
By Assertion 1 in Lemma~\ref{lem:Dzeta}, we have
\begin{equ}
    D^2\zeta(\vec{y};m) = \frac{2n(2n+1)}{\|m'\|^2_{\lL^2}}\int_0^\infty \int_\RR m'(z) \, m^{2n-1}(z) \, \prod_{j=1}^{2} p_t(y_j,z;m) \,\md z \,\md t\;.
\end{equ}
We divide the integral into $\int_0^1$ and $\int_1^\infty$, which corresponds to the decomposition of $D^2 \zeta(m)$ into $h_1 + h_2$ (up to constant multiple) as in \eqref{e:D2zeta_decomposition} and \eqref{e:D2zeta_h}.

Note that $h_1$ contains the singular part of $D^2 \zeta (m)$ as it involves the singularity of $p_t$ near $t \approx 0$. This singularity is the same as the heat kernel $q_t$. Hence, we write
\begin{equation*}
    G(\vec{y}) := \int_0^1 \int_\RR m'(z) \, m^{2n-1}(z) \prod
    _{j=1}^{2} q_t(y_j-z)\,\md z\,\md t\;,
\end{equation*}
which is the singular part of $h_1$. Denote 
\begin{equ}
    P_\delta:=\frac{2n(2n+1)}{\|m'\|^2_{\lL^2}}\int_\RR \sqrt{\fD}^{\otimes 2}\Big(\qQ_\delta^{\otimes2}G\times \frac{1}{2}(y_1+y_2)\Big)(y,y)\,\md y\;.
\end{equ}
Then by Lemmas~\ref{lem:regularity_decay_h2y} and~\ref{lem:regularity_h1tildey}, we have
\begin{equ}
    \frac{1}{2}\int_\RR \sqrt{\fD}^{\otimes 2}\Big(\qQ_\delta^{\otimes2}\big(D^2\zeta(m)\big)\times (y_1+y_2)\Big)(y,y)\,\md y = P_\delta + \text{const} + \oO(\delta^\nu)\;.
\end{equ}
It remains to control $P_\delta$. By Lemma~\ref{lem:leibniz_3} and the symmetry of $G$ in its two variables, we have
\begin{equation}\label{e:expression_leibniz}
\begin{aligned}
    \sqrt{\fD}^{\otimes 2} \Big(\qQ_\delta^{\otimes2} G \times \frac{1}{2}(y_1+y_2)\Big) &(y,y) = \, \big(\sqrt{\fD}^{\otimes 2}\qQ_\delta^{\otimes2}G\big)(y,y) \cdot y\\
    &+C_\ff \int_\RR \frac{\big(\sqrt{\fD_{y_1}} \qQ_\delta^{\otimes2} G\big)(y,y-u)  \cdot u}{|u|^{\frac{3}{2}}}\,\md u\;.
\end{aligned}
\end{equation}
Noting that $\qQ_\delta q_t =q_{t+\delta^2}$, we have
\begin{equation} \label{e:G_mollify_expression}
    (\qQ_\delta^{\otimes2}G)(\vec{y}) = \int_0^1 \int_\RR m'(z) \, m^{2n-1}(z) \prod_{j=1}^{2} q_{t+\delta^2}(y_j-z) \,\md z\,\md t\;.
\end{equation}
Hence, the contribution to $P_\delta$ from the second term on the right hand side of \eqref{e:expression_leibniz} (after integrating $y \in \RR$) is
\begin{align*}
    &\int_0^1 \iiint\limits_{\RR^3} \sqrt{\fD_{y}}q_{t+\delta^2}(y-z)q_{t+\delta^2}(y-u-z)m^{2n-1}(z)m'(z)\frac{u}{|u|^{\frac{3}{2}}}\,\md u\,\md z\,\md y\,\md t\\
    =&\int_0^1 \int_\RR \Big(\int_\RR \sqrt{\fD_{y}} q_{t+\delta^2}(y) \, q_{t+\delta^2}(y-u) \md y \Big) \Big(\int_\RR m'(z) \, m^{2n-1}(z) \md z \Big) \frac{u}{|u|^{\frac{3}{2}}}\,\md u \,\md t=0\;,
\end{align*}
where we perform a change of variable $y\mapsto y+z$ in the second inequality and use the fact that $m$ is odd so $m' \cdot m^{2n-1}$ integrates to $0$ in the last step. Hence, the first term on the right hand side of \eqref{e:expression_leibniz} contains all contribution to $P_\delta$. 

Now, plugging the expression \eqref{e:G_mollify_expression} into the first term on the left hand side of \eqref{e:expression_leibniz} and integrating over $y \in \RR$, we get
\begin{equation*}
    P_\delta = \frac{2n(2n+1)}{\|m'\|^2_{\lL^2}} \int_0^1 \iint\limits_{\RR^2}  y \cdot \big(\sqrt{\fD_{y}}q_{t+\delta^2}\big)^2(y-z) \, m^{2n-1}(z) \, m'(z) \, \md z \,\md y\,\md t\;.
\end{equation*}
Performing a change of variable $y \mapsto y+z$ and using again the fact that $m' m^{2n-1}$ integrates to $0$, we obtain
\begin{equation*}
    P_\delta = \frac{2n(2n+1)}{\|m'\|^2_{\lL^2}} \Big(\int_{0}^{1} \int_\RR \big(\sqrt{\fD_{y}}q_{t+\delta^2}(y)\big)^2\,\md y\,\md t \Big) \Big(\int_\RR m^{2n-1}(z)m'(z)z\,\md z\Big)\;.
\end{equation*}
With Plancherel's identity, we can do the explicit computation
\begin{align*}
    \int_0^1 \int_\RR  \big(\sqrt{\fD_{y}}q_{t+\delta^2}\big)^2(y)\,\md y\,\md t = \frac{1}{8\pi^2} \log \Big(\frac{1+\delta^2}{\delta^2}\Big)\;.
\end{align*}
The conclusion then follows from the definition of $C_0^*$ in \eqref{e:C0}. 
\end{proof}

\appendix

\section{Proof of Lemma~\ref{lem:smallnessOfLinearSolution}}\label{app:linear_smallness}

This section is devoted to the proof of Lemma~\ref{lem:smallnessOfLinearSolution}, which states that the $\wW^{-\kappa',p}$ norm of the linearized solution $X_\eps$ is small. For notational convenience in the proof, we give details to establish the same bound in the H\"older-type space $\cC^{-\kappa'}$. The corresponding $\wW^{-\kappa',p}$ estimate can be obtained with only minor modifications, which we briefly sketch at the end of the proof. 

Recall that $X_\eps$ is the stationary-in-time solution to \eqref{e:linear_solution}. For simplicity of notations, we sometimes omit the time $t$ in the notation when it only concerns the law of $X_\eps[t]$ for the fixed time $t$. We have the following expression on the correlation function. 

\begin{lem} \label{lem:linear_Exy_expression}
We have the expression
    \begin{equation}\label{e:linear_Exy_expression}
    \begin{aligned}
        &\phantom{111}\EE \big(X_\eps(t,x) X_\eps(t, y) \big)\\
        &= \eps^{2 \gamma} \int_{\RR} |\eta| e^{-2 \pi i \eta (x-y)} \bigg( \iint\limits_{\RR^2} \frac{\widehat{a}(\theta_1) \overline{\widehat{a}(\theta_2)} \, e^{2 \pi i \sqrt{\eps} (\theta_1 x - \theta_2 y)} \,\md \theta_1 \,\md  \theta_2 }{4 \pi^2 \big( (\eta - \sqrt{\eps} \theta_1)^2 + (\eta - \sqrt{\eps} \theta_2)^2 \big) + 2 \mu}  \bigg) {\rm d}\eta\;.    
    \end{aligned}
    \end{equation}
\end{lem}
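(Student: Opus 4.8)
The plan is to write the stationary solution $X_\eps$ as a stochastic convolution over the infinite past, apply the It\^o isometry for space‑time white noise to turn the covariance into a deterministic integral, and then evaluate that integral by Plancherel's theorem together with an explicit time integration. First I would record the mild (stationary) representation
\[
X_\eps(t,x) = \eps^\gamma \int_{-\infty}^{t}\int_{\RR} e^{-\mu(t-s)}\, q_{t-s}(x-x')\, a_\eps(x')\, \big(\sqrt{\fD}\dot W\big)(s,x')\,\md x'\,\md s\;,
\]
with $q$ the heat kernel of \eqref{e:heat_kernel}, and, using that $\sqrt{\fD}$ is self‑adjoint on $\lL^2(\RR)$, rewrite it as a Wiener integral against the space‑time white noise $W$ with deterministic integrand $h_{x,r} := \sqrt{\fD}\big[\,e^{-\mu r} q_r(x-\cdot)\, a_\eps(\cdot)\,\big]$ ($\sqrt{\fD}$ acting on the displayed variable). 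For each $r>0$ the bracketed function is Schwartz, so $h_{x,r}\in\lL^2(\RR)$ and the It\^o isometry (after the substitution $r=t-s$) gives $\EE\big(X_\eps(t,x)X_\eps(t,y)\big) = \eps^{2\gamma}\int_0^{\infty}\bracket{h_{x,r},h_{y,r}}_{\lL^2(\RR)}\,\md r$.

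Next, since $\widehat{\sqrt{\fD}\varphi}(\theta)=|\theta|^{\frac12}\widehat\varphi(\theta)$, Plancherel turns $\bracket{h_{x,r},h_{y,r}}_{\lL^2}$ into $\int_\RR |\theta|\,\widehat{g_{x,r}}(\theta)\,\overline{\widehat{g_{y,r}}(\theta)}\,\md\theta$, where $g_{x,r}=e^{-\mu r}q_r(x-\cdot)a_\eps$. A one‑line computation with the convention $\widehat\varphi(\theta)=\int\varphi(x)e^{-2\pi i\theta x}\md x$ gives $\widehat{e^{-\mu r}q_r(x-\cdot)}(\theta)=e^{-r(4\pi^2\theta^2+\mu)}e^{-2\pi i\theta x}$, so the convolution theorem yields $\widehat{g_{x,r}}(\theta)=\int_\RR e^{-r(4\pi^2(\theta-\xi)^2+\mu)}e^{-2\pi i(\theta-\xi)x}\widehat{a_\eps}(\xi)\,\md\xi$. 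Inserting this and its $y$‑analogue, combining the phases into $e^{-2\pi i\theta(x-y)}e^{2\pi i(\xi x-\xi' y)}$, and performing the elementary integral $\int_0^\infty e^{-r[4\pi^2(\theta-\xi)^2+4\pi^2(\theta-\xi')^2+2\mu]}\,\md r$ produces the Poisson‑type denominator $4\pi^2\big((\theta-\xi)^2+(\theta-\xi')^2\big)+2\mu$. Finally I would substitute $\xi=\sqrt\eps\,\theta_1$, $\xi'=\sqrt\eps\,\theta_2$ and use $\widehat{a_\eps}(\xi)=\eps^{-1/2}\widehat a(\xi/\sqrt\eps)$ (the Jacobian $\eps$ absorbing the two $\eps^{-1/2}$ factors), relabel $\theta\mapsto\eta$, and arrive at exactly \eqref{e:linear_Exy_expression}.

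The one point that is not purely routine is the rigorous interpretation: the outer $\eta$‑integral in \eqref{e:linear_Exy_expression} is only conditionally convergent (the integrand decays like $|\eta|^{-1}$ at infinity), which is the analytic reflection of $X_\eps(t,\cdot)$ being distribution valued and $\int_0^\infty\|h_{x,r}\|_{\lL^2}^2\,\md r=+\infty$. I would handle this by first carrying out the whole computation for the mollified field $\qQ_\delta X_\eps$, for which $\widehat{g_{x,r}}$ acquires the extra factor $e^{-4\pi^2\delta^2\theta^2}$ (cf. \eqref{e:C_delta}) and every integral converges absolutely, and then reading \eqref{e:linear_Exy_expression} as the $\delta\to0$ statement, i.e. as an identity of tempered distributions in $(x,y)$ (equivalently, testing both sides against $\phi\otimes\psi$ with $\phi,\psi\in\sS(\RR)$ and justifying Fubini, the It\^o isometry and Plancherel, all legitimate since $h_{x,r}\in\lL^2$ for $r>0$). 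I expect this bookkeeping, together with keeping the $2\pi$‑conventions and signs consistent throughout, to be the only real obstacle; conceptually the proof is a direct spectral calculation.
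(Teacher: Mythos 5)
Your proposal is correct and takes essentially the same route as the paper: a direct spectral computation of the covariance of the stationary stochastic convolution, in which the time integral $\int_0^\infty e^{-r[\cdots]}\,\md r$ produces the denominator and the substitution $\xi_j=\sqrt{\eps}\,\theta_j$ together with $\widehat{a_\eps}(\xi)=\eps^{-1/2}\widehat a(\xi/\sqrt\eps)$ yields \eqref{e:linear_Exy_expression}. The paper organizes the same calculation through $\widehat{X_\eps}(t,\theta)$ and the correlation $\EE\big(\widehat{\dot W}(r_1,\eta_1)\widehat{\dot W}(r_2,\eta_2)\big)=\delta(r_1-r_2)\,\delta(\eta_1+\eta_2)$, whereas you use the physical-space It\^o isometry followed by Plancherel -- an equivalent bookkeeping; your added care about the merely conditional convergence of the $\eta$-integral (handled via $\qQ_\delta$-mollification) is a reasonable refinement that the paper does not spell out.
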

\begin{proof}
Write $X_\eps$ in terms of its Fourier transform, and we have
\begin{equation*}
    \EE \big(X_\eps(t,x) X_\eps(t, y) \big) = \iint\limits_{\RR^2} e^{2 \pi i (\theta_1 x + \theta_2 y)} \, \EE \big( \widehat{X_\eps}(t,\theta_1) \widehat{X_\eps}(t,\theta_2) \big) \, \md \theta_1 \,\md \theta_2\;.
\end{equation*}
Note that for $k=1,2$, we have
\begin{equation*}
    \widehat{X_\eps}(t,\theta_k) = \eps^\gamma \int_{-\infty}^{t} e^{-(4\pi^2 \theta_k^2 + \mu)(t- r_k)} \Big( \int_{\RR} \widehat{a_\eps}(\theta_k - \eta_k) \, |\eta_k|^{\frac{1}{2}} \, \widehat{\dot{W}}(r_k, \eta_k) \,\md \eta_k \Big) \, \md r_k\;,
\end{equation*}
and that the Fourier transform of the spacetime white noise has correlation
\begin{equation*}
    \EE \big( \widehat{\dot{W}}(r_1, \eta_1) \widehat{\dot{W}}(r_2, \eta_2) \big) = \delta (r_1 - r_2) \, \delta (\eta_1 + \eta_2)\;.
\end{equation*}
The conclusion then follows from plugging these identities back into the original expression and then a change of variable. 
\end{proof}

\begin{lem}\label{lem:linear_Exy_bound}
We have
    \begin{equ}
        \big| \EE \big( X_\eps(t,x) X_\eps(t,y) \big) \big| \lesssim \eps^{2\gamma} \big( \log |x-y|^{-1} \vee 1 \big)
    \end{equ}
for all $\eps \in [0,1]$ and $x,y \in \RR$. 
\end{lem}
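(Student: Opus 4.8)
The plan is to pull the $\theta$-integrations in the expression of Lemma~\ref{lem:linear_Exy_expression} to the outside and reduce the claim to a one-dimensional oscillatory-integral bound. Bounding $\lvert\widehat{a}(\theta_1)\overline{\widehat{a}(\theta_2)}\,e^{2\pi i\sqrt\eps(\theta_1 x-\theta_2 y)}\rvert=\lvert\widehat{a}(\theta_1)\rvert\,\lvert\widehat{a}(\theta_2)\rvert$ in \eqref{e:linear_Exy_expression} and writing $a_j:=\sqrt\eps\,\theta_j$, $r:=x-y$, it suffices to establish
\[
    \Bigl\lvert\int_\RR\frac{\lvert\eta\rvert\,e^{-2\pi i\eta r}}{4\pi^2\bigl((\eta-a_1)^2+(\eta-a_2)^2\bigr)+2\mu}\,\md\eta\Bigr\rvert\ \lesssim_\mu\ \bracket{a_1}+\bracket{a_2}+\bigl(\log\lvert r\rvert^{-1}\vee 1\bigr)
\]
uniformly over $a_1,a_2,r\in\RR$. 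Granting this, since $\eps\le 1$ forces $\bracket{a_j}\le\bracket{\theta_j}$, and $\widehat a$ is Schwartz so that $\int_\RR\lvert\widehat a(\theta)\rvert\bracket\theta\,\md\theta<\infty$, integration against $\lvert\widehat a(\theta_1)\rvert\,\lvert\widehat a(\theta_2)\rvert$ gives exactly $\lvert\EE(X_\eps(t,x)X_\eps(t,y))\rvert\lesssim_\mu\eps^{2\gamma}(\log\lvert x-y\rvert^{-1}\vee 1)$.

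For the displayed one-dimensional bound, I would first diagonalise the quadratic form in the denominator via the translation $\eta\mapsto\eta+\tfrac{a_1+a_2}{2}$: this multiplies the integrand by a unimodular constant and turns the denominator into $8\pi^2\eta^2+D$ with $D:=2\pi^2(a_1-a_2)^2+2\mu\ge 2\mu>0$. Splitting $\lvert\eta+\tfrac{a_1+a_2}{2}\rvert\le\lvert\eta\rvert+\tfrac{\lvert a_1\rvert+\lvert a_2\rvert}{2}$, the constant piece contributes $\tfrac{\lvert a_1\rvert+\lvert a_2\rvert}{2}\int_\RR\tfrac{\md\eta}{8\pi^2\eta^2+D}=\tfrac{\lvert a_1\rvert+\lvert a_2\rvert}{2}\cdot\tfrac{1}{\sqrt{8D}}\lesssim_\mu\lvert a_1\rvert+\lvert a_2\rvert$. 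For the remaining $\int_\RR\tfrac{\lvert\eta\rvert e^{-2\pi i\eta r}}{8\pi^2\eta^2+D}\,\md\eta$, I would split the $\eta$-integral at $\lvert\eta\rvert=A:=\max(1,\lvert r\rvert^{-1})$. On $\{\lvert\eta\rvert\le A\}$ one uses the crude bound $\int_{0}^{A}\tfrac{\eta}{8\pi^2\eta^2+2\mu}\,\md\eta=\tfrac{1}{16\pi^2}\log\tfrac{8\pi^2A^2+2\mu}{2\mu}\lesssim_\mu\log\lvert r\rvert^{-1}\vee 1$. On $\{\lvert\eta\rvert> A\}$ one integrates by parts once in $\eta$, using $\bigl\lvert\tfrac{\md}{\md\eta}\tfrac{\lvert\eta\rvert}{8\pi^2\eta^2+D}\bigr\rvert\le\tfrac{1}{8\pi^2\eta^2}$ for $\lvert\eta\rvert\ge 1$ and the boundary value $\tfrac{A}{8\pi^2A^2+D}\le\min(1,\lvert r\rvert)$, which gives a contribution $\lesssim\lvert r\rvert^{-1}\bigl(\min(1,\lvert r\rvert)+\int_A^\infty\eta^{-2}\,\md\eta\bigr)=\lvert r\rvert^{-1}\,O(\min(1,\lvert r\rvert))=O(1)$. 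Assembling the three pieces yields the displayed bound.

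The estimate is delicate at exactly two points, which are where care is needed. First, the integrand decays only like $\lvert\eta\rvert^{-1}$ at infinity, so oscillation in $\eta$ (a single integration by parts) is indispensable; it is precisely this mechanism that replaces the would-be divergence at $\lvert\eta\rvert=\infty$ by a cutoff at scale $\lvert r\rvert^{-1}$ and hence produces the $\log\lvert x-y\rvert^{-1}$ in the statement. Second, and more essentially, the argument needs $\mu>0$: without it the denominator $4\pi^2((\eta-a_1)^2+(\eta-a_2)^2)$ vanishes at $\eta=a_1=a_2$, and since the weight $\lvert\widehat a(\theta_1)\rvert\,\lvert\widehat a(\theta_2)\rvert$ does not vanish there, the two-dimensional integral would diverge logarithmically near that point; the mass $2\mu$ is exactly what yields the uniform lower bound $D\ge 2\mu$ used above, consistent with the remark in Section~\ref{sec:renormalization} that the analysis degenerates as $\mu\to 0$. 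Finally, the linear-in-$a_j$ term created by the shift is harmless only because it is integrated against the Schwartz function $\widehat a$; one could instead avoid it by splitting the $\theta$-domain into a central part $\{\sqrt\eps\lvert\theta_j\rvert\le\lvert\eta\rvert/2\}$, where the denominator is $\gtrsim\bracket\eta^2$, and a rapidly-decaying tail controlled by the decay of $\widehat a$, but the diagonalisation route above seems cleanest.
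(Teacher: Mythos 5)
Your argument is correct, but it is organized genuinely differently from the paper's proof. The paper decomposes the denominator in \eqref{e:linear_Exy_expression} as in \eqref{e:Exy_expansion} into the $\theta$-independent reference kernel $\tfrac{1}{8\pi^2\eta^2+2\mu}$ plus a difference: the difference is bounded in absolute value (it contributes only $O(\eps^{2\gamma})$, with no logarithm), while for the reference kernel the inner $\theta$-integrals factorize exactly by Fourier inversion into $a_\eps(x)a_\eps(y)$, leaving the single one-dimensional oscillatory integral $\int_\RR\frac{|\eta|e^{-2\pi i\eta(x-y)}}{8\pi^2\eta^2+2\mu}\,\md\eta$, whose bound by $\log|x-y|^{-1}\vee 1$ is asserted without detail. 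You instead take absolute values in $\theta_1,\theta_2$, diagonalize the full shifted quadratic form (your $D=2\pi^2(a_1-a_2)^2+2\mu\ge 2\mu$ is correct), and prove a bound on the $\eta$-integral that is uniform in the shifts up to linear growth $\bracket{a_1}+\bracket{a_2}$, which the Schwartz decay of $\widehat a$ then absorbs. What your route buys: an explicit proof (split at $|\eta|=\max(1,|x-y|^{-1})$ plus one integration by parts, with the derivative bound $\frac{|D-8\pi^2\eta^2|}{(8\pi^2\eta^2+D)^2}\le\frac{1}{8\pi^2\eta^2}$) of the logarithmic estimate the paper leaves implicit, and it isolates clearly that $\mu>0$ is what prevents divergence. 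What it costs: pulling the $\theta$-integrations outside the $\eta$-integration is an interchange of iterated integrals that are only conditionally convergent in $\eta$ (after taking absolute values in $\theta$ the integrand decays like $|\eta|^{-1}$), so Fubini does not literally apply; this is harmless and fixable by inserting a cutoff such as $e^{-\lambda\eta^2}$, noting your uniform bound also holds uniformly in $\lambda$, and passing to the limit, but it should be stated, whereas the paper's ordering avoids the issue because its error term is absolutely convergent and its main term's $\theta$-integral is evaluated exactly at each fixed $\eta$. One small wording fix: the step ``splitting $|\eta+\tfrac{a_1+a_2}{2}|\le|\eta|+\tfrac{|a_1|+|a_2|}{2}$'' should be the decomposition $|\eta+c|=|\eta|+\big(|\eta+c|-|\eta|\big)$ with the second summand bounded by $|c|$, which is in fact exactly what your subsequent estimate of the ``constant piece'' uses.
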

\begin{proof}
We write
\begin{equation} \label{e:Exy_expansion}
\begin{aligned}
    &\frac{1}{4\pi^2(\eta - \sqrt{\eps}\theta_1)^2+4\pi^2(\eta - \sqrt{\eps}\theta_2)^2+2\mu} \\
    = &\Big(\frac{1}{4\pi^2(\eta - \sqrt{\eps}\theta_1)^2+4\pi^2(\eta - \sqrt{\eps}\theta_2)^2+2\mu} -\frac{1}{8\pi^2\eta^2+2\mu}\Big)+\frac{1}{8\pi^2\eta^2+2\mu}
\end{aligned}
\end{equation}
for the denominator term in the expression \eqref{e:linear_Exy_expression}. Note that
\begin{equ} \label{e:Exy_expansion_part1_1}
    \frac{1}{4\pi^2(\eta - \sqrt{\eps}\theta_1)^2+4\pi^2(\eta - \sqrt{\eps}\theta_2)^2+2\mu} \lesssim \frac{\bracket{\theta_1}^2}{\bracket{\eta}^2}\;,
\end{equ}
so together with the subtraction of $\frac{1}{8 \pi^2 \eta^2 + 2\mu}$, the first term on the right hand side of \eqref{e:Exy_expansion} is bounded by 
\begin{equ}\label{e:Exy_expansion_part1_2}
\frac{\big( |\theta_1 + \theta_2| \cdot |\eta| + \theta_1^2 + \theta_2^2 \big) \bracket{\theta_1}^2}{\bracket{\eta}^4}\;.
\end{equ}
Hence, the contribution of this part in $\EE \big(X_\eps(t,x)X_\eps(t,y) \big)$ is bounded by $\eps^{2\gamma}$. 

The second term on the right hand side of \eqref{e:Exy_expansion} is independent of $\theta_1$ and $\theta_2$, and hence by Fourier inversion, we see its contribution to $\EE \big(X_\eps(t,x)X_\eps(t,y) \big)$ is precisely
\begin{equ}
    \eps^{2\gamma} a_\eps(x)a_\eps(y) \int_\RR \frac{|\eta| e^{-2\pi i \eta(x-y)}}{8\pi^2\eta^2+2\mu}\,\md \eta\;,
\end{equ}
which is bounded by $\eps^{2\gamma} \big(\log |x-y|^{-1}\vee 1\big)$. This completes the proof. 
\end{proof}

For every non-negative $\varphi\in\cC_c^\infty(-1,1)$, we write $\varphi_x^\lambda:=\lambda^{-1}\varphi(\frac{\cdot-x}{\lambda})$. We have the following lemma. 

\begin{lem} \label{lem:linear_EXphi_bound}
We have the bound
\begin{equ}
    \EE \big| \bracket{X_\eps,\varphi_x^\lambda} \big|^2\lesssim\eps^{2\gamma}|\log\lambda|\;.
\end{equ}
Furthermore, for every $\nu \in [0,2]$, we have
\begin{equ}
    \EE \big| \bracket{X_\eps,\varphi_x^\lambda} - \bracket{X_\eps,\varphi_{x'}^\lambda}\big|^2 \lesssim \frac{\eps^{2\gamma} |\log\lambda|}{\lambda^\nu} \cdot |x-x'|^\nu\;.
\end{equ}
Both bounds are uniformly over all $\eps \in [0,1]$, $\lambda \in (0,\frac{1}{2}]$ and $x, x' \in \RR$. 
\end{lem}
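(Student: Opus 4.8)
The plan is to reduce both bounds in Lemma~\ref{lem:linear_EXphi_bound} to the covariance estimate in Lemma~\ref{lem:linear_Exy_bound}. For the first bound, I would simply write
\[
\EE \big| \bracket{X_\eps,\varphi_x^\lambda} \big|^2 = \iint_{\RR^2} \varphi_x^\lambda(y)\,\varphi_x^\lambda(z)\,\EE\big(X_\eps(t,y)X_\eps(t,z)\big)\,\md y\,\md z\;,
\]
and then apply Lemma~\ref{lem:linear_Exy_bound} to bound the covariance kernel by $C\eps^{2\gamma}\big(\log|y-z|^{-1}\vee 1\big)$. Since $\varphi_x^\lambda$ is supported in a ball of radius $\lambda$ around $x$ and has $\lL^1$ norm $\|\varphi\|_{\lL^1}$, the double integral of $\log|y-z|^{-1}$ against $\varphi_x^\lambda\otimes\varphi_x^\lambda$ is $\oO(|\log\lambda|)$ — one way to see this is to bound $\iint \varphi_x^\lambda(y)\varphi_x^\lambda(z)\log|y-z|^{-1}\,\md y\,\md z \le \|\varphi_x^\lambda\|_{\lL^1}\sup_y\int \varphi_x^\lambda(z)\log|y-z|^{-1}\md z$ and the inner integral is $\lesssim |\log\lambda|$ by the scaling $z\mapsto x+\lambda z$ and integrability of $\log|\cdot|^{-1}$ near the origin. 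This gives the first claimed bound.

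For the second bound, the natural approach is to expand the $\lL^2$ norm of the difference as
\[
\EE\big|\bracket{X_\eps,\varphi_x^\lambda}-\bracket{X_\eps,\varphi_{x'}^\lambda}\big|^2 = \iint_{\RR^2} \big(\varphi_x^\lambda-\varphi_{x'}^\lambda\big)(y)\,\big(\varphi_x^\lambda-\varphi_{x'}^\lambda\big)(z)\,\EE\big(X_\eps(t,y)X_\eps(t,z)\big)\,\md y\,\md z\;.
\]
Again applying Lemma~\ref{lem:linear_Exy_bound}, I must control $\iint |\psi(y)||\psi(z)|\big(\log|y-z|^{-1}\vee 1\big)\,\md y\,\md z$ where $\psi = \varphi_x^\lambda-\varphi_{x'}^\lambda$. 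I expect to interpolate: when $|x-x'|\ge\lambda$ one just uses $\|\psi\|_{\lL^1}\le 2\|\varphi\|_{\lL^1}$ together with the log-singularity estimate to get $\lesssim\eps^{2\gamma}|\log\lambda|$, which is $\lesssim \eps^{2\gamma}|\log\lambda|\,\lambda^{-\nu}|x-x'|^\nu$ in this regime; when $|x-x'|<\lambda$, one uses the smoothness of $\varphi$, namely $\|\varphi_x^\lambda-\varphi_{x'}^\lambda\|_{\lL^1}\lesssim |x-x'|/\lambda$ (from $\|\partial\varphi_x^\lambda\|_{\lL^1}\lesssim\lambda^{-1}$), to gain a factor $(|x-x'|/\lambda)^2$, and then to get the claimed exponent $\nu\in[0,2]$ rather than $2$ I interpolate this $\lL^1$-difference bound with the trivial $\lL^1$ bound $2\|\varphi\|_{\lL^1}$, obtaining $\|\psi\|_{\lL^1}\lesssim (|x-x'|/\lambda)^{\nu/2}$, and also bound one of the two factors $\psi$ in the double integral trivially while keeping the $\log$ factor against the other. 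Combining with the log-singularity estimate (which contributes the remaining $|\log\lambda|$) then yields $\lesssim \eps^{2\gamma}|\log\lambda|\,\lambda^{-\nu}|x-x'|^\nu$ in both regimes.

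I would present this as: (i) state the covariance expansion identity; (ii) record the elementary lemma that $\sup_y\int_{\RR}\varphi_x^\lambda(z)\big(\log|y-z|^{-1}\vee 1\big)\,\md z\lesssim|\log\lambda|$ uniformly in $x,\lambda$, proved by rescaling; (iii) for the first estimate, combine (i), (ii) and Lemma~\ref{lem:linear_Exy_bound}; (iv) for the second estimate, split into $|x-x'|\ge\lambda$ and $|x-x'|<\lambda$, in the latter case use $\|\varphi_x^\lambda-\varphi_{x'}^\lambda\|_{\lL^1}\lesssim\min(1,|x-x'|/\lambda)\lesssim(|x-x'|/\lambda)^{\nu/2}$ and estimate the double integral by putting the absolute value of one difference times the $\log$-kernel against the $\lL^1$ norm of the other difference.

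The main obstacle is the bookkeeping in step (iv): one must be careful to distribute the two available factors (the $\lL^1$-smallness of $\psi=\varphi_x^\lambda-\varphi_{x'}^\lambda$ coming from regularity, and the $|\log\lambda|$ coming from the kernel singularity) correctly so that the final power of $|x-x'|/\lambda$ is exactly $\nu$ with only a single $|\log\lambda|$ factor. Interpolating the Lipschitz-in-$x$ bound $\|\psi\|_{\lL^1}\lesssim|x-x'|/\lambda$ with the boundedness $\|\psi\|_{\lL^1}\lesssim 1$ to get the fractional exponent $\nu/2$ on each of the two $\psi$ factors, and then checking that the $|x-x'|\ge\lambda$ regime is genuinely dominated by the stated right-hand side, is where care is needed; everything else is routine. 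Uniformity in $t$ is automatic since, by stationarity of $X_\eps$, the covariance $\EE(X_\eps(t,x)X_\eps(t,y))$ does not depend on $t$, and the bounds in Lemma~\ref{lem:linear_Exy_bound} are $t$-independent.
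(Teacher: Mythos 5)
Your overall strategy (Fubini plus Lemma~\ref{lem:linear_Exy_bound}, then estimating the double integral of the logarithmic kernel against $\psi:=\varphi_x^\lambda-\varphi_{x'}^\lambda$) is the same as the paper's, and your treatment of the first bound and of the regime $|x-x'|\ge\lambda$ is correct. The gap is in the regime $|x-x'|<\lambda$: you try to produce the full factor $(|x-x'|/\lambda)^{\nu}$ by attaching $(|x-x'|/\lambda)^{\nu/2}$ to \emph{each} of the two $\psi$-factors while using only $\lL^1$-type information on $\psi$. But in the estimate $\iint|\psi(y)||\psi(z)|K(y-z)\,\md y\,\md z\le\|\psi\|_{\lL^1}\,\sup_y\int|\psi(z)|K(y-z)\,\md z$ with $K(w)=\log|w|^{-1}\vee 1$, the factor that is convolved with $K$ cannot be controlled through its $\lL^1$ norm alone, because $K$ is unbounded on the diagonal; your auxiliary estimate (ii) is stated for a fixed bump $\varphi_x^\lambda$ and carries no smallness in $|x-x'|$, so it only delivers $|\log\lambda|$. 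Hence your scheme gives at best $\|\psi\|_{\lL^1}\cdot|\log\lambda|\lesssim(|x-x'|/\lambda)^{\nu/2}|\log\lambda|$, i.e.\ the claimed inequality only for exponents in $[0,1]$, not for the full range $\nu\in[0,2]$ stated in the lemma; likewise the asserted gain of ``a factor $(|x-x'|/\lambda)^2$'' from $\lL^1$ bounds on both factors is not justified for a singular kernel.

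The repair is exactly the paper's route: use the pointwise bound $\|\varphi_x^\lambda-\varphi_{x'}^\lambda\|_{\lL^\infty}\lesssim\lambda^{-1}\min\big(1,|x-x'|/\lambda\big)\le\lambda^{-1}\big(|x-x'|/\lambda\big)^{\nu/2}$ (interpolating the Lipschitz-in-$x$ bound $\lambda^{-2}|x-x'|$ with the trivial bound $\lambda^{-1}$) for the factor paired with the kernel -- or, as the paper does, for both factors -- together with $\sup_y\int_{\mathrm{supp}\,\psi}K(y-z)\,\md z\lesssim\lambda|\log\lambda|$ and $|\mathrm{supp}\,\psi|\lesssim\lambda$. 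This yields $\lambda^{-2}(|x-x'|/\lambda)^{\nu}\cdot\lambda^{2}|\log\lambda|$, i.e.\ the stated bound for all $\nu\in[0,2]$, uniformly in $x,x'$ and without any case distinction. (Your weaker exponent would in fact still suffice for Kolmogorov's criterion with $p$ large in Lemma~\ref{lem:linear_X_test_compact}, but it does not prove the lemma as stated.)
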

\begin{proof}
The first bound follows directly from Fubini's Theorem to change the order of integration and then Lemma~\ref{lem:linear_Exy_bound}. For the second one, we have
\begin{equation*}
    \EE \big| \bracket{X_\eps, \varphi_x^\lambda - \varphi_{x'}^\lambda} \big|^2 \lesssim \eps^{2\gamma} \iint\limits_{\RR^2} \big( 1 \vee \, \big| \log |y-z| \, \big| \big) \cdot \big( \varphi_x^\lambda (y) - \varphi_{x'}^\lambda (y) \big) \big( \varphi_x^\lambda (z) - \varphi_{x'}^\lambda (z) \big) \,\md y \,\md z\;.
\end{equation*}
By H\"older continuity of $\varphi$, we have
\begin{equation*}
    \big| \varphi_x^\lambda (y) - \varphi_{x'}^\lambda (y) \big| \lesssim \frac{1}{\lambda} \cdot \Big( \frac{|x-x'|}{\lambda} \Big)^{\frac{\nu}{2}}\;,
\end{equation*}
and the same estimate holds for the terms with the $z$ variable. The conclusion then follows. 
\end{proof}

\begin{lem} \label{lem:linear_X_test_compact}
For every $\nu\in(0,2)$ and $p$ sufficiently large (depending on $\nu$), we have the bound
    \begin{equation*}
    \EE\sup_{\substack{|x|\leq \eps^{-\frac{1}{2}}}}|\bracket{X_\eps,\varphi_x^\lambda}|^p\lesssim\eps^{p\gamma-\frac{1}{2}}\lambda^{-p\nu}\;.
\end{equation*}
The proportionality constant depends on $\nu$ and $p$, but is independent of $\eps \in [0,1]$. 
\end{lem}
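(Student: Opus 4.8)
\textbf{Proof proposal for Lemma~\ref{lem:linear_X_test_compact}.}

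The plan is to upgrade the second-moment bound of Lemma~\ref{lem:linear_EXphi_bound} to a supremum-in-$p$-moment bound by combining hypercontractivity (Gaussian equivalence of moments) with a Kolmogorov-type continuity/chaining argument, and then to pay the volume factor $\eps^{-1/2}$ coming from the length of the interval $\{|x|\le \eps^{-1/2}\}$. First I would observe that for each fixed $x$ and $\lambda$, the random variable $\bracket{X_\eps,\varphi_x^\lambda}$ lies in the first Wiener chaos (it is a linear functional of the white noise $W$), so by Nelson's hypercontractivity inequality all its $\lL^p$-norms are comparable to its $\lL^2$-norm: $\EE|\bracket{X_\eps,\varphi_x^\lambda}|^p \lesssim_p \big(\EE|\bracket{X_\eps,\varphi_x^\lambda}|^2\big)^{p/2} \lesssim_p \eps^{p\gamma}|\log\lambda|^{p/2}$, using the first bound of Lemma~\ref{lem:linear_EXphi_bound}. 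Likewise the increment $\bracket{X_\eps,\varphi_x^\lambda}-\bracket{X_\eps,\varphi_{x'}^\lambda}$ is in the first chaos, so the second bound of Lemma~\ref{lem:linear_EXphi_bound} gives $\EE|\bracket{X_\eps,\varphi_x^\lambda}-\bracket{X_\eps,\varphi_{x'}^\lambda}|^p \lesssim_p \eps^{p\gamma}|\log\lambda|^{p/2}\lambda^{-p\nu_0/2}|x-x'|^{p\nu_0/2}$ for any $\nu_0\in[0,2]$.

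Next I would apply the Kolmogorov continuity theorem (or a direct dyadic chaining argument) on the interval $I_\eps := [-\eps^{-1/2},\eps^{-1/2}]$ to control the supremum. Partition $I_\eps$ into $\oO(\eps^{-1/2})$ unit subintervals; on each unit subinterval $J$, Kolmogorov's theorem applied to the process $x\mapsto \bracket{X_\eps,\varphi_x^\lambda}$ with the increment bound above (choosing $p$ large enough that $p\nu_0/2 > 1$, i.e. $p>2/\nu_0$) gives
\begin{equation*}
\EE \sup_{x\in J} |\bracket{X_\eps,\varphi_x^\lambda}|^p \lesssim_p \eps^{p\gamma}|\log\lambda|^{p/2}\lambda^{-p\nu_0/2}\;.
\end{equation*}
Then a union bound over the $\oO(\eps^{-1/2})$ subintervals yields $\EE\sup_{|x|\le\eps^{-1/2}}|\bracket{X_\eps,\varphi_x^\lambda}|^p \lesssim_p \eps^{-1/2}\eps^{p\gamma}|\log\lambda|^{p/2}\lambda^{-p\nu_0/2}$. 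Since $|\log\lambda|^{p/2}$ is dominated by $\lambda^{-p\kappa}$ for any small $\kappa>0$ as $\lambda\to 0$, one absorbs the logarithm by enlarging the exponent slightly: choosing $\nu_0<\nu$ and $p$ large, one obtains $|\log\lambda|^{p/2}\lambda^{-p\nu_0/2}\lesssim \lambda^{-p\nu}$, which gives exactly the claimed bound $\EE\sup_{|x|\le\eps^{-1/2}}|\bracket{X_\eps,\varphi_x^\lambda}|^p\lesssim \eps^{p\gamma-\frac12}\lambda^{-p\nu}$.

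The main obstacle, and the place requiring care, is the interplay between the three small/large parameters: one must choose $p$ large enough for the chaining to converge ($p\nu_0/2>1$) \emph{and} to convert the logarithmic loss in $\lambda$ into the polynomial loss $\lambda^{-p\nu}$, while simultaneously making sure the $\eps^{-1/2}$ volume factor is genuinely harmless — which it is, since the exponent of $\eps$ on the right, $p\gamma-\frac12$, matches the statement and the $|x|\le\eps^{-1/2}$ range is dictated by the compact support of $a_\eps$ (outside this range $a_\eps$ vanishes, so $X_\eps$ is supported, up to fast-decaying tails from the heat semigroup, near $I_\eps$; I would remark that the tail contribution from $|x|\gg\eps^{-1/2}$ is even smaller and can be folded into the estimate, or the statement restricted to the stated range as written). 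A secondary technical point is that Kolmogorov's theorem as usually stated controls a modification; since $X_\eps$ has a continuous-in-$x$ version (it is spatially smooth, being a mollified-type object), there is no measurability issue, and the supremum over the uncountable index set can be replaced by a supremum over a countable dense set without loss. I would also note that the analogous $\wW^{-\kappa',p}$ estimate needed for Lemma~\ref{lem:smallnessOfLinearSolution} follows by the same scheme, testing against a Littlewood–Paley or wavelet basis in place of the single mollifier $\varphi_x^\lambda$ and summing the resulting block estimates with the appropriate weights.
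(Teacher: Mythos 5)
Your proposal is correct and follows essentially the same route as the paper: upgrade the second-moment bounds of Lemma~\ref{lem:linear_EXphi_bound} via Gaussian hypercontractivity, apply Kolmogorov's continuity theorem on unit intervals $[k,k+1]$, and sum over the $\oO(\eps^{-1/2})$ integers $k$ in $[-\eps^{-1/2},\eps^{-1/2}]$ to produce the $\eps^{-1/2}$ loss. Your extra bookkeeping (absorbing the $|\log\lambda|^{p/2}$ factor by taking $\nu_0<\nu$ and $p$ large) is exactly what the paper's one-line argument implicitly does.
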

\begin{proof}
    By Lemma~\ref{lem:linear_EXphi_bound}, hypercontractivity of Gaussian and Kolmogorov's continuity theorem, for every $p$ sufficiently large and $k\in\ZZ$, we have
    \begin{equ}
    \EE \sup_{\substack{x\in[k,k+1]}}| \bracket{X_\eps,\varphi_x^\lambda}|^p \lesssim \eps^{p\gamma} \cdot \lambda^{-p\nu}\;.
    \end{equ}
    Summing over integers $k\in[-\eps^{-\frac{1}{2}},\eps^{-\frac{1}{2}}]$ gives the desired inequality.
\end{proof}

\begin{lem} \label{lem:linear_Ex_decay}
We have the bound
    \begin{equ}
        \EE X_\eps^2(x) \lesssim \eps^{2\gamma} |x|^{-1}
    \end{equ}
    for all $\eps \in [0,1]$ and $x \in \RR$ with $|x| > \eps^{-\frac{1}{2}}$. 
\end{lem}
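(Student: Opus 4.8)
The plan is to start from the explicit correlation formula \eqref{e:linear_Exy_expression} specialized to $x=y$, namely
\begin{equation*}
    \EE X_\eps^2(x) = \eps^{2\gamma} \int_\RR |\eta| \bigg( \iint\limits_{\RR^2} \frac{\widehat{a}(\theta_1)\overline{\widehat{a}(\theta_2)}\, e^{2\pi i \sqrt{\eps}(\theta_1 - \theta_2)x}}{4\pi^2\big((\eta - \sqrt{\eps}\theta_1)^2 + (\eta - \sqrt{\eps}\theta_2)^2\big) + 2\mu}\,\md\theta_1\,\md\theta_2 \bigg)\,\md\eta\;,
\end{equation*}
and to exploit the oscillatory factor $e^{2\pi i\sqrt{\eps}(\theta_1 - \theta_2)x}$. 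The point is that for $|x| > \eps^{-1/2}$ the phase $\sqrt{\eps}\,x$ is large, so repeated integration by parts in the $\theta_1$ (or $\theta_2$) variable, using that $\widehat{a}$ is Schwartz, should gain arbitrary negative powers of $\sqrt{\eps}|x|$. One must be careful that each $\theta$-derivative also hits the denominator; but the denominator is bounded below by $2\mu > 0$ and its $\theta_k$-derivatives are controlled by the bound \eqref{e:Exy_expansion_part1_1}, i.e. $\big(4\pi^2(\eta-\sqrt\eps\theta_1)^2 + 4\pi^2(\eta-\sqrt\eps\theta_2)^2 + 2\mu\big)^{-1}\lesssim \bracket{\theta_1}^2\bracket{\eta}^{-2}$, and similarly each differentiation in $\theta_1$ produces at worst an extra factor $\sqrt{\eps}\,\bracket{\theta_1}/\bracket{\eta}$ type gain together with polynomial growth in $\theta_1$ that is absorbed by the rapid decay of $\widehat{a}$.

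Concretely, first I would split the $\eta$-integral into $|\eta|\le 1$ and $|\eta| > 1$. On $|\eta| > 1$, using \eqref{e:Exy_expansion_part1_1} the integrand is bounded by $\eps^{2\gamma}|\eta|\cdot \bracket{\theta_1}^2\bracket{\eta}^{-2}|\widehat a(\theta_1)||\widehat a(\theta_2)|$, which is integrable, and then I integrate by parts $N$ times in $\theta_1$ to convert the bound into $\eps^{2\gamma}(\sqrt{\eps}|x|)^{-N}$ times an $\eta$-integral that still converges (the extra $\theta_1$-derivatives landing on $\widehat a$ keep Schwartz decay, those landing on the denominator cost only polynomial factors in $\theta_1$ and powers of $\sqrt\eps$, harmless since $\sqrt\eps \le 1$). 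Choosing $N=2$ already gives $\eps^{2\gamma}(\sqrt\eps|x|)^{-2} = \eps^{2\gamma-1}|x|^{-2} \le \eps^{2\gamma}|x|^{-1}$ in the regime $|x|>\eps^{-1/2}$, as desired. On $|\eta|\le 1$ the $\eta$-region has finite measure and $|\eta|\le 1$, so the same integration-by-parts argument in $\theta_1$ (the denominator is still $\ge 2\mu$ there, and all $\theta_1$-derivatives are fine) yields a bound $\lesssim \eps^{2\gamma}(\sqrt\eps|x|)^{-N}$, again taking $N=2$ suffices.

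The main obstacle I anticipate is bookkeeping in the integration by parts: one has to check that every term produced when the differential operator $\partial_{\theta_1}^N$ is distributed by the Leibniz rule over the product $\widehat a(\theta_1)\cdot(\text{denominator})^{-1}\cdot e^{2\pi i\sqrt\eps\theta_1 x}$ remains integrable in $(\theta_1,\theta_2,\eta)$ after one extracts the gain $(\sqrt\eps|x|)^{-N}$ from the exponential. This is where the bound \eqref{e:Exy_expansion_part1_1}, the fact that $\mu>0$ keeps the denominator nondegenerate, and the Schwartz decay of $\widehat a$ all combine; since $\sqrt\eps\le 1$ no positive power of $\eps$ is lost, so the final estimate is genuinely $\eps^{2\gamma}\cdot(\sqrt\eps|x|)^{-2}\lesssim \eps^{2\gamma}|x|^{-1}$ for $|x|>\eps^{-1/2}$. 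I would present the $|\eta|>1$ case in detail and remark that $|\eta|\le 1$ is strictly easier.
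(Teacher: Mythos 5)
There is a genuine gap, in two places. First, your starting point, the unsubtracted formula \eqref{e:linear_Exy_expression} at $x=y$, is not absolutely convergent in $\eta$: the best pointwise bound on the $\theta$-integrand, namely \eqref{e:Exy_expansion_part1_1}, is of order $\bracket{\theta_1}^2\bracket{\eta}^{-2}$, so after multiplying by $|\eta|$ the $\eta$-integral diverges logarithmically; your claim that it ``is integrable'' on $|\eta|>1$ is false. Integration by parts in $\theta_1$ does not repair this: in the terms where the derivatives fall on $\widehat{a}(\theta_1)$, the denominator still only supplies $\bracket{\eta}^{-2}$, so the $\eta$-integral remains divergent. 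The divergence is removed only by the cancellation the paper exploits: subtract $\frac{1}{8\pi^2\eta^2+2\mu}$ as in \eqref{e:Exy_expansion}; for each fixed $\eta$ the subtracted, $\theta$-independent part contributes a multiple of $\iint \widehat{a}(\theta_1)\overline{\widehat{a}(\theta_2)}e^{2\pi i\sqrt{\eps}(\theta_1-\theta_2)x}\,\md\theta_1\,\md\theta_2=|a_\eps(x)|^2=0$ because $|x|>\eps^{-1/2}$, while the remaining difference is bounded by $\sqrt{\eps}$ times \eqref{e:Exy_expansion_part1_2}, hence decays like $\bracket{\eta}^{-4}$ and is absolutely integrable against $|\eta|$. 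Your plan never invokes this vanishing of $a_\eps$ at $x$, which is the actual mechanism behind the lemma (for $|x|\leq\eps^{-1/2}$ the pointwise variance is in fact logarithmically divergent, so no estimate ignoring the cutoff can succeed).

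Second, the power counting does not close. $N$ integrations by parts yield at best $\eps^{2\gamma}(\sqrt{\eps}|x|)^{-N}$, and $\eps^{2\gamma}(\sqrt{\eps}|x|)^{-N}\leq\eps^{2\gamma}|x|^{-1}$ requires $|x|\geq\eps^{-N/(2(N-1))}$; in particular your step $\eps^{2\gamma-1}|x|^{-2}\leq\eps^{2\gamma}|x|^{-1}$ needs $|x|\geq\eps^{-1}$, not $|x|>\eps^{-1/2}$, and no fixed $N$ covers, say, $|x|=2\eps^{-1/2}$. The paper instead performs a single integration by parts on the subtracted representation, using $\sqrt{\eps}\,x\,e^{2\pi i\sqrt{\eps}x(\theta_1-\theta_2)}=\frac{1}{2\pi i}\partial_{\theta_1}e^{2\pi i\sqrt{\eps}x(\theta_1-\theta_2)}$; the factor $\eps^{-1/2}$ thereby incurred is compensated because both the subtracted kernel and its $\theta_1$-derivative carry an explicit factor $\sqrt{\eps}$, which gives $|x|\,\EE X_\eps^2(x)\lesssim\eps^{2\gamma}$ directly. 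To make your route work you would need both the subtraction (to restore convergence) and this $\sqrt{\eps}$ bookkeeping (to restore the exponent); as written, neither is present.
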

\begin{proof}
By \eqref{e:linear_Exy_expression} and the fact that $a_\eps(x) = 0$ for $|x| > \eps^{-\frac{1}{2}}$, we have the expression
\begin{align*}
    x\EE X_\eps^2(x) = &\eps^{2\gamma} \int_\RR |\eta| \bigg( \iint\limits_{\RR^2} \widehat{a}(\theta_1)\overline{\widehat{a}(\theta_2)} \cdot x \, e^{2\pi i \sqrt{\eps} x(\theta_1-\theta_2)} \\
    & \Big(\frac{1}{4\pi^2 \big( (\eta - \sqrt{\eps}\theta_1)^2 + (\eta - \sqrt{\eps}\theta_2)^2 \big) + 2\mu}-\frac{1}{8\pi^2\eta^2+2\mu}\Big)\,\md \theta_1\,\md \theta_2\bigg)\,\md \eta\\
\end{align*}
Noting that $\sqrt{\eps}xe^{2\pi i \sqrt{\eps}x(\theta_1-\theta_2)} = \frac{1}{2\pi i}\frac{\d}{\d \theta_1}e^{2\pi i \sqrt{\eps}x(\theta_1-\theta_2)}$, and that 
\begin{align*}
    &\bigg|\frac{\d}{\d \theta_1} \Big(\frac{1}{4\pi^2 \big( (\eta - \sqrt{\eps}\theta_1)^2 + (\eta - \sqrt{\eps}\theta_2)^2 \big) + 2\mu}-\frac{1}{8\pi^2\eta^2+2\mu} \Big) \bigg|\\
    \lesssim & \frac{\big(|\eta|+|\theta_1|\big) \, \bracket{\theta_1}^2}{\bracket{\eta}^4} + \frac{\big(|\theta_1+\theta_2| \cdot |\eta| + \theta_1^2+\theta_2^2\big) \, (|\eta|+|\theta_1|) \, \bracket{\theta_1}^4}{\bracket{\eta}^6}\;,
\end{align*}
where we have used \eqref{e:Exy_expansion_part1_1}, the conclusion follows from integration by parts (with respect to $\theta_1$).
\end{proof}

\begin{lem}\label{lem:linear_Exy_continuity_space}
Fix $\nu \in (0,1)$. We have the bound
    \begin{equ}
        \EE \big(X_\eps(x)-X_\eps(y)\big)^2\lesssim \eps^{2\gamma} |x-y|^{\nu}
    \end{equ}
for all $\eps \in [0,1]$ and $x,y \in \RR$ with $|x|, |y| > \eps^{-\frac{1}{2}}$. 
\end{lem}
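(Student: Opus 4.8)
The proof will follow the same Fourier-analytic scheme as Lemmas~\ref{lem:linear_Exy_bound} and~\ref{lem:linear_Ex_decay}, but now tracking the increment $X_\eps(x) - X_\eps(y)$. First I would write
\begin{equation*}
    \EE \big( X_\eps(x) - X_\eps(y) \big)^2 = \EE X_\eps^2(x) + \EE X_\eps^2(y) - 2 \EE \big( X_\eps(x) X_\eps(y) \big)\;,
\end{equation*}
and use the expression \eqref{e:linear_Exy_expression} for each term. Since $|x|, |y| > \eps^{-\frac{1}{2}}$, we have $a_\eps(x) = a_\eps(y) = 0$, so the second term on the right hand side of the decomposition \eqref{e:Exy_expansion} contributes nothing (its inverse Fourier transform is a multiple of $a_\eps(x) a_\eps(y)$, $a_\eps(x)^2$, $a_\eps(y)^2$ respectively, all vanishing). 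Hence only the "regular" part of the denominator---the first term in \eqref{e:Exy_expansion}, which is bounded by \eqref{e:Exy_expansion_part1_2}---survives. Combining the three contributions,
\begin{equation*}
    \EE \big( X_\eps(x) - X_\eps(y) \big)^2 = \eps^{2\gamma} \int_{\RR} |\eta| \bigg( \iint\limits_{\RR^2} \widehat{a}(\theta_1) \overline{\widehat{a}(\theta_2)} \, K_\eps(\eta, \theta_1, \theta_2) \, \big| e^{2\pi i \sqrt{\eps} \theta_1 x} - e^{2\pi i \sqrt{\eps} \theta_1 y} \big|^2 \,\md\theta_1 \,\md\theta_2 \bigg) \md\eta\;,
\end{equation*}
wait---this needs a little care, since the three phase factors combine into $|e^{2\pi i\sqrt\eps\theta_1 x} - e^{2\pi i\sqrt\eps\theta_1 y}|^2$ only after also accounting for the $\theta_2$-dependence; the cleanest way is to note that the $x$-dependence of $X_\eps$ enters only through the factor $e^{2\pi i\sqrt\eps\theta_1 x}$ in the Fourier representation of $\widehat{X_\eps}$, so the increment introduces the factor $(e^{2\pi i\sqrt\eps\theta_1 x} - e^{2\pi i\sqrt\eps\theta_1 y})$ in the $\theta_1$ integral and its conjugate in the $\theta_2$ integral, giving an integrand with the $L^2$-type structure above once the kernel $K_\eps$ (the regular part of the denominator) is treated.

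\textbf{Key estimate.} The elementary bound $|e^{i\alpha} - e^{i\beta}| \lesssim |\alpha - \beta|^{\nu} \wedge 1$ for $\nu \in (0,1)$ gives
\begin{equation*}
    \big| e^{2\pi i \sqrt{\eps} \theta_1 x} - e^{2\pi i \sqrt{\eps} \theta_1 y} \big| \lesssim \big( \sqrt{\eps} |\theta_1| \, |x-y| \big)^{\nu} \wedge 1 \lesssim \eps^{\nu/2} |\theta_1|^{\nu} |x-y|^{\nu}\;,
\end{equation*}
and similarly for the $\theta_2$-factor (or one keeps one factor bounded by $1$ to avoid an extra $|x-y|^\nu$). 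Plugging this into the integral and using that, by \eqref{e:Exy_expansion_part1_2}, the kernel $K_\eps(\eta,\theta_1,\theta_2)$ times $|\eta|$ is integrable against $\widehat{a}(\theta_1)\overline{\widehat{a}(\theta_2)}$ (which decays faster than any polynomial since $a \in \cC_c^\infty$, absorbing the extra powers $|\theta_1|^\nu$, $\bracket{\theta_1}^2$, etc.), one gets
\begin{equation*}
    \EE \big( X_\eps(x) - X_\eps(y) \big)^2 \lesssim \eps^{2\gamma} \cdot \eps^{\nu/2} |x-y|^{\nu} \cdot \big( \text{convergent integral} \big)\;.
\end{equation*}
The extra $\eps^{\nu/2}$ is harmless; since $\eps \leq 1$ one simply bounds it by $1$ to reach the stated $\eps^{2\gamma} |x-y|^{\nu}$. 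I should be slightly careful that the integral over $\eta$ converges: the bound \eqref{e:Exy_expansion_part1_2} decays like $\bracket{\eta}^{-3}$ after the $|\eta|$ weight, which is integrable, and the polynomial growth in $\theta_1, \theta_2$ is killed by $\widehat{a}$.

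\textbf{Main obstacle.} The only genuinely delicate point is bookkeeping: making sure the subtraction $-\,\frac{1}{8\pi^2\eta^2 + 2\mu}$ in \eqref{e:Exy_expansion} is correctly used (it is what makes the $\eta$-integral converge at infinity and what gives the decay $\bracket{\eta}^{-4}$ in \eqref{e:Exy_expansion_part1_2}), while simultaneously verifying that its contribution to $\EE(X_\eps(x)-X_\eps(y))^2$ vanishes because of the hypothesis $|x|,|y| > \eps^{-1/2}$. One wants to split the computation so that the constant-in-$(\theta_1,\theta_2)$ piece is handled by Fourier inversion (giving terms proportional to $a_\eps(x), a_\eps(y)$ that vanish) before applying the $|e^{i\alpha}-e^{i\beta}|$ bound only to the remaining regular piece. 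Apart from this organizational care, the estimate is routine and entirely parallel to Lemmas~\ref{lem:linear_Exy_bound} and~\ref{lem:linear_Ex_decay}.
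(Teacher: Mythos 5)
Your overall scheme -- polarizing the increment, observing that the constant-in-$(\theta_1,\theta_2)$ part of the denominator in \eqref{e:Exy_expansion} contributes terms proportional to $a_\eps(x)a_\eps(y)$, $a_\eps(x)^2$, $a_\eps(y)^2$ which vanish since $|x|,|y|>\eps^{-1/2}$, and then estimating the remaining piece via a H\"older bound on phase differences together with the decay \eqref{e:Exy_expansion_part1_2} -- is exactly the route the paper takes. However, your key estimate rests on a false identification of the phase: in \eqref{e:linear_Exy_expression} the $x$-dependence is \emph{not} carried by $e^{2\pi i\sqrt{\eps}\theta_1 x}$ alone, because the factor $e^{-2\pi i\eta(x-y)}$ also depends on $x$; the full phase attached to $x$ is $e^{-2\pi i(\eta-\sqrt{\eps}\theta_1)x}$ (and $e^{2\pi i(\eta-\sqrt{\eps}\theta_2)y}$ for $y$). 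Consequently the increment produces difference factors $e^{-2\pi i(\eta-\sqrt{\eps}\theta_1)x}-e^{-2\pi i(\eta-\sqrt{\eps}\theta_1)y}$, whose H\"older bound is $|\eta-\sqrt{\eps}\theta_1|^{\nu}|x-y|^{\nu}\lesssim(|\eta|^{\nu}+|\theta_1|^{\nu})|x-y|^{\nu}$. Two consequences: the factor $\eps^{\nu/2}$ you extract is spurious, and, more importantly, an extra power $|\eta|^{\nu}$ appears in the $\eta$-integral which must be absorbed by the $\bracket{\eta}^{-4}$ decay of the subtracted kernel in \eqref{e:Exy_expansion_part1_2}; your write-up never performs this convergence check because your (incorrect) difference factor carries no $\eta$-dependence at all. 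As a sanity check that something is off: if $x$ entered only through frequencies of order $\sqrt{\eps}$, then $X_\eps$ would be smooth on scales $o(\eps^{-1/2})$, contradicting its logarithmic correlations in Lemma~\ref{lem:linear_Exy_bound}.

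The repair is short and is what the paper does: write $\EE\big(X_\eps(x)-X_\eps(y)\big)^2$ directly from \eqref{e:linear_Exy_expression} as an integral with the product of the two difference factors above, use $a_\eps(x)=a_\eps(y)=0$ to replace the denominator by its subtracted version, bound each difference factor by $|\eta-\sqrt{\eps}\theta_j|^{\nu/2}|x-y|^{\nu/2}$ (or one with exponent $\nu$ and the other trivially by $2$), and then verify that the resulting weight $|\eta|\,(|\eta|^{\nu}+|\theta_1|^{\nu})(|\eta|^{\nu}+|\theta_2|^{\nu})$ against \eqref{e:Exy_expansion_part1_2} is integrable in $\eta$ for $\nu\in(0,1)$, the polynomial growth in $\theta_1,\theta_2$ being killed by the rapid decay of $\widehat{a}$. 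Without this last step the argument is incomplete, and with your version of the phase it proves a statement about a different, much smoother object.
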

\begin{proof}
By \eqref{e:linear_Exy_expression}, we have the expression
\begin{align*}
    &\EE \big(X_\eps(x)-X_\eps(y)\big)^2 = \eps^{2\gamma} \int |\eta| \bigg( \iint\limits_{\RR^2} \widehat{a}(\theta_1) \overline{\widehat{a}(\theta_2)} \cdot \\
    &\phantom{111}\frac{\big(e^{-2\pi i (\eta-\sqrt{\eps}\theta_1)x} -e^{-2\pi i (\eta-\sqrt{\eps}\theta_1)y}\big)\big(e^{2\pi i (\eta-\sqrt{\eps}\theta_2)x} -e^{2\pi i (\eta-\sqrt{\eps}\theta_2)y}\big)}{4\pi^2(\eta - \sqrt{\eps}\theta_1)^2+4\pi^2(\eta - \sqrt{\eps}\theta_2)^2+2\mu}\,\md \theta_1\,\md \theta_2 \bigg)\,\md \eta\;.
\end{align*}
Note that $a_\eps(x) = a_\eps(y) = 0$ for $|x|,|y| > \eps^{-\frac{1}{2}}$. We then can replace the denominator in the above integral by
\begin{equation*}
    \frac{1}{4\pi^2(\eta - \sqrt{\eps}\theta_1)^2+4\pi^2(\eta - \sqrt{\eps}\theta_2)^2+2\mu} - \frac{1}{8 \pi^2 \eta^2 + 2 \mu}\;.
\end{equation*}
Then the conclusion follows from \eqref{e:Exy_expansion_part1_2} and the fact that
\begin{equ}
    |e^{-2\pi i (\eta-\sqrt{\eps}\theta_1)x} -e^{-2\pi i (\eta-\sqrt{\eps}\theta_1)y}|\lesssim |\eta-\sqrt{\eps}\theta_1|^\nu|x-y|^\nu\lesssim \big(|\eta|^\nu + |\theta_1|^\nu\big)|x-y|^\nu\;.
\end{equ}
This completes the proof. 
\end{proof}
\begin{lem} \label{lem:linear_outside}
For every $p>8$, we have the bound
    \begin{equation*}
    \EE\sup_{\substack{|x|\geq \eps^{-\frac{1}{2}}}}|X_\eps(x)|^p\lesssim \eps^{p\gamma-\frac{1}{2}}\;.
\end{equation*}
The proportionality constant is independent of $\eps \in [0,1]$. 
\end{lem}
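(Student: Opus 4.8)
The plan is to control the supremum of $|X_\eps|$ over the unbounded region $\{|x| \geq \eps^{-\frac{1}{2}}\}$ by combining the pointwise second-moment decay from Lemma~\ref{lem:linear_Ex_decay} with the spatial H\"older estimate from Lemma~\ref{lem:linear_Exy_continuity_space}, via a Kolmogorov-type chaining argument performed dyadically in the distance from the origin. First I would decompose $\{|x| \geq \eps^{-\frac{1}{2}}\}$ into dyadic annuli $A_j := \{2^j \eps^{-\frac{1}{2}} \leq |x| < 2^{j+1} \eps^{-\frac{1}{2}}\}$ for $j \geq 0$. On each annulus, Lemma~\ref{lem:linear_Ex_decay} gives $\EE X_\eps^2(x) \lesssim \eps^{2\gamma} |x|^{-1} \lesssim \eps^{2\gamma} (2^j \eps^{-\frac{1}{2}})^{-1} = \eps^{2\gamma + \frac{1}{2}} 2^{-j}$, and Lemma~\ref{lem:linear_Exy_continuity_space} gives, for $x, y \in A_j$ (so that both have modulus exceeding $\eps^{-\frac{1}{2}}$), $\EE(X_\eps(x) - X_\eps(y))^2 \lesssim \eps^{2\gamma} |x-y|^\nu$ for any fixed $\nu \in (0,1)$.

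The key steps, in order: (1) apply Gaussian hypercontractivity to upgrade these $L^2$ bounds to $L^p$ bounds for any $p$, picking up only a $p$-dependent constant; (2) on a single annulus $A_j$, whose diameter is $\lesssim 2^j \eps^{-\frac{1}{2}}$, apply Kolmogorov's continuity theorem (in the form giving a moment bound on the supremum of a field over a bounded one-dimensional set) to deduce
\begin{equation*}
    \EE \sup_{x \in A_j} |X_\eps(x)|^p \lesssim \EE |X_\eps(x_j)|^p + \big(\text{diam}(A_j)\big)^{p(\frac{\nu}{2} - \frac{1}{p})_+ \cdot \text{something}} \cdot \big(\eps^{2\gamma}\big)^{p/2}
\end{equation*}
for a reference point $x_j \in A_j$ — more precisely, choosing $\nu$ close to $1$ and $p > 8$ large, the chaining series over scales inside $A_j$ converges and yields $\EE \sup_{x \in A_j}|X_\eps(x)|^p \lesssim \eps^{p\gamma} 2^{-jp/2} + \eps^{p\gamma} \big(2^j \eps^{-\frac{1}{2}}\big)^{\frac{\nu p}{2} - 1}$; (3) choose $\nu$ and $p$ so that the exponent of $2^j$ is strictly negative (this requires $\frac{\nu p}{2} - 1 < 0$ to compete against the $\eps^{-\frac{1}{2}(\frac{\nu p}{2} - 1)}$ factor being absorbed — actually the cleanest route is to note $(2^j \eps^{-\frac 12})^{\nu p/2 - 1} = 2^{j(\nu p/2 -1)} \eps^{-(\nu p/2-1)/2}$ and to keep $\nu p$ bounded while $j$-summability is arranged by incorporating the pointwise decay), so that $\sum_{j \geq 0} \EE \sup_{x \in A_j}|X_\eps(x)|^p$ converges and is bounded by $\eps^{p\gamma - \frac{1}{2}}$ times a constant; (4) conclude by $\EE \sup_{|x| \geq \eps^{-1/2}} |X_\eps(x)|^p \leq \sum_{j \geq 0} \EE \sup_{x \in A_j} |X_\eps(x)|^p$.

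The bookkeeping of exponents in step (3) is the delicate part: one must simultaneously ensure the dyadic series in $j$ converges and that the resulting $\eps$-power is exactly $\eps^{p\gamma - \frac{1}{2}}$ (matching the form of the bounds in Lemmas~\ref{lem:linear_X_test_compact} and~\ref{lem:linear_outside}, where the extra $\eps^{-\frac{1}{2}}$ comes from summing a geometric-in-$j$ series of length $\sim \log(\eps^{-1/2})$ or from the $2^j$-factors). Concretely, since $\text{diam}(A_j) \asymp 2^j \eps^{-1/2}$ and the H\"older constant on $A_j$ is $\eps^{\gamma}$ (in the $L^p$ sense after hypercontractivity), the Kolmogorov bound contributes $\eps^{p\gamma}(2^j \eps^{-1/2})^{\frac{\nu p}{2} - 1}$; balancing this against the pointwise term $\eps^{p\gamma} 2^{-jp/2}$ and summing forces one to take $\nu$ slightly less than $1$ and $p$ large enough that $\frac{\nu p}{2} - 1 < \frac{p}{2} - \frac{1}{2}$ and that the $j$-series converges, and the $\eps^{-1/2}$ emerges from the $\eps^{-(\nu p/2 - 1)/2}$ factor once $\nu p/2 - 1$ is taken close to $1$; taking $p > 8$ gives enough room. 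I expect the main obstacle to be precisely this calibration — ensuring the chaining estimate on each annulus is uniform in $\eps$ after the rescaling, and that the union bound over the infinitely many annuli still produces the claimed $\eps^{p\gamma - \frac{1}{2}}$ rather than a worse power. An alternative that sidesteps some of this is to substitute $Y_\eps(x) := X_\eps(\eps^{-1/2} x)$, reducing everything to a field on $\{|x| \geq 1\}$ with second moment $\lesssim \eps^{2\gamma}|x|^{-1}$ and increments $\lesssim \eps^{2\gamma} \eps^{\nu/2}|x-y|^\nu$, and then running a single weighted Kolmogorov argument with the weight $|x|^{-1}$; I would try this reformulation first as it makes the $\eps$-powers transparent.
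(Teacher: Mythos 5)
Your toolkit (dyadic decomposition in $|x|$, hypercontractivity, Kolmogorov/chaining, and the two inputs Lemmas~\ref{lem:linear_Ex_decay} and~\ref{lem:linear_Exy_continuity_space}) is the same as the paper's, but the way you combine the two lemmas has a genuine gap. You feed the pointwise decay $\EE X_\eps^2(x)\lesssim \eps^{2\gamma}|x|^{-1}$ into the argument only through a single reference point $x_j$ per annulus, and then chain over the whole annulus $A_j$ (of length $\sim 2^j\eps^{-1/2}$) with the raw increment bound $\EE(X_\eps(x)-X_\eps(y))^2\lesssim \eps^{2\gamma}|x-y|^\nu$, whose constant does not decay in $j$. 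The standard Kolmogorov/chaining estimate over an interval of length $L$ with $\|X_\eps(x)-X_\eps(y)\|_{\lL^p}\lesssim \eps^\gamma |x-y|^{\nu/2}$ gives $\EE\sup_{x,y}|X_\eps(x)-X_\eps(y)|^p\lesssim \eps^{p\gamma}L^{\nu p/2}$ (the exponent is $\nu p/2$, not $\nu p/2-1$), so your per-annulus contribution is $\eps^{p\gamma}\big(2^j\eps^{-1/2}\big)^{\nu p/2}$, which grows geometrically in $j$ and makes the sum over annuli diverge. Even granting your exponent $\nu p/2-1$, the calibration in step (3) is internally inconsistent: taking $\nu p/2-1$ close to $1$ (to manufacture the $\eps^{-1/2}$) makes $\sum_j 2^{j(\nu p/2-1)}$ divergent, while taking it close to $0$ destroys the $\eps^{-1/2}$ and puts the chaining at the borderline $\nu p\approx 2$. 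The rescaled alternative at the end has the same structural problem, plus a sign error: substituting $x\mapsto \eps^{-1/2}x$ turns the increment bound into $\eps^{2\gamma-\nu/2}|x-y|^\nu$, not $\eps^{2\gamma+\nu/2}|x-y|^\nu$.

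The missing idea, which is exactly what the paper does, is to use the pointwise decay at \emph{every} scale by interpolating the two lemmas before chaining: for $x,y$ in the $\ell$-th annulus one also has $\EE(X_\eps(x)-X_\eps(y))^2\lesssim \eps^{2\gamma+\frac12}2^{-\ell}$, and taking the geometric mean with the H\"older bound gives $\EE(X_\eps(x)-X_\eps(y))^2\lesssim \eps^{2\gamma}\,2^{-\ell/2}|x-y|^{\nu/2}$, i.e.\ a H\"older constant decaying in $\ell$. Applying hypercontractivity and Kolmogorov on unit intervals $[M,M+1]$ (where the chaining needs only $\nu p/4>1$) yields $\EE\sup_{[M,M+1]}|X_\eps|^p\lesssim \eps^{p\gamma}2^{-p\ell/4}$, and the $\eps^{-1/2}$ in the statement arises from the union bound over the $\sim 2^\ell\eps^{-1/2}$ unit intervals in each annulus, not from an $\eps$-power produced by the chaining exponent; the resulting factor $2^{\ell(1-p/4)}$ is summable over $\ell$ for $p$ large ($p>8$ leaves room for $\nu<1$ and the interpolation loss). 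Your dyadic skeleton is fine; without this interpolation (or an equivalent weighted chaining that uses the decay at every point, done with the correct $\eps$-powers) the argument as written does not close.
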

\begin{proof}
For notational simplicity, we assume the supremum is taken over positive $x > \eps^{-\frac{1}{2}}$. Interpolating Lemmas~\ref{lem:linear_Ex_decay} and~\ref{lem:linear_Exy_continuity_space}, and using hypercontractivity as well as Kolmogorov's continuity criterion, we have
\begin{equ}
    \EE \sup_{x \in [M, M+1]} |X_\eps(x)|^p\lesssim \eps^{p\gamma}2^{-\frac{p\ell}{4}}
\end{equ}
for all $\eps \in [0,1]$, $\ell \in \NN$ and $M \in [2^\ell / \sqrt{\eps}, \; 2^{\ell+1} / \sqrt{\eps}]$. 

Summing over integers $M \in [2^\ell / \sqrt{\eps}, \; 2^{\ell+1} / \sqrt{\eps}]$ then gives
\begin{equation}\label{e:X_eps_decay}
    \EE \sup_{x \in [2^\ell / \sqrt{\eps}, \; 2^{\ell+1} / \sqrt{\eps}]} |X_\eps(x)|^p \lesssim \eps^{p \gamma - \frac{1}{2}} \cdot 2^{\ell-\frac{p\ell}{4}}\;.
\end{equation}
Further summing over the above bound over $\ell \in \NN$, the conclusion then follows for sufficiently large $p$. 
\end{proof}

\begin{lem} \label{lem:linear_fixed_time}
    For every $\kappa' \in (0,1)$ and $p$ sufficiently large (depending on $\kappa'$), we have the bound
    \begin{equ}
        \EE \|X_\eps[t]\|_{\cC^{-\kappa'}}^p \lesssim \eps^{p\gamma-\frac{1}{2}}\;.
    \end{equ}
    The proportionality constant is independent of $\eps \in [0,1]$ and $t \geq 0$. 
\end{lem}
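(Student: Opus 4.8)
\textbf{Proof plan for Lemma~\ref{lem:linear_fixed_time}.}

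The plan is to reduce the bound on $\|X_\eps[t]\|_{\cC^{-\kappa'}}^p$ to a combination of the estimates already established in this appendix: the ``inside the cutoff'' estimate (Lemma~\ref{lem:linear_X_test_compact}) and the ``outside the cutoff'' estimate (Lemma~\ref{lem:linear_outside}), glued together with a suitable characterisation of the negative H\"older norm. First I would recall that, up to equivalence of norms, one may test $X_\eps$ against a fixed family of mollifiers $\varphi_x^\lambda = \lambda^{-1}\varphi((\cdot - x)/\lambda)$ for a single fixed nonnegative $\varphi \in \cC_c^\infty(-1,1)$ rather than the full unit ball of $\cC^{\kappa'}$; this is the standard wavelet/Littlewood--Paley description of $\cC^{-\kappa'}$ (see e.g.\ the discussion preceding Lemma~\ref{lem:linear_EXphi_bound}), so that
\begin{equation*}
    \|X_\eps[t]\|_{\cC^{-\kappa'}}^p \lesssim \sup_{\lambda \in (0,\frac12]}\sup_{x \in \RR} \lambda^{p\kappa'} |\bracket{X_\eps[t], \varphi_x^\lambda}|^p + \text{(lower-frequency term)}\;,
\end{equation*}
where the low-frequency contribution is controlled by $\sup_x |(\qQ_1 X_\eps)(x)|$, which in turn is dominated by the pointwise supremum of $X_\eps$ smoothed at unit scale.

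\textbf{Splitting by spatial location.} The key structural fact is that $a_\eps$ is supported in $\{|x| \le \eps^{-1/2}\}$, so $X_\eps$ is only genuinely ``rough'' there; far away it decays. Accordingly I would split the supremum over $x$ into $\{|x| \le 2\eps^{-1/2}\}$ and $\{|x| > 2\eps^{-1/2}\}$. On the near region, for each dyadic $\lambda = 2^{-j}$ with $j \ge 0$ I apply Lemma~\ref{lem:linear_X_test_compact} with $\nu < \kappa'$ (and $p$ large depending on $\nu$), giving $\EE \sup_{|x|\le \eps^{-1/2}} |\bracket{X_\eps,\varphi_x^\lambda}|^p \lesssim \eps^{p\gamma - 1/2} \lambda^{-p\nu}$; multiplying by $\lambda^{p\kappa'}$ yields $\eps^{p\gamma-1/2} 2^{-jp(\kappa'-\nu)}$, which is summable over $j \ge 0$. (Mollifiers centred slightly outside $|x|\le\eps^{-1/2}$ but at scale $\lambda\le\frac12$ still only see the region $|x|\le 2\eps^{-1/2}$, so an enlargement of the constant absorbs them.) On the far region, since $X_\eps$ is a genuine continuous function there with $|\bracket{X_\eps,\varphi_x^\lambda}| \le \sup_{|y-x|\le\lambda}|X_\eps(y)|$, I simply bound $\lambda^{p\kappa'}|\bracket{X_\eps,\varphi_x^\lambda}|^p \le \sup_{|x|>\eps^{-1/2}}|X_\eps(x)|^p$ and invoke Lemma~\ref{lem:linear_outside} to get $\EE(\cdots) \lesssim \eps^{p\gamma-1/2}$. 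The low-frequency term is handled the same way (it is controlled by the global pointwise supremum, which on the near region is a special case of Lemma~\ref{lem:linear_X_test_compact} with $\lambda$ of order $1$, and on the far region is Lemma~\ref{lem:linear_outside}). Combining the pieces and taking expectations gives the claimed bound $\EE\|X_\eps[t]\|_{\cC^{-\kappa'}}^p \lesssim \eps^{p\gamma-1/2}$, with a constant independent of $t$ by stationarity of $X_\eps$.

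\textbf{Main obstacle.} The routine-but-delicate point is the reduction from the definition of $\|\cdot\|_{\cC^{-\kappa'}}$ (a supremum over \emph{all} test functions $\psi$ with $\|\psi\|_{\cC^{\kappa'}}\le 1$, all base points, and all scales) to the supremum over the single fixed mollifier family $\varphi_x^\lambda$ that Lemmas~\ref{lem:linear_X_test_compact} and~\ref{lem:linear_outside} are phrased in terms of; one needs the standard fact that a negative-order H\"older norm is comparable to the one built from a fixed compactly supported kernel with enough cancellation (here $\kappa'<1$ so no vanishing moments beyond total mass are required), together with care that the decomposition of an arbitrary $\psi_x^\lambda$ into the $\varphi$-building blocks does not destroy the spatial localisation used in the near/far split. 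Once that is granted, the rest is summation of a geometric series in the frequency index. Finally, for the $\wW^{-\kappa',p}$ version one repeats the argument with the $\lL^p(\md x)$-norm in place of the supremum in $x$: the near region contributes $\int_{|x|\le\eps^{-1/2}} \EE|\bracket{X_\eps,\varphi_x^\lambda}|^p \md x \lesssim \eps^{-1/2}\cdot \eps^{p\gamma}|\log\lambda|^{p/2}$ by Lemma~\ref{lem:linear_EXphi_bound} and hypercontractivity, and the far region is summed dyadically exactly as in the proof of Lemma~\ref{lem:linear_outside}, using \eqref{e:X_eps_decay}; this is the ``minor modification'' alluded to in the section preamble.
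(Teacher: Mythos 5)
Your proposal is correct and follows essentially the same route as the paper, whose proof of this lemma is simply the one-line statement that it follows by combining Lemma~\ref{lem:linear_X_test_compact} (the near-region mollifier estimate, summed dyadically in the scale) with Lemma~\ref{lem:linear_outside} (the pointwise bound outside the cutoff). Your write-up just makes explicit the standard reduction of the $\cC^{-\kappa'}$ norm to a fixed mollifier family and the near/far splitting that the paper leaves implicit.
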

\begin{proof}
The statement follows by combining Lemma~\ref{lem:linear_X_test_compact} and~\ref{lem:linear_outside}. 
\end{proof}

\begin{lem}
    For every $\kappa' \in (0,1)$, $N>0$ and $p$ sufficiently large (depending on $\kappa'$), we have
    \begin{equ}
        \EE \sup_{t\in[0,\eps^{-N}]}\|X_\eps[t]\|_{\cC^{-\kappa'}}^p\lesssim \eps^{p\gamma-\frac{1}{2}-N}
    \end{equ}
    for all $\eps \in [0,1]$. 
\end{lem}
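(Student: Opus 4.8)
The plan is to upgrade the fixed-time bound of Lemma~\ref{lem:linear_fixed_time} to a bound uniform over the polynomially long time window $[0,\eps^{-N}]$ by a standard chaining-in-time argument. First I would record that, since $X_\eps$ is a Gaussian process living in a fixed (finite, here first) Wiener chaos, hypercontractivity lets us replace the $\lL^p$-in-probability control by $\lL^2$ control at the price of a constant depending only on $p$; so it suffices to obtain, for every fixed $t$ and every pair $s,t$, the two-point estimates
\begin{equ}
    \EE \|X_\eps[t]\|_{\cC^{-\kappa'}}^2 \lesssim \eps^{2\gamma - \frac{1}{2}/p}\;, \qquad \EE \|X_\eps[t] - X_\eps[s]\|_{\cC^{-\kappa'}}^2 \lesssim \eps^{2\gamma}\, |t-s|^{\rho}
\end{equ}
for some H\"older exponent $\rho>0$ (after which Kolmogorov's continuity theorem on the Banach-space-valued process $t \mapsto X_\eps[t]$, applied on each unit subinterval of $[0,\eps^{-N}]$, yields a modulus of continuity, and then one sums the resulting bounds over the $\eps^{-N}$ subintervals, exactly as in the proof of Lemma~\ref{lem:linear_outside}). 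The first estimate is already essentially Lemma~\ref{lem:linear_fixed_time}; the genuinely new ingredient is the temporal increment bound.

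For the temporal increment bound I would go through the spectral representation. Using stationarity of $X_\eps$ and the same Fourier computation as in Lemma~\ref{lem:linear_Exy_expression}, the covariance of the increment $X_\eps(t,\cdot) - X_\eps(s,\cdot)$ carries an extra factor of the form $\big|1 - e^{-(4\pi^2 \theta^2 + \mu)|t-s|}\big|^2$ (equivalently $2\big(1-e^{-(4\pi^2\theta_k^2+\mu)|t-s|}\big)$ after the stationary cancellation) inside the $\eta$-integral. One then bounds this factor by $(\text{const})\,\big((4\pi^2\eta^2+\mu)|t-s|\big)^{\rho}$ for any $\rho\in(0,1)$, which costs $|\eta|^{2\rho}$ but gains $|t-s|^\rho$; since the $\eta$-integrand in \eqref{e:linear_Exy_expression} decays fast enough (after subtracting the $\theta$-independent principal part as in \eqref{e:Exy_expansion}), the extra polynomial weight $|\eta|^{2\rho}$ is harmless for $\rho$ small, and the logarithmic divergence is worsened only to another logarithm. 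This gives $\EE|\langle X_\eps[t]-X_\eps[s], \varphi_x^\lambda\rangle|^2 \lesssim \eps^{2\gamma} |\log\lambda|\, \lambda^{-2\rho}|t-s|^{\rho}$ on the compact region $|x|\le \eps^{-1/2}$, together with the pointwise analogues of Lemmas~\ref{lem:linear_Ex_decay} and~\ref{lem:linear_Exy_continuity_space} with an additional $|t-s|^\rho$ on the tail region $|x|>\eps^{-1/2}$; combining the two regions exactly as in the fixed-time proof (Lemmas~\ref{lem:linear_X_test_compact}, \ref{lem:linear_outside}, \ref{lem:linear_fixed_time}) yields the claimed increment bound in $\cC^{-\kappa'}$, with the $\eps^{-1/2}$ coming again from summing over $\oO(\eps^{-1/2})$ unit intervals in space.

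With both two-point bounds in hand, Kolmogorov's continuity criterion for the $\cC^{-\kappa'}$-valued process on each unit time interval gives $\EE \sup_{t\in[k,k+1]}\|X_\eps[t]\|_{\cC^{-\kappa'}}^p \lesssim \eps^{p\gamma - \frac12}$ uniformly in $k$, and summing over $k \in \{0,1,\dots,\eps^{-N}\}$ produces the stated factor $\eps^{p\gamma-\frac12-N}$. Finally, the passage from $\cC^{-\kappa'}$ to $\wW^{-\kappa',p}$ needed in Lemma~\ref{lem:smallnessOfLinearSolution} is by the embedding $\wW^{-\kappa',p}\hookrightarrow\cC^{-\kappa'-1/p}$ read backwards together with the corresponding Besov-type $\lL^p$-in-space estimates, which the excerpt already indicates follow ``with only minor modifications'' of the same computation; I would just note this reduction rather than redo it. The main obstacle I anticipate is purely bookkeeping: keeping the weight $|\eta|^{2\rho}$ under control simultaneously with the subtraction of the principal part in \eqref{e:Exy_expansion} and the spatial tail decay from Lemma~\ref{lem:linear_Ex_decay}, i.e. making sure the combination of ``worse decay in $\eta$'' and ``integration by parts in $\theta_1$ for the spatial tail'' still closes; but since $\rho$ can be taken arbitrarily small and everything is polynomial, this is a routine (if slightly tedious) verification rather than a real difficulty.
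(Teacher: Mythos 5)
Your overall strategy (fixed-time bound plus a temporal increment estimate, Kolmogorov in time on unit intervals, then summing the $\oO(\eps^{-N})$ intervals to produce the factor $\eps^{-N}$) is exactly the intended structure, and your sketch would work; the difference is in how the increment estimate is obtained. The paper does it pathwise via Duhamel: for $s<t$ it writes $X_\eps[t]-X_\eps[s]=\big(e^{(t-s)(\Delta-\mu)}-\id\big)X_\eps[s]+\eps^\gamma\int_s^t e^{(t-r)(\Delta-\mu)}\big(a_\eps\sqrt{\fD}\,\md W(r)\big)$, bounds the first term by $(t-s)^{\kappa'/4}\|X_\eps[s]\|_{\cC^{-\kappa'/2}}$ (so the trade of time regularity against spatial regularity is done deterministically, with the regularity margin built in by invoking the fixed-time Lemma~\ref{lem:linear_fixed_time} in the slightly stronger norm $\cC^{-\kappa'/2}$), and observes that the second term is controlled by repeating the fixed-time argument verbatim, the only change being that the time integration runs over $[s,t]$ rather than $(-\infty,t]$, which yields the factor $(t-s)^{p\kappa'/2}$. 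Your route instead performs the same trade at the covariance level, bounding $1-e^{-(4\pi^2\theta^2+\mu)|t-s|}$ by $\big((4\pi^2\theta^2+\mu)|t-s|\big)^{\rho}$ inside the spectral representation; this is correct, but it forces you to rerun the off-support estimates (the integration by parts in $\theta_1$ behind Lemmas~\ref{lem:linear_Ex_decay} and~\ref{lem:linear_Exy_continuity_space}) with the additional $\eta$- and $\theta$-dependent factor, which is precisely the bookkeeping you flag; the paper's Duhamel split avoids redoing any of that. Two small points of precision: hypercontractivity should be applied to the Gaussian pairings $\bracket{X_\eps[t]-X_\eps[s],\varphi_x^\lambda}$ rather than to the $\cC^{-\kappa'}$ norm itself (your subsequent references to the chain Lemmas~\ref{lem:linear_X_test_compact}--\ref{lem:linear_fixed_time} show you mean this, but it is worth stating); and since your increment bound costs $\lambda^{-2\rho}$, the increment is only controlled in a slightly weaker H\"older norm, so you should run the fixed-time estimate at some $\kappa''<\kappa'$ and absorb the loss in the gap $\kappa'-\kappa''$ --- exactly the role played by $\cC^{-\kappa'/2}$ versus $\cC^{-\kappa'}$ in the paper's two-line argument.
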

\begin{proof}
For every $t >s$, by Duhamel's principle, we write
    \begin{equ}
        X_\eps[t]-X_\eps[s] = \big(e^{(t-s)(\Delta-\mu)}-\operatorname{id}\big)X_\eps[s] +\eps^\gamma\int_s^t e^{(t-r)(\Delta-\mu)}\big(a_\eps\sqrt{\fD}\,\md W(r)\big)\;.
    \end{equ}
For the first term, we use the fact that 
\begin{equ}
    \|\big(e^{(t-s)(\Delta-\mu)}-\operatorname{id}\big)X_\eps[s]\|_{\cC^{-\kappa'}}\lesssim (t-s)^{\frac{\kappa'}{4}}\|X_\eps[s]\|_{\cC^{-\frac{\kappa'}{2}}}\;.
\end{equ}
For the second term, with the same argument as Lemma~\ref{lem:linear_fixed_time} and noting additionally that the time integration is in $[s, t]$ instead of $(-\infty, t]$, one sees that 
\begin{equ}
    \EE\Big\|\int_s^t e^{(t-r)(\Delta-\mu)}\big(a_\eps\sqrt{\fD}\,\md W(r)\big)\Big\|_{\cC^{-\kappa'}}^p\lesssim (t-s)^{\frac{p\kappa'}{2}}\eps^{-\frac{1}{2}}\;.
\end{equ}
This completes the proof. 
\end{proof}
\begin{lem}
For every $k\in\NN$, we have the bound
\begin{equ}
    \EE \big| \bracket{X_\eps^{\diamond k},\varphi_x^\lambda} \big|^2 \lesssim \eps^{2k\gamma} \cdot |\log \lambda|^k
\end{equ}
for all $\eps\in[0,1]$ and $\lambda \in(0,\frac{1}{2}]$.
\end{lem}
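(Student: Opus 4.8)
The plan is to reduce the bound on $\EE|\bracket{X_\eps^{\diamond k},\varphi_x^\lambda}|^2$ to the already-established estimates on the two-point function of $X_\eps$ itself, using the Wick isometry for chaoses of the underlying Gaussian field. First I would recall that $X_\eps^{\diamond k}$ lives in the $k$-th Wick chaos generated by the Gaussian field $X_\eps$, and that for two elements of the $k$-th chaos obtained as Wick powers one has the standard orthogonality/isometry formula
\begin{equation*}
    \EE\big( X_\eps^{\diamond k}(t,x)\, X_\eps^{\diamond k}(t,y) \big) = k!\,\big( \EE\big( X_\eps(t,x)\, X_\eps(t,y) \big) \big)^k\;.
\end{equation*}
This is the Wick-power analogue of the Itô isometry and follows from the combinatorics of Hermite polynomials (one may cite the construction in Lemma~\ref{le:convergence_Wick_power}, or simply derive it from $\EE(H_k(Z_1;\sigma_1^2)H_k(Z_2;\sigma_2^2)) = k!\,(\EE Z_1 Z_2)^k$ for jointly Gaussian $Z_1,Z_2$ with the relevant variances, passing to the $\delta\to 0$ limit).

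Next I would write out, by Fubini,
\begin{equation*}
    \EE\big| \bracket{X_\eps^{\diamond k},\varphi_x^\lambda} \big|^2 = \iint_{\RR^2} \varphi_x^\lambda(y)\,\varphi_x^\lambda(z)\, \EE\big( X_\eps^{\diamond k}(t,y)\, X_\eps^{\diamond k}(t,z) \big)\, \md y\,\md z = k! \iint_{\RR^2} \varphi_x^\lambda(y)\,\varphi_x^\lambda(z)\, \big( \EE( X_\eps(t,y) X_\eps(t,z) ) \big)^k \, \md y\,\md z\;.
\end{equation*}
By Lemma~\ref{lem:linear_Exy_bound}, $|\EE(X_\eps(t,y)X_\eps(t,z))| \lesssim \eps^{2\gamma}(\log|y-z|^{-1}\vee 1)$, so the integrand is bounded by $\eps^{2k\gamma}(\log|y-z|^{-1}\vee 1)^k\,\varphi_x^\lambda(y)\varphi_x^\lambda(z)$. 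Since $\varphi$ is non-negative, compactly supported in $(-1,1)$ and $\|\varphi_x^\lambda\|_{\lL^1}=\|\varphi\|_{\lL^1}\lesssim 1$, it remains to estimate
\begin{equation*}
    \iint_{\RR^2} \varphi_x^\lambda(y)\,\varphi_x^\lambda(z)\, \big(\log|y-z|^{-1}\vee 1\big)^k\, \md y\,\md z\;.
\end{equation*}
On the support of $\varphi_x^\lambda\otimes\varphi_x^\lambda$ one has $|y-z|\leq 2\lambda \leq 1$, so $\log|y-z|^{-1}\vee 1 \lesssim |\log\lambda| + \log(2\lambda/|y-z|)$. Splitting accordingly and using that $\int_{|y-z|\leq 2\lambda}(\log(2\lambda/|y-z|))^j\,\varphi_x^\lambda(z)\,\md z \lesssim_{j} \|\varphi\|_{\lL^\infty}$ (after the change of variables $z\mapsto (z-y)/\lambda$, the logarithmic factors are integrable against the bounded density on a bounded set), one concludes that the double integral is $\lesssim_k |\log\lambda|^k$. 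Combining gives $\EE|\bracket{X_\eps^{\diamond k},\varphi_x^\lambda}|^2 \lesssim \eps^{2k\gamma}|\log\lambda|^k$, uniformly in $\eps\in[0,1]$, $\lambda\in(0,\tfrac12]$ and $x\in\RR$.

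The only genuinely non-routine point is justifying the Wick isometry identity for $X_\eps^{\diamond k}$ rigorously, i.e.\ that the $\delta\to 0$ limit defining $X_\eps^{\diamond k}$ (Lemma~\ref{le:convergence_Wick_power}) is compatible with the Hermite orthogonality relation; this is standard in the theory of Wiener chaos but should be stated carefully, e.g.\ by first establishing the identity for the mollified objects $H_k(\qQ_\delta X_\eps;\fC_\eps^{(\delta)})$ — where it is an exact Gaussian computation with $\EE(H_k(\qQ_\delta X_\eps(t,y);\fC_\eps^{(\delta)}(y))H_k(\qQ_\delta X_\eps(t,z);\fC_\eps^{(\delta)}(z))) = k!\,(\EE(\qQ_\delta X_\eps(t,y)\qQ_\delta X_\eps(t,z)))^k$ — and then passing to the limit using the $\cC^{-\nu}$ convergence from Lemma~\ref{le:convergence_Wick_power} together with the uniform-in-$\delta$ bounds on $\EE(\qQ_\delta X_\eps(t,y)\qQ_\delta X_\eps(t,z))$ implicit in the proof of Lemma~\ref{lem:linear_Exy_bound}. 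Everything else is a direct computation, and the logarithmic bookkeeping in the last display is elementary.
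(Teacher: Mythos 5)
Your proposal is correct and follows essentially the same route as the paper: the paper also invokes Wick's formula to write $\EE\big(X_\eps^{\diamond k}(t,x)X_\eps^{\diamond k}(t,y)\big) = k!\big(\EE(X_\eps(t,x)X_\eps(t,y))\big)^k$ and then concludes directly from Lemma~\ref{lem:linear_Exy_bound}, exactly as you do (with the Fubini step and the logarithmic bookkeeping left implicit there, which you spell out).
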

\begin{proof}
    By the Wick's formula, we have the identity
    \begin{equation*}
        \EE \big( X_\eps^{\diamond k}(t,x) X_\eps^{\diamond k} (t,y) \big) = k! \Big( \EE \big( X_\eps(t,x) X_\eps(t,y) \big) \Big)^k\;.
    \end{equation*}
    The desired bound then follows directly from Lemma~\ref{lem:linear_Exy_bound}. 
\end{proof}

\begin{lem} \label{lem:linear_final_approx}
    For every $k \in \NN$, $\kappa'\in(0,1)$, $N>0$ and $p$ sufficiently large (depending on $\kappa'$), we have
    \begin{equ}
        \EE \sup_{t\in[0,\eps^{-N}]}\|X_\eps^{\diamond k}[t]\|_{\cC^{-\kappa'}}^p\lesssim \eps^{kp\gamma-\frac{1}{2}-N}
    \end{equ}
    for all $\eps \in [0,1]$. Furthermore, we also have the convergence
    \begin{equation*}
        \EE \sup_{t \in [0,\eps^{-N}]} \| H_{k}\big( \qQ_\delta X_\eps[t]; \, \fC_\eps^{(\delta)} \big) - X_\eps^{\diamond k}[t] \|_{\cC^{-\nu}}^p \lesssim \eps^{kp\gamma-1-N}\delta^{\frac{\nu p}{2}}\;,
    \end{equation*}
    where the proportionality constant is independent of $\eps, \delta \in [0,1]$. 
\end{lem}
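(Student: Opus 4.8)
\textbf{Proof strategy for Lemma~\ref{lem:linear_final_approx}.} The plan is to establish the two bounds by following exactly the structure already set up in this appendix, now keeping careful track of the $\delta$-dependence. For the first bound, the single-time estimate $\EE\,|\bracket{X_\eps^{\diamond k},\varphi_x^\lambda}|^2 \lesssim \eps^{2k\gamma}|\log\lambda|^k$ is already available from the previous lemma. First I would upgrade this to a pointwise-in-space supremum bound by the same recipe used in Lemmas~\ref{lem:linear_X_test_compact} and~\ref{lem:linear_outside}: prove a spatial increment bound $\EE\,|\bracket{X_\eps^{\diamond k},\varphi_x^\lambda}-\bracket{X_\eps^{\diamond k},\varphi_{x'}^\lambda}|^2 \lesssim \eps^{2k\gamma}|\log\lambda|^k\lambda^{-\nu}|x-x'|^\nu$ (which follows from Wick's formula, the identity $\EE(X_\eps^{\diamond k}(x)X_\eps^{\diamond k}(y))=k!(\EE(X_\eps(x)X_\eps(y)))^k$, and Lemmas~\ref{lem:linear_Exy_bound}, \ref{lem:linear_EXphi_bound}, \ref{lem:linear_Ex_decay}, \ref{lem:linear_Exy_continuity_space}), then apply Gaussian hypercontractivity in the $k$-th Wiener chaos together with Kolmogorov's continuity criterion to get $\EE\sup_{|x|\le\eps^{-1/2}}|\bracket{X_\eps^{\diamond k},\varphi_x^\lambda}|^p\lesssim \eps^{kp\gamma-1/2}\lambda^{-p\nu}$, and similarly for the tail $|x|\ge\eps^{-1/2}$ using the decay $\EE(X_\eps^{\diamond k}(x))^2\lesssim(\eps^{2\gamma}|x|^{-1})^k$. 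This gives the fixed-time bound $\EE\|X_\eps^{\diamond k}[t]\|_{\cC^{-\kappa'}}^p\lesssim\eps^{kp\gamma-1/2}$ uniformly in $t$, and then the time-supremum over $[0,\eps^{-N}]$ is obtained exactly as in the proof of the preceding lemma for $X_\eps$: a Duhamel decomposition $X_\eps^{\diamond k}[t]-X_\eps^{\diamond k}[s]$ combined with a time-regularity estimate and a union bound over a mesh of size $\sim\eps^N$ of time points, costing one extra factor of $\eps^{-N}$ (hence the $\eps^{kp\gamma-1/2-N}$; the statement writes $\eps^{kp\gamma-\frac12-N}$ which I would reconcile, possibly absorbing constants, or noting the $\frac12$ vs $1$ discrepancy is a typo to be checked).

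\textbf{The convergence bound.} For the second estimate I would work at a fixed time first. Set $Y_\delta := H_k(\qQ_\delta X_\eps; \fC_\eps^{(\delta)}) - X_\eps^{\diamond k}$, which lives in the $k$-th Wiener chaos. The key is a chaos-variance computation: $\EE|\bracket{Y_\delta,\varphi_x^\lambda}|^2$ can be written via Wick's formula in terms of the covariance kernels $C_\eps(x,y):=\EE(X_\eps(x)X_\eps(y))$ and its mollified versions $C_\eps^{\delta,\delta}$, $C_\eps^{\delta,0}$ (one or both arguments mollified by $\qQ_\delta$). Using the Fourier representation \eqref{e:linear_Exy_expression}, the difference $C_\eps^{\delta,\delta}-C_\eps$ (and the partially mollified analogues) picks up a factor roughly $1-e^{-c\delta^2\eta^2}\lesssim (\delta|\eta|)^{\nu'}$ for any $\nu'\in(0,2)$, which converts into a gain $\delta^{\nu'}$ at the cost of a slightly worse power of $|\log\lambda|$ (or of $|x-y|^{-1}$) — this is where the $\delta^{\nu p/2}$ comes from after raising to the $p$-th power and choosing $\nu'$ slightly below $\nu$. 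So I would prove $\EE|\bracket{Y_\delta,\varphi_x^\lambda}|^2\lesssim \eps^{2k\gamma}\delta^{\nu}|\log\lambda|^{k}\lambda^{-\text{(small)}}$ and the analogous increment bound, then run the same hypercontractivity/Kolmogorov/union-bound machinery as in the first part to pass to $\sup_{x}$, then to $\|\cdot\|_{\cC^{-\nu}}$ at fixed time, and finally to $\sup_{t\in[0,\eps^{-N}]}$ via a Duhamel-plus-mesh argument, each step contributing the same $\eps$-power bookkeeping ($\eps^{-1/2}$ from the spatial tail sum, $\eps^{-N}$ from the time mesh, giving $\eps^{kp\gamma-1-N}$; note here the exponent is $-1$ rather than $-\frac12$, presumably because one sums a cruder tail bound, which I would make precise).

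\textbf{Main obstacle.} The genuinely delicate point is the Fourier-analytic estimate showing that mollification by $\qQ_\delta$ costs only $\delta^{\nu}$ in the covariance kernel while preserving the logarithmic (rather than worse) singularity structure, \emph{uniformly in $\eps\in[0,1]$}. Concretely, in \eqref{e:linear_Exy_expression} the mollification inserts factors $e^{-4\pi^2\delta^2(\eta-\sqrt\eps\theta_j)^2}$ or similar, and one must bound quantities like $\int_\RR|\eta|(1-e^{-c\delta^2\eta^2})\langle\eta\rangle^{-2}\,\md\eta$ together with the $\theta_1,\theta_2$ integrals against $\widehat a$, checking that the splitting trick of \eqref{e:Exy_expansion} (subtracting the $\theta$-independent leading part $\frac{1}{8\pi^2\eta^2+2\mu}$) still works so that only the explicit, $\eps$-independent, logarithmically divergent term carries the $\delta$-dependence and everything else is $\oO(\eps^{2\gamma}\delta^2)$-type. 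This is the analogue, with an extra parameter, of Lemma~\ref{lem:linear_Exy_bound} and Lemma~\ref{le:renormalisation_approx}, and I expect it to require the same care near $\eta\approx 0$ and in the large-$|\theta|$ regime; once it is in place, everything else is a routine repetition of the arguments already carried out in this appendix.
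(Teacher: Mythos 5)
Your proposal is correct and follows essentially the route the paper intends: the paper in fact states Lemma~\ref{lem:linear_final_approx} without a written proof, leaving it to follow from the preceding appendix lemmas exactly as you reconstruct — the Wick identity $\EE(X_\eps^{\diamond k}(x)X_\eps^{\diamond k}(y))=k!\,(\EE(X_\eps(x)X_\eps(y)))^k$ combined with hypercontractivity in the $k$-th chaos, Kolmogorov's criterion, the spatial tail sum for $|x|>\eps^{-1/2}$, a time-regularity plus mesh argument over $[0,\eps^{-N}]$, and a mollified-versus-unmollified covariance comparison giving the $\delta^{\nu p/2}$ gain uniformly in $\eps$ (the analogue of Lemma~\ref{le:convergence_Wick_power} with quantified constants, handled via the splitting \eqref{e:Exy_expansion} as you indicate). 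The only loose ends are minor: the "Duhamel decomposition" should be applied to $X_\eps$ itself (or one works directly with two-time covariances) rather than to $X_\eps^{\diamond k}$, and the $\eps^{-1/2}$ versus $\eps^{-1}$ bookkeeping you flag needs to be carried out, but neither affects the validity of the approach.
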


\begin{rmk}
    Lemma~\ref{lem:linear_final_approx} gives the bound with $p=+\infty$. To obtain the corresponding bound for sufficiently large (but finite) $p$, we first note that for every $p \geq 1$, there exists $\nu > 0$ such that
    \begin{equation*}
        \|X_\eps^{\diamond k}\|_{\wW^{-\kappa',p}} \lesssim \eps^{-\nu} \|X_\eps^{\diamond k}\|_{\cC^{-\kappa'}}
    \end{equation*}
    when restricted on the interval $[-\frac{1}{\sqrt{\eps}}, \frac{1}{\sqrt{\eps}}]$, and that $\nu \downarrow 0$ as $p \uparrow +\infty$. Outside the interval $[-\frac{1}{\sqrt{\eps}}, \frac{1}{\sqrt{\eps}}]$, the bound \eqref{e:X_eps_decay} implies
    \begin{equation*}
        \EE \int_{|x| > \frac{1}{\sqrt{\eps}}} |X_\eps^{\diamond k}(t,x)|^p\, \md x \lesssim \eps^{k(p\gamma - \frac{1}{2})}\;.
    \end{equation*}
    The reason it carries over for the Wick product is that the renormalization $\E |X_\eps(t,x)|^2$ is finite for $|x|>\frac{1}{\sqrt{\eps}}$. Together, they imply Lemma~\ref{lem:smallnessOfLinearSolution} for all sufficiently large $p$. 
\end{rmk}

\section{Proof of Lemma~\ref{le:renormalisation_approx}}
\label{app:renormalization}

\begin{proof} [Proof of Lemma~\ref{le:renormalisation_approx}]
Write $\phi_{s}(\eta):= e^{-4\pi^2 s\eta^2}$. Then $\fC_\eps^{(\delta)}$ has the expression
\begin{equation} \label{e:renorm_funct_expression}
    \begin{split}
    \fC_\eps^{(\delta)}(x) = \eps^{2\gamma} &\iint\limits_{\RR^2} \widehat{a}(\theta_1) \, \overline{\widehat{a}(\theta_2)} \, e^{2\pi i \sqrt{\eps} (\theta_1 - \theta_2) x}\\
    &\cdot \bigg[\int_{0}^{+\infty} e^{-2\mu s} \bigg( \int_{\RR} |\eta| \prod_{j=1}^{2} \phi_{s+\delta^2} (\eta + \sqrt{\eps} \theta_j) \, {\rm d} \eta \bigg) \, {\rm d}s \bigg] \,\md \theta_1 \,\md \theta_2\;.
    \end{split}
\end{equation}
Taylor expanding $\phi_{s+\delta^2}(\eta + \sqrt{\eps} \theta_j)$ at $\eta$, we can write
\begin{equation} \label{e:phi_product_expansion}
    \begin{split}
    &\prod_{j=1}^{2} \phi_{s+\delta^2}(\eta + \sqrt{\eps} \theta_j) = \phi_{s+\delta^2}^2(\eta) + \sqrt{\eps} \, \phi_{s+\delta^2}(\eta) \, \phi_{s+\delta^2}'(\eta) \cdot (\theta_1 + \theta_2)\\
    &+ \prod_{j=1}^{2} \Big( \phi_{s+\delta^2}(\eta + \sqrt{\eps} \theta_j) - \phi_{s+\delta^2}(\eta) \Big) +  \phi_{s+\delta^2}(\eta) \sum_{j=1}^{2} R_{s+\delta^2,\eps}(\eta, \theta_j)\;,
    \end{split}
\end{equation}
where
\begin{equ}
    R_{s+\delta^2, \,\eps}(\eta, \theta_j) = \eps \theta_j^2 \int_0^1 (1-\nu) \, \phi_{s+\delta^2}''(\eta + \sqrt{\eps}\nu\theta_j) \,\md \nu\;.
\end{equ}
We plug each term on the right hand side of \eqref{e:phi_product_expansion} back into the integral expression of $\fC_\eps^{(\delta)}$ and analyze their contributions. 

For the term $\phi_{s+\delta}^2(\eta)$, by Fourier inversion formula, its contribution to $\fC_\eps^{(\delta)}$ is precisely
\begin{equation*}
    \eps^{2\gamma} a_\eps^2(x) \int_{\RR} \frac{|\eta| e^{-8\pi^2 \delta^2 \eta^2}}{2(4\pi^2 \eta^2 + \mu)} \,\md \eta = \eps^{2\gamma} C^{(\delta)} a_\eps^2(x)\;.
\end{equation*}
Also, since $\phi_{s+\delta^2}$ is even, by integrating $\eta$ first, we see the contribution from the term $\phi_{s+\delta^2}(\eta) \phi_{s+\delta^2}'(\eta)$ is precisely $0$. 

As for the product of the two first-order Taylor remainder terms, we have
\begin{equation} \label{e:phi_product_remainder_1}
    \begin{split}
    \left| \phi_{s+\delta^2}(\eta + \sqrt{\eps} \theta_1) - \phi_{s+\delta^2}(\eta) \right| &\leq \sqrt{\eps} |\theta_1| \int_{0}^{1} |\phi_{s+\delta^2}'(\eta + \sqrt{\eps} \theta_1 \nu)| \, {\rm d}\nu\\
    &\lesssim \sqrt{\eps} |\theta_1| \sqrt{s+\delta^2} \int_{0}^{1} e^{-c (s+\delta^2)(\eta + \sqrt{\eps} \theta_1 \nu)^2} \, {\rm d} \nu\;,
    \end{split}
\end{equation}
where we have used the pointwise bound
\begin{equation*}
    |\phi_r' (\widetilde{\eta})| = c_0 r |\widetilde{\eta}| \cdot e^{-4 \pi^2 r \widetilde{\eta}^2} \lesssim \sqrt{r} e^{-c r \widetilde{\eta}^2}
\end{equation*}
for some $c>0$. For the term with $\theta_2$, we simply bound the integral in $\nu$ by $1$ so that
\begin{equation} \label{e:phi_product_remainder_2}
    \left| \phi_{s+\delta^2}(\eta + \sqrt{\eps} \theta_2) - \phi_{s+\delta^2}(\eta) \right| \lesssim \sqrt{\eps} |\theta_2| \sqrt{s+\delta^2}\;.
\end{equation}
Plugging the bounds \eqref{e:phi_product_remainder_1} and \eqref{e:phi_product_remainder_2} into \eqref{e:renorm_funct_expression}, we see the contribution of the product of the two first order Taylor remainder terms to the integral in $\eta$ (including $|\eta|$) is bounded by
\begin{equation*}
    \begin{split}
    &\phantom{111}\eps |\theta_1 \theta_2| (s+\delta^2) \int_{\RR} |\eta| \int_{0}^{1} e^{-c(s+\delta^2) (\eta + \sqrt{\eps} \theta_1 \nu)^2} \,\md \nu \, \md \eta\\
    &\lesssim \eps |\theta_1 \theta_2| (s+\delta^2) \int_\RR \big( |\eta| + \sqrt{\eps} |\theta_1| \big) e^{-c(s+\delta^2) \eta^2} \,\md \eta \lesssim \eps |\theta_1 \theta_2| \Big( 1 + \sqrt{s+\delta^2} \, |\theta_1| \Big)\;,
    \end{split}
\end{equation*}
where the first bound follows from a change of variable $\eta \mapsto \eta +\sqrt{\eps} \theta_1 \nu$ together with proper changes of order of integration. Plugging this bound back into \eqref{e:renorm_funct_expression} and using the decay of $\widehat{a}$ and $e^{-2\mu s}$, we see the contribution to $\fC_\eps^{(\delta)}$ from the third term on the right hand side of \eqref{e:phi_product_expansion} is bounded by $\eps^{2\gamma+1}$. The contribution of the term $\phi_{s+\delta^2}(\eta) R_{s+\delta^2,\, \eps}(\eta,\theta_j)$ can be obtained in the same way. We have thus completed the proof. 
\end{proof}

\section{Fractional Leibniz rule}
\label{app:fractional_Leibniz}

We define the constant $C_\ff$ by
\begin{equation*}
    \frac{1}{C_\ff} := \int_{\RR} \frac{1 - \cos (2 \pi z)}{|z|^{\frac{3}{2}}} {\rm d}z = \int_\RR \frac{1 - e^{2 \pi i z}}{|z|^{\frac{3}{2}}} {\rm d}z\;.
\end{equation*}

\begin{lem}\label{lem:leibniz_1}
For every $\varphi\in\sS$, we have the representation
    \begin{equ}
        (\sqrt{\fD}\varphi)(x) = C_\ff \int_\RR \frac{\varphi(x)-\varphi(x-z)}{|z|^\frac{3}{2}}\,\md z\;.
    \end{equ}
\end{lem}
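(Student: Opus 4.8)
\textbf{Proof plan for Lemma~\ref{lem:leibniz_1}.}

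The plan is to verify the claimed representation by passing to Fourier space and recognizing the right-hand side as a Fourier multiplier whose symbol is exactly $|\theta|^{1/2}$. First I would note that for $\varphi \in \sS$ the integral $\int_\RR |z|^{-3/2}\big(\varphi(x) - \varphi(x-z)\big)\,\md z$ converges absolutely: near $z = 0$ the numerator is $O(|z|)$ by smoothness (Taylor expansion of $\varphi$, using that the odd first-order term integrates to zero against the even weight if one symmetrizes, or just crudely $|\varphi(x)-\varphi(x-z)| \lesssim |z|$), so the integrand is $O(|z|^{-1/2})$ which is integrable near $0$; near $|z| = \infty$ the numerator is bounded and $|z|^{-3/2}$ is integrable. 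Hence the right-hand side defines a tempered (indeed Schwartz-decaying, by the kind of estimate in Lemma~\ref{lem:sqrtD_decay}) function of $x$, and it suffices to compute its Fourier transform.

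Next I would write $\varphi(x) = \int_\RR \widehat\varphi(\theta)\, e^{2\pi i \theta x}\,\md\theta$ and interchange the ($\md z$, $\md\theta$) integrals — justified by the absolute convergence just discussed together with the rapid decay of $\widehat\varphi$ — to get
\begin{equation*}
    C_\ff \int_\RR \frac{\varphi(x)-\varphi(x-z)}{|z|^{3/2}}\,\md z
    = C_\ff \int_\RR \widehat\varphi(\theta)\, e^{2\pi i\theta x}
      \left( \int_\RR \frac{1 - e^{-2\pi i\theta z}}{|z|^{3/2}}\,\md z \right) \md\theta\,.
\end{equation*}
It then remains to evaluate the inner integral $I(\theta) := \int_\RR |z|^{-3/2}\big(1 - e^{-2\pi i\theta z}\big)\,\md z$. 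For $\theta \neq 0$, substitute $z = w/|\theta|$ (and, if $\theta<0$, also $w \mapsto -w$, using that the integrand's real part is even in $z$ while the imaginary part integrates to zero by symmetry) to obtain $I(\theta) = |\theta|^{1/2} \int_\RR |w|^{-3/2}\big(1 - e^{-2\pi i w}\big)\,\md w = |\theta|^{1/2}/C_\ff$, where the last equality is precisely the defining relation of $C_\ff$ (note $\int_\RR |w|^{-3/2}(1-e^{2\pi i w})\,\md w = \int_\RR |w|^{-3/2}(1-\cos 2\pi w)\,\md w$ since the sine part is odd, and this is real and equals $1/C_\ff$; the same holds with $e^{-2\pi i w}$). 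Plugging back gives $C_\ff \int_\RR \widehat\varphi(\theta) e^{2\pi i\theta x} |\theta|^{1/2} C_\ff^{-1}\,\md\theta = \int_\RR |\theta|^{1/2}\widehat\varphi(\theta)\, e^{2\pi i\theta x}\,\md\theta$, which by \eqref{e:half_derivative} is exactly $(\sqrt{\fD}\varphi)(x)$.

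The main obstacle is purely a matter of care rather than depth: justifying the interchange of integrals and, relatedly, making sense of $I(\theta)$ as a genuine (conditionally convergent but well-defined) integral — one should either regularize (e.g. insert $e^{-\eps |z|}$ and let $\eps \downarrow 0$, or split into $|z|\le 1$ and $|z|>1$ as in the proof of Lemma~\ref{lem:sqrtD_decay}) or invoke that both sides are tempered distributions agreeing on $\sS$. A clean way is to first establish the identity for the symmetrized integrand $\tfrac12(2\varphi(x) - \varphi(x-z) - \varphi(x+z))$, which is $O(|z|^2)$ near $0$ and makes absolute convergence transparent at every stage, and then observe that the original (non-symmetrized) integral coincides with it because $\int_\RR |z|^{-3/2}\big(\varphi(x+z) - \varphi(x-z)\big)\,\md z = 0$ by oddness of the integrand in $z$.
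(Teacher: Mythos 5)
Your proposal is correct and follows essentially the same route as the paper: both reduce the claim to the scaling identity $|\theta|^{1/2} = C_\ff \int_\RR |z|^{-3/2}\big(1-e^{\pm 2\pi i\theta z}\big)\,\md z$ coming from the definition of $C_\ff$, the paper by reading this as the distributional Fourier transform of the symbol $|\cdot|^{1/2}$ and convolving with $\varphi$, you by inserting Fourier inversion for $\varphi$ and applying Fubini. Your extra care about absolute convergence and the symmetrized variant is fine but not a different argument.
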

\begin{proof}
Note that by definition of $C_\ff$, we have
\begin{equation*}
    |\eta|^{\frac{1}{2}} = C_\ff \int_\RR \frac{1 - e^{2 \pi i z \eta}}{|z|^{\frac{3}{2}}} \,\md z\;,
\end{equation*}
and hence
\begin{equation*}
    \widehat{|\cdot|^{\frac{1}{2}}}(y) = C_\ff \int_{\RR} \frac{1}{|z|^{\frac{3}{2}}} \big( \delta(y) - \delta(y+z) \big) \,\md z
\end{equation*}
as a distribution in the variable $y$. We then deduce 
\begin{equation*}
    \big( \sqrt{\fD} \varphi \big)(x) = (\widehat{|\cdot|^{\frac{1}{2}}} * \varphi)(x) = C_\ff \int_\RR \frac{\varphi(x) - \varphi(x+z)}{|z|^{\frac{3}{2}}}\,\md z\;,
\end{equation*}
where we used that $|\cdot|$ is an even function so that its Fourier transform and inversion coincide. Flipping $z \mapsto -z$ does not change the value of the integral, thus concluding the proof. 
\end{proof}

\begin{lem}\label{lem:leibniz_2}
    For $\varphi,\psi\in\sS$, we have the identity
    \begin{align*}
        \big( \sqrt{\fD}(\varphi \psi) \big)(x)               =\big(\sqrt{\fD}\varphi\big)(x) \, \psi(x) + C_\ff \int_\RR \frac{\varphi(x-z) \, \big(\psi(x)-\psi(x-z)\big)}{|z|^\frac{3}{2}}\,\md z\;.
    \end{align*}
\end{lem}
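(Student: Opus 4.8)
```latex
\textbf{Proof plan for Lemma~\ref{lem:leibniz_2}.}
The statement is the fractional Leibniz rule for $\sqrt{\fD}$, and the natural route is to start from the singular-integral representation of $\sqrt{\fD}$ furnished by Lemma~\ref{lem:leibniz_1} and then simply expand the product $\varphi\psi$ inside that representation. Concretely, apply Lemma~\ref{lem:leibniz_1} with the Schwartz function $\varphi\psi$ in place of $\varphi$, to write
\[
\big(\sqrt{\fD}(\varphi\psi)\big)(x) = C_\ff \int_\RR \frac{(\varphi\psi)(x) - (\varphi\psi)(x-z)}{|z|^{\frac{3}{2}}}\,\md z\;.
\]
The key algebraic step is the pointwise identity
\[
\varphi(x)\psi(x) - \varphi(x-z)\psi(x-z) = \big(\varphi(x)-\varphi(x-z)\big)\psi(x) + \varphi(x-z)\big(\psi(x)-\psi(x-z)\big)\;,
\]
which splits the numerator into two pieces, one carrying the increment of $\varphi$ and one carrying the increment of $\psi$.

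Substituting this decomposition, the first piece gives $\psi(x)\cdot C_\ff\int_\RR \frac{\varphi(x)-\varphi(x-z)}{|z|^{3/2}}\,\md z$, which by Lemma~\ref{lem:leibniz_1} again is exactly $(\sqrt{\fD}\varphi)(x)\,\psi(x)$; the second piece is, by definition, the stated integral term $C_\ff\int_\RR \frac{\varphi(x-z)(\psi(x)-\psi(x-z))}{|z|^{3/2}}\,\md z$. The only thing that requires a word of justification is that the splitting is legitimate at the level of absolutely convergent (or at least conditionally convergent in the principal-value sense) integrals: since $\varphi,\psi\in\sS$, each of $\varphi(x)-\varphi(x-z)$ and $\psi(x)-\psi(x-z)$ is $\oO(|z|)$ near $z=0$ and bounded for large $|z|$ with rapid decay coming from the Schwartz factors evaluated at $x-z$, so each of the two resulting integrals converges absolutely, and the decomposition of the integrand is then valid termwise. (Alternatively, one can carry out the whole computation on a truncated domain $\{\epsilon<|z|<R\}$ where everything is manifestly finite, split there, and pass to the limit $\epsilon\to0$, $R\to\infty$ in each term separately.)

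I do not anticipate a genuine obstacle here: the lemma is essentially a one-line bilinear manipulation of the representation in Lemma~\ref{lem:leibniz_1}. The only mildly delicate point — and the one I would state explicitly rather than hand-wave — is the integrability check ensuring that the two pieces of the split integrand are individually integrable, so that the identity
\[
\int_\RR \frac{A(z)+B(z)}{|z|^{3/2}}\,\md z = \int_\RR \frac{A(z)}{|z|^{3/2}}\,\md z + \int_\RR \frac{B(z)}{|z|^{3/2}}\,\md z
\]
is applied in a setting where no cancellation between the two terms is being relied upon. With $\varphi(x-z)$ bounded and the increment $\psi(x)-\psi(x-z)=\oO(|z|\wedge 1)$, together with the analogous bound for the $\varphi$-increment, both integrals converge absolutely, and the proof is complete.
```
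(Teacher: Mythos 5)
Your proposal is correct and follows essentially the same route as the paper, which likewise applies Lemma~\ref{lem:leibniz_1} to $\varphi\psi$ and to $\varphi$ and combines the two via the elementary product-increment identity; your additional remark on absolute convergence of each split integral is a harmless (and reasonable) elaboration.
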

\begin{proof}
    The claim follows directly from applying Lemma~\ref{lem:leibniz_1} to $\varphi \psi$ and $\varphi$ respectively and taking the difference. 
\end{proof}

\begin{lem}\label{lem:leibniz_3}
    For $\varphi,\psi\in\sS(\RR^2)$, we have the identity
    \begin{align*}
        \sqrt{\fD}^{\otimes 2}(\varphi\psi)(x_1,x_2)      =&\sqrt{\fD}^{\otimes 2}\varphi(x_1,x_2)\psi(x_1,x_2)\\
        &+C_\ff\int_\RR\frac{\sqrt{\fD_{x_1}}\varphi(x_1,x_2-z)\big(\psi(x_1,x_2)-\psi(x_1,x_2-z)\big)}{|z|^{\frac{3}{2}}}\,\md z\\
        &+C_\ff\int_\RR\frac{\sqrt{\fD_{x_2}}\varphi(x_1-z,x_2)\big(\psi(x_1,x_2)-\psi(x_1-z,x_2)\big)}{|z|^{\frac{3}{2}}}\,\md z\\
        &+C_\ff^2\iint\limits_{\RR^2} \frac{\varphi(x_1-z_1,x_2-z_2)A}{|z_1z_2|^\frac{3}{2}}\,\md z_1 \,\md z_2\;,
    \end{align*}
    where
    \begin{align*}
        A=\psi(x_1,x_2)-\psi(x_1-z_1,x_2)-\psi(x_1,x_2-z_2)+\psi(x_1-z_1,x_2-z_2)\;.
    \end{align*}
\end{lem}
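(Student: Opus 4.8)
The statement to prove is Lemma~\ref{lem:leibniz_3}, the two–variable fractional Leibniz identity. Here is how I would approach it.

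\textbf{Strategy.} The plan is to derive the identity by applying the one–dimensional representation of Lemma~\ref{lem:leibniz_1} (or rather its consequence Lemma~\ref{lem:leibniz_2}) one variable at a time, using that $\sqrt{\fD}^{\otimes 2} = \sqrt{\fD_{x_1}} \sqrt{\fD_{x_2}}$ by definition \eqref{e:cross_fractional_derivative}, and that the two operators act on different variables and hence commute on Schwartz functions. Concretely, freeze $x_2$ and view $\varphi\psi$ as a Schwartz function of $x_1$; apply Lemma~\ref{lem:leibniz_2} in the $x_1$ variable to split $\sqrt{\fD_{x_1}}(\varphi\psi)$ into a ``main'' term $(\sqrt{\fD_{x_1}}\varphi)\,\psi$ plus a ``commutator'' term given by an integral in $z_1$ against $\varphi(x_1-z_1,x_2)\big(\psi(x_1,x_2)-\psi(x_1-z_1,x_2)\big)/|z_1|^{3/2}$. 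Then apply $\sqrt{\fD_{x_2}}$ to each of these two pieces, again via Lemma~\ref{lem:leibniz_2} in the $x_2$ variable.

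\textbf{Key steps in order.} First, record the one–variable expansion in $x_1$:
\begin{equation*}
\sqrt{\fD_{x_1}}(\varphi\psi)(x_1,x_2) = (\sqrt{\fD_{x_1}}\varphi)(x_1,x_2)\,\psi(x_1,x_2) + C_\ff\int_\RR \frac{\varphi(x_1-z_1,x_2)\big(\psi(x_1,x_2)-\psi(x_1-z_1,x_2)\big)}{|z_1|^{3/2}}\,\md z_1\;.
\end{equation*}
Second, apply $\sqrt{\fD_{x_2}}$ to the first summand: this is again a product of two Schwartz functions in $x_2$ (namely $(\sqrt{\fD_{x_1}}\varphi)(x_1,\cdot)$ and $\psi(x_1,\cdot)$), so Lemma~\ref{lem:leibniz_2} in $x_2$ yields $\sqrt{\fD}^{\otimes 2}\varphi\cdot\psi$ (using that $\sqrt{\fD_{x_2}}\sqrt{\fD_{x_1}}\varphi=\sqrt{\fD}^{\otimes2}\varphi$) plus the $z_2$–integral that becomes the second term on the right-hand side of the claimed identity. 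Third, apply $\sqrt{\fD_{x_2}}$ to the $z_1$–integral: one may differentiate under the integral sign (justified by the rapid decay of $\varphi,\psi\in\sS$, which makes the integrand and its $\sqrt{\fD_{x_2}}$ dominated uniformly in $z_1$ near and away from the singularity $z_1=0$, exactly as in the proof of Lemma~\ref{lem:leibniz_1}), so $\sqrt{\fD_{x_2}}$ lands on the integrand in $x_2$. The integrand is $\varphi(x_1-z_1,x_2)\cdot\big(\psi(x_1,x_2)-\psi(x_1-z_1,x_2)\big)$, a product of two Schwartz functions of $x_2$; applying Lemma~\ref{lem:leibniz_2} in $x_2$ to this product splits it into $(\sqrt{\fD_{x_2}}\varphi)(x_1-z_1,x_2)\big(\psi(x_1,x_2)-\psi(x_1-z_1,x_2)\big)$ — which, after integrating in $z_1$, is the third term on the right-hand side — plus a double integral in $z_1,z_2$ against $\varphi(x_1-z_1,x_2-z_2)$ times the second difference of $\psi$, which is precisely the $A$ term. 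Collecting the four pieces gives the stated identity; finally, in the third term one renames the $x_2$–difference variable to $z$ to match the paper's notation (and symmetrically for the second term), and one observes that in the surviving ``main'' second term of step three, the factor $(\sqrt{\fD_{x_2}}\varphi)(x_1-z_1,x_2)\big(\psi(x_1,x_2)-\psi(x_1-z_1,x_2)\big)/|z_1|^{3/2}$ integrated over $z_1$ is exactly $C_\ff^{-1}$ times $\sqrt{\fD_{x_1}}$ applied in the first slot, i.e.\ it reassembles into $\int \sqrt{\fD_{x_2}}\varphi(x_1-z,x_2)(\psi(x_1,x_2)-\psi(x_1-z,x_2))/|z|^{3/2}\,\md z$, matching line three.

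\textbf{Main obstacle.} The only genuine technical point is justifying that $\sqrt{\fD_{x_2}}$ may be passed inside the singular $z_1$–integral and that all the resulting iterated/double integrals converge absolutely, so that Fubini applies and the order of the two operators is immaterial. This is handled exactly as in Lemma~\ref{lem:leibniz_1}: near $z_1=0$ one uses the Lipschitz bound $|\psi(x_1,x_2)-\psi(x_1-z_1,x_2)|\lesssim |z_1|$ together with smoothness of $\varphi$ to beat $|z_1|^{-3/2}$, and for $|z_1|$ large one uses the rapid decay of $\varphi$ and boundedness of $\psi$; the same two regimes control the $z_2$–integral and the double integral (where near $z_1=z_2=0$ the second difference $A$ satisfies $|A|\lesssim |z_1||z_2|$). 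Once absolute convergence is in hand, everything is an honest application of Lemma~\ref{lem:leibniz_2} twice plus a change of notation, so no further ideas are needed. Since the paper only uses this identity for the specific Schwartz inputs arising in Sections~\ref{sec:deterministic_kernel}--\ref{sec:deterministic_zeta}, working in $\sS(\RR^2)$ throughout is sufficient.
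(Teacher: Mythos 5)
Your proposal is correct and follows essentially the same route as the paper, whose proof is exactly "apply Lemma~\ref{lem:leibniz_2} twice" (once in $x_1$, once in $x_2$ to each resulting piece); your extra care about passing $\sqrt{\fD_{x_2}}$ through the singular $z_1$-integral and absolute convergence is the only detail the paper leaves implicit. (One loose phrase — calling the third line ``$C_\ff^{-1}$ times $\sqrt{\fD_{x_1}}$ applied in the first slot'' — is not literally accurate, but the identification with line three after renaming $z_1\to z$ is right, so nothing is affected.)
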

\begin{proof}
    The claim follows directly from applying Lemma~\ref{lem:leibniz_2} twice.
\end{proof}

\begin{rmk}
    All the above identities can be extended to locally integrable functions with polynomial growth in standard ways, with the right hand sides interpreted as Schwartz distributions. 
\end{rmk}

\begin{lem}\label{lem:leibniz_4}
Suppose $\varphi: \RR^2 \rightarrow \RR$ is locally integrable, and there exists $\lambda>0$ such that
\begin{equation*}
    |\sqrt{\fD_{y_1}}\varphi|(y_1,y_2)\lesssim \bracket{y_1}^{-\frac{3}{2}}e^{-\lambda|y_2|}\;, \quad |\sqrt{\fD_{y_2}}\varphi|(y_1,y_2)\lesssim \bracket{y_2}^{-\frac{3}{2}}e^{-\lambda|y_1|}\;.
\end{equation*}
Suppose furthermore that
\begin{equation*}
    |\sqrt{\fD}^{\otimes 2}\varphi|(y,y)\lesssim \bracket{y}^{-3}\;.
\end{equation*}
Then we have
\begin{equ}
    \Big|\sqrt{\fD}^{\otimes 2}\Big((y_1+y_2)\varphi\Big)\Big|(y,y)\lesssim \bracket{y}^{-2}\;.
\end{equ}
The proportionality constant depends on $\varphi$ only through the proportionality constants in the assumptions of the lemma. 
\end{lem}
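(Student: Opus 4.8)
The statement is a Leibniz-type bound: it says that multiplying $\varphi$ by the linear weight $y_1+y_2$ before applying the cross fractional derivative $\sqrt{\fD}^{\otimes 2}$ costs essentially one power of decay on the diagonal, passing from $\bracket{y}^{-3}$ to $\bracket{y}^{-2}$. The natural tool is Lemma~\ref{lem:leibniz_3}, applied with the given $\varphi$ and with $\psi(y_1,y_2) = y_1 + y_2$. The plan is to expand $\sqrt{\fD}^{\otimes 2}\big((y_1+y_2)\varphi\big)$ using that identity, evaluate at $y_1 = y_2 = y$, and bound each of the four resulting terms by $\bracket{y}^{-2}$. The key simplification is that $\psi$ is affine, so its finite differences collapse: $\psi(y_1,y_2) - \psi(y_1 - z, y_2) = z$, likewise in the second variable, and the mixed second difference $A = \psi(y_1,y_2) - \psi(y_1-z_1,y_2) - \psi(y_1,y_2-z_2) + \psi(y_1-z_1,y_2-z_2)$ vanishes identically.

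\textbf{Term-by-term estimates.} First I would handle the ``diagonal'' term $\sqrt{\fD}^{\otimes 2}\varphi(y,y) \cdot (y+y) = 2y\, \sqrt{\fD}^{\otimes 2}\varphi(y,y)$; by the third hypothesis $|\sqrt{\fD}^{\otimes 2}\varphi|(y,y) \lesssim \bracket{y}^{-3}$, this is $\lesssim |y|\bracket{y}^{-3} \lesssim \bracket{y}^{-2}$. Next, the first integral term in Lemma~\ref{lem:leibniz_3} becomes, after setting $x_1 = x_2 = y$ and using $\psi(y,y) - \psi(y, y-z) = z$,
\begin{equation*}
    C_\ff \int_\RR \frac{(\sqrt{\fD_{y_1}}\varphi)(y, y-z) \cdot z}{|z|^{3/2}}\,\md z = C_\ff \int_\RR (\sqrt{\fD_{y_1}}\varphi)(y, y-z)\, \frac{z}{|z|^{3/2}}\,\md z\;,
\end{equation*}
and by the first hypothesis the integrand is bounded by $\bracket{y}^{-3/2} e^{-\lambda|y-z|}|z|^{-1/2}$, so Lemma~\ref{lem:technical_2}-style (really Lemma~\ref{lem:techninal_1}-style after a substitution, splitting $|z| \le 1$ and $|z|>1$; note near $z=0$ the factor $z/|z|^{3/2} = \mathrm{sgn}(z)|z|^{-1/2}$ is integrable) gives a bound $\lesssim \bracket{y}^{-3/2}$, which is even better than $\bracket{y}^{-2}$ needs. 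Wait — I must be more careful: $e^{-\lambda|y-z|}$ peaks at $z = y$, where the weight $|z|^{-1/2} = |y|^{-1/2}$, so the contribution is actually $\lesssim \bracket{y}^{-3/2}\cdot |y|^{-1/2} \lesssim \bracket{y}^{-2}$ from the region $|z|\approx|y|$, and $\lesssim \bracket{y}^{-3/2}\cdot e^{-c|y|}$ from $|z|\approx 0$; either way $\lesssim \bracket{y}^{-2}$. The second integral term is symmetric (uses $\sqrt{\fD_{y_2}}\varphi$ and the second hypothesis) and is bounded the same way. Finally, the double integral term carries the factor $A \equiv 0$ by affineness of $\psi$, so it contributes nothing.

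\textbf{Main obstacle.} The only genuine subtlety is the interplay, in the two single-integral terms, between the polynomial decay $\bracket{y}^{-3/2}$ in the first argument of $\sqrt{\fD_{y_1}}\varphi$, the exponential decay $e^{-\lambda|y-z|}$ in its second argument, and the near-singular but integrable weight $\mathrm{sgn}(z)|z|^{-1/2}$ coming from the affine finite difference divided by $|z|^{3/2}$. One must split the $z$-integral into $\{|z| \le \tfrac{|y|}{2}\}$, where $e^{-\lambda|y-z|}\lesssim e^{-\lambda|y|/2}$ beats any polynomial; $\{\tfrac{|y|}{2} < |z| < 2|y|\}$, where the weight is $\lesssim |y|^{-1/2}$ and the exponential localizes the mass to an $O(1)$ interval (or one just integrates $e^{-\lambda|y-z|}|z|^{-1/2}\lesssim \langle y\rangle^{-1/2}$ by Lemma~\ref{lem:techninal_1}); and $\{|z| \ge 2|y|\}$, where $|y-z|\gtrsim |z|$ and the integral converges absolutely. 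Combining with $|\sqrt{\fD_{y_1}}\varphi|(y,\cdot)\lesssim \bracket{y}^{-3/2}$ yields the claimed $\bracket{y}^{-2}$. Since every bound in the lemma's hypotheses is used with explicit constants, the final proportionality constant depends on $\varphi$ only through those, as asserted. I would write this up in about half a page.
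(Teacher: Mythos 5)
Your proposal is correct and follows essentially the same route as the paper: apply Lemma~\ref{lem:leibniz_3} with $\psi(y_1,y_2)=y_1+y_2$, use that the mixed second difference vanishes since $\psi$ is affine, bound the diagonal term by $|y|\bracket{y}^{-3}$, and control the two single-integral terms via the hypotheses together with Lemma~\ref{lem:techninal_1} (your self-correction that the exponential factor localizes near $z\approx y$, yielding the extra $\bracket{y}^{-1/2}$ and hence $\bracket{y}^{-2}$, is exactly the point). No gaps.
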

\begin{proof}
By Lemma~\ref{lem:leibniz_3}, we have
\begin{align*}
    \sqrt{\fD}^{\otimes 2}\Big((y_1+y_2) &\varphi\Big)(y,y)      =2\sqrt{\fD}^{\otimes 2}\varphi(y,y)y\\
    &+C_\ff\int_\RR\frac{\sqrt{\fD_{y_1}}\varphi(y,y-z)z}{|z|^{\frac{3}{2}}}\,\md z+C_\ff\int_\RR\frac{\sqrt{\fD_{y_2}}\varphi(y-z,y)z}{|z|^{\frac{3}{2}}}\,\md z\;.
\end{align*}
Then the conclusion follows from the assumptions and Lemma~\ref{lem:techninal_1}.
\end{proof}

\endappendix

\bibliographystyle{Martin}
\bibliography{Refs}

\end{document}